\newcommand\FF{{\mathbb F}}
\newcommand\RR{{\mathbb R}}
\newcommand{\R}{\mathbb{R}}
\newcommand\ZZ{{\mathbb Z}}
\newcommand{\Z}{\mathbb{Z}}
\newcommand\cE{{\mathcal E}}
\newcommand\cL{{\mathcal L}}
\newcommand\cM{{\mathcal M}}
\newcommand\cP{{\mathcal P}}
\newcommand\cR{{\mathcal R}}
\newcommand\0{{\mathbf 0}}
\newcommand{\Rinf}{\RR \cup \{\infty\}}
\newcommand{\M}{\text{M}}
\newcommand{\Mnat}{\text{M}^{\natural}}
\newcommand{\tE}{{\widetilde{E}}}
\newcommand\SetOf[2]{\left\{#1 \mid #2\right\}}
\newcommand\biggSetOf[2]{\biggl\{#1 \biggm| #2\biggr\}}
\newcommand\BiggSetOf[2]{\Biggl\{#1 \Biggm| #2\Biggr\}}
\newcommand{\quotient}{\twoheadrightarrow}
\newcommand{\exquotient}{\twoheadrightarrow_{\text{Ex}}}
\newcommand{\minquotient}{\twoheadrightarrow_{\text{min}}}
\newcommand\ind[2]{{\rm ind}_{#2}(#1)}
\newcommand\lift[4]{L^{#1}_{#2,#3}(#4)}
\newcommand{\set}[1]{\ensuremath{ \left\lbrace  #1 \right\rbrace }}
\newcommand{\ma}{\begin{pmatrix} }      \newcommand{\trix}{\end{pmatrix}}
\newcommand{\sma}{\left(\begin{smallmatrix} }       \newcommand{\strix}{\end{smallmatrix}\right)}
\DeclareMathOperator{\conv}{conv}
\DeclareMathOperator{\im}{im}
\DeclareMathOperator{\id}{id}
\DeclareMathOperator*{\argmin}{arg\,min}
\DeclareMathOperator{\dom}{dom}
\DeclareMathOperator{\Sym}{Sym}
\DeclareMathOperator{\supp}{supp}
\let\oldforall\forall
\let\forall\undefined
\DeclareMathOperator{\forall}{\ \oldforall}
\let\oldexists\exists
\let\exists\undefined
\DeclareMathOperator{\exists}{\ \oldexists}
\newcommand{\bases}[1][]{%
	\ifthenelse{ \equal{#1}{} }
	{\ensuremath{\mathcal{B}}}
	{\ensuremath{\mathcal{B}(#1)}}
}
\newcommand{\rank}[1][]{%
	\ifthenelse{ \equal{#1}{} }
	{\text{rk}}
	{\text{rk}_{#1}}
}
\newtheorem{theorem}{Theorem}[section]
\newenvironment{manualtheorem}[1]{%
	\manualtheoreminner
}{\endmanualtheoreminner}
\theoremstyle{definition}
\newtheorem{construction}[theorem]{Construction}
\newaliascnt{proposition}{theorem}
\newtheorem{proposition}[proposition]{Proposition}
\newaliascnt{lemma}{theorem}
\newtheorem{lemma}[lemma]{Lemma}
\newaliascnt{corollary}{theorem}
\newtheorem{corollary}[corollary]{Corollary}
\newaliascnt{conjecture}{theorem}
\theoremstyle{definition}
\newaliascnt{remark}{theorem}
\newtheorem{remark}[remark]{Remark}
\newaliascnt{question}{theorem}
\newtheorem{question}[question]{Question}
\newaliascnt{assumption}{theorem}
\newaliascnt{definition}{theorem}
\newtheorem{definition}[definition]{Definition}
\newaliascnt{example}{theorem}
\newtheorem{example}[example]{Example}
\def\mscclasses{\xdef\@thefnmark{}\@footnotetext}
\author{Marie-Charlotte Brandenburg \and Georg Loho \and Ben Smith}
\begin{document}

\title{Quotients of M-convex sets and M-convex functions}

\begin{abstract}
We unify the study of quotients of matroids, polymatroids, valuated matroids and strong maps of submodular functions in the framework of Murota's discrete convex analysis.
As a main result, we compile a list of ten equivalent characterizations of quotients for \M-convex sets, generalizing existing formulations for (poly)matroids and submodular functions.
We also initiate the study of quotients of \M-convex functions, constructing a hierarchy of four separate characterizations.
Our investigations yield new insights into the fundamental operation of induction, as well as the structure of linking sets and linking functions, which are generalizations of linking systems and bimatroids.
\end{abstract}

\maketitle

\mscclasses{\hspace*{-1.8em} \textsc{MSC Classes.}
	Primary:
	05B35, %Combinatorial aspects of matroids and geometric lattices
	14T15,    %Combinatorial aspects of tropical varieties
	52B20, %Lattice polytopes in convex geometry
	52B40. %   Matroids in convex geometry (realizations in the context of convex polytopes, convexity in combinatorial structures, etc.)
	Secondary:
	14M15,    %Grassmannians, Schubert varieties, flag manifolds
	90C25, %    Convex programming
	90C27. %   Combinatorial optimization
}

\mscclasses{\hspace*{-1.8em} \textsc{Keywords.} discrete convex functions; flag matroids; discrete convex analysis; matroid theory; matroid quotients; polymatroids.}

\section{Introduction}

\emph{Matroids} form a combinatorial model of linear dependence in a configuration of $1$-dimensional linear spaces, as identified by Whitney~\cite{Whitney:1935} and Nakasawa~\cite{Nakasawa:1935}.  
In the same spirit, \emph{matroid quotients} form an abstraction of linear maps of configurations of $1$-dimensional linear spaces going back to Crapo~\cite{Crapo:1967} and Higgs~\cite{Higgs:1968}.
Matroid quotients are the building block for flag matroids~\cite{borovik03_coxetermatroids} which form a versatile tool in the study of flag varieties.

It is natural to ask what happens for configurations of potentially higher dimensional linear spaces.  
It turns out that the combinatorial structure of a configuration of general linear spaces is captured by a \emph{submodular function}. %, or equivalently by a generalized permutohedron, or an \M-convex set. 
Considering linear maps of configurations of linear spaces leads to \emph{strong maps} of submodular functions, a generalization of matroid quotients (\Cref{subsec:realizable-quotients}). 
These were already identified by Fujishige in the first version (1991) of~\cite{Fujishige:2005}. 
%Fujishige already identified this generalization of strong maps to submodular functions in the first version (1991) of~\cite{Fujishige:2005}. 
They also have a polyhedral description in terms of so-called \emph{generalized polymatroids}, see the paper by Frank \& Tardos~\cite{FrankTardos:1988}. 
%They can also be equivalently stated in polyhedral terms of so-called \emph{generalized polymatroids}, see the paper by Frank \& Tardos~\cite{FrankTardos:1988}. 

There is another generalization of matroids in a different direction to submodular functions, namely \emph{valuated matroids} introduced by Dress \& Wenzel~\cite{DressWenzel:1992}. 
These can be seen as height functions on the bases of a matroid that are compatible with the matroid structure.
An important class of valuated matroids arises by tropicalizing Pl\"ucker vectors over a non-Archimedean field; these represent tropical linear spaces. 
Quotients of valuated matroids were introduced by Haque~\cite{Haque:2012} and further studied in the context of tropical geometry by Brandt, Eur and Zhang~\cite{BrandtEurZhang:2021}. 

Formalised by Murota~\cite{Murota:2003}, discrete convex analysis provides a unified framework for submodular functions and valuated matroids. 
The main building blocks are \emph{\M-convex sets}, lattice points of integral generalized permutohedra, or equivalently the bases of an integral submodular base polyhedron.
They form a generalization of matroids from set systems to sets of integer points. 
In analogy with convex functions, there is the concept of \emph{\M-convex functions} where all minimizers are \M-convex sets, generalizing valuated matroids.
Note that \M-convex functions are exactly the tropicalizations of Lorentzian polynomials~\cite{BraendenHuh:2020}. 

\subsection*{Overview of main results}
The aim of this paper is to unify the theory of quotients of submodular functions and \M-convex sets in geometric terms, and to build a foundation for the study of quotients of \M-convex functions. 
Building on existing work, we compile a list of ten equivalent characterizations of quotients of \M-convex sets (\Cref{thm:quotient}).
Many of the new characterizations depend on \emph{induction}, a powerful operation generalizing the classical construction of induction by graphs~\cite{Brualdi:1971}.
This involves a thorough study of \emph{linking sets}, a reformulation of \emph{poly-linking systems} and \emph{bi-polymatroids}~\cite{Schrijver:1978}.

With these foundations, we propose four separate notions of quotients of \M-convex functions and establish their relationship with one another (\Cref{thm:functions-intro}). 
In particular, we show how $\Mnat$-convex functions are special cases of flags of \M-convex functions.
Finally, we drop the integrality assumption of \M-convex sets and identify equivalent characterizations of quotients of generalized permutohedra (\Cref{thm:real+quotients}). 
Overall, we present new tools to study quotients of matroids and valuated matroids in the geometric framework of discrete convex analysis.

\subsection{An instructive example: realizable quotients}
\label{subsec:realizable-quotients}

We motivate the study of quotients of \M-convex sets and their submodular functions with the following instructive example.
Let $V$ be a vector space and $(L_i)_{i \in E}$ a collection of subspaces of $V$.
It is folklore that the function defined by
\[
f \colon 2^E \to \mathbb{R}, \quad f(S) = \dim\left(\sum_{i \in S} L_i\right) \, \forall S \subseteq E
\]
is submodular, 
and encodes the combinatorial structure of the arrangement of linear subspaces. 
Now let $\phi \colon V \to W$ be a linear map which maps the arrangement in $V$ to an arrangement of linear spaces of $W$.
Again, one can define the submodular function
\[
g \colon 2^E \to \mathbb{R}, \quad g(S) = \dim\left(\sum_{i \in S} \phi(L_i)\right) \, \forall S \subseteq E \enspace .
\]
These two functions satisfy the relation %are in an intricate relation, namely one gets
\begin{equation}\label{eq:compliant+realizable}
   f(X) - g(X) \leq f(Y) - g(Y) \quad \forall X \subseteq Y \subseteq E \enspace .
\end{equation}
This follows from the elementary formula $\dim U = \dim \im_{\phi}(U) + \dim \ker_{\phi}(U)$, where $\im_{\phi}(U) = \im \phi|_U$ and $\ker_{\phi}(U) = \ker \phi|_U$, as
\begin{align*}
  \dim \left(\sum_{i \in X}L_i\right) - \dim \im_{\phi} \left(\sum_{i \in X}L_i\right) &= \dim \ker_{\phi} \left(\sum_{i \in X}L_i\right) \leq \dim \ker_{\phi} \left(\sum_{i \in Y}L_i\right) \\
  &= \dim \left(\sum_{i \in Y}L_i\right) - \dim \im_{\phi} \left(\sum_{i \in Y}L_i\right) \enspace .
\end{align*}
The equation~\eqref{eq:compliant+realizable} will be one of many ways to define quotients of \M-convex sets and their submodular functions.

\subsection{Matroid quotients}

There are various equivalent definitions of quotients of matroids.
We recall three conceptually different ones in terms of bases, rank functions and a bigger matroid.

\begin{definition} \label{def:matroid-quotient}
  Let $M,N$ be matroids on the same ground set $E$.
 We have $N$ is a quotient of $M$, denoted $M \quotient N$, if one of the following equivalent conditions holds. 
	\begin{enumerate}
		\item For all $ X \in \bases[N]$, $Y \in \bases[M]$ and $i \in X \setminus Y$, there exists $j \in Y \setminus X $ such that 
		\[
		X \setminus i \cup j \in \bases[N] \, , \, Y \cup i \setminus j \in \bases[M] \, .
		\]
		\item For all $A  \subseteq B \subseteq E$, the rank functions satisfy
		$\rank[N](B) - \rank[N](A) \leq \rank[M](B) - \rank[M](A)$.
		\item There exists a matroid $R$ on ground set $E' = E \sqcup X$ such that $M = R \setminus X$ is a deletion of $R$ and $N = R / X$ is a contraction of $R$.
	\end{enumerate}
\end{definition}

We refer to \cite[Proposition 7.4.7]{Brylawski:1986} for further equivalent conditions in terms of the corresponding geometric lattice, the closure operator, flats, hyperplanes, cocircuits, independent sets, circuits and the dual matroid.
Furthermore, \cite[Theorem 4]{Kung:1978} shows an equivalent condition in terms of bimatroids, or equivalently linking systems.
In the literature, matroid quotients may be disguised under the name of \emph{strong map} or \emph{morphism}. %, sometimes with minor technical differences in their constructions.
They are also formulated in terms of \emph{extensions} or \emph{lifts of matroids}. 

Using the concept of a matroid quotient, one can define a \emph{flag matroid} as a sequence of matroids $M_0,\dots,M_k$ with $M_i \quotient M_{i-1}$ for each $i \in [k]$.
Flag matroids are a special class of Coxeter matroids~\cite{borovik03_coxetermatroids}.

\goodbreak

\subsection{Main Theorem on quotients of \M-convex sets}

We derive a list of equivalent conditions for quotients of \M-convex sets. 
As already exhibited in \Cref{def:matroid-quotient}, there are three conceptually different ways to consider quotients: two sets (of bases), two functions (the rank functions), or one set (the bigger matroid). 
This point of view also appears in the following theorem.
A quotient can arise as a pair of \M-convex sets, a pair of submodular functions or a single \M-convex / $\Mnat$-convex set. 

\begin{theorem}\label{thm:quotient}
  Let $P, Q \subset \Z^E$ be \M-convex sets, and $p,q \colon 2^E \to \Z$ be the corresponding submodular set functions.
  %, i.e. $P = B(p) \cap \Z^d, Q = B(q) \cap \Z^d$, where $B(p)$ denotes the base polyhedron of a submodular function. 
	Then the following are equivalent conditions for $P \quotient Q$, that is, $Q$ is a quotient of~$P$. 
	\begin{enumerate}
	  \item (compliant functions) For all $X \subseteq Y \subseteq E$, the inequality $q(Y) - q(X) \leq p(Y) - p(X)$ holds.
		 \label{submod-fctns-compliant}
	  \item (containment of bases) For every $\sigma \in \Sym(E)$, the vertices $x_\sigma \in P$ and $y_\sigma \in Q$ satisfy $x_\sigma \geq y_\sigma$. \label{bases+containment} 
		\item (submodular polyhedron containment) For all $X \subseteq E$, the containment $S(q/X) \subseteq S(p/X)$ holds, i.e., the submodular polyhedra are contained for all contractions. \label{submod-poly-containment} 
%		\item (top and bottom) $P$ is the top layer and $Q$ is the bottom layer of the $\Mnat$-convex set $G(p,q^\#) \cap \Z^E$. \label{Mnatural}
		\item (top and bottom) There exists an $\Mnat$-convex set $R \in \ZZ^E$ such that $P$ is the top layer and $Q$ is the bottom layer of $R$.
		This $\Mnat$-convex set is $R = G(p,q^\#) \cap \Z^E$. \label{Mnatural}
		\item (deletion-contraction) There exists an \M-convex set $R \subseteq \ZZ^{\tE}$ with $\tE = E \sqcup X$ such that $P = R \setminus X$ is the deletion of $R$ and $Q = R / X$ is the contraction of $R$.
		Moreover, $X$ can be picked to be a singleton. \label{deletion-contraction}
%		There exists a submodular set function $r: E \sqcup e \to \Z$ such that $p$ is the restriction of $r$, $q^\#$ is the restriction of $r^\#$, and the generalized polymatroid $G(p,q^\#)$ is the coordinate projection of $B(r)$ onto $\{x_e = 0\}$.   \todo{Rewrite this to be analogous to Defn 1.1 (3) - first as a set but also note that a single element will do. Edit as appropriate later}
		\item (exchange property) For all $x \in Q, y \in P$ and $i \in \supp^+(x-y)$, there exists some $j \in \supp^-(x-y)$ such that $x - e_i + e_j \in Q$ and $y + e_i - e_j \in P$. \label{exchange-property}
		\item (induction)\label{induction}
		There exists an \M-convex set $\Gamma \subseteq \ZZ^E \times \ZZ^\tE$ and $W \in \ZZ^\tE$ \M-convex such that $P$ is the top layer of the left set of $\Gamma$, and $Q$ is the induction of $W$ through $\Gamma$, i.e.
		\[
		P = \pi_E(\Gamma)^\uparrow \quad , \quad Q = \ind{W}{\Gamma} \, .
		\]
		\item ($\cR$-ordered linking sets) \label{r-order}
    There exist linking sets $\Gamma, \Delta \subseteq \ZZ^E \times \ZZ^E$ such that $P$ and $Q$ are top layers of the left sets of $\Gamma$ and $\Delta$ respectively, and $\Gamma \preceq_\cR \Delta$ where $\preceq_\cR$ is Green's right preorder on the monoid of linking sets.
    \item  (matroid quotient lift) Any compatible matroid lifts of $P$ and $Q$ form a matroid quotient. \label{lift-quotient}
    \item (compressed quotient) The sum $P+Q$ is a flag \M-convex set of type $\left((\rank(Q) + \ell,\rank(P) + \ell),\phi\right)$ for some surjection $\phi$ onto $E$ and $\ell \in \ZZ$. \label{compressed-quotient}
      %characterization of compressed quotient / flag (compare with \cite{BrandtEurZhang:2021}, \cite{FujishigeHirai:2022} and Borovik, Gelfand, White)
	\end{enumerate}
\end{theorem}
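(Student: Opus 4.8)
The natural strategy is to establish the equivalences via a cycle together with a few shortcuts, anchoring everything at the two most classical and most tractable characterizations: \eqref{submod-fctns-compliant} (compliant submodular functions) and \eqref{bases+containment} (containment of greedy vertices). These two are equivalent by a direct computation: the vertex $x_\sigma$ of the base polyhedron $B(p)$ associated to a permutation $\sigma \in \Sym(E)$ has coordinates given by successive differences of $p$ along the chain induced by $\sigma$, so $x_\sigma \ge y_\sigma$ coordinatewise for all $\sigma$ is precisely the statement that $p(Y)-p(X) \ge q(Y)-q(X)$ for all chains $X \subseteq Y$, and every pair $X \subseteq Y$ sits inside some maximal chain. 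I would prove \eqref{submod-fctns-compliant} $\Leftrightarrow$ \eqref{bases+containment} $\Leftrightarrow$ \eqref{exchange-property} first, since the exchange property \eqref{exchange-property} is the ``local'' form of \eqref{bases+containment}: a standard exchange/augmentation argument on $\M$-convex sets (using the simultaneous exchange axiom for the pair) upgrades the pointwise vertex containment to the element-wise exchange statement, and conversely the exchange property forces $x_\sigma \ge y_\sigma$ by induction on $\|x_\sigma - y_\sigma\|_1$.

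Next I would handle the polyhedral and $\Mnat$ reformulations. The equivalence \eqref{submod-fctns-compliant} $\Leftrightarrow$ \eqref{submod-poly-containment} comes from noting that $S(p/X)$ is the submodular polyhedron of the contracted function $p/X$, whose defining inequalities are indexed by subsets of $E \setminus X$; containment $S(q/X) \subseteq S(p/X)$ for all $X$ unwinds exactly to $q(X \cup A) - q(X) \le p(X \cup A) - p(X)$ for all $X$ and all $A \subseteq E \setminus X$, which is \eqref{submod-fctns-compliant}. For \eqref{Mnatural}, I would recall that the generalized permutohedron $G(p,q^\#)$ (a $g$-polymatroid, in the Frank--Tardos sense) is nonempty and $\Mnat$-convex precisely when $p$ and the supermodular $q^\#$ are compliant in the stated sense, and that its ``top layer'' (maximal last coordinate, or maximal total degree) recovers $P$ while its ``bottom layer'' recovers $Q$; this is essentially the $g$-polymatroid construction, so the content is bookkeeping about which layer is which.

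The deletion--contraction clause \eqref{deletion-contraction} and the induction clause \eqref{induction} are where the real work lies, and I expect \eqref{deletion-contraction}, in particular the ``$X$ can be taken to be a singleton'' refinement, to be the main obstacle. The forward direction of \eqref{deletion-contraction} is the free-extension/principal-extension construction: given compliant $p \succeq q$, one builds $R$ on $E \sqcup \{x_0\}$ by a rank function that restricts to $p$ on $E$ and whose contraction by $x_0$ is $q$ — the point is to verify submodularity of this extended function and that it actually realizes both $P$ and $Q$ simultaneously, and to check a single extra element suffices rather than a large set (the classical matroid fact, due to the quotient being a ``single-step'' lift after adding one sufficiently generic element, must be ported to the $\M$-convex setting via the submodular extension formula). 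The converse is an easy computation with rank functions of deletions and contractions. From here, \eqref{induction} follows by taking $\Gamma$ to be the linking set associated to $R$ (its ``graph'' viewed as a correspondence on $E \times \tE$) and $W$ the $\M$-convex set realizing the contraction data, invoking the theory of linking sets and induction developed earlier in the paper; \eqref{r-order} is then a translation of the induction statement into Green's right preorder on the linking-set monoid, using that $\Gamma \preceq_\cR \Delta$ encodes exactly ``$\Delta$ factors through $\Gamma$,'' i.e. $Q$ is obtained from $P$ by further induction. Finally, \eqref{lift-quotient} and \eqref{compressed-quotient} are extracted from \eqref{deletion-contraction}: taking compatible matroid lifts of $P$ and $Q$ and lifting the realizing $\M$-convex set $R$ to a matroid $\widetilde R$ shows the lifts form a matroid quotient in the sense of \Cref{def:matroid-quotient}(3), while the Minkowski sum $P + Q$ being a flag $\M$-convex set of the stated type is the geometric incarnation of $P$ and $Q$ being consecutive layers of a common $\Mnat$-convex set from \eqref{Mnatural}, with the surjection $\phi$ recording the ground-set identification and $\ell$ absorbing the rank shift. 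I would close the cycle by observing \eqref{compressed-quotient} $\Rightarrow$ \eqref{bases+containment} directly from the flag structure of $P+Q$, and \eqref{lift-quotient} $\Rightarrow$ \eqref{submod-fctns-compliant} via \Cref{def:matroid-quotient}(2) applied to the lifts together with the fact that rank functions of the lifts determine $p$ and $q$.
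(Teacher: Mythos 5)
Your anchoring at (1)~$\Leftrightarrow$~(2) via greedy vertices is sound and matches the paper (who attribute the result to Iwata--Murota--Shigeno), and so are the routes (1)~$\Leftrightarrow$~(3) and (1)~$\Leftrightarrow$~(4) via contracted submodular polyhedra and the $g$-polymatroid condition. But there are two substantive problems with the rest of the plan.

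First, the proposed proof of (6)~$\Rightarrow$~(2) ``by induction on $\|x_\sigma - y_\sigma\|_1$'' does not work as stated. After one application of the exchange property to $x = y_\sigma \in Q$, $y = x_\sigma \in P$, $i \in \supp^+(x-y)$, the replacement points $x - e_i + e_j$ and $y + e_i - e_j$ are generally not greedy vertices for $\sigma$ (or for any common $\sigma'$), so the quantity you want to induct on is not attached to a well-defined pair of objects. Moreover, the difference vector changes by $\pm 2(e_i - e_j)$, so the $\ell_1$-distance between the exchanged points need not decrease when $(x-y)_i = 1$ or $(x-y)_j = -1$; the induction can stall. The paper instead proves (6)~$\Rightarrow$~(1) directly: a single-element step (their Lemma~2.26) is obtained by assuming $q(X) - q(X\setminus i) > p(X) - p(X\setminus i)$ for contradiction, running a carefully controlled sequence of exchanges that tracks how many replacement indices $j$ land inside versus outside $X$, and deriving a numerical contradiction from the tallies $|J \cap X|$, $|J \setminus X|$; compliance then follows by induction on $|Y \setminus X|$. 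This argument is the genuinely delicate step of the theorem, and your sketch does not contain it.

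Second, you flag (5) and its ``$X$ can be a singleton'' refinement as the main obstacle and propose a hands-on construction of the extension rank function $r$ on $E \sqcup \{x_0\}$. That construction can in fact be made to work (set $r|_{2^E} = p$, $r(A \cup x_0) = q(A) + p(E) - q(E)$, and check cross-submodularity against compliance), but you are fighting a battle the paper avoids: once (4) is established, the $\Mnat$-convex set $G(p,q^\#) \cap \Z^E$ lifts to an $\M$-convex set on $E \sqcup \{e\}$ by adding a single coordinate (their Proposition~2.9/Theorem~2.12), and its deletion and contraction at $e$ recover the top and bottom layers by their Lemma~2.17. The singleton refinement is then automatic. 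More broadly, the paper routes (5)--(10) almost entirely through the $\Mnat$-convex hub (4), which you describe as ``bookkeeping about which layer is which.'' Underestimating (4) leads you to reinvent harder arguments for (5), and it also affects (9): the paper gets (4)~$\Rightarrow$~(9) for free from the fact that induction through a linking set preserves quotients (their Lemma~3.7 and Lemma~4.8), whereas your idea of lifting the realizing $R$ to a matroid $\widetilde R$ and invoking Definition~1.1(3) would require a separate verification that the resulting lifts of $P$ and $Q$ are \emph{compatible} in the box-lift sense, which is not obvious from the $\widetilde R$ construction alone.
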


We emphasize that \eqref{exchange-property} through \eqref{compressed-quotient} are new characterizations, while \eqref{submod-fctns-compliant} through \eqref{deletion-contraction} are mostly reformulations of known characterizations.

While one of the first and most prominent characterizations of matroid quotients is in terms of flats, this does not appear in our list for \M-convex sets. 
This has two reasons.
Firstly, the flats of a submodular function do not suffice to characterize the submodular function.
Secondly, the flats do appear implicitly in the containment condition \eqref{submod-poly-containment}, as they define the submodular polyhedra. 

\subsection{Main Theorem on quotients of \M-convex functions}

While some of the characterizations in \Cref{thm:quotient} were (implicitly) known, the results and subsequent insights for quotients of \M-convex functions are entirely novel.
They build on the framework developed in \Cref{thm:quotient} for \M-convex sets combined with the characterizations of quotients for valuated matroids~\cite{BrandtEurZhang:2021}.

% \todo[inline]{Georg: distinction between Mnat-convex, quotient and flag}

\begin{theorem}\label{thm:functions-intro}
  Let $f,g : \Z^E \to \R \cup \{\infty\}$ be \M-convex functions such that $\rank(g) < \rank(f)$.
  Consider the following statements.
	\begin{enumerate}[label=(\Alph*)]
		\item (top and bottom) There exists an $\Mnat$-convex function $h: \Z^E \to \Rinf$ such that $f$ and $g$ are the top and bottom layers of $h$ respectively.
		\item (induction)
			There exists a linking function $\gamma : \ZZ^E \times \ZZ^{\tE} \to \Rinf$ and an \M-convex function $r :~\ZZ^{\tE} \to \Rinf$ such that $f$ is the left function of $\gamma$, and $g$ is the induction of $r$ through $\gamma$, i.e.
			\[
			f = \pi_E(\gamma)^\uparrow \quad , \quad g = \ind{r}{\gamma} \, .
			\]
		      \item (exchange property)
                        For every $x \in \dom(f), y \in \dom(g), i \in \supp^+(y-x)$ there exists a $j \in \supp^-(y-x)$ such that
		\[
		f(x) + g(y) \geq f(x + e_i - e_j) + g(y - e_i + e_j).
		\]
		\item (minimizers) For every $u \in (\R^E)^*$ the minimizers $f^u \quotient g^u$ are quotients as \M-convex sets. 
	\end{enumerate}
	Then \ref{quo:Mnat} $\implies$ \ref{quo:induction} $\implies$ \ref{quo:exchange}$\implies$ \ref{quo:minimizers}.
	If $\rank(f) = \rank(g)+1$ then these are all equivalences.
\end{theorem}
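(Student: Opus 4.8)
The plan is to obtain the forward chain $\ref{quo:Mnat}\Rightarrow\ref{quo:induction}\Rightarrow\ref{quo:exchange}\Rightarrow\ref{quo:minimizers}$ as the function-level shadow of the corresponding set-level implications inside \Cref{thm:quotient}, running the same arguments but carrying function values along, and then to close the loop under $\rank(f)=\rank(g)+1$ by reconstructing the $\Mnat$-convex function directly from its weighted minimizers. For $\ref{quo:Mnat}\Rightarrow\ref{quo:induction}$, first translate the $\Mnat$-convex $h$ on $\Z^E$ to an \M-convex function on $\Z^{\tE}$ with $\tE=E\sqcup\{*\}$ via the standard correspondence, so that the top and bottom layers of $h$ become a deletion and a contraction by $*$; this identifies $f$ and $g$ exactly as \eqref{Mnatural}$\Rightarrow$\eqref{deletion-contraction} does for sets. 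Then mimic the construction realizing \eqref{deletion-contraction}$\Rightarrow$\eqref{induction}: exhibit a linking function $\gamma\colon\Z^E\times\Z^{\tE}\to\Rinf$ and an \M-convex $r\colon\Z^{\tE}\to\Rinf$, and verify the two identities $\pi_E(\gamma)^\uparrow=f$ and $\ind{r}{\gamma}=g$ by unwinding the definitions of the left function and of induction through a linking function, now recording values rather than merely supports.

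For $\ref{quo:induction}\Rightarrow\ref{quo:exchange}$, given $x\in\dom(f)$, $y\in\dom(g)$ and $i\in\supp^+(y-x)$, choose $w^x$ attaining $f(x)=\pi_E(\gamma)^\uparrow(x)$ and $w^y$ attaining $g(y)=\gamma(y,w^y)+r(w^y)$, and run the exchange axiom of the linking function $\gamma$ on $\bigl((x,w^x),(y,w^y)\bigr)$ at the index $i$, coordinated with the \M-exchange axiom of $r$ on $w^x,w^y$; the output is an index $j\in\supp^-(y-x)$ with revised witnesses, and reassembling gives $f(x+e_i-e_j)+g(y-e_i+e_j)\le f(x)+g(y)$. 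This is the valued version of \eqref{induction}$\Rightarrow$\eqref{exchange-property}, and — as already for sets — the crux is the simultaneous use of exchange in the $E$-coordinates and in the $\tE$-coordinates, which is precisely what the structure theory of linking functions developed beforehand supplies. For $\ref{quo:exchange}\Rightarrow\ref{quo:minimizers}$, fix $u\in(\R^E)^*$; since $f^u\subseteq\dom(f)$ and $g^u\subseteq\dom(g)$ consist of points of coordinate sum $\rank(f)$ and $\rank(g)$ respectively, $f^u$ is the larger set and it suffices to verify condition \eqref{exchange-property} of \Cref{thm:quotient} for $f^u\quotient g^u$. Given $a\in g^u$, $b\in f^u$ and $i\in\supp^+(a-b)$, apply \ref{quo:exchange} with $x:=b$ and $y:=a$ to obtain $j\in\supp^-(a-b)$ with $f(b+e_i-e_j)+g(a-e_i+e_j)\le f(b)+g(a)$; adding $\langle u,b+e_i-e_j\rangle+\langle u,a-e_i+e_j\rangle=\langle u,b\rangle+\langle u,a\rangle$ to both sides shows that the sum of the two $u$-weighted values at $b+e_i-e_j$ and $a-e_i+e_j$ is at most $\min(f+\langle u,\cdot\rangle)+\min(g+\langle u,\cdot\rangle)$, so each is individually minimal, i.e.\ $b+e_i-e_j\in f^u$ and $a-e_i+e_j\in g^u$.

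For the converse under $\rank(f)=\rank(g)+1$, set $r:=\rank(g)$ and note that any $\Mnat$-convex $h$ with $f,g$ as top and bottom layers must satisfy $\dom(h)=\dom(f)\cup\dom(g)$ with $h=f$ on the slice $\{|z|=r+1\}$ and $h=g$ on $\{|z|=r\}$, since these coordinate sums are consecutive. We take this as the definition of $h$ and prove $\Mnat$-convexity by the criterion that a function with bounded domain is $\Mnat$-convex as soon as its domain is an $\Mnat$-convex set and $\argmin(h+\langle u,\cdot\rangle)$ is an $\Mnat$-convex set for every $u$ (see \cite{Murota:2003}). The domain is handled by \ref{quo:minimizers}: taking $u=t\,w_\sigma$ with $t\to\infty$ along a generic $w_\sigma$ forces the greedy vertices of $\dom(f)$ and $\dom(g)$ to obey $x_\sigma\ge y_\sigma$ coordinatewise, hence $\dom(f)\quotient\dom(g)$ by \eqref{bases+containment}, and then $\dom(f)\cup\dom(g)$ is the $\Mnat$-convex set of \eqref{Mnatural} precisely because the ranks differ by one. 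For a general $u$, minimizing $h+\langle u,\cdot\rangle$ splits over the two slices, so its minimizer is $f^u$, or $g^u$, or $f^u\cup g^u$; in the last case \ref{quo:minimizers} gives $f^u\quotient g^u$, and again since the ranks differ by one, \eqref{Mnatural} identifies $f^u\cup g^u$ as the required $\Mnat$-convex set. Thus $h$ is $\Mnat$-convex and realizes \ref{quo:Mnat}, closing the cycle.

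The new difficulty is concentrated in two places. The forward implications $\ref{quo:Mnat}\Rightarrow\ref{quo:induction}\Rightarrow\ref{quo:exchange}$ are, up to bookkeeping, the set-level arguments of \Cref{thm:quotient} with values attached, so their cost is the structure theory of linking functions (the valued generalization of poly-linking systems), on which the coordinated exchange step in $\ref{quo:induction}\Rightarrow\ref{quo:exchange}$ relies entirely; I expect this to be the most delicate point. In the converse, the rank-difference-one hypothesis is genuinely essential — it is exactly what forces $\dom(h)$, and every weighted minimizer of $h$, to be the plain union of the corresponding sets for $f$ and $g$, so that \eqref{Mnatural} can be applied slice by slice; without it the $\Mnat$-convex set produced by \eqref{Mnatural} carries intermediate layers and $h$ is no longer determined by $f$ and $g$ alone.
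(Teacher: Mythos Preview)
Your overall strategy matches the paper's, and the steps \ref{quo:exchange}$\Rightarrow$\ref{quo:minimizers} and \ref{quo:minimizers}$\Rightarrow$\ref{quo:Mnat} (under rank difference one) are essentially identical to the paper's arguments. Two points deserve comment.

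For \ref{quo:induction}$\Rightarrow$\ref{quo:exchange}, you anticipate a ``coordinated'' exchange in both $\gamma$ and $r$ and flag this as the most delicate step. In fact the paper's argument is simpler and never invokes $r$'s exchange axiom: choose witnesses $w$ with $f(x)=\gamma(x,w)$ and $z$ with $g(y)=\gamma(y,-z)+r(z)$, and apply only the \M-convexity of $\gamma$ to the pair $(x,w),(y,-z)$ at index $i$. The resulting $j$ lies either in $E$ (done) or in $\tE$; but $j\in\tE$ would put $(x+e_i,w-e_j)\in\dom(\gamma)$, contradicting that $x$ already has maximal $E$-coordinate sum in $\pi_E(\dom\gamma)$. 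Your proposed ``exchange of $r$ on $w^x,w^y$'' cannot work as stated anyway, since the projection witness $w^x$ for $f(x)$ need not lie in $\dom(r)$. So the step is easier than you fear, and the top-layer hypothesis on $f$ is exactly what disposes of the $\tE$-case.

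For \ref{quo:minimizers}$\Rightarrow$\ref{quo:Mnat}, your limiting argument establishing $\dom(f)\quotient\dom(g)$ is correct but superfluous: the minimizer criterion (\Cref{thm:minimizers}) does not require the domain to be checked separately, so once every $h^u\in\{f^u,g^u,f^u\cup g^u\}$ is seen to be $\Mnat$-convex you are done. For \ref{quo:Mnat}$\Rightarrow$\ref{quo:induction} the paper simply writes down $\gamma$ and $r$ explicitly on $\tE=\{*\}$ rather than routing through deletion/contraction; your detour works but is longer.
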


This allows us to clarify the relationship between $\Mnat$-convex functions and quotients of \M-convex functions. 

\begin{theorem}\label{th:flags-intro}
  Let $f_0,f_1,\dots,f_k: \Z^E \to \Rinf$ be \M-convex functions such  that $(f_{i}, f_{i-1})$ satisfy $\rank(f_i)=\rank(f_{i-1})+1$ and any condition of \Cref{thm:functions-intro} for all $i \in [k]$, 
  and $P = \bigcup_{i=0}^k \dom(f_i)$ is an $\Mnat$-convex set.
	Then there exist constants $c_0,c_1,\dots,c_k \in \RR$ such that $f_0 - c_0, f_1 - c_1, \dots, f_k - c_k$ are the layers of the $\Mnat$-convex function
	\[
	h(x) = \inf(f_0(x)-c_0,\dots,f_k(x)-c_k) \, .
	\]

\end{theorem}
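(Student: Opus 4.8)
The plan is to build the candidate $\Mnat$-convex function $h = \inf_i(f_i - c_i)$ directly and verify its properties layer by layer, using the rank-$1$ equivalences of \Cref{thm:functions-intro} as the inductive engine. First I would recall that an $\Mnat$-convex function on $\Z^E$ is, up to the standard lifting, an \M-convex function on $\Z^{E \cup \{0\}}$, and that its \emph{layers} are the restrictions to the affine hyperplanes $\{x : x(E) = \text{const}\}$; concretely, layer $j$ of $h$ is the \M-convex function $x \mapsto h(x)$ on $\{x \in \dom(h) : x(E) = r_j\}$ where $r_j = \rank(f_j)$, and these ranks are consecutive integers by the hypothesis $\rank(f_i) = \rank(f_{i-1})+1$. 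So the combinatorial skeleton — that $P = \bigcup_i \dom(f_i)$ is $\Mnat$-convex with layers $\dom(f_i)$ — is already handed to us; what must be produced are the shifts $c_i$ that glue the \emph{values} of the $f_i$ together consistently.

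The heart of the argument is choosing the $c_i$. I would fix $c_0 = 0$ and define the $c_i$ recursively so that consecutive layers match along the exchange relation: using condition \ref{quo:exchange} of \Cref{thm:functions-intro} for the pair $(f_i, f_{i-1})$, for every $x \in \dom(f_{i-1})$, $y \in \dom(f_i)$ and $i^+ \in \supp^+(y-x)$ there is $j \in \supp^-(y-x)$ with $f_i(x + e_{i^+} - e_j) + f_{i-1}(y - e_{i^+} + e_j) \le f_i(x) + f_{i-1}(y)$ — wait, one must orient this correctly: quotients go $f \quotient g$ with $\rank(g) < \rank(f)$, so $f_i$ is the larger one and $f_{i-1}$ the quotient. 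The right normalization is to set
\[
c_i - c_{i-1} = \min\bigl\{ f_{i-1}(x) - f_i(x') : x \in \dom(f_{i-1}),\ x' \in \dom(f_i),\ x' \le x,\ x'(E) = x(E) - 1 \bigr\}
\]
or an analogous expression built from the exchange inequality, chosen precisely so that the local exchange moves of $h$ that cross from layer $i-1$ to layer $i$ neither increase nor create new minima spuriously. With the $c_i$ in hand, I would then verify the $\M^\natural$-exchange axiom for $h$ by case analysis on where the two points $u, v \in \dom(h)$ lie: (i) both in the same layer $\dom(f_i)$ — here the axiom is immediate from $f_i$ being \M-convex; (ii) in adjacent layers — this is exactly where condition \ref{quo:exchange} for the pair $(f_i, f_{i-1})$ is invoked, and the normalization of $c_i$ makes the value inequality work out; (iii) in non-adjacent layers — here I would \emph{telescope}, using that $\dom(h) = P$ is already known to be $\Mnat$-convex (so the required exchange \emph{steps} exist combinatorially) together with a chain of the adjacent-layer value inequalities to control the $h$-values, together with the fact that within each intermediate layer $f_i$ is \M-convex.

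The main obstacle I anticipate is case (iii): controlling the $h$-values across several layers. Knowing that each consecutive pair is a quotient gives an exchange inequality comparing layer $i$ and layer $i-1$, but chaining these across $k$ layers requires that the intermediate exchange elements can be chosen compatibly — i.e., that the element $e_j$ removed going from layer $i-1$ to layer $i$ can be coordinated with the moves at other layers so that the telescoped inequality closes on the specific pair of coordinates demanded by the $\M^\natural$-axiom for $u$ and $v$. The clean way around this is probably to avoid telescoping the \emph{exchanges} and instead argue only about \emph{values}: show that $h$ as defined is convex-extensible (its convex hull restricted to each $x(E)$-slab agrees with $\conv\dom(f_i)$ and the values interpolate correctly) and then appeal to a known criterion — e.g.\ that a function whose effective domain is $\Mnat$-convex and which is locally \M-convex on each layer and "consecutively compatible" between adjacent layers is globally $\Mnat$-convex. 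If such a criterion is available from the \M-convex-set results (the layer characterization in condition \eqref{Mnatural} of \Cref{thm:quotient}, applied at the level of sublevel sets of $h$), then case (iii) reduces to checking that every sublevel set $\{h \le \alpha\}$ is $\Mnat$-convex, which follows because each such set is a union of consecutive sublevel sets of the $f_i$ and the $c_i$ were chosen to make these nest properly. I would present the proof in that order: normalize the $c_i$; check adjacent-layer compatibility via \ref{quo:exchange}; deduce that all sublevel sets are $\Mnat$-convex; conclude $h$ is $\Mnat$-convex with the stated layers.
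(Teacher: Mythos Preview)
Your overall plan is sound and you correctly isolate case (iii), the cross-layer exchange inequality, as the crux. However both remedies you propose for case (iii) fail, and the fix requires changing how you choose the $c_i$.

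First, the sublevel-set route is a non-starter: having all sublevel sets $\{h \le \alpha\}$ $\Mnat$-convex is only a quasiconvexity condition and does \emph{not} imply $\Mnat$-convexity of $h$. (Already on $\Z^{\{1\}}$ take $h(0)=h(2)=0$, $h(1)=2$: every sublevel set is an interval, but $h(0)+h(2) \ge 2h(1)$ fails.) The minimizer criterion in the paper (\Cref{thm:minimizers}) concerns $\argmin(h - \langle u,\cdot\rangle)$, not sublevel sets, and checking that for all $u$ is no easier than the exchange axiom directly.

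Second, and more importantly, your recursive choice of $c_i$ using only the pair $(f_i,f_{i-1})$ cannot control the cross-layer inequality~\eqref{eq:mnat-axiom-between-layers} for $x\in\dom(f_m)$ and $y\in\dom(f_l)$ with $l\le m-2$: telescoping the adjacent-layer exchange inequalities does not close up, because the exchange index $j$ may differ at each step and you cannot force $x-e_j$ and $y+e_j$ to lie in the right layers simultaneously. The paper resolves this not by telescoping at all, but by making $c_m$ depend on \emph{all} earlier layers. One proceeds by induction on $k$: assuming $h^{k}=\inf_{i\le k}(f_i-c_i)$ is already $\Mnat$-convex, set
\[
c_{k+1}=\min\Bigl\{\, f_{k+1}(x)-f_{k}(x-e_j)+c_{k}-f_{l+1}(y+e_j)+c_{l+1}+f_l(y)-c_l \;:\; y\in\dom f_l,\ l\le k-1,\ x\in\dom f_{k+1},\ j\in\supp^-(y-x)\,\Bigr\}.
\]
Because $P$ is $\Mnat$-convex, for any $x$ in the new top layer and $y$ below there \emph{is} some $j\in\supp^+(x-y)$ with $x-e_j,\,y+e_j\in P$; the minimum above is then tailored so that the single inequality $h^{k+1}(x)+h^{k+1}(y)\ge h^{k+1}(x-e_j)+h^{k+1}(y+e_j)$ holds \emph{by construction} for that $j$, with no chaining needed. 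The adjacent case $l=k$ is covered by~\ref{quo:exchange}, and all pairs not involving the new layer are covered by the induction hypothesis. So the correct architecture is: induction on the number of layers, with the new constant absorbing the worst defect against \emph{every} lower layer at once.
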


Applying this to valuated matroids, one obtains that a valuated flag matroid with a generalized matroid as support is the same as a valuated generalized matroid up to adding a constant to each layer.
More generally, we see that $\Mnat$-convex functions are special flags of \M-convex functions. 

\subsection{Motivation}

Our work draws from several directions of research.

\subsubsection*{Optimization}

Inspired by matroid intersection and induction by (bipartite) graphs, Schrijver introduced (poly-)linking systems \cite{Schrijver:1978}. 
They give rise to a flexible framework for constructing matroids and expressing a wide range of optimization problems.
Not long after their introduction, it was shown that matroid quotients can also be nicely interpreted in terms of linking systems as a combinatorial analog to matroid multiplication~\cite{Kung:1978}. 
Furthermore, they can also be expressed in terms of generalized polymatroids~\cite{FrankTardos:1988}.
Generalizing the Dulmage-Mendelsohn decomposition of bipartite graphs,~\cite{Nakamura:1988} derived a decomposition of poly-linking systems. 
In a similar spirit of generalizing graph constructions, \cite{GoemansIwataZenklusen:2012} used poly-linking systems to obtain a more general framework for modelling flow, further extended in \cite{Fujishige:2013}

Quotients are also a helpful structure for parametric optimization. 
One can derive efficient submodular intersection algorithms \cite{IwataMurotaShigeno:1997, FujishigeNagano:2009} in the case that a parametrized family of submodular functions form quotients in terms of the parameter. 
Under the same requirement on the family, there is also a parametric principal partition \cite{Fujishige:2009}. 
By looking at the compression of $\Mnat$-convex functions, \cite{FujishigeHirai:2022} gave an interpretation of the resulting \M-convex function in terms of parametric minimization; the crucial building blocks are again quotients of submodular functions. 

Finally, it has been shown that several natural constructions for valuated matroids do not suffice to generate all of them from some fundamental building blocks~\cite{HusicLohoSmithVegh:2022}.
A suitable choice of linking functions (\Cref{sec:functions}) may be flexible but still simple enough to be a good candidate building block for constructing all valuated matroids.

\subsubsection*{Tropical geometry}

The study of the Grassmannian and its tropicalization uses many techniques from matroid theory.
Similar to the Grassmannian, flag varieties are a promising class of varieties to be investigated using tools from tropical geometry and the theory of matroid quotients.
%Similarly, flag varieties have proved to be amenable to study with tools from tropical geometry.
This has lead to various work studying flag varieties with tropical geometry and studying tropical flag varieties, 
see~\cite{BrandtEurZhang:2021,FangFeiginFourierMakhlin:2019,BoretskyEurWilliams:2022,BorziSchleis:2023} for a few recent developments. 
An emphasis on polyhedral subdivisions and the interplay between flags of tropical linear spaces and discrete convexity was given in~\cite{JoswigLohoLuberOlarte:2023}.
One can also study matroids and their quotients over other hyperfields \cite{BakerBowler:2019,JarraLorscheid:2024}, a framework that captures both flag matroids and valuated flag matroids (\cite[Proposition 2.21]{JarraLorscheid:2024}).
% Note that M-convex functions are tropicalization of Lorentzian polynomials~\cite{BraendenHuh:2020}. 

Already \cite[Chapter 4]{Frenk:2013} identified a concept of a morphism of tropical linear spaces in terms of linking systems. 
In general it is not so clear what the right concept of linear maps in tropical geometry should be, as \cite{FinkRincon:2015} demonstrates. 
Our study of linking sets and linking functions provides new tools to study tropical analogs of linear maps. 

\subsubsection*{Discrete convex analysis for matroid theory}

The framework of `discrete convex functions' unifies the study of many combinatorial constructions and has many applications in optimization and economics.
There are various characterizations of such functions in terms of exchange properties, generalizing the Steinitz exchange lemma for bases of a vector space.
Such functions are not merely abstract generalizations but have powerful applications in matroid theory and beyond.
For example, quotients of \M-convex sets already appear implicitly in the proof of the factorization theorem for strong maps of matroids~\cite[\S 17.2, Lemma 1]{Welsh:1976}.%: if two submodular functions are compliant then their point-wise minimum is submodular.

Many linear algebraic constructions that can be formulated in terms of matroids can be generalized even further in terms of \M-convex sets.
For example, there is an analog of eigensets and dynamical systems based on poly-linking systems \cite{Murota:1990,Murota:1989}.
We use a slightly different approach in terms of \emph{linking sets}; we extend their algebraic study by exhibiting basic properties of the monoid of linking sets.
Furthermore, this framework allows us to derive many powerful generalizations of structures of matroids with geometric tools.

\subsection{Roadmap}

Section \ref{sec:M-convex} provides an introduction to fundamental objects of Discrete Convex Analysis and matroid quotients.
We unify existing results on submodular functions, \M-convex sets and $\Mnat$-convex sets, and combine them with the equivalent characterizations of matroid quotients.
This leads to the characterizations \eqref{submod-fctns-compliant} to \eqref{exchange-property} in our Main Theorem.
In Section \ref{sec:induction}, we go further by introducing induction and studying the structure of linking sets.
This allows us to derive the characterizations \eqref{induction} and \eqref{r-order}.
Extending the framework for associating a matroid to a polymatroid, we combine induction, quotients of \M-convex sets and quotients of matroids in Section \ref{sec:lifts}, yielding \eqref{lift-quotient} and \eqref{compressed-quotient}.
In Section \ref{sec:functions}, we go beyond \M-convex sets and extend the concept of quotient from \M-convex sets to M-convex functions, culminating in Theorems \ref{thm:functions-intro} and \ref{th:flags-intro}.
As an alternative generalization, we propose the study of quotients as purely polyhedral structures beyond combinatorial characterizations in Section \ref{sec:non-integral}.

\textbf{Acknowledgements.}
We thank Raman Sanyal, Diane Maclagan, Kemal Rose and Alex Levine for insightful conversations, and Alex Fink for pointing out related work to us.
We also thank Satoru Fujishige, Refael Hassin and Kazuo Murota for pointing us to related references and additional historical context.
We also thank two anonymous referees for helpful comments and suggestions which improved the exposition.
B.S. was supported by EPSRC grant EP/X036723/1.

\goodbreak

\section{\M-convex sets} \label{sec:M-convex}

We recall several preliminaries from discrete convex analysis, mainly referring to \cite{Murota:2003}. 

\subsection{Basics for \M-convex sets}

Let $E$ be a finite set and $x,y \in \Z^E$.
We define $\supp^+(x-y) = \set{i \in [n] \mid x_i - y_i > 0}$ and $\supp^-(x-y) = \supp^+(y-x)$. Given a subset $A \subseteq E$, we denote $x(A) = \sum_{i \in A} x_i$.
For each $i\in E$, we let $e_i$ denote the unit vector in the $i$-th coordinate.

\begin{definition}
%	Let $E$ be a finite set. % of size $|E|=n$.
	A non-empty set $P \subseteq \Z^E$ is \emph{\M-convex} if for all $x, y \in P$ and $i \in\supp^+(x-y)$, there exists $j \in \supp^-(x-y)$ such that $x-e_i+e_j  \in P$ and $y+e_i-e_j \in P$.
If $P \subseteq \ZZ^E$ is \M-convex, then $x(E)$ is the same quantity for all $x \in P$.
We call this the \emph{rank} of $P$ and denote it by $\rank(P)$. 
\end{definition}

In general, \M-convex sets may be unbounded.
However, much of the time it will be necessary to work with bounded \M-convex sets, i.e., those with only finitely many points.
From now on, we will assume an \M-convex set is bounded unless stated otherwise.

\begin{example}\label{ex:running1}
	Let $E = \{1,2,3\}$ and consider the set 
	\[
		P = \left\{			
			\sma 1 \\ 1 \\ -1 \strix, 
			\sma 1 \\ -1 \\ 1 \strix,  
			\sma -1 \\ 1 \\ 1 \strix,  
			\sma -1 \\ 0 \\ 2 \strix,  
			\sma 0 \\ -1 \\ 2 \strix,  
			\sma 1 \\ 0 \\ 0 \strix, 
			\sma 0 \\ 1 \\ 0 \strix,  
			\sma 0 \\ 0 \\ 1 \strix
		\right\} \subset \Z^E \ , 
	\]
	as depicted in \Cref{fig:running1}.
	It can be checked that $P$ is an \M-convex set. For example, when $x = (0,0,1)$ and $y=(1,1,-1)$, we have $\supp^+(x-y) = \{3\} = \{i\}$ and $\supp^-(x-y) = \{1,2\}$.
	Furthermore for all $j \in \supp^-(x-y)$, both elements $x - e_3 + e_j$ and $y + e_3 - e_j$ are contained in $P$.
Note that $x(E) = x_1 + x_2 + x_3 = 1$ for all $x \in P$.
	
	A second example of an \M-convex set is given by 
	\[
	Q = \left\{			
		\sma -1 \\ -3 \\ -1 \strix, \ 
		\sma -3 \\ -1 \\ -1 \strix, \ 
		\sma -1 \\ -1 \\ -3 \strix, \ 
		\sma -1 \\ -2 \\ -2 \strix, \ 
		\sma -2 \\ -2 \\ -1 \strix, \ 
		\sma -2 \\ -1 \\ -2 \strix
		\right\} \subset \Z^E \ ,
	\]
	contained in the hyperplane $x(E) = -5$, also depicted in \Cref{fig:running1}.
	
	\begin{figure}
		\centering
		\begin{subfigure}{0.39\textwidth}
	\centering
	\captionsetup{width=\linewidth}
	\begin{tikzpicture}[scale = 0.42]
		\filldraw[red!20] (8, 0) -- (4, 6) -- (0, 6) -- (-2, 3) --  (0, 0) -- (4, 0)  -- (8, 0);
		\filldraw (8, 0) circle (3pt);
		\node[anchor=north] at (8, 0) {\footnotesize{$ (1, 1, $-$1) $}};
		\filldraw (4, 6) circle (3pt);
		\node[anchor=south] at (4, 6) {\footnotesize{$ (1, $-$1, 1) $}};
		\filldraw (0, 0) circle (3pt);
		\node[anchor=north] at (0, 0) {\footnotesize{$ ($-$1, 1, 1) $}};
		\filldraw (-2, 3) circle (3pt);
		\node[anchor=east] at (-2, 3) {\footnotesize{$ ($-$1, 0, 2) $}};
		\filldraw (0, 6) circle (3pt);
		\node[anchor=south] at (0, 6) {\footnotesize{$ (0, $-$1, 2) $}};
		\filldraw (6, 3) circle (3pt);
		\node[anchor=west] at (6, 3) {\footnotesize{$ (1, 0, 0) $}};
		\filldraw (4, 0) circle (3pt);
		\node[anchor=north] at (4, 0) {\footnotesize{$ (0, 1, 0) $}};
		\filldraw (2, 3) circle (3pt);
		\node[anchor=north] at (2, 3) {\footnotesize{$ (0, 0, 1) $}};
	\end{tikzpicture}
	\caption{The M-convex set $P$.}
	\label{fig:m-convex-P}
\end{subfigure}
\begin{subfigure}{0.3\textwidth}
	\centering
	\captionsetup{width=\linewidth}
	\begin{tikzpicture}[scale = 0.42]
		\filldraw[blue!20] (8, 0) -- (4, 6) -- (0, 0) -- (4, 0)  -- (8, 0);
		\filldraw (4, 6) circle (3pt);
		\node[anchor=south] at (4, 6) {\footnotesize{$ ($-$1, $-$3, $-$1) $}};
		\filldraw (0, 0) circle (3pt);
		\node[anchor=north] at (0, 0) {\footnotesize{$ ($-$3, $-$1, $-$1) $}};
		\filldraw (8, 0) circle (3pt);
		\node[anchor=north] at (8, 0) {\footnotesize{$ ($-$1, $-$1, $-$3) $}};
		\filldraw (6, 3) circle (3pt);
		\node[anchor=west] at (6, 3) {\footnotesize{$ ($-$1, $-$2, $-$2) $}};
		\filldraw (2, 3) circle (3pt);
		\node[anchor=east] at (2, 3) {\footnotesize{$ ($-$2, $-$2, $-$1) $}};
		\filldraw (4, 0) circle (3pt);
		\node[anchor=north] at (4, 0) {\footnotesize{$ ($-$2, $-$1, $-$2) $}};
	\end{tikzpicture}
	\caption{The M-convex set $Q$.}
	\label{fig:m-convex-Q}
\end{subfigure}
\begin{subfigure}{0.25\textwidth}
	\centering
	\captionsetup{width=\linewidth}
	\begin{tikzpicture}[scale=1.3,
			inner sep=0.1mm,
		]
		\draw[->,thick] (0,0) -- (1,0);
		\node[anchor=south] at (1,0.05) {\small $e_1 - e_3$};
		\draw[->,thick] (0,0) -- (-1,0);
		\node[anchor=south] at (-1,0.05) {\small $e_3 - e_1$};
		\draw[->,thick] (0,0) -- (0.5,-0.86);
		\node[anchor=north] at (0.5,-0.86) {\small $e_2 - e_3$};
		\draw[->,thick] (0,0) -- (-0.5,0.86);
		\node[anchor=south] at (-0.5,0.86) {\small $e_3 - e_2$};
		\draw[->,thick] (0,0) -- (0.5,0.86);
		\node[anchor=south] at (0.5,0.86) {\small $e_1 - e_2$};
		\draw[->,thick] (0,0) -- (-0.5,-0.86);
		\node[anchor=north] at (-0.5,-0.86) {\small $e_2 - e_1$};
	  \node at (0,-1.2) {};
	\end{tikzpicture}
	\caption{The directions $e_i - e_j$.}
		\label{fig:m-convex-directions}
\end{subfigure}
		\caption{
			The lattice points in 	\textsc{(\subref{fig:m-convex-P})} and 	\textsc{(\subref{fig:m-convex-Q})} are the \M-convex sets from \Cref{ex:running1}, depicted inside the plane of points with coordinate sum equal to $1$ and $-5$, respectively.
		The shaded regions are their convex hulls, which are the submodular base polyhedra from \Cref{ex:running2}. %The directions $e_i - e_j$ are depicted in \textsc{(\subref{fig:m-convex-directions})}.
		}
		\label{fig:running1}
	\end{figure}
\end{example}

\begin{example} \label{ex:matroids-M-convex}
Let $B \subseteq \{0,1\}^E$ be the set of bases of a matroid.
The basis exchange axiom is precisely the \M-convex exchange axiom restricted to the unit hypercube, and so $B$ is an \M-convex set.
A notable example is the $m$-hypersimplex $\Delta(m,E) = \SetOf{x \in \{0,1\}^E}{x(E) = m}$, or equivalently the uniform matroid of rank $m$.
\end{example}

\begin{example}
  \label{ex:M-convex-subspaces}
    Recall the setup from \Cref{subsec:realizable-quotients}, where we are given a vector space $V$ and $(L_i)_{i \in E}$ subspaces of $V$.
    Let $(K_i)_{i \in E}$ be a tuple of subspaces with (i) $K_i \subseteq L_i$ and (ii) $\bigoplus_{i \in E} K_i = V$. 
    This gives rise to a lattice point $(\dim(K_i))_{i \in E}$ with coordinate sum $\dim(V)$. 
    Ranging over all choices of $(K_i)_{i \in E}$ gives rise to an \M-convex set.
\end{example}

\begin{definition}\label{def:submodularity}
	A map $p \colon 2^E \to \R$ is a \emph{submodular set function} if 
	\begin{equation*} \label{eq:submodular-inequalities}
	p(A \cup B) +  p(A \cap B) \leq  p(A) +  p(B) \forall A,B \subseteq E \, .
	\end{equation*}
%  We will always assume that $p(\emptyset) \neq \infty$.
  If $p$ is submodular, then $p + c$ is also submodular for all $c \in \R$.
  Hence, we will always normalize by setting $p(\emptyset) = 0$.
  A submodular set function is \emph{$\Z$-valued} (or \emph{integral}) if its range is contained in $\Z$. 
        
	The \emph{base polyhedron} of a submodular function is 
	\[
	B(p) = \set{x \in \R^E \mid x(A) \leq p(A) \forall A \subseteq E, \ x(E) = p(E)}
	\]
	and the \emph{submodular polyhedron} is 
	\[
	S(p) = \set{x \in \R^E \mid x(A) \leq p(A) \forall A \subseteq E} \, .
	\]
	It is straightforward to see that the submodular polyhedron is the base polyhedron summed with a negative orthant, i.e., $S(p) = B(p) + \RR^E_{\leq 0}$.
\end{definition}

Base polyhedra can also be characterized in terms of \emph{generalized permutohedra}~\cite{Postnikov:2009}, deformations of the permutohedron
\[
\Pi_E = \conv\left((\sigma(1), \sigma(2), \dots, \sigma(E)\right) \in \RR^E \mid \sigma \in \Sym(E))  \, ,
\]
such that edge directions are preserved. 
Explicitly, they are the convex hull of $|E|!$ (not-necessarily distinct) points $x_\sigma \in \RR^E$ labelled by permutations $\sigma \in \Sym(E)$ on the set $E$ such that for any adjacent transposition $s_i = (i, i+1)$, we have $x_{\sigma} - x_{\sigma \cdot s_i} = k_{\sigma, i}(e_{\sigma(i)} - e_{\sigma(i+1)})$ for some non-negative number $k_{\sigma, i} \in \RR_{\geq 0}$.
We can give an alternative characterization of \M-convex sets as the lattice points of \emph{integral generalized permutohedra}, those whose vertices have only integral coordinates; in~\cite{Murota:2003} these are called \emph{\M-convex polyhedra}.

\begin{theorem}[{\cite[\S 4.4 \& \S 4.8]{Murota:2003}}]\label{thm:Mconv+submod+permutohedra+correspondence}
There is a one-to-one correspondence between \M-convex sets, $\ZZ$-valued submodular set functions and integral generalized permutohedra: 
\begin{enumerate}
	\item If $p$ is a $\Z$-valued submodular set function then $P = B(p)\cap \Z^E$ is an \M-convex set.
	\item If $P$ is a \M-convex set then $p\colon 2^E \rightarrow \ZZ$ defined as
	\[
	p(A) = \max\SetOf{x(A)}{x \in P}
	\]
	is a submodular set function with $P = B(p) \cap \ZZ^E$.
  \item The set $P$ is \M-convex if and only if $P = \conv(P) \cap \Z^E$ and $\conv(P)$ is an integral generalized permutohedron, i.e., $x_\sigma \in \ZZ^E$ for all $\sigma \in \Sym(E)$.
\end{enumerate}
\end{theorem}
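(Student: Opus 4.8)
The plan is to prove the three assertions using two workhorses: the greedy (Edmonds) description of the vertices of $B(p)$, and the exchange axiom itself, which is strong enough both to pin down lexicographically maximal points of an \M-convex set and to interpolate between two of its points. I would establish the first assertion, then the second, and deduce the third from these together with the classical correspondence between generalized permutohedra and submodular functions.

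\emph{The first assertion.} Let $p$ be integral submodular with $p(\emptyset)=0$. For $\sigma\in\Sym(E)$ let $v^\sigma$ be the greedy point, $v^\sigma_{\sigma(k)}=p(\{\sigma(1),\dots,\sigma(k)\})-p(\{\sigma(1),\dots,\sigma(k-1)\})$; by Edmonds' theorem (cf.\ \cite{Murota:2003}) these are precisely the vertices of $B(p)$, and integrality of $p$ makes them lattice points, so $B(p)$ is a non-empty integral polytope and $P:=B(p)\cap\ZZ^E$ is non-empty with $\conv(P)=B(p)$. For the exchange axiom, call $A\subseteq E$ \emph{$w$-tight} (for $w\in B(p)$) if $w(A)=p(A)$; submodularity makes the $w$-tight sets closed under union and intersection, and one checks that $w-e_i+e_j\in B(p)$ iff no $w$-tight set contains $j$ but not $i$, while $w+e_i-e_j\in B(p)$ iff no $w$-tight set contains $i$ but not $j$. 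Given $x,y\in P$ and $i\in\supp^+(x-y)$, let $S$ be the minimal $y$-tight set containing $i$; from $x(S)\le p(S)=y(S)$ and $x_i>y_i$ some $j\in S\cap\supp^-(x-y)$ exists, and any such $j$ yields $y+e_i-e_j\in P$. Securing $x-e_i+e_j\in P$ as well requires intersecting a hypothetical bad $x$-tight set with $S$ and uncrossing, which lets one refine the choice of $j$; this uncrossing argument is the main obstacle of the theorem.

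\emph{The second assertion.} Let $P$ be \M-convex and $p(A):=\max\{x(A):x\in P\}$, so $p(E)=\rank(P)$. Submodularity of $p$ follows from the exchange axiom in the standard way: pick $x,y\in P$ attaining $p(A)$ and $p(B)$ with $\|x-y\|_1$ minimal, apply one exchange move, and track where the two exchanged coordinates sit relative to $A\cap B$ and $A\cup B$. The inclusion $P\subseteq B(p)\cap\ZZ^E$ is immediate. For the converse, run lexicographic maximization in an order $\sigma$ inside the finite set $P$: the exchange axiom forces, at every stage, the maximum of $x(\{\sigma(1),\dots,\sigma(k)\})$ over the current set to equal $p(\{\sigma(1),\dots,\sigma(k)\})$, so the lex-maximal point of $P$ is $v^\sigma$; hence $B(p)=\conv\{v^\sigma\}\subseteq\conv(P)\subseteq B(p)$, i.e.\ $\conv(P)=B(p)$. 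Finally $P$ has no holes: given $z\in\conv(P)\cap\ZZ^E$, take $x\in P$ nearest to $z$ in $\ell_1$; if $x\ne z$, pick $i\in\supp^+(x-z)$, use a convex-combination representation $z=\sum_k\lambda_k x_k$ with $x_k\in P$ to find some $x_k$ with $(x_k)_i<x_i$, and apply the exchange axiom to the pair $(x,x_k)$ to step strictly closer to $z$, contradicting minimality. Thus $P=B(p)\cap\ZZ^E$, which is \M-convex by the first assertion, so the two assignments are mutually inverse. (Carrying out the no-holes step carefully is the one point, besides the uncrossing argument, that needs real work.)

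\emph{The third assertion.} Recall that a polytope $\Pi$ is a generalized permutohedron iff $A\mapsto\max_{x\in\Pi}x(A)$ is submodular, with $\Pi=B(p)$ for this $p$, and that $\Pi$ has integral vertices iff $p$ is integral. If $P$ is \M-convex, then the vertices of $\conv(P)$ lie in $P\subseteq\ZZ^E$, and for adjacent vertices $u,v$ of $\conv(P)$ with a linear functional $c$ exposing $[u,v]$ the exchange axiom produces $i\in\supp^+(u-v)$, $j\in\supp^-(u-v)$ with $u-e_i+e_j,\,v+e_i-e_j\in\conv(P)$; comparing $c$-values forces $c_i=c_j$, so these two points lie on the edge $[u,v]$ and therefore $v-u\in\RR_{>0}(e_j-e_i)$. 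Hence every edge of $\conv(P)$ is a difference direction, and combined with the no-holes statement we conclude $P=\conv(P)\cap\ZZ^E$ with $\conv(P)$ an integral generalized permutohedron. Conversely, if $\conv(P)$ is an integral generalized permutohedron and $P=\conv(P)\cap\ZZ^E$, then $\conv(P)=B(p)$ for an integral submodular $p$, so $P=B(p)\cap\ZZ^E$ is \M-convex by the first assertion.
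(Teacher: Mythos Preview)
The paper does not give its own proof of this theorem: it is stated with a citation to \cite[\S 4.4 \& \S 4.8]{Murota:2003} and used thereafter as a black box. So there is no in-paper argument to compare your proposal against.

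That said, your plan is essentially the standard route taken in Murota's book and in Fujishige's treatment, and it is sound. The greedy/Edmonds description of the vertices of $B(p)$, the tight-set uncrossing to verify the simultaneous exchange, the lexicographic-maximization argument to place each $v^\sigma$ inside $P$, and the $\ell_1$-nearest-point step for ``no holes'' are exactly the pieces one needs. Two places in your sketch are genuinely where the work lies and are currently only gestured at: (i) in the first assertion, getting \emph{both} $x-e_i+e_j\in P$ and $y+e_i-e_j\in P$ for the \emph{same} $j$ really does require the uncrossing refinement you allude to (a careless choice of $j$ can fail the $x$-side), and (ii) in the second assertion, the one-exchange derivation of submodularity of $p(A)=\max_{x\in P}x(A)$ needs a careful case split on where the exchanged indices sit relative to $A$ and $B$; the ``minimal $\|x-y\|_1$'' trick is the right device, but the bookkeeping is not entirely trivial. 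Neither of these is a gap in the sense of a wrong idea---they are the expected places where the details must be written out.
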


Given this theorem, we define the \emph{vertices} of $P$ to be the vertices $x_{\sigma}$ of the integral generalized permutohedron $\conv(P)$.
We can also define $x_\sigma$ independently of $\conv(P)$ as the unique maximizer of the objective function $\sum c_i\cdot x_i$ where $c_{\sigma(1)} > c_{\sigma(2)} > \cdots > c_{\sigma(n)}$.
Note that it is possible that several permutations $\sigma$ will define the same vertex. 

\begin{example}\label{ex:running2}
	Let $E = \{1,2,3\}$, and recall the $\M$-convex sets $P,Q \subset \ZZ^E$ from \Cref{ex:running1}.
	Using the correspondence in \Cref{thm:Mconv+submod+permutohedra+correspondence}, we construct their associated submodular functions $p,q \colon 2^E \rightarrow \ZZ$.
	Writing $ab$ for the set $\{a,b\}$, we consider the two set functions \\
		\begin{minipage}{0.49\textwidth}
		\begin{align*}
			p : 2^{E} &\to \Z &&&&& \\[-.5em]
			\emptyset &\mapsto 0 &
			1 &\mapsto 1 &
			2 &\mapsto 1 &
			3 &\mapsto 2 \\[-.5em]
			12 &\mapsto 2 &
			13 &\mapsto 2 &
			23 &\mapsto 2 &
			123 &\mapsto 1 , 
		\end{align*}
	\end{minipage}
	\begin{minipage}{0.49\textwidth}
		\begin{align*}
		q : 2^{E} &\to \Z &&&&& \\[-.5em]
			\emptyset &\mapsto 0 &
			1 &\mapsto -1 &
			2 &\mapsto -1 &
			3 &\mapsto -1 \\[-.5em]
			12 &\mapsto -2 &
			13 &\mapsto -2 &
			23 &\mapsto -2 &
			123 &\mapsto -5 , 
		\end{align*}
	\end{minipage}
	
	defined by the relation $p(A) = \max\{ x(A) \mid x \in P\}$ and $q(A) = \max\{y(A) \mid y \in Q\}$ for all $A \subseteq E$.
	It can be verified that both set functions are submodular.
	\Cref{fig:running1,fig:running2} show the base polyhedra of both functions.
	%Observe that $p$ is the submodular function corresponding to the \M-convex set $P$ from \Cref{ex:running1}, as given by the correspondence in \Cref{thm:Mconv+submod+permutohedra+correspondence}.
	As given \Cref{thm:Mconv+submod+permutohedra+correspondence}, the \M-convex set $P = B(p) \cap \ZZ^E$ is precisely the lattice points of the base polyhedron $B(p)$. 
	Conversely, the base polyhedron $B(p) = \conv(P)$ is an integral generalized permutohedron as all edges are parallel to some $e_i - e_j$.
	%Moreover, it can be checked that $p(A) = \max\{ x(A) \mid x \in P\}$ holds for all $A \subseteq \{1,2,3\}$.
	The same correspondence holds between $Q$ and $q$: we have $Q = B(q) \cap \Z^E$ and the base polyhedron $B(q) = \conv(Q)$ has all edges in directions $e_i - e_j$.%, and $q(A) = \max\{y(A) \mid y \in Q\}$ for all $A \subseteq \{1,2,3\}$. 
		\begin{figure}
		\centering
		\begin{subfigure}{0.45\textwidth}
	\centering
	\captionsetup{width=\linewidth}
\begin{tikzpicture}[scale = 0.5]
	\filldraw[red!20] (8, 0) -- (4, 6) -- (0, 6) -- (-2, 3) --  (0, 0) -- (4, 0)  -- (8, 0);
	\draw[red!40, very thick] (8, 0) -- (4, 6) -- (0, 6) -- (-2, 3) --  (0, 0) -- (4, 0)  -- (8, 0);
	%\filldraw (8, 0) circle (3pt);
	%\node[anchor=north] at (8, 0) {\footnotesize{$ (1, 1, $-$1) $}};
	\node[anchor = north] at (8,0) {$x_{231} = x_{321}$};
	%\filldraw (4, 6) circle (3pt);
	%\node[anchor=north] at (4, 6) {\footnotesize{$ (1, $-$1, 1) $}};
	\node[anchor = south] at (4,6) {$x_{312}$};
	%\filldraw (0, 0) circle (3pt);
	%\node[anchor=north] at (0, 0) {\footnotesize{$ ($-$1, 1, 1) $}};
	\node[anchor = north east] at (0,0) {$x_{132}$};
	%\filldraw (-2, 3) circle (3pt);
	%\node[anchor=north] at (-2, 3) {\footnotesize{$ ($-$1, 0, 2) $}};
	\node[anchor = east] at (-2,3) {$x_{123}$};
	%\filldraw (0, 6) circle (3pt);
	%\node[anchor=north] at (0, 6) {\footnotesize{$ (0, $-$1, 2) $}};
	\node[anchor = south] at (0,6) {$x_{213}$};
	%\filldraw (6, 3) circle (3pt);
	%\node[anchor=north] at (6, 3) {\footnotesize{$ (1, 0, 0) $}};
	%\filldraw (4, 0) circle (3pt);
	%\node[anchor=north] at (4, 0) {\footnotesize{$ (0, 1, 0) $}};
	%\filldraw (2, 3) circle (3pt);
	%\node[anchor=north] at (2, 3) {\footnotesize{$ (0, 0, 1) $}};
\end{tikzpicture}
\caption{The submodular base polyhedron $B(p)$.}
\end{subfigure}
\begin{subfigure}{0.45\textwidth}
	\centering
	\captionsetup{width=\linewidth}
\begin{tikzpicture}[scale = 0.5]
	\filldraw[blue!20] (8, 0) -- (4, 6) -- (0, 0) -- (4, 0)  -- (8, 0);
	\draw[blue!40, very thick] (8, 0) -- (4, 6) -- (0, 0) -- (4, 0)  -- (8, 0);
	%\filldraw (4, 6) circle (3pt);
	%\node[anchor=north] at (4, 6) {\footnotesize{$ ($-$1, $-$3, $-$1) $}};
	\node[anchor = south] at (4,6) {$y_{213} = y_{312}$};
	%\filldraw (0, 0) circle (3pt);
	%\node[anchor=north] at (0, 0) {\footnotesize{$ ($-$3, $-$1, $-$1) $}};
	\node[anchor = north] at (0,0) {$y_{123} = y_{132}$};
	%\filldraw (8, 0) circle (3pt);
	%\node[anchor=north] at (8, 0) {\footnotesize{$ ($-$1, $-$1, $-$3) $}};
	\node[anchor = north] at (8,0) {$y_{231} = y_{321}$};
	%\filldraw (6, 3) circle (3pt);
	%\node[anchor=north] at (6, 3) {\footnotesize{$ ($-$1, $-$2, $-$2) $}};
	%\filldraw (2, 3) circle (3pt);
	%\node[anchor=north] at (2, 3) {\footnotesize{$ ($-$2, $-$2, $-$1) $}};
	%\filldraw (4, 0) circle (3pt);
	%\node[anchor=north] at (4, 0) {\footnotesize{$ ($-$2, $-$1, $-$2) $}};
\end{tikzpicture}
\caption{The submodular base polyhedron $B(q)$.}
\end{subfigure}
		\caption{The submodular base polyhedra from \Cref{ex:running2}, whose vertices $x_\sigma, y_\sigma$ are labelled by permutations $\sigma \in \operatorname{Sym}(E)$ in one-line notation. Coordinates of the vertices are given in \Cref{fig:running1}.}
		\label{fig:running2}
	\end{figure}
\end{example}

\begin{remark}
The one-to-one correspondence between \M-convex sets and submodular functions from \Cref{thm:Mconv+submod+permutohedra+correspondence} can be extended to unbounded \M-convex sets by allowing the submodular function $p$ to take infinite values.
Explicitly, the function $p \colon 2^E \rightarrow \ZZ \cup \{\infty\}$ defined as
\[
p(A) = \sup\SetOf{x(A)}{x \in P}
\]
is the unique submodular function such that $P = B(p) \cap \ZZ^E$.
Allowing infinite values can be problematic for a number of quotient characterizations later, and so we restrict to submodular functions taking finite values unless explicitly stated.
\end{remark}

We are now ready to define quotients of \M-convex sets.

\begin{definition}\label{def:compliant}
  Let $P,Q \subseteq \ZZ^E$ be \M-convex sets with corresponding submodular set functions $p,q \colon 2^E \rightarrow \ZZ$.
We say the submodular functions are \emph{compliant} and write $p \quotient q$ if they satisfy
\begin{equation*}
q(Y) - q(X) \leq p(Y) - p(X) \, , \quad \forall X \subseteq Y \subseteq E \, . \label{eq:compliant}
\end{equation*}
We say $Q$ is a \emph{quotient} of $P$, and write $P \quotient Q$, if their submodular functions are compliant.
In the case that $\rank(P) = \rank(Q) + 1$, we call it an \emph{elementary quotient}.
\end{definition}

Our main \Cref{thm:quotient} states that there are many equivalent ways of defining quotients.
We can immediately state an equivalence in terms of vertices due to~\cite{IwataMurotaShigeno:1997}.

\begin{theorem}[\cite{IwataMurotaShigeno:1997}]\label{thm:quotient+iff+bases+contained}
Let $P,Q \subseteq \ZZ^E$ be \M-convex sets.
Then $P \quotient Q$ if and only if for every $\sigma \in \Sym(E)$, the vertices $x_\sigma \in P$ and $y_\sigma \in Q$ satisfy $x_\sigma \geq y_\sigma$.
\end{theorem}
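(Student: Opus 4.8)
# Proof Proposal for Theorem~\ref{thm:quotient+iff+bases+contained}

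The plan is to prove both implications by exploiting the tight linear-programming duality between the submodular function and the vertices of its base polyhedron, via the greedy algorithm. Recall that for a submodular function $p$ with base polyhedron $B(p)$, and a permutation $\sigma \in \Sym(E)$ inducing the chain $\emptyset = C_0 \subsetneq C_1 \subsetneq \cdots \subsetneq C_n = E$ with $C_k = \{\sigma(1), \dots, \sigma(k)\}$, the vertex $x_\sigma$ is computed greedily by $x_\sigma(\sigma(k)) = p(C_k) - p(C_{k-1})$ for each $k$. In particular, $x_\sigma(C_k) = p(C_k)$ for every $k$, so that every $p$-value along the chain is realized exactly by a partial sum of $x_\sigma$; conversely, the values $p(C_k)$ for all chains $\emptyset \subset C_1 \subset \cdots \subset E$ recover $p$ on every subset of $E$, since every $A \subseteq E$ sits on some such maximal chain.

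For the forward direction, suppose $P \quotient Q$, i.e.\ $q$ and $p$ are compliant. Fix $\sigma \in \Sym(E)$ with associated chain $C_0 \subsetneq \cdots \subsetneq C_n$. For each $k$, applying the compliance inequality with $X = C_{k-1} \subseteq Y = C_k$ gives $q(C_k) - q(C_{k-1}) \leq p(C_k) - p(C_{k-1})$, which is exactly $y_\sigma(\sigma(k)) \leq x_\sigma(\sigma(k))$. Since this holds for every $k$ and $\sigma$ runs over all of $\Sym(E)$, we conclude $y_\sigma \leq x_\sigma$ coordinatewise for all $\sigma$. For the converse, suppose $x_\sigma \geq y_\sigma$ for all $\sigma$. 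Given $X \subseteq Y \subseteq E$, choose a permutation $\sigma$ whose chain $C_0 \subsetneq \cdots \subsetneq C_n$ passes through both $X$ and $Y$, say $X = C_a$ and $Y = C_b$ with $a \le b$. Then
\[
p(Y) - p(X) = \sum_{k=a+1}^{b} x_\sigma(\sigma(k)) \geq \sum_{k=a+1}^{b} y_\sigma(\sigma(k)) = q(Y) - q(X),
\]
which is the compliance inequality. Hence $p \quotient q$, i.e.\ $P \quotient Q$.

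The conceptual content is entirely in the greedy-algorithm characterization of vertices of a base polyhedron, which is classical (see \cite[\S 3]{Fujishige:2005}); I would cite this rather than reprove it. The only point requiring a word of care is that a single vertex $x_\sigma$ may correspond to several permutations (when some $k_{\sigma,i} = 0$), but this causes no difficulty: the identities $x_\sigma(C_k) = p(C_k)$ hold for whichever chain $\sigma$ induces, and the argument only ever uses the chain, not $\sigma$ itself. So I do not expect a genuine obstacle here; the statement is essentially a repackaging of greedy duality, and the proof is short. The main thing to get right in the writeup is to state the greedy formula $x_\sigma(\sigma(k)) = p(C_k) - p(C_{k-1})$ cleanly and invoke it symmetrically for $p$ and $q$.
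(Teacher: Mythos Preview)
Your proof is correct and is precisely the standard greedy-algorithm argument. Note, however, that the paper does not actually give its own proof of this theorem: it is stated with a citation to \cite{IwataMurotaShigeno:1997} and invoked as-is to establish the equivalence \eqref{submod-fctns-compliant} $\iff$ \eqref{bases+containment} in \Cref{thm:quotient}. So there is nothing to compare against in the paper itself; your writeup simply supplies the (short, classical) proof that the paper omits.
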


\begin{example}
	The submodular functions $p$ and $q$ from \Cref{ex:running2} form a compliant pair of submodular set functions. Thus, the M-convex sets from \Cref{ex:running1} form a quotient $P \quotient Q$. Furthermore, \Cref{fig:running1,fig:running2} show that the vertices $x_\sigma \in P$ and $y_\sigma \in Q$ satisfy $x_\sigma \geq y_\sigma$ in the partial order on $\ZZ^E$, as implied by \Cref{thm:quotient+iff+bases+contained}.
\end{example}

 \Cref{thm:quotient+iff+bases+contained} gives (\eqref{submod-fctns-compliant} $\iff$ \eqref{bases+containment}) in \Cref{thm:quotient}.
The remaining equivalences will be given in \Cref{sec:M-convex}, \Cref{sec:induction} and \Cref{sec:lifts} once necessary concepts have been introduced.

\subsection{\texorpdfstring{$\Mnat$}{Mnat}-convex sets and generalized polymatroids}

\M-convex sets are closed under a number of operations as listed in \Cref{sec:operations}.
However, they are not closed under coordinate projections $\pi_{E} \colon \ZZ^{\tE} \rightarrow \ZZ^E$ where $E \subseteq \tE$.
This leads to a wider class of discrete convex sets.

\begin{definition}\label{def:mnatural-sets}
	A set $R\subseteq \ZZ^E$ is an \emph{$\Mnat$-convex set} if for all $x, y \in R$:
	\begin{enumerate}
		\item if $x(E) > y(E)$, there exists $i \in \supp^+(x - y)$ such that $x - e_i \in R$ and $y + e_i \in R$, \label{Mnat:aug}
		\item if $x(E) = y(E)$, for all $i \in \supp^+(x-y)$ there exists some $j \in \supp^-(x-y)$ such that $x + e_i - e_j \in R$ and $y - e_i + e_j \in R$. \label{Mnat:exc}
	\end{enumerate}
\end{definition}
Note that there are a number of equivalent definitions of $\Mnat$-convex sets.
We choose to use the definition from~\cite{MurotaShioura:2018simpler} as the second condition just recovers the definition of an \M-convex set for points with constant coordinate sum. 

\begin{example}\label{ex:mnat}
	Let $E = \{1,2\}$ and 
	\[
		R = \left\{
			\sma -1 \\ 0 \strix, 
			\sma 0 \\ -1 \strix,
			\sma -1 \\ 1 \strix,
			\sma 0 \\ 0 \strix,
			\sma 1 \\ -1 \strix,
			\sma 0 \\ 1 \strix,
			\sma 1 \\ 0 \strix,
			\sma 1 \\ 1 \strix,
		\right\} \subset \Z^E \ ,
	\]
	as depicted in \Cref{fig:mnat}. It can be checked that $R$ satisfies the conditions from \Cref{def:mnatural-sets}, i.e., is an M$^\natural$-convex set. This set of points is obtained by projecting away the last coordinate of the $M$-convex set $P$ from \Cref{ex:running1}.

	\begin{figure}
		\centering
		\begin{tikzpicture}[scale = 1]
	\filldraw[red!20] (-1,1) -- (-1,0) -- (0,-1) -- (1,-1) -- (1,1) -- (-1,1);
	\filldraw (-1, 0) circle (2pt);
	\node[anchor=east] at (-1, 0) {\small{$($-$1,0)$}};
	\filldraw (0, -1) circle (2pt);
	\node[anchor=east] at (0,-1) {\small{$ (0,$-$1) $}};
	\filldraw (-1, 1) circle (2pt);
	\node[anchor=east] at (-1,1) {\small{$ ($-$1,1) $}};
	\filldraw (0, 0) circle (2pt);
	\node[anchor=south] at (0,0) {\small{$ (0,0) $}};
	\filldraw (1, -1) circle (2pt);
	\node[anchor=west] at (1, -1) {\small{$ (1,$-$1) $}};
	\filldraw (0, 1) circle (2pt);
	\node[anchor=south] at (0, 1) {\small{$ (0, 1) $}};
	\filldraw (1, 0) circle (2pt);
	\node[anchor=west] at (1, 0) {\small{$ (1, 0) $}};
	\filldraw (1, 1) circle (2pt);
	\node[anchor=west] at (1, 1) {\small{$ (1,1) $}};
\end{tikzpicture}
		\caption{The M$^\natural$-convex set from \Cref{ex:mnat} and its convex hull.}
		\label{fig:mnat}
	\end{figure}
\end{example}

\begin{example}
The prototypical example of an $\Mnat$-convex set is the collection of independent sets $I \subseteq \{0,1\}^E$ of a matroid.
In particular, axiom~\eqref{Mnat:aug} can be deduced directly from the augmentation axiom for independent sets.
Axiom~\eqref{Mnat:exc} holds as independent sets of the same cardinality again form the bases of a matroid, namely a truncation of the original matroid.
By \Cref{ex:matroids-M-convex}, these form an \M-convex set.
More generally, $\Mnat$-convex sets within the unit cube are known as \emph{generalized matroids}. 
\end{example}

The following result gives an alternative characterization of $\Mnat$-convex sets as projections of \M-convex sets.
\begin{proposition}[{\cite[Section 4.7]{Murota:2003}}]\label{prop:Mnat+lifts+to+M}
A set $R\subseteq\ZZ^E$ is $\Mnat$-convex if and only if there exists an \M-convex set $P \subseteq \ZZ^\tE$ such that $\pi_E(P) = R$ where $\pi_E$ is the coordinate projection from $\ZZ^\tE$ to~$\ZZ^E$.
\end{proposition}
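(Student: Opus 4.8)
The plan is to prove both directions directly from the exchange axioms, using a single extra coordinate for the lift (as is implicit in the phrasing ``$\tE = E \sqcup X$'' elsewhere, one new element always suffices). For the easy direction, suppose $P \subseteq \ZZ^{\tE}$ is \M-convex and set $R = \pi_E(P)$; I would verify axioms \eqref{Mnat:aug} and \eqref{Mnat:exc} of \Cref{def:mnatural-sets}. Given $x, y \in R$, lift them to $\hat x, \hat y \in P$. The point is that $\hat x(\tE) = \hat y(\tE) = \rank(P)$, so the relation between $x(E)$ and $y(E)$ is governed entirely by the coordinates on $\tE \setminus E$. When $x(E) > y(E)$, the slack forces some coordinate of $\hat x - \hat y$ on $\tE \setminus E$ to be negative, hence (by \M-convexity applied with that index as the ``$j$'') there is an $i \in \supp^+(\hat x - \hat y)$ — which must lie in $E$, since on $\tE\setminus E$ the difference is $\le 0$ — with $\hat x - e_i + e_j \in P$ and $\hat y + e_i - e_j \in P$; projecting gives $x - e_i \in R$, $y + e_i \in R$. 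When $x(E) = y(E)$ I can take $\tE \setminus E$ to be a single element $*$; then $\hat x_* = \hat y_*$, so $\supp^{\pm}(\hat x - \hat y) \subseteq E$ and the \M-exchange axiom restricted to these indices is exactly \eqref{Mnat:exc}.

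For the converse, given an $\Mnat$-convex set $R \subseteq \ZZ^E$, I would let $r = \max\{x(E) : x \in R\}$ (using boundedness, which we have assumed) and set $\tE = E \sqcup \{*\}$, defining
\[
P = \SetOf{(x, r - x(E)) \in \ZZ^{\tE}}{x \in R} \, .
\]
By construction $\pi_E(P) = R$ and every point of $P$ has coordinate sum exactly $r$, so $P$ is a candidate \M-convex set; it remains to check the \M-exchange axiom. Take $(x, s), (y, t) \in P$ with $s = r - x(E)$, $t = r - y(E)$, and $i \in \supp^+((x,s) - (y,t))$. There are two cases. If $i = *$, then $s > t$, i.e. $x(E) < y(E)$; applying axiom~\eqref{Mnat:aug} of \Cref{def:mnatural-sets} with the roles of $x,y$ swapped yields $k \in \supp^+(y - x)$ with $y - e_k \in R$ and $x + e_k \in R$, and then $j = k \in \supp^-((x,s)-(y,t))$ works: $(x,s) - e_* + e_k = (x + e_k, s - 1) \in P$ since $(x+e_k)(E) = x(E)+1$, and similarly $(y,t) + e_* - e_k = (y - e_k, t+1) \in P$. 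If $i \in E$, then $x_i > y_i$; now either $x(E) \le y(E)$, in which case axiom~\eqref{Mnat:aug} with roles swapped (or the trivial case $x(E)=y(E)$, handled by \eqref{Mnat:exc}) lets me also move mass, but more directly: either $x(E) > y(E)$ and I use \eqref{Mnat:aug} to find $i' \in \supp^+(x-y)$ with $x - e_{i'} \in R$, $y + e_{i'} \in R$ — wait, I need the specific $i$. Let me instead argue: if $x(E) > y(E)$, repeatedly applying \eqref{Mnat:aug} shows the sublevel/superlevel structure, but the clean route is that among $x, y$ with $i \in \supp^+(x-y)$ fixed, I should reduce to the equal-rank case by first adjusting the $*$-coordinate. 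Concretely, if $s \ne t$, WLOG $s > t$ (the case $s<t$ is symmetric and was just done giving $j=*$ is not available here since $i\in E$); then $x(E) < y(E)$, and axiom~\eqref{Mnat:aug} (swapped) produces $k \in \supp^+(y-x)$, $k \ne i$, with $y - e_k, x + e_k \in R$; replace $y$ by $y - e_k$ and $t$ by $t+1$, strictly decreasing $|s - t|$ while keeping $i \in \supp^+$. Iterating reaches $s = t$, i.e. $x(E) = y(E)$, where axiom~\eqref{Mnat:exc} applied to $i$ gives the required $j \in \supp^-(x - y) = \supp^-((x,s)-(y,t))$ with $x + e_i - e_j, y - e_i + e_j \in R$, hence $(x,s)+e_i-e_j, (y,t)-e_i+e_j \in P$. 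Unwinding the iteration produces the exchange for the original pair (the intermediate moves only added to $\supp^-$, so $j$ remains valid); I would phrase this as an induction on $|s-t|$ to keep it clean.

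The main obstacle is exactly this last bookkeeping: the $\Mnat$ augmentation axiom only moves a \emph{single} element, so to realize the two-element \M-exchange on $P$ when the $*$-coordinates of the two points differ and $i \ne *$, I must combine an augmentation step with an exchange step, and I need the chosen $i$ to survive the augmentation. Handling this by induction on the gap $|s-t|$, checking at each stage that $i \in \supp^+$ is preserved and that the final $j$ projects back correctly, is the one place requiring care; everything else is a direct translation between the two axiom systems. Alternatively, one could cite \Cref{prop:Mnat+lifts+to+M} wholesale from \cite[Section 4.7]{Murota:2003} and only sketch the single-element construction — but since the statement is being invoked for the explicit top/bottom-layer description in condition~\eqref{Mnatural} of \Cref{thm:quotient}, giving the explicit one-point lift $P = \{(x, r - x(E))\}$ is worthwhile.
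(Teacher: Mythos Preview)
The paper does not give its own proof of this proposition; it is cited from \cite[Section 4.7]{Murota:2003}. Your direct approach via exchange axioms is a natural one, and your explicit one-element lift $P = \{(x, r - x(E)) : x \in R\}$ is exactly the right construction. However, both directions have gaps as written.

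In the forward direction you assert that for lifts $\hat x, \hat y \in P$ the difference $\hat x - \hat y$ is $\le 0$ on every coordinate of $\tE \setminus E$. This is only true when $|\tE \setminus E| = 1$; for general $\tE$ only the \emph{sum} over $\tE \setminus E$ is forced to be negative, so the index $i$ produced by the (swapped) \M-exchange axiom may lie in $\tE \setminus E$, and your argument stalls. You also write ``I can take $\tE \setminus E$ to be a single element'', but in this direction $\tE$ is given with $P$, not chosen. A clean fix is to first aggregate the coordinates of $\tE \setminus E$ into one (aggregation preserves \M-convexity), reducing to the single-element case where your argument is correct; but this step needs to be stated.

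The more serious gap is in the reverse direction. Your induction on $|s-t|$ does not close: after replacing $y$ by $y' = y - e_k$ via axiom~\eqref{Mnat:aug} and invoking the inductive hypothesis on $(\hat x, \hat y')$, you obtain $\hat y' + e_i - e_j \in P$, i.e.\ $y - e_k + e_i - e_j \in R$, \emph{not} $y + e_i - e_j \in R$. The sentence ``unwinding the iteration produces the exchange for the original pair'' is exactly the missing step, and there is no mechanism to reinsert $e_k$. What you are implicitly trying to prove is that the two-case axiom of \Cref{def:mnatural-sets} implies the unified ``$j \in \supp^-(x-y) \cup \{0\}$'' axiom; that equivalence is the point of \cite{MurotaShioura:2018simpler} and is not a one-line induction. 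If you want a self-contained argument, the polyhedral route (via generalized polymatroids, as in \Cref{thm:Mnat+convex} and \Cref{thm:Mconv+submod+permutohedra+correspondence}) is far more efficient than a direct attack on the exchange axioms.
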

	%{\color{red} It follows from \cite{MurotaShioura:2018simpler} that if $x(E)<y(E)$ then (ii) always holds.}
	Let $R\subseteq \ZZ^E$ be an $\Mnat$-convex set and consider the hyperplane $H_k = \{x \in \R^E \mid \sum_{i \in E} x_i = k\}$.
	If $R \cap H_k \neq \emptyset$ then we call $R \cap H_k$ a \emph{layer} of $R$.
	We denote the top and bottom layer of $R$ respectively by:
	\[
	R^\uparrow = R \cap H_a \, , \, a = \max\SetOf{k \in \ZZ}{R \cap H_k \neq \emptyset} \quad , \quad 	R^\downarrow = R \cap H_b \, , \, b = \min\SetOf{k \in \ZZ}{R \cap H_k \neq \emptyset} \, .	
	\]
It follows directly from~\eqref{Mnat:exc} of \Cref{def:mnatural-sets} that a layer of an $\Mnat$-convex set is \M-convex.
We note that \M-convex sets are $\Mnat$-convex sets with a single layer, as~\eqref{Mnat:aug} of \Cref{def:mnatural-sets} is trivially satisfied.

We can give an additional characterization of $\Mnat$-convex sets in terms of `generalized polymatroids', polyhedra defined by a pair of submodular and supermodular set functions.
A map $q$ is an (integral) \emph{supermodular set function} if $-q$ is an (integral) submodular set function. 
  %% A map $q: 2^E \to \Z\cup\{-\infty\}$ is an (integral) \emph{supermodular set function} if 
  %%       \[
  %%       q(A \cup B) +  q(A \cap B) \geq  q(A) +  q(B) \forall A,B \subseteq E \, .
  %%       \]
  %% As with submodular set functions, we can normalize $q$ such that $q(\emptyset) = 0$.
Its associated \emph{supermodular polyhedron} and base polyhedron are
	\[
	S^\#(q) = \set{x \in \R^E \mid x(A) \geq q(A) \forall A \subseteq E} \quad , \quad B(q) = S^\#(q) \cap \{x \in \RR^E \mid x(E) = q(E)\} \, .
	\]
In analogy with submodular polyhedra, the supermodular polyhedron is the base polyhedron summed with the positive orthant, i.e., $S^\#(q) = B(q) + \RR^E_{\geq 0}$.

Given a submodular set function $p$, we define its \emph{dual supermodular function}  to be
\begin{align*}
p^\# : 2^E \to \ZZ \quad , \quad p^\#(A) = p(E) - p(E\setminus A) \, .
\end{align*}
Note that dualizing reflects the submodular polyhedron through the hyperplane $H_{p(E)}$ to obtain the supermodular polyhedron, i.e., $S^\#(p^\#) = B(p) + \RR^E_{\geq 0}$.
In particular, the base polyhedra $B(p) = B(p^\#)$ are the same.
%\bs{I am fairly sure this is where the referee wanted the remark that under condition (2) we have $B(p)=B(q)$ when $p(E)=q(E)$. Its previous location completely breaks up the flow of the `proof' of Lemma 2.13}

\begin{definition}\label{def:g-polymatroid}
Let $p$ be a ($\ZZ$-valued) submodular function and $q$ a ($\ZZ$-valued) supermodular function such that 
\begin{equation} \label{eq:g-polymatroid-compliant}
p(A)-q(B)\geq p(A \setminus B)-q(B \setminus A) \; \text{ for all } \; A,B \subseteq E \, .
\end{equation}
For such a pair, we define the (integral) \emph{generalized polymatroid} $G(p,q)$ to be
\[
G(p,q) = \set{x \in \R^E \mid q(A) \leq x(A) \leq p(A) \text{ for all } A \subseteq E }.
\]
\end{definition}
Note that 
\begin{equation*}
G(p,q) = S(p) \cap S^{\#}(q) = \left(B(p) + \R_{\leq 0}^E\right) \cap \left(B(q) + \R_{\geq 0}^E\right) \, .
\end{equation*}
Furthermore, observe that if $p(E) = q(E)$ then \eqref{eq:g-polymatroid-compliant} implies that $B(p) = B(q)$.

From now on, we assume that all functions are $\ZZ$-valued and polyhedra are integral unless otherwise stated, so we drop the terms.

\begin{figure}
	\includegraphics[width=0.5\textwidth]{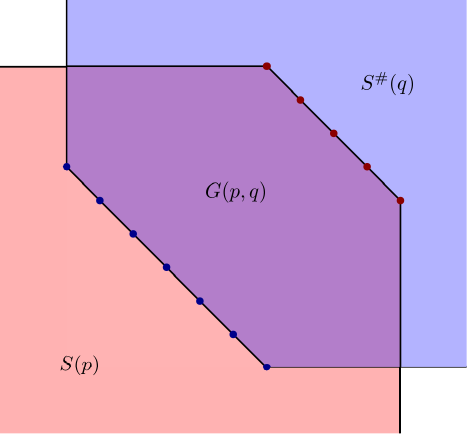}
	\caption{A generalized polymatroid $G(p,q)$, marked purple, as the intersection of the submodular and supermodular polyhedra $S(p)$ and $S^\#(q)$.
		The corresponding \M-convex sets are the lattice points marked in red and blue respectively.}
	\label{fig:g-polymatroid}
\end{figure}

The generalized polymatroid $G(p,q)$ is the intersection $S(p) \cap S^\#(q)$ of the submodular and supermodular polyhedra corresponding to $p$ and $q$, as depicted in \Cref{fig:g-polymatroid}.
The condition~\eqref{eq:g-polymatroid-compliant} ensures that $B(q) \subseteq S(p)$ and $B(p) \subseteq S^{\#}(q)$, hence the base polyhedra $B(p)$ and $B(q)$ are untouched by this intersection; %"In particular, for $p(E) = q(E)$ one has $B(p) = B(q)$
this condition was already identified in~\cite{Hassin:1982}.
It follows that these form the top and bottoms layers of $G(p,q)$ respectively, see~\cite[Lemma 2.4]{FrankKiraly:2009}. 

\begin{lemma}\label{lem:top-bottom-layers}
	Let $p,q$ be submodular functions such that $G(p,q^\#)$ is a generalized polymatroid.
	Then the top layer is $B(p) = G(p,q^\#)^\uparrow$ and the bottom layer is $B(q) = G(p,q^\#)^\downarrow$. 
\end{lemma}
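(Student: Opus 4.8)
The plan is to read both layers directly off the description $G(p,q^\#)=S(p)\cap S^\#(q^\#)$ from \Cref{def:g-polymatroid}, using only the defining inequalities, the identity $x(A)=x(E)-x(E\setminus A)$, and the normalizations $p(\emptyset)=q(\emptyset)=0$. The arguments for the top and bottom layers are mirror images, so I would carry out the first in detail and indicate the dual of each step; alternatively one can simply cite \cite[Lemma 2.4]{FrankKiraly:2009}, but the direct verification is short enough to include.

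\textbf{Step 1 (the ambient slab).} For every $x\in G(p,q^\#)$, the inequality $x(A)\le p(A)$ with $A=E$ gives $x(E)\le p(E)$, and $x(A)\ge q^\#(A)$ with $A=E$ gives $x(E)\ge q^\#(E)=q(E)-q(\emptyset)=q(E)$. Hence $G(p,q^\#)\subseteq\{x\in\RR^E : q(E)\le x(E)\le p(E)\}$, so the top layer (once shown non-empty) lies in $\{x(E)=p(E)\}$ and the bottom layer in $\{x(E)=q(E)\}$.

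\textbf{Step 2 (the base polyhedra sit inside $G(p,q^\#)$).} Two consequences of the $g$-polymatroid condition \eqref{eq:g-polymatroid-compliant} for the pair $(p,q^\#)$ are all that is needed, each a one-line specialization: taking $A=E$, $B=E\setminus A_0$ and simplifying with $q^\#(\emptyset)=0$ gives $p^\#(A_0)\ge q^\#(A_0)$; taking $A=A_0$, $B=E$ gives $q(A_0)\le p(A_0)$; here $A_0\subseteq E$ is arbitrary. (Letting $A_0$ range with $X\subseteq Y$ in the shape $A=Y$, $B=E\setminus X$ shows these amount precisely to $p$ and $q$ being compliant in the sense of \Cref{def:compliant}.) Now for $x\in B(p)$: trivially $x\in S(p)$, and for every $A\subseteq E$ we get $x(A)=p(E)-x(E\setminus A)\ge p(E)-p(E\setminus A)=p^\#(A)\ge q^\#(A)$, so $x\in S^\#(q^\#)$; thus $B(p)\subseteq G(p,q^\#)$. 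Dually, for $x\in B(q)=B(q^\#)$: trivially $x\in S^\#(q^\#)$, and $x(A)=q(E)-x(E\setminus A)\le q(E)-q^\#(E\setminus A)=q(A)\le p(A)$, so $x\in S(p)$ and $B(q)\subseteq G(p,q^\#)$. Finally, since $p,q$ are submodular, $B(p)$ and $B(q)$ are non-empty (for instance $B(p)=\conv(P)\supseteq P$ via \Cref{thm:Mconv+submod+permutohedra+correspondence}), so the bounds $p(E)$ and $q(E)$ of Step 1 are attained on $G(p,q^\#)$.

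\textbf{Step 3 (the extreme slices are exactly the base polyhedra).} If $x\in G(p,q^\#)$ with $x(E)=p(E)$, then $x(A)\le p(A)$ for all $A$ together with $x(E)=p(E)$ says exactly $x\in B(p)$; combined with $B(p)\subseteq G(p,q^\#)$ from Step 2 this gives $G(p,q^\#)\cap\{x(E)=p(E)\}=B(p)$, and with Steps 1--2 (attainment of the maximal coordinate sum $p(E)$) we conclude $G(p,q^\#)^\uparrow=B(p)$. Symmetrically, if $x\in G(p,q^\#)$ with $x(E)=q(E)=q^\#(E)$, then $x\in S^\#(q^\#)$ forces $x\in B(q^\#)=B(q)$, whence $G(p,q^\#)^\downarrow=B(q)$. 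The only point needing any genuine (if still routine) work is the pair of specializations of \eqref{eq:g-polymatroid-compliant} in Step 2 that produce $p^\#\ge q^\#$ and $q\le p$; everything else is bookkeeping with the defining inequalities and the $\#$-duality, and I expect no real obstacle beyond keeping the dualizations and the normalizations straight.
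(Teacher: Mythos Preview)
Your proof is correct and follows essentially the same route as the paper. In fact the paper does not spell out a proof: it cites \cite[Lemma 2.4]{FrankKiraly:2009} in the text preceding the lemma, and an earlier draft of the paper (visible in an \texttt{\textbackslash iffalse} block in the source) contained a short direct argument that matches yours step for step --- show $G(p,q^\#)\cap H_{p(E)}\subseteq B(p)$ from the defining inequalities, then show $B(p)\subseteq G(p,q^\#)$ by specializing \eqref{eq:g-polymatroid-compliant} to obtain $p^\#\ge q^\#$, and argue dually for the bottom layer. Your write-up is slightly more detailed (you separate out the ambient slab and the non-emptiness), but the substance is identical.
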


The following theorem shows that there is a one-to-one correspondence between $\Mnat$-convex sets and generalized polymatroids.

\begin{theorem}[{\cite[Section 4.7]{Murota:2003}, \cite[Theorem 3.58]{Fujishige:2005}}]\label{thm:Mnat+convex}
	The following are equivalent:
	\begin{enumerate}
		\item The set $R \subseteq \ZZ^E$ is $\Mnat$-convex.
		\item There exist submodular functions $p,q: 2^E \to \Z$ such that $R = G(p,q^\#) \cap \Z^E$ where $G(p,q^\#)$ is a generalized polymatroid.
		\item There exists a submodular function $r: 2^{E \sqcup e} \to \Z$ such that 
		\[
		p(X) = r(X) \quad , \quad  q^\#(X) = r^\#(X) %= r(E\cup e) - r((E \setminus X) \cup e) 
		\, \text{ for all } X \subseteq E
		\] 
		and $G(p,q^\#)$ is the coordinate projection of $B(r)$ under $\pi_E\colon \RR^{E \sqcup e} \rightarrow \RR^E$.
	\end{enumerate}
\end{theorem}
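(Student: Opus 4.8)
The plan is to run the implications $(1)\Rightarrow(3)$, $(3)\Rightarrow(2)$, $(2)\Rightarrow(3)$ and $(3)\Rightarrow(1)$, treating $(3)$ as the hub since it mediates between the ``single set'' picture (the lift $r$, or $R$ itself) and the ``pair of functions'' picture ($p$ and $q^\#$). Only $(1)\Rightarrow(3)$ and $(2)\Rightarrow(3)$ carry real content; the other two are a one-line verification and an appeal to \Cref{thm:Mconv+submod+permutohedra+correspondence} together with \Cref{prop:Mnat+lifts+to+M}.

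For $(1)\Rightarrow(3)$: given an $\Mnat$-convex $R$ whose nonempty layers have coordinate sums in $[b,a]$, form the one-element lift $P=\SetOf{(x,\,a-x(E))}{x\in R}\subseteq\ZZ^{E\sqcup e}$. I would check that $P$ is $\M$-convex --- the case $i=e$ of the exchange axiom is exactly axiom~\eqref{Mnat:aug} for $R$, and the remaining cases reduce to axiom~\eqref{Mnat:exc} within a single layer together with the swap trading the $e$-coordinate against a coordinate in $E$; this is the one-element refinement of \Cref{prop:Mnat+lifts+to+M} and may be cited. By \Cref{thm:Mconv+submod+permutohedra+correspondence} there is then a submodular $r\colon 2^{E\sqcup e}\to\ZZ$ with $P=B(r)\cap\ZZ^{E\sqcup e}$, and since the top layer of $R$ is nonempty one gets $r(E)=r(E\sqcup e)=a$. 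Setting $p:=r|_{2^E}$ (a restriction, hence submodular) and letting $q$ be the contraction of $r$ by $e$ (again submodular), the identities of $(3)$ follow from the definition of $r^\#$ and from $r(E\sqcup e)=p(E)$, while $G(p,q^\#)=\pi_E(B(r))$ is the polyhedral computation of the next paragraph read backwards.

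For $(2)\Rightarrow(3)$: given a compliant pair $(p,q^\#)$, define $r\colon 2^{E\sqcup e}\to\ZZ$ by $r(X)=p(X)$ for $X\subseteq E$ and $r(Y\cup e)=p(E)-q^\#(E\setminus Y)$ for $Y\subseteq E$. Three things need checking. First, $r$ is submodular: for $S,T\subseteq E\sqcup e$ one splits on whether $e$ lies in neither, in both, or in exactly one of $S,T$; the first case is submodularity of $p$, the second is supermodularity of $q^\#$ transported through complementation inside $E$, and the mixed case $S=S'\cup e$, $T\subseteq E$ is exactly inequality~\eqref{eq:g-polymatroid-compliant} applied with $A=T$ and $B=E\setminus S'$. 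Second, $r|_{2^E}=p$ and $r^\#$ restricted to $2^E$ equals $q^\#$, both immediate from the definition together with $r(E\sqcup e)=p(E)$. Third, $\pi_E(B(r))=G(p,q^\#)$: for ``$\subseteq$'', a point $(x,x_e)\in B(r)$ satisfies $x(X)\le r(X)=p(X)$ directly and $x(X)=x(E\sqcup e)-x((E\setminus X)\cup e)\ge p(E)-r((E\setminus X)\cup e)=q^\#(X)$; for ``$\supseteq$'', the point $\bigl(x,\,p(E)-x(E)\bigr)$ lies in $B(r)$ for every $x\in G(p,q^\#)$, which simultaneously shows the lattice points match up since $p(E)-x(E)\in\ZZ$.

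The remaining two implications are short. For $(3)\Rightarrow(2)$ one only needs $(p,q^\#)$ to satisfy~\eqref{eq:g-polymatroid-compliant}, which is submodularity of $r$ applied with $S=A$ and $T=(E\setminus B)\cup e$. For $(3)\Rightarrow(1)$: $B(r)\cap\ZZ^{E\sqcup e}$ is $\M$-convex by \Cref{thm:Mconv+submod+permutohedra+correspondence}, its coordinate projection is $G(p,q^\#)\cap\ZZ^E=R$ (with the integral preimage $x\mapsto(x,\,p(E)-x(E))$ giving surjectivity onto the lattice points), and a coordinate projection of an $\M$-convex set is $\Mnat$-convex by \Cref{prop:Mnat+lifts+to+M}. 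The main obstacle is not conceptual but the case bookkeeping in the submodularity check of $(2)\Rightarrow(3)$: that is the sole place where the generalized-polymatroid compliance condition~\eqref{eq:g-polymatroid-compliant} is actually consumed, and one must track carefully how complementation inside $E$ interacts with the three sets involved. A secondary point, preserving integrality under the two projections, is transparent here only because every preimage needed is written down explicitly.
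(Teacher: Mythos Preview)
The paper does not prove this theorem; it is quoted as a known result from Murota~\cite[Section~4.7]{Murota:2003} and Fujishige~\cite[Theorem~3.58]{Fujishige:2005}. Your proposal is correct and is essentially the standard argument found in those references: the one-coordinate lift $(x\mapsto(x,a-x(E)))$ for $(1)\Rightarrow(3)$, the explicit definition of $r$ on $2^{E\sqcup e}$ for $(2)\Rightarrow(3)$ with the three-case submodularity check (the mixed case being exactly where the compliance inequality~\eqref{eq:g-polymatroid-compliant} is consumed), and the projection identity $\pi_E(B(r))=G(p,q^\#)$ verified by the explicit integral section $x\mapsto(x,p(E)-x(E))$.

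One minor remark: in the displayed identity of part~(3) the paper writes $r^\#(X)=r(E\cup e)-r(X)$, but the dual supermodular function of $r$ on the ground set $E\sqcup e$ evaluated at $X\subseteq E$ is $r(E\cup e)-r((E\setminus X)\cup e)$; this is the formula you correctly use throughout, and it is what makes your computation $q^\#(X)=r^\#(X)$ (via $q=$ contraction of $r$ by $e$) and the projection check go through.
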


\begin{example}\label{ex:running3}
	\Cref{fig:running3} shows the generalized polymatroid $G(p,q^\#)$ associated to the submodular functions $p,q$ in \Cref{ex:running2}. By \Cref{thm:Mnat+convex}, its lattice points $R = G(p,q^\#) \cap \Z^E$ form an $\Mnat$-convex set. It is the coordinate projection of the submodular polyhedron $B(r) \subset \R^{E \sqcup e}$, where $E \sqcup e = \{1,2,3,4\}$ and $r : 2^{E \sqcup e} \to \Z$ is defined by
	\vspace{-1em}
	\begin{center}
		\begin{minipage}{0.15\textwidth}
		\begin{align*}
			\emptyset &\mapsto 0 \\[-.5em]
			1 &\mapsto 1 \\[-.5em]
			2 &\mapsto 1 \\[-.5em]
			3 &\mapsto 2 
					\end{align*}
	\end{minipage}
	\begin{minipage}{0.15\textwidth}
		\begin{align*}
			12 &\mapsto 2 \\[-.5em]
			13 &\mapsto 2 \\[-.5em]
			23 &\mapsto 2 \\[-.5em]
			123 &\mapsto 1 
		\end{align*}
	\end{minipage}
	\begin{minipage}{0.15\textwidth}
		\begin{align*}
			4 &\mapsto c + 5 \\[-.5em]
			14 &\mapsto c+4 \\[-.5em]
			24 &\mapsto c+4 \\[-.5em]
			34 &\mapsto c+4
					\end{align*}
	\end{minipage}
	\begin{minipage}{0.15\textwidth}
		\begin{align*}
			124 &\mapsto c+3 \\[-.5em]
			134 &\mapsto c+3 \\[-.5em]
			234 &\mapsto c+3 \\[-.5em]
			1234 &\mapsto c \   
		\end{align*}
	\end{minipage}
	\end{center}
 where $c \in \Z$ is fixed but arbitrary.
\end{example}
	
	\begin{figure}
		\includegraphics[height=18em]{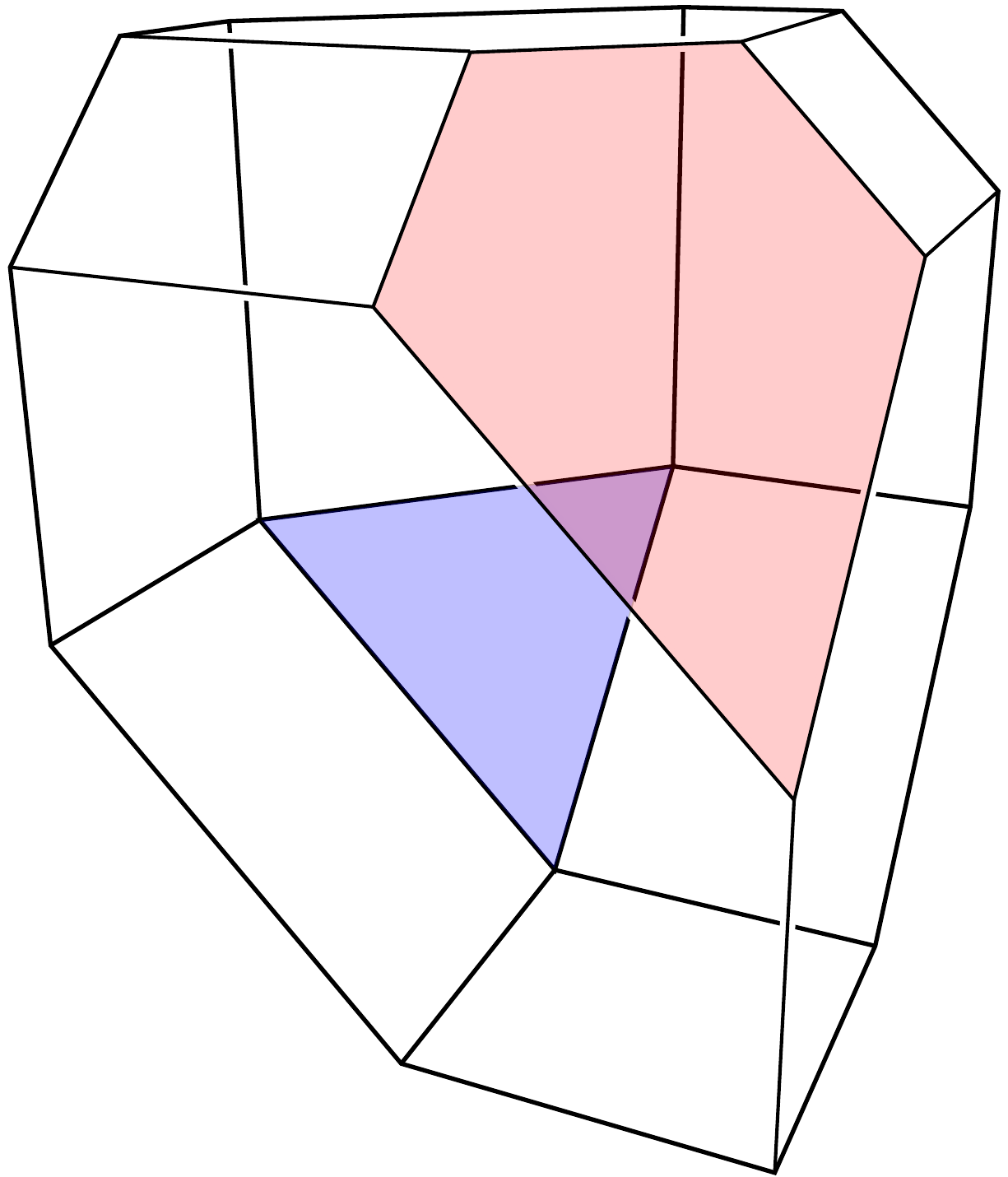}
		\caption{The generalized polymatroid from \Cref{ex:running3}, having the two generalized permutohedra from \Cref{ex:running2} as top and bottom face.}
		\label{fig:running3}
	\end{figure}

\subsection{Basic operations}\label{sec:operations}

We are now ready to recall a number of operations on \M-convex sets.
Throughout we let $P \subseteq \ZZ^E$ be an \M-convex set of rank $r$.
Recall that $H_k$ is the set of points in $\RR^E$ with coordinate sum $k$.

	Write $E = V \sqcup U$ as a disjoint union.
The \emph{restriction} of $P$ to $V$ is the \M-convex set
	\[
	P|_V =	\{x \in \Z^V \mid (x,\0) \in P\}\, .
	\]
	The \emph{projection} of $P$ onto $V$ is the $\Mnat$-convex set
	\[
	\pi_V(P) = \{x \in \Z^V \mid \exists y\in \Z^U \text{ such that } (x,y) \in P\} \, .
	\]

	The \emph{Minkowski sum} of two \M-convex sets $P_1,P_2 \subseteq \Z^E$ is the \M-convex set
	\[
	P_1 + P_2 = \{x_1 + x_2 \in \ZZ^E \mid x_1 \in P_1 \, , \, x_2 \in P_2\}.
	\]
As a special case, when one of these is just a single point, we call $P + v$ the \emph{translation} of $P$ by $v$.

The \emph{truncation} $P^{tr}$ of $P$ is the \M-convex set
\[
P^{tr} = (P + \ZZ^E_{\leq 0}) \cap H_{r-1} \subseteq \ZZ^E \, ,
\]
the layer below $P$ in its submodular polyhedron.
The $k$-th truncation $P_k^{tr}$ is obtained by truncating $P$ $k$ times, or equivalently as the integer points $k$ layers below $P$ in its submodular polyhedron.

The \emph{elongation} $P^{el}$ of $P$ is the \M-convex set
\[
P^{el} = (P + \ZZ^E_{\geq 0}) \cap H_{r+1} \subseteq \ZZ^E \, ,
\]
the layer above $P$ in its supermodular polyhedron. 
The $k$-th elongation $P_k^{el}$ is obtained by elongating $P$ $k$ times, or equivalently as the integer points $k$ layers above $P$ in its supermodular polyhedron.

\begin{remark}
  \label{rem:truncation-submodular-vs-polymatroid}
This notion of truncation differs from the usual matroid theoretic definition.
Given a rank $r$ matroid with independent sets $I \subseteq 2^E$ viewed as an $\Mnat$-convex set, its bases are the $r$-th layer of $I$ while the bases of its (matroid) truncation are the $(r-1)$-th layer of $I$.
This is because matroids are naturally restricted to the unit hypercube $2^E$.
Moreover, they come with a canonical $\Mnat$-convex set in the unit hypercube, namely its independent sets.
More general \M-convex sets are not restricted in this way, and so their canonical associated $\Mnat$-convex set is the submodular polyhedron.

If instead one considers \emph{polymatroids}, namely non-negative, non-decreasing integral submodular functions, then the associated \emph{polymatroid polytope} is the submodular polyhedron intersected with the non-negative orthant.
The \M-convex set $P \subseteq \ZZ_{\geq 0}^E$ associated with a polymatroid is also contained in the non-negative orthant, and so its polymatroid truncation is $P^{tr} \cap \ZZ_{\geq 0}^E$. 
\end{remark}

Most of these operations correspond to an operation at the level of submodular functions, as the following lemma summarizes from~\cite[Section 3.1]{Fujishige:2005}.
\begin{lemma}
  Let $P$ be an \M-convex set with corresponding submodular set function $p: 2^E \to \Z$.

The submodular set function associated to the truncation $P^{tr}$ is 
\[
p^{tr} \colon 2^E \to \Z \quad , \quad p^{tr}(A) = 
\begin{cases} p(A) & A \subsetneq E \\ p(E) - 1 & A = E  \end{cases} \, .
\]
The supermodular set function associated to the elongation $P^{el}$ is 
\[
p^{el} \colon 2^E \to \Z \quad , \quad p^{el}(A) = 
\begin{cases} p^\#(A) & A \subsetneq E \\ p^\#(E) + 1 & A = E  \end{cases} \, .
\]
The submodular set function associated to the Minkowski sum $P_1 + P_2$ is $p_1 + p_2$.
\end{lemma}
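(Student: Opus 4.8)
The three statements will all follow by reducing to the extreme‑value description of the submodular/supermodular function of an \M-convex set. Recall from \Cref{thm:Mconv+submod+permutohedra+correspondence}(2) that if $R\subseteq\ZZ^E$ is \M-convex with submodular function $r$, then $r(A)=\max_{x\in R}x(A)$, and hence its associated supermodular function is $A\mapsto \min_{x\in R}x(A)=r(E)-r(E\setminus A)$. Since $P^{tr}$, $P^{el}$ and $P_1+P_2$ have already been recorded as \M-convex sets in this section, it therefore suffices to evaluate these maxima and minima. Write $r=\rank(P)=p(E)$ throughout.

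First I would record the elementary structural fact that truncation and elongation merely remove or add a single unit vector:
\[
P^{tr}=\{\,y-e_k \mid y\in P,\ k\in E\,\}\,,\qquad P^{el}=\{\,y+e_k \mid y\in P,\ k\in E\,\}\,.
\]
Indeed, a point of $P+\ZZ^E_{\leq 0}$ has the form $y+w$ with $y\in P$ and $w\in\ZZ^E_{\leq 0}$; requiring it to lie in $H_{r-1}$ forces $w(E)=-1$, and a non‑positive integer vector with coordinate sum $-1$ is exactly $-e_k$ for some $k\in E$. The elongation case is symmetric (a non‑negative integer vector with coordinate sum $+1$ is $e_k$).

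Then I would substitute. For the truncation: every $z\in P^{tr}$ has $z(E)=r-1=p(E)-1$, so $p^{tr}(E)=p(E)-1$; and for $A\subsetneq E$, choosing $k\in E\setminus A$ gives $(y-e_k)(A)=y(A)$, whence $\max_{z\in P^{tr}}z(A)\geq\max_{y\in P}y(A)=p(A)$, while $(y-e_k)(A)\leq y(A)\leq p(A)$ in all cases, so equality holds. For the elongation, using the supermodular (minimum) description: every $z\in P^{el}$ has $z(E)=r+1=p^\#(E)+1$ since $p^\#(E)=p(E)$; and for $A\subsetneq E$, choosing $k\in E\setminus A$ is again optimal, giving $\min_{z\in P^{el}}z(A)=\min_{y\in P}y(A)=p(E)-p(E\setminus A)=p^\#(A)$. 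For the Minkowski sum, $\max_{x\in P_1+P_2}x(A)=\max_{x_1\in P_1}x_1(A)+\max_{x_2\in P_2}x_2(A)=p_1(A)+p_2(A)$ because the contributions of the two summands to the coordinate sum on $A$ are independent.

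There is essentially no genuine obstacle here beyond this bookkeeping; the only point worth a remark is \emph{consistency} — that the displayed $p^{tr}$ really is submodular and $p^{el}$ really is supermodular — but this is automatic, since $P^{tr}$ and $P^{el}$ are \M-convex and \Cref{thm:Mconv+submod+permutohedra+correspondence} then guarantees that the associated functions we computed are of the required type. (Should a direct check be preferred, it reduces to the observation that decreasing a submodular function's value on the full ground set $E$ preserves every submodularity inequality, and dually for supermodular functions.)
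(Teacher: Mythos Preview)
Your argument is correct. The structural descriptions $P^{tr}=\{y-e_k\mid y\in P,\ k\in E\}$ and $P^{el}=\{y+e_k\mid y\in P,\ k\in E\}$ are exactly right, and the subsequent computations of the extrema are clean and complete; the Minkowski sum case is likewise fine.

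The paper, however, does not actually prove this lemma: it simply cites \cite[Section~3.1]{Fujishige:2005}, treating these as standard facts about submodular functions. So your approach is genuinely different in that you give a self-contained elementary derivation via the extreme-value description of \Cref{thm:Mconv+submod+permutohedra+correspondence}, whereas the paper defers to the literature. Your route has the virtue of transparency and requires no external reference; the paper's has only brevity. Both are perfectly adequate for a result of this kind.
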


We note that the projection of an \M-convex set is an $\Mnat$-convex set in general, and so the projection has no associated submodular function.
The restriction of an \M-convex set is still an \M-convex set, however we shall recover its submodular function as a special case in \Cref{sec:minors}.

\M-convex and $\Mnat$-convex sets are not closed under intersections in general, but they are when one of the sets is of a special form.
Given $a, b \in \ZZ^E$ with $a \leq b$, the associated \emph{box} is
\[
[a,b] = \SetOf{x \in \ZZ^E}{a \leq x \leq b} \, .
\]
Given $\alpha, \beta \in \ZZ$ with $\alpha \leq \beta$, the \emph{plank} $K(\alpha, \beta)$ is
\[
K(\alpha,\beta) = \SetOf{x \in \ZZ^E}{\alpha \leq x(E) \leq \beta} \, .
\]
Intersection with boxes and planks preserves $\Mnat$-convexity.

\begin{theorem}[{\cite[Section 14.3]{Frank:2011}}] \label{thm:box+plank}
Boxes and planks are $\Mnat$-convex.
Given an $\Mnat$-convex set $P = G(p,q) \cap \ZZ^E$, the intersections $P \cap [a,b]$ and $P \cap K(\alpha, \beta)$ are also $\Mnat$-convex, where
\begin{align*}
P \cap [a,b] &= G(p',q') \cap \ZZ^E\, , & P \cap K(\alpha, \beta) &= G(\tilde{p},\tilde{q}) \cap \ZZ^E \, ,\\
p'(Z) &= \min_{X \subseteq E}(p(X) - a(X-Z) + b(Z-X))\, , & \tilde{p}(Z) &= \min(p(Z), \beta - q(E-Z)) \, , \\
q'(Z) &= \max_{X \subseteq E}(q(X) - b(X-Z) + a(Z-X))\, , & \tilde{q}(Z) &= \max(q(Z), \alpha - p(E-Z)) \, . 
\end{align*}
\end{theorem}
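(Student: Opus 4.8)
The plan is to treat the three assertions in turn — that boxes and planks are $\Mnat$-convex, then the box intersection formulas, then the plank intersection formulas — and to derive everything from the dictionary between $\Mnat$-convex sets and generalized polymatroids in \Cref{thm:Mnat+convex}. For boxes this is immediate: $[a,b] = G(p_{[a,b]},q_{[a,b]})\cap\ZZ^E$ with $p_{[a,b]}(Z)=b(Z)$ and $q_{[a,b]}(Z)=a(Z)$, which are modular (hence sub- and supermodular), and the compliance \eqref{eq:g-polymatroid-compliant} collapses to $b(A\cap B)\ge a(A\cap B)$, which holds since $a\le b$; evaluating the defining inequalities on singletons recovers $a\le x\le b$, so the pair really does cut out $[a,b]$. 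For a plank $K(\alpha,\beta)$ I would instead check the axioms of \Cref{def:mnatural-sets} by hand: if $x(E)>y(E)$ then any $i\in\supp^+(x-y)$ works, because $x(E)-1$ and $y(E)+1$ both lie in $[\alpha,\beta]$; if $x(E)=y(E)$ then any $i\in\supp^+(x-y)$ together with any $j\in\supp^-(x-y)$ works since the transfer preserves every coordinate sum.

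For the box intersection, the goal is to show $P\cap[a,b]=G(p',q')\cap\ZZ^E$ with $p',q'$ as stated; \Cref{thm:Mnat+convex} then finishes once $(p',q')$ is verified to be a valid generalized polymatroid pair. One inclusion is pure bookkeeping: for $x\in P\cap[a,b]$ and any $X\subseteq E$, write $x(Z)=x(X)-x(X\setminus Z)+x(Z\setminus X)$ and use $x(X)\le p(X)$, $x(X\setminus Z)\ge a(X\setminus Z)$, $x(Z\setminus X)\le b(Z\setminus X)$ to get $x(Z)\le p(X)-a(X\setminus Z)+b(Z\setminus X)$, hence $x(Z)\le p'(Z)$; dually $x(Z)\ge q'(Z)$. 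The reverse inclusion comes from specialising the minimisation: $X=Z$ gives $p'(Z)\le p(Z)$ and $q'(Z)\ge q(Z)$, so $G(p',q')\subseteq G(p,q)=P$, while $X=\emptyset$ gives $p'(Z)\le b(Z)$ and $q'(Z)\ge a(Z)$ for all $Z$, hence, on singletons, $G(p',q')\subseteq[a,b]$. For the plank the analogous set equality $P\cap K(\alpha,\beta)=G(\tilde p,\tilde q)\cap\ZZ^E$ is even easier in both directions: $x\in P\cap K(\alpha,\beta)$ satisfies $x(Z)\le p(Z)$ and $x(Z)=x(E)-x(E\setminus Z)\le\beta-q(E\setminus Z)$, so $x(Z)\le\tilde p(Z)$ and dually $x(Z)\ge\tilde q(Z)$; conversely $x\in G(\tilde p,\tilde q)$ has $x(Z)\le\tilde p(Z)\le p(Z)$ and $x(E)\le\tilde p(E)\le\beta$, and dually, so $x\in P\cap K(\alpha,\beta)$.

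The substantive step, and the one I expect to be the main obstacle, is checking that $p',\tilde p$ are submodular, $q',\tilde q$ supermodular, and that the new pairs satisfy \eqref{eq:g-polymatroid-compliant}, since only then does \Cref{thm:Mnat+convex} apply. For $p'$, rewrite $p'(Z)=b(Z)+\min_{X}\big(\psi(X)-c(X\cap Z)\big)$ with $\psi=p-a$ submodular and $c=b-a\ge 0$; the inner minimum $h(Z)$ is, for fixed $Z$, the minimum of a submodular function of $X$, and $h$ is submodular in $Z$ by uncrossing minimisers $X_1,X_2$ for $Z_1,Z_2$, using submodularity of $\psi$ together with the per-coordinate inequality $(u_1\wedge u_2)(v_1\wedge v_2)+(u_1\vee u_2)(v_1\vee v_2)\ge u_1v_1+u_2v_2$ for $u_k,v_k\in\{0,1\}$; then $p'=b+h$ is submodular. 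The function $\tilde p(Z)=\min\big(p(Z),\beta-q(E\setminus Z)\big)$ is a minimum of two submodular functions, which is submodular here only because of the compliance of $(p,q)$: in the mixed case $\tilde p(A)=p(A)$, $\tilde p(B)=\beta-q(E\setminus B)$, the inequality $\tilde p(A)+\tilde p(B)\ge\tilde p(A\cup B)+\tilde p(A\cap B)$ follows, after bounding the right side by $p(A\cap B)+\beta-q(E\setminus(A\cup B))$, from \eqref{eq:g-polymatroid-compliant} applied to $A$ and $E\setminus B$. The statements for $q'$ and $\tilde q$ are symmetric, and compliance of the new pairs follows by a similar uncrossing, or from the fact that $p',q'$ (resp. $\tilde p,\tilde q$) are the tight functions of the nonempty polyhedron $G(p,q)\cap[a,b]$ (resp. $G(p,q)\cap K(\alpha,\beta)$).

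Finally, I would note that this matches the route of \cite{Frank:2011}; an alternative for the $\Mnat$-convexity of the intersections is to verify the exchange axioms of \Cref{def:mnatural-sets} directly, exploiting that a box is a product of intervals and a plank a single coordinate-sum constraint, but this repackages rather than avoids the same crossing estimates. The set-level identities above are routine, so the whole proof hinges on the submodularity and compliance checks of the previous paragraph.
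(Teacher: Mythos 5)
The paper does not prove \Cref{thm:box+plank}; it is cited from~\cite{Frank:2011}, so there is no in-paper argument to compare against. Evaluated on its own terms, your plan is structurally sound and most of the calculations check out, but it has one real gap and one place where your efficiency assessment is backwards.

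The real gap is the compliance of the new pairs. Your route to $\Mnat$-convexity of the intersection is: establish the lattice equality $P\cap[a,b] = G(p',q')\cap\ZZ^E$, then apply \Cref{thm:Mnat+convex}; but invoking that theorem requires $(p',q')$ to be a genuine generalized-polymatroid pair, i.e.\ to satisfy~\eqref{eq:g-polymatroid-compliant}. You dispatch this with a single clause (``by a similar uncrossing, or from the fact that $p',q'$ are the tight functions of the nonempty polyhedron''). The second alternative is circular: knowing that the tight functions of $P\cap[a,b]$ are compliant presupposes that $P\cap[a,b]$ is a generalized polymatroid, which is exactly what you are trying to prove; and your set-level identity only shows that $G(p',q')$ has the right lattice points, not that $p',q'$ \emph{are} the tight functions. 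The first alternative is the right move, but it is the substantive part of this proof and is entirely uncarried; for $(\tilde p,\tilde q)$ it is a four-case analysis in which the mixed and the doubly-swapped cases each use compliance of $(p,q)$ on a different pair of sets, plus $\alpha\le\beta$, and it is not a trivial ``similar uncrossing.''

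Your closing remark that the direct exchange-axiom route ``repackages rather than avoids the same crossing estimates'' is wrong, and in fact inverts the situation. For boxes: given $x,y\in P\cap[a,b]$ and the $i$ (and $j$) supplied by $\Mnat$-convexity of $P$, one has $x_i-1\geq y_i\geq a_i$ and $x_j+1\leq y_j\leq b_j$ (and symmetrically), so the exchanged points automatically stay in $[a,b]$; no uncrossing is required. For planks the sum either drops by one (still $\geq y(E)\geq\alpha$, still $\leq\beta$) or is preserved, again trivially. So the exchange-axiom verification of $\Mnat$-convexity is \emph{strictly simpler} than your chosen route and genuinely avoids the submodularity/compliance machinery. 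The clean structure is therefore: (i) verify the exchange axioms directly to get $\Mnat$-convexity of the intersections (one paragraph of coordinate bookkeeping); (ii) establish the lattice identities $P\cap[a,b]=G(p',q')\cap\ZZ^E$ and $P\cap K(\alpha,\beta)=G(\tilde p,\tilde q)\cap\ZZ^E$ exactly as you do, which is indeed ``pure bookkeeping.'' The submodularity and compliance checks then become a bonus (establishing that $p',q'$ and $\tilde p,\tilde q$ are the canonical functions) rather than a load-bearing step, and if included should be written out rather than waved at.
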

We end with a key observation linking quotients and truncations.
Not only are truncations quotients, they are in some sense maximal or generic quotients.

\begin{proposition} \label{prop:truncation+quotient}
Let $P$ be an \M-convex set.
For any $k \in \ZZ_{>0}$, the $k$-th truncation $P_k^{tr}$ is a quotient of $P$.
Moreover, for any quotient $P \quotient Q$ with $k = \rank(P) - \rank(Q)$, we have $Q \subseteq P_{k}^{tr}$.
\end{proposition}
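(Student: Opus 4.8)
The plan is to work at the level of submodular functions via the correspondence in \Cref{thm:Mconv+submod+permutohedra+correspondence}. Let $p \colon 2^E \to \ZZ$ be the submodular function of $P$ with $p(E) = \rank(P) = r$. For the first claim, I would use the explicit description of the $k$-th truncation's submodular function, iterating the formula for $p^{tr}$: since $p^{tr}$ agrees with $p$ on all proper subsets and drops $p(E)$ by one, we get by induction that $p^{tr}_k(A) = p(A)$ for $A \subsetneq E$ and $p^{tr}_k(E) = p(E) - k = r - k$. Then checking the compliance inequality $p^{tr}_k(Y) - p^{tr}_k(X) \leq p(Y) - p(X)$ for all $X \subseteq Y \subseteq E$ is immediate: if $Y \subsetneq E$ both sides are equal; if $Y = E$ and $X \subsetneq E$ the left side is $r - k - p(X)$ while the right is $r - p(X)$, so the inequality holds since $k > 0$; if $Y = X = E$ both sides are zero. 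Hence $P \quotient P^{tr}_k$ by \Cref{def:compliant}.

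For the second claim, suppose $P \quotient Q$ with submodular function $q$ and $k = \rank(P) - \rank(Q) = p(E) - q(E)$. I want to show $Q \subseteq P^{tr}_k$, equivalently (since both are \M-convex sets, hence determined by their base polyhedra) that $B(q) \subseteq B(p^{tr}_k)$, which reduces to showing $q(A) \leq p^{tr}_k(A)$ for all $A \subseteq E$ together with the matching coordinate-sum condition $q(E) = p^{tr}_k(E)$. The coordinate-sum condition is exactly $q(E) = p(E) - k = r - k$, which holds by the definition of $k$. For $A \subsetneq E$ we have $p^{tr}_k(A) = p(A)$, so it suffices to show $q(A) \leq p(A)$. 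This follows from compliance applied to $A \subseteq E$: $q(E) - q(A) \leq p(E) - p(A)$, i.e. $q(A) \geq p(A) - (p(E) - q(E)) = p(A) - k$... wait, that gives a lower bound. The correct direction: apply compliance with $X = \emptyset$, $Y = A$, giving $q(A) - q(\emptyset) \leq p(A) - p(\emptyset)$, and since both functions are normalized with $p(\emptyset) = q(\emptyset) = 0$, this yields $q(A) \leq p(A) = p^{tr}_k(A)$ directly. For $A = E$ we have equality. Thus $B(q) \subseteq B(p^{tr}_k)$, so $Q = B(q) \cap \ZZ^E \subseteq B(p^{tr}_k) \cap \ZZ^E = P^{tr}_k$.

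The argument is essentially routine once the explicit truncation formula is in hand; the only mild subtlety is to remember that truncation here is the submodular-polyhedron truncation (lowering $p(E)$) rather than the matroid truncation, as emphasized in \Cref{rem:truncation-submodular-vs-polymatroid}, so that iterating $k$ times genuinely lands on the correct layer $H_{r-k}$. The main thing to get right is the bookkeeping that $q(A) \leq p(A)$ for $A \subsetneq E$ comes from the $X = \emptyset$ instance of the compliance inequality, while the top-value match $q(E) = p(E) - k$ is forced by the rank hypothesis; together these are exactly the conditions $B(q) \subseteq B(p^{tr}_k)$.
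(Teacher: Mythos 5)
Your proof is correct and follows essentially the same route as the paper: the first claim is checked directly against the compliance inequality using the explicit formula $p^{tr}_k(A) = p(A)$ for $A \subsetneq E$ and $p^{tr}_k(E) = p(E) - k$, and the second claim uses the normalization $q(\emptyset) = p(\emptyset) = 0$ together with compliance at $X = \emptyset$ to get $q(A) \leq p(A) = p^{tr}_k(A)$. The brief mid-stream detour where you first applied compliance with $Y = E$ before correcting to $X = \emptyset$ is just an exposition wrinkle; the final argument is exactly the paper's.
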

\begin{proof}
	For the first statement, we use the compliant definition of quotient. 
	For all $X \subseteq Y \subsetneq E$, we immediately have $p^{tr}_k(Y) - p^{tr}_k(X)  = p(Y) - p(X)$ and $p^{tr}_k(E) - p^{tr}_k(X) < p(E) - p(X)$, so the truncation is a quotient of $P$.
	
	For the second statement, if $Q$ is a quotient of $P$ with submodular function $q$ such that $k = \rank(P) - \rank(Q)$, then $q(E) = p_k^{tr}(E)$.
	Recall that we can assume that the $q(\emptyset) = p(\emptyset)=0$.
	Then for any $A \subsetneq E$, we have $q(A) = q(A) - q(\emptyset) \leq p(A) - p(\emptyset) = p_k^{tr}(A)$.
\end{proof}

\begin{example}
  Recall the setup of \Cref{subsec:realizable-quotients}, and consider the case where the linear projection $\phi \colon V \rightarrow W$ is generic and $\dim(W) = \dim(V)-1$.
  Then, let $(\dim(K_i))_{i \in E}$ be a lattice point as considered in \Cref{ex:M-convex-subspaces}.
  Applying such a generic $\phi$ results in a point $(\dim(\phi(K_i)))_{i \in E}$ where exactly one of the coordinates is diminished by one.
  Overall this gives rise to a polymatroid truncation as discussed in \Cref{rem:truncation-submodular-vs-polymatroid}.
\end{example}

\subsection{Minors}\label{sec:minors}
Unlike with matroids, there is some flexibility for the definition of a minor for submodular functions and \M-convex sets. 
We propose the following definitions, noting some properties relating to quotients. As it turns out, these definitions will allow us to easily construct large classes of quotients of \M-convex sets (\Cref{prop:minors-quotients}).

  Throughout, we let $P \subseteq \ZZ^E$ be an \M-convex set where $E = V \sqcup U$.	
	Given some $k\in \ZZ$, the \emph{basic $k$-{th} minor} of $P$ with respect to $U$ is the bounded \M-convex set 
	\[
	P^U_k = \{x \in \Z^V \mid \exists y\in \Z^U \text{ such that } (x,y) \in P \text{ and } x(V) = k\} = \pi_V(P) \cap H_k \, .
	\] 
	A \emph{minor} of $P$ is a sequence of basic minors.
	
As special cases of basic minors, we define
  \begin{itemize}
  \item	the \emph{deletion} $P\setminus U$ of $U$ is the minor $P^U_k$ for $k = \max\{x(V) \mid (x,y) \in P\} = p(V)$, 
  \item the \emph{contraction} $P/U$ of $U$ is the minor $P^U_k$ for $k = \min\{x(V)\mid (x,y) \in P\} = p(E) - p(U)$.
  \end{itemize}
  We emphasize the following important observation relating deletion and contraction to layers of $\Mnat$-convex sets.
  \begin{lemma}\label{lem:deletion+contraction+layer}
  Let $P \subseteq \ZZ^E$ be an \M-convex set with $E = V \sqcup U$.
  The deletion $P \setminus U$ is the top layer of the $\Mnat$-convex set $\pi_V(P)$ and the contraction $P/ U$ is the bottom layer of $\pi_V(P)$.
  \end{lemma}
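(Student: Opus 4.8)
The plan is to unwind the definitions of deletion, contraction, projection, and layer, and observe that they match essentially by inspection, using only the previously established facts that $\pi_V(P)$ is $\Mnat$-convex (Proposition 4.8) and that its top and bottom layers are the extreme nonempty slices $\pi_V(P)\cap H_k$ (the definition of $R^\uparrow$, $R^\downarrow$ given after Proposition 4.8). First I would recall that, by definition, the basic $k$-th minor is $P^U_k = \pi_V(P)\cap H_k$, so both the deletion and the contraction are literally layers of $\pi_V(P)$ for specific values of $k$; the only content is identifying $k$ with the extreme values $a=\max\{k : \pi_V(P)\cap H_k\neq\emptyset\}$ and $b=\min\{k : \pi_V(P)\cap H_k\neq\emptyset\}$ appearing in the definitions of $R^\uparrow$ and $R^\downarrow$.

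For the deletion, I would argue that
\[
\max\{k \in \ZZ : \pi_V(P)\cap H_k \neq \emptyset\} = \max\{x(V) : (x,y)\in P\} = p(V),
\]
where the first equality holds because $x \in \pi_V(P)$ iff $(x,y)\in P$ for some $y\in\ZZ^U$, and the second is exactly the identity recorded in the definition of deletion (and follows from Theorem 2.8(2), since $p(V)=\max\{z(V): z\in P\}$ with $z=(x,y)$ and $z(V)=x(V)$). Hence the value of $k$ defining $P\setminus U$ is precisely the top layer index $a$, so $P\setminus U = \pi_V(P)^\uparrow$. Symmetrically, for the contraction I would note that
\[
\min\{k \in \ZZ : \pi_V(P)\cap H_k \neq \emptyset\} = \min\{x(V) : (x,y)\in P\} = p(E) - p(U);
\]
the last equality holds because for $(x,y)\in P$ we have $x(V) = z(E) - z(U)$ with $z(E)=p(E)$ fixed (constant coordinate sum on an \M-convex set) and $\min\{z(U):z\in P\} $ is not directly $p(U)$ — rather $\max\{z(U):z\in P\}=p(U)$, so $\min\{x(V)\} = p(E)-\max\{z(U):z\in P\} = p(E)-p(U)$. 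Thus the value of $k$ defining $P/U$ is the bottom layer index $b$, giving $P/U = \pi_V(P)^\downarrow$.

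The main (and only mild) obstacle is the bookkeeping in the contraction case: making sure the minimum of $x(V)$ over $(x,y)\in P$ is correctly rewritten as $p(E)-p(U)$ via the constant-coordinate-sum property of \M-convex sets and the fact that $p(U)=\max\{z(U):z\in P\}$ from Theorem 2.8(2). Everything else is a direct substitution of definitions; in particular $\pi_V(P)$ being $\Mnat$-convex (so that "layer" is meaningful and each layer is \M-convex) is already guaranteed by Proposition 4.8 and the remarks following it, so no exchange-axiom verification is needed here. I would close by remarking that this lemma is what makes the equivalence $\eqref{Mnatural}\iff\eqref{deletion-contraction}$ in Theorem 2.12 transparent.
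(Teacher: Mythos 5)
Your proof is correct. The paper states this lemma as a bare observation without a written proof, precisely because it is immediate from the definitions once one notices that $P^U_k = \pi_V(P)\cap H_k$ and that the deletion and contraction are, by definition, the basic minors at the extreme values $k=\max\{x(V):(x,y)\in P\}$ and $k=\min\{x(V):(x,y)\in P\}$; your unwinding of those definitions, including the identification of these extreme values with $p(V)$ and $p(E)-p(U)$ via the constant-coordinate-sum property and Theorem~\ref{thm:Mconv+submod+permutohedra+correspondence}(2), is exactly the intended argument.
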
  
  
  We also note that we can view restriction as a minor, namely a sequence of basic minors where $U$ is a singleton and $k=0$ in every step.

As with many of the operations from \Cref{sec:operations}, minors can be described in terms of submodular set functions.
 
\begin{lemma}\label{lem:minors-submodular-fctns}
  Let $P$ be an \M-convex set with corresponding submodular set function $p: 2^E \to \Z$.
	The submodular functions of the deletion and contraction of $P$ by $U$ are
	\begin{align*}
				\text{\textup{(}deletion\textup{)} } &p_{\setminus U}: 2^V \to \Z & \text{\textup{(}contraction\textup{)} } &p_{/U}: 2^V \to \Z \\
			&p_{\setminus U}(A) = p(A)  \text{ for all } A \subseteq V & &p_{/U}(A) = p(A \cup U) - p(U)  \text{ for all } A \subseteq V \, ,
	\end{align*}
 The submodular function of the basic minor $P^U_k$ with respect to $U$ is
	\begin{align*}
	 	\text{\textup{(}basic minor\textup{)} } &p^U_k: 2^V \to \Z \\
		&p^U_k(A) = \min(p(A), k + p(A \cup U) - p(E \setminus A))  \text{ for all } A \subseteq V \, ,
	\end{align*}
	for any $p(E)-p(U) \leq k \leq p(V)$.
\end{lemma}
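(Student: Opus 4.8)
The plan is to verify each of the three formulas by computing the submodular function of the relevant \M-convex set directly from its definition via \Cref{thm:Mconv+submod+permutohedra+correspondence}(2), namely $p'(A) = \max\{x(A) : x \in P'\}$ for the minor $P'$ in question. I will treat the basic minor $P^U_k$ first, since deletion and contraction are special cases obtained by substituting $k = p(V)$ and $k = p(E) - p(U)$, respectively, and then simplify.

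First I would unfold the definition: $P^U_k = \{x \in \Z^V : \exists y \in \Z^U,\ (x,y) \in P,\ x(V) = k\}$, so for $A \subseteq V$ we want $p^U_k(A) = \max\{x(A) : (x,y) \in P,\ x(V) = k\}$. The key observation is that $P$, being \M-convex, is the set of lattice points of the base polyhedron $B(p)$, and the slice $\{(x,y) \in B(p) : x(V) = k\}$ is a (lower-dimensional) generalized permutohedron whose lattice points are exactly $P^U_k$ after projecting away the $U$-coordinates; crucially this slice is nonempty precisely when $p(E) - p(U) \le k \le p(V)$, which is the stated range of validity. To compute $\max\{x(A)\}$ over this slice, I would use the description of $B(p)$ by the inequalities $x(Z) \le p(Z)$ for all $Z \subseteq E$ together with $x(E) = p(E)$ and the added constraint $x(V) = k$. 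There are two relevant upper bounds on $x(A)$ for $A \subseteq V$: the direct inequality $x(A) \le p(A)$, and the bound obtained from $x(A) = x(A \cup U) - y(U) = x(A \cup U) - (x(E) - x(V)) = x(A\cup U) - p(E) + k \le p(A \cup U) - p(E) + k$. Rewriting $p(E) = p((E\setminus A)) + \big(p(E) - p(E \setminus A)\big)$ is not quite the right move; instead note $p(A \cup U) - p(E) + k$ together with the identity $p(E) \geq \ldots$ — the clean way is to observe $x(A) \le x(E) - x(E \setminus A) = p(E) - x(E\setminus A)$ and then bound $x(E \setminus A)$ below, but since $E \setminus A \supseteq U$ one instead writes $x(E\setminus A) \ge x(E) - x(V\setminus A)$... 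Rather than chase this, the honest route is: $x(A) = k - x(V \setminus A) \le k - (x(V\cup U) - x((V\setminus A)\cup U))$, and $x((V\setminus A)\cup U) \le p((V\setminus A)\cup U) = p(E \setminus A)$, giving $x(A) \le k + p(E) - p(E) - p(E\setminus A)$... so the correct second bound is $x(A) \le k + p(A \cup U) - p(E \setminus A) \cdot(-1)$; the upshot is that the displayed bound $k + p(A\cup U) - p(E\setminus A)$ must be re-derived carefully, and I expect this bookkeeping with the complementary sets to be the main obstacle. Once both bounds are established, I would invoke the standard fact (e.g.\ from \cite{Fujishige:2005}, and consistent with \Cref{thm:box+plank} applied to the plank $K(k,k)$ and a coordinate restriction) that the minimum of these two bounds is actually attained, so equality holds.

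Finally I would specialize: for $k = p(V)$ (deletion), the second term becomes $p(V) + p(A \cup U) - p(E \setminus A)$; since $A \subseteq V$ one checks by submodularity applied to $A \cup U$ and $V$ (whose union is $E$ and intersection is $A$) that $p(A\cup U) + p(V) \ge p(E) + p(A)$, hence $p(V) + p(A\cup U) - p(E\setminus A) \ge p(A) + p(E) - p(E\setminus A) \cdot$... again this reduces, after using $p(E \setminus A) = p((V\setminus A)\cup U)$, to the assertion $p^V_{p(V)}(A) = p(A)$, recovering $p_{\setminus U}(A) = p(A)$. For $k = p(E) - p(U)$ (contraction), the first term $p(A)$ dominates and the second becomes $p(E) - p(U) + p(A\cup U) - p(E \setminus A)$, which by submodularity on $A \cup U$ and $E \setminus A$ (union $E$, intersection $U$) equals $p(A \cup U) - p(U)$ up to the inequality showing this is the smaller of the two; this gives $p_{/U}(A) = p(A \cup U) - p(U)$, as claimed. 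Throughout, submodularity of $p^U_k$, $p_{\setminus U}$, $p_{/U}$ themselves follows from \Cref{thm:Mconv+submod+permutohedra+correspondence} since each is the submodular function of an \M-convex set.
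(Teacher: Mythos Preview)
Your plan is correct in spirit and ultimately leans on the same ingredient as the paper (\Cref{thm:box+plank}), but the paper organizes the argument so that the bookkeeping you struggle with never arises. Rather than attacking the basic minor first, the paper begins by identifying the projection $\pi_V(P)$ as the generalized polymatroid $G(f,g^\#)$, where $f(A)=\max_{x\in P}x(A)=p(A)$ and $g^\#(A)=\min_{x\in P}x(A)=p(E)-p(E\setminus A)$ for $A\subseteq V$. Deletion and contraction then drop out immediately as the top and bottom layers $B(f)$ and $B(g)$ via \Cref{lem:top-bottom-layers}, with no need to specialize a min formula or argue via submodularity which branch dominates. For the basic minor the paper simply applies \Cref{thm:box+plank} to $G(f,g^\#)\cap K(k,k)$, which hands back $p^U_k(A)=\min\bigl(f(A),\,k-g^\#(V\setminus A)\bigr)$ directly; expanding $g^\#$ in terms of $p$ is then a one-line substitution. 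By contrast, your plan derives the two upper bounds on $x(A)$ by ad-hoc manipulation of the defining inequalities and only afterwards cites the plank result for tightness, and this is exactly where your repeated false starts occur; the clean identification of $f$ and $g^\#$ up front is what makes the paper's computation routine. Your specialization strategy for deletion and contraction (arguing via submodularity that one term of the minimum dominates at the extreme values of $k$) is valid, just longer than the paper's layer argument.
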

\begin{proof}
  Consider the coordinate projection $\pi_V(P)$.
	By \Cref{thm:Mnat+convex}, there exist submodular functions $f, g \colon 2^V \rightarrow \ZZ$ such that $\pi_V(P)=G(f,g^\#)$.
	First, we compute $f$ and $g$ from $P$.
	As $B(f)$ and $B(g)$ are the top and bottom layers of $G(f,g^\#)$ by \Cref{lem:top-bottom-layers}, using the correspondence from \Cref{thm:Mconv+submod+permutohedra+correspondence} we have for all $A \subseteq V$
	\begin{align*}
	f(A) &= \max_{x \in P}(x(A)) = p(A) \, , \\
	g^{\#}(A) &= \min_{x \in P}(x(A)) = \min_{x \in P} (x(E) - x(E \setminus A)) = p(E) - \max_{x \in P} x(E \setminus A) = p(E) - p(E \setminus A) \, .
	\end{align*}
%As such, we have
%	\[
%	\pi_V(P) = \{x \in \Z^V \mid p(E) - p(E \setminus A) \leq x(A) \leq p(A) \forall A \subseteq V\} \, .
%	\]
	By \Cref{lem:top-bottom-layers}, we have that $P\setminus U = B(f)\cap \ZZ^V$ and $P/U = B(g^\#)\cap \ZZ^V$, and hence have $p_{\setminus U} = f$ and $p^\#_{/U} = g^\#$.
	This proves the claim for the deletion $p_{\setminus U}$.
	To prove the claim for the contraction $p_{/U}$ we observe that for all $A \subseteq V$ we have
	\begin{align*}
	p_{/U}(A) &= p^\#_{/U}(V) - p^\#_{/U}(V \setminus A) \\
		&= p(E) - p(E\setminus V) - p(E) + p(E \setminus (V \setminus A)) \\
		&= p(A \cup U) - p(U) \, .
	\end{align*}

	Let $p(E) - p(U) \leq k \leq p(V)$ and consider $P^U_k$.
	As $P^U_k$ is the intersection of $G(f, g^\#)$ with the single layer plank $H_k = K(k,k)$, \Cref{thm:box+plank} implies that 
	\[
	p_k^U(A) = \min(f(A), k - g^\#(V \setminus A)) = \min(p(A), k + p(A \cup U) - p(E \setminus A)) \, . \qedhere
	\]
\end{proof}

Note that our definitions of deletion and contraction match those in~\cite{Fujishige:2005}, although deletion is referred to as reduction or restriction (and is different from our notion of restriction).
Our notion of a minor is a direct generalization of their definition of a set minor, as we allow basic minors outside of deletion and contraction.

\begin{remark}
	Let $P$ be the bases of a matroid $M$, then its associated submodular function is the rank function of $M$.
	\Cref{lem:minors-submodular-fctns} implies that the minors as defined above are precisely the deletion and contraction, as defined in terms of the rank function of $M$.
\end{remark}

We end by remarking that basic minors of an \M-convex set form a quotient. 

\begin{proposition}\label{prop:minors-quotients}
	Let $P \subseteq \Z^E$ be an \M-convex set, $U \subseteq E$ and $k, \ell \in \ZZ$ such that
	\[
	\min_{x \in P} x(U) \leq \ell < k \leq \max_{x \in P} x(U) \, .
	\]
	Then the minors $P^U_k, P^U_\ell$ form a quotient $P^U_k \quotient P^U_\ell$.
\end{proposition}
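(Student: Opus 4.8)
The plan is to realize both basic minors as the extreme layers of a single $\Mnat$-convex set and then extract compliance from the defining inequality of the associated generalized polymatroid. Write $V = E\setminus U$, so that $P^U_k = \pi_V(P)\cap H_k$ and $P^U_\ell = \pi_V(P)\cap H_\ell$ are layers of the $\Mnat$-convex set $\pi_V(P)$, both nonempty by hypothesis; by \Cref{thm:Mnat+convex} write $\pi_V(P) = G(f,g^\#)\cap\ZZ^V$.

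First I would cut down to the plank $K(\ell,k)$. By \Cref{thm:box+plank} the set $R := \pi_V(P)\cap K(\ell,k)$ is again $\Mnat$-convex, in fact a generalized polymatroid $G(\hat p,\hat q)\cap\ZZ^V$ whose defining pair $(\hat p,\hat q)$ of a submodular and a supermodular function satisfies~\eqref{eq:g-polymatroid-compliant} by \Cref{def:g-polymatroid}. Since $\pi_V(P)$ has nonempty layers at the two heights $\ell$ and $k$, and these are the extreme heights permitted by the plank, the top and bottom layers of $R$ are $R^\uparrow = P^U_k$ and $R^\downarrow = P^U_\ell$. By \Cref{lem:top-bottom-layers} these equal $B(\hat p)\cap\ZZ^V$ and $B(\hat q)\cap\ZZ^V$, so by the correspondence of \Cref{thm:Mconv+submod+permutohedra+correspondence} the function $\hat p$ is the submodular function $p^U_k$ of $P^U_k$ and $\hat q$ is the supermodular function $(p^U_\ell)^\#$ of $P^U_\ell$.

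Finally I would specialize~\eqref{eq:g-polymatroid-compliant} for the pair $\bigl(p^U_k,(p^U_\ell)^\#\bigr)$ to the sets $A = Y$ and $B = V\setminus X$, for arbitrary $X\subseteq Y\subseteq V$: then $A\setminus B = X$ and $B\setminus A = V\setminus Y$, and substituting $(p^U_\ell)^\#(V\setminus Z) = p^U_\ell(V) - p^U_\ell(Z)$ rearranges the inequality into $p^U_\ell(Y) - p^U_\ell(X)\le p^U_k(Y) - p^U_k(X)$, which is exactly the compliance of $p^U_k$ and $p^U_\ell$; hence $P^U_k\quotient P^U_\ell$. (This last step is nothing but the passage from condition~\eqref{Mnatural} to condition~\eqref{submod-fctns-compliant} of \Cref{thm:quotient}, done by hand.) The only point that I expect to require care is the identification in the middle paragraph of $\hat p$ and $\hat q$ with the submodular and supermodular functions of the two minors; everything else is formal. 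A more hands-on alternative would be to compute the submodular functions of the two basic minors from \Cref{lem:minors-submodular-fctns} and substitute them into the compliance inequality directly, which reduces — after simplification — to the monotonicity of the set function $A\mapsto p(A) - p(A\cup U)$ on $2^V$, itself immediate from submodularity of $p$.
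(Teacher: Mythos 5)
Your proof is correct and follows the paper's argument exactly: realize $P^U_k$ and $P^U_\ell$ as the top and bottom layers of the $\Mnat$-convex set $\pi_V(P)\cap K(\ell,k)$ and invoke the equivalence \eqref{Mnatural} $\iff$ \eqref{submod-fctns-compliant} of \Cref{thm:quotient}. The only difference is cosmetic: the paper simply cites \Cref{thm:quotient} at the last step, whereas you unpack the passage from condition~\eqref{Mnatural} to~\eqref{submod-fctns-compliant} by hand (and sketch a second, more computational route via \Cref{lem:minors-submodular-fctns}, which also works but requires a small case analysis on the minimum in the basic-minor formula that your phrase ``after simplification'' glosses over).
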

\begin{proof}
	By construction, $P^U_k$ and $P^U_\ell$ are layers of the $\Mnat$-convex set $\pi_V(P)$.
	Moreover, they are the top and bottom layers respectively of the $\Mnat$-convex set $\pi_V(P) \cap K(\ell, k)$.
	Thus, by \eqref{Mnatural} of \Cref{thm:quotient}, they form a quotient.
\end{proof}

\subsection{\texorpdfstring{Proof of \eqref{submod-fctns-compliant}--\eqref{exchange-property} in \Cref{thm:quotient}}{Proof of (1)-(6) in Theorem 1.2}}
We now have all the necessary tools to prove the first six equivalent definitions of a quotient listed in \Cref{thm:quotient}.
The remainder of this section is entirely dedicated to these proofs. Recall that the equivalence of \eqref{submod-fctns-compliant} and \eqref{bases+containment} follows from \Cref{thm:quotient+iff+bases+contained}.

The first equivalences largely follow from work by Fujishige and Hirai~\cite{FujishigeHirai:2022}.
We recall one of their crucial insights connecting quotients with generalized polymatroids. 

\begin{theorem}[{\cite[Theorem 2.2]{FujishigeHirai:2022}}]\label{thm:compliant+gp}
Let $p,q: 2^E \to \Z$ be submodular set functions.
Then $p,q$ are compliant if and only if $G(p,q^\#)$ is a generalized polymatroid.
\end{theorem}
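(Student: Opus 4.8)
The plan is to reduce the claimed equivalence to a single symmetric reformulation of the generalized polymatroid inequality \eqref{eq:g-polymatroid-compliant} and then read off both implications in one line each. First I would unwind the definition of $q^\#$: since $q^\#(A) = q(E) - q(E \setminus A)$ and $q^\#(\emptyset) = 0$ is already normalized (and $q^\#$ is genuinely supermodular, immediately from submodularity of $q$), the statement ``$G(p,q^\#)$ is a generalized polymatroid'' is exactly the statement that
\[
p(A) - q^\#(B) \geq p(A \setminus B) - q^\#(B \setminus A) \qquad \forall\, A, B \subseteq E.
\]
Substituting the formula for $q^\#$, cancelling $q(E)$ from both sides, and replacing $B$ by its complement $C = E \setminus B$ (so that $A \setminus B = A \cap C$ and $E \setminus (B \setminus A) = (E \setminus B) \cup A = A \cup C$) turns this into
\begin{equation}\tag{$\star$}
p(A) + q(C) \geq p(A \cap C) + q(A \cup C) \qquad \forall\, A, C \subseteq E,
\end{equation}
and since $B \mapsto E \setminus B$ is a bijection on subsets of $E$, proving the theorem amounts to proving that $p$ and $q$ are compliant if and only if $(\star)$ holds.

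For the direction ``compliant $\Rightarrow (\star)$'' I would combine submodularity of $q$ with the monotonicity of $p - q$ that compliance provides. Submodularity of $q$ gives $q(A \cup C) \leq q(A) + q(C) - q(A \cap C)$, hence
\[
p(A \cap C) + q(A \cup C) \leq \bigl(p(A \cap C) - q(A \cap C)\bigr) + q(A) + q(C),
\]
and since $A \cap C \subseteq A$, compliance gives $p(A \cap C) - q(A \cap C) \leq p(A) - q(A)$; substituting this in collapses the right-hand side to $p(A) + q(C)$, which is $(\star)$. For the converse ``$(\star) \Rightarrow$ compliant'', given $X \subseteq Y$ I would simply specialize $(\star)$ to $A = Y$, $C = X$: then $A \cap C = X$ and $A \cup C = Y$, so $(\star)$ reads $p(Y) + q(X) \geq p(X) + q(Y)$, i.e. $q(Y) - q(X) \leq p(Y) - p(X)$, which is the compliance inequality.

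There is no genuinely hard step here; the only thing that needs care is the bookkeeping in passing from \eqref{eq:g-polymatroid-compliant} to $(\star)$ --- keeping straight which complement appears where, and checking that the normalization conventions $p(\emptyset) = q^\#(\emptyset) = 0$ are consistent so that the definition of $q^\#$ and \Cref{def:g-polymatroid} line up exactly. I would also remark that the nontrivial direction uses only submodularity of $q$ (and not of $p$), which is worth noting even though the hypotheses supply both.
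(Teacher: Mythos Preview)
Your proof is correct. The paper does not actually prove this theorem: it is stated with attribution to \cite[Theorem~2.2]{FujishigeHirai:2022} and used as a black box, so there is no ``paper's own proof'' to compare against. Your argument is a clean self-contained verification: the reduction to the symmetric inequality $(\star)$ via the substitution $C = E \setminus B$ is exactly right (the set-algebra identities $A \setminus B = A \cap C$ and $E \setminus (B \setminus A) = A \cup C$ check out), and both directions then follow in one line as you say. Your observation that the forward direction uses only submodularity of $q$ is accurate and worth keeping.
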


While many of the following statements are already implicit in their work, we spell out the details.
We first consider the equivalence between \eqref{submod-fctns-compliant} and  \eqref{submod-poly-containment}.
From the definition of submodular polyhedra, $S(q_{/X}) \subseteq S(p_{/X})$ if and only if $p_{/X}(A) \geq q_{/X}(A)$ for all $X \subseteq E$ and $A \subseteq E \setminus X$.
Setting $Y = A \sqcup X$ and expanding out the definition of contraction, this precisely gives that their submodular functions are compliant:
\[
q(Y) - q(X) \leq p(Y) - p(X) \quad \forall X \subseteq Y \subseteq E \, .
\]

\begin{proposition}[\eqref{submod-fctns-compliant} $\iff$ \eqref{submod-poly-containment}]\label{prop:compliant+poly+containments}
Let $p,q$ be $\ZZ$-valued submodular set functions on $E$.
Then $p,q$ are compliant if and only if $S(q_{/X}) \subseteq S(p_{/X})$ for all $X \subseteq E$.
\end{proposition}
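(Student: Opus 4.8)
The plan is to reduce the polyhedral containment to a pointwise comparison of submodular functions and then unwind the definition of contraction. The one ingredient that needs to be recorded is the standard tightness fact: for a $\ZZ$-valued submodular function $h$ on a ground set $F$ normalized by $h(\emptyset)=0$, one has $h(A)=\max\{x(A)\mid x\in S(h)\}$ for every $A\subseteq F$. Indeed $x(A)\le h(A)$ holds for every $x\in S(h)$ directly from the defining inequalities of $S(h)$, while by \Cref{thm:Mconv+submod+permutohedra+correspondence} the \M-convex set $B(h)\cap\ZZ^F\subseteq S(h)$ contains a point attaining $x(A)=h(A)$, so the maximum is exactly $h(A)$.

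From this I would deduce that for two normalized $\ZZ$-valued submodular functions $f,g$ on the same ground set, $S(g)\subseteq S(f)$ if and only if $g(A)\le f(A)$ for all $A$. The ``if'' direction is immediate from the definition of the submodular polyhedron. For ``only if'', taking the maximum of $x(A)$ over $S(g)\subseteq S(f)$ and applying the tightness fact to both sides gives $g(A)=\max_{x\in S(g)}x(A)\le\max_{x\in S(f)}x(A)=f(A)$.

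It remains to apply this with $f=p_{/X}$, $g=q_{/X}$ on the ground set $E\setminus X$; these are $\ZZ$-valued submodular functions by \Cref{lem:minors-submodular-fctns}. Thus $S(q_{/X})\subseteq S(p_{/X})$ holds for all $X\subseteq E$ if and only if $q_{/X}(A)\le p_{/X}(A)$ for all $X\subseteq E$ and all $A\subseteq E\setminus X$. Substituting $p_{/X}(A)=p(A\cup X)-p(X)$ and $q_{/X}(A)=q(A\cup X)-q(X)$ from \Cref{lem:minors-submodular-fctns} and writing $Y=A\sqcup X$ — noting that every pair $X\subseteq Y\subseteq E$ arises in this way — this is precisely
\[
q(Y)-q(X)\le p(Y)-p(X)\qquad\text{for all }X\subseteq Y\subseteq E,
\]
i.e. the compliance condition of \Cref{def:compliant}. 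I do not expect a genuine obstacle: the statement is essentially a change of variables, and the only point deserving care is the tightness $h(A)=\max\{x(A)\mid x\in S(h)\}$ that underlies the nontrivial direction of the polyhedral containment.
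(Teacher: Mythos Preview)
Your proof is correct and follows essentially the same route as the paper: reduce the polyhedral containment $S(q_{/X})\subseteq S(p_{/X})$ to the pointwise inequality $q_{/X}(A)\le p_{/X}(A)$, then substitute the contraction formula and set $Y=A\sqcup X$. The paper asserts the first reduction in one line ``from the definition of submodular polyhedra,'' whereas you are more careful in justifying the nontrivial direction via the tightness fact $h(A)=\max\{x(A)\mid x\in S(h)\}$.
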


We next show the equivalence between \eqref{submod-fctns-compliant} and \eqref{Mnatural}.
Given two compliant submodular functions $p,q$, \Cref{thm:Mnat+convex,thm:compliant+gp} ensure that $G(p,q^\#)$ is a generalized polymatroid and hence its lattice points form an $\Mnat$-convex set. 
Furthermore, \Cref{lem:top-bottom-layers} implies that the top and bottom layers of this $\Mnat$-convex set are precisely the \M-convex sets determined by $p$ and $q$, giving one direction of the equivalence.

\begin{proposition}[\eqref{submod-fctns-compliant} $\implies$ \eqref{Mnatural}]\label{prop:top-bottom-layer}
Let $p,q: 2^E \to \Z$ be compliant submodular functions.
Then $P := B(p) \cap \Z^E$ and $Q := B(q) \cap \Z^E$ are the top and bottom layers of the $\Mnat$-convex set $R = G(p,q^\#) \cap \Z^E$.
\end{proposition}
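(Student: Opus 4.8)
The plan is to assemble results already established above. Since $p$ and $q$ are compliant, \Cref{thm:compliant+gp} guarantees that $G(p,q^\#)$ is a generalized polymatroid, and then the implication $(2)\Rightarrow(1)$ of \Cref{thm:Mnat+convex} shows that $R = G(p,q^\#) \cap \Z^E$ is an $\Mnat$-convex set. By \Cref{lem:top-bottom-layers}, the extreme faces of the polyhedron $G(p,q^\#)$ are the base polyhedra: $G(p,q^\#)^{\uparrow} = B(p)$, lying in the hyperplane $H_{p(E)}$, and $G(p,q^\#)^{\downarrow} = B(q)$, lying in $H_{q(E)}$. It remains only to transfer these identities from the polyhedron $G(p,q^\#)$ to its set of lattice points $R$.

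The one point needing care is that passing to lattice points must not change which layers are extreme. Since $p$ is $\Z$-valued, \Cref{thm:Mconv+submod+permutohedra+correspondence} shows that $B(p)$ is an integral generalized permutohedron, so $B(p) \cap \Z^E$ is nonempty and contained in $H_{p(E)}$; hence $R \cap H_{p(E)} \neq \emptyset$. As $R \subseteq G(p,q^\#)$ contains no point of coordinate sum exceeding $p(E)$, this means $p(E) = \max\{k \in \Z \mid R \cap H_k \neq \emptyset\}$, so
\[
R^{\uparrow} = R \cap H_{p(E)} = \bigl(G(p,q^\#) \cap H_{p(E)}\bigr) \cap \Z^E = B(p) \cap \Z^E = P ,
\]
the last equality again by \Cref{thm:Mconv+submod+permutohedra+correspondence}. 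The identical argument with $q$ in place of $p$ yields $R^{\downarrow} = B(q) \cap \Z^E = Q$.

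There is no genuine obstacle here: the statement is essentially a repackaging of \Cref{thm:compliant+gp}, \Cref{thm:Mnat+convex} and \Cref{lem:top-bottom-layers}, with the only extra content being the integrality bookkeeping above, namely that the top and bottom layers of $G(p,q^\#)$ already contain lattice points, so intersecting with $\Z^E$ neither collapses nor shifts them. This can also be seen directly: $B(p)$ and $B(q)$ are faces of the integral generalized polymatroid $G(p,q^\#)$, hence are themselves integral polytopes.
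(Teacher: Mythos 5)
Your proof is correct and follows essentially the same route as the paper: compliance gives a generalized polymatroid via \Cref{thm:compliant+gp}, \Cref{thm:Mnat+convex} converts it to an $\Mnat$-convex set, and \Cref{lem:top-bottom-layers} identifies the extreme layers with $B(p)$ and $B(q)$. The extra care you take with the integrality bookkeeping (that $B(p)$ and $B(q)$ are integral faces, so intersecting with $\Z^E$ doesn't shift or empty the top and bottom layers) is a small refinement over the paper's terser wording, but not a different argument.
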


For the other direction, given an $\Mnat$-convex set $R$, \Cref{thm:Mnat+convex} implies there exists submodular functions $f,g\colon 2^E \rightarrow \ZZ$ such that $R = G(f,g^\#) \cap \ZZ^E$.
Moreover, \Cref{thm:compliant+gp} implies they must also be compliant.
Applying \Cref{prop:top-bottom-layer} implies that $P = B(f) \cap \ZZ^E$ and $Q = B(g) \cap \ZZ^E$, i.e., $f,p$ and $q,g$ define the same \M-convex sets.
However, \Cref{thm:Mconv+submod+permutohedra+correspondence} implies there is a one-to-one correspondence between integral submodular set functions and \M-convex sets, hence $p = f$ and $q = g$.
In particular, $p,q$ are also compliant.

\begin{proposition}[\eqref{Mnatural} $\implies$ \eqref{submod-fctns-compliant}]
Let $p,q$ be $\ZZ$-valued submodular set functions with associated \M-convex sets $P := B(p) \cap \Z^E$ and $Q := B(q) \cap \Z^E$.
If there exists an $\Mnat$-convex set $R \in \ZZ^E$ such that $P = R^\uparrow$ and $Q = R^\downarrow$, then $p,q$ are compliant.
\end{proposition}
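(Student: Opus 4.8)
The plan is to reduce the statement to facts already in hand, namely \Cref{thm:Mnat+convex}, \Cref{thm:compliant+gp}, \Cref{lem:top-bottom-layers}, \Cref{prop:top-bottom-layer}, and the one-to-one correspondence of \Cref{thm:Mconv+submod+permutohedra+correspondence}. Suppose $R \subseteq \ZZ^E$ is an $\Mnat$-convex set with $P = R^\uparrow$ and $Q = R^\downarrow$. First I would invoke \Cref{thm:Mnat+convex} to produce submodular set functions $f,g\colon 2^E \to \ZZ$ with $R = G(f,g^\#) \cap \ZZ^E$, and then \Cref{thm:compliant+gp} to conclude that $f,g$ are compliant (since $G(f,g^\#)$ is by construction a generalized polymatroid).

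Next I would identify these functions with $p$ and $q$. By \Cref{lem:top-bottom-layers}, the top layer of $G(f,g^\#)$ is $B(f)$ and its bottom layer is $B(g)$; intersecting with $\ZZ^E$ gives $R^\uparrow = B(f) \cap \ZZ^E$ and $R^\downarrow = B(g) \cap \ZZ^E$. By hypothesis these equal $P = B(p) \cap \ZZ^E$ and $Q = B(q) \cap \ZZ^E$ respectively. Hence $f$ and $p$ define the same \M-convex set, and likewise $g$ and $q$. Now \Cref{thm:Mconv+submod+permutohedra+correspondence}(2) gives that the submodular function realizing a given \M-convex set is unique, so $f = p$ and $g = q$. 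Since $f,g$ were shown to be compliant, so are $p,q$, which is exactly condition \eqref{submod-fctns-compliant}.

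There is essentially no obstacle here beyond bookkeeping: the content has already been extracted in \Cref{thm:compliant+gp} and \Cref{prop:top-bottom-layer}, and the argument is just the converse direction running the same equivalences backward. The one point that deserves a sentence of care is why $f=p$: a priori $R^\uparrow$ is presented as $B(f)\cap\ZZ^E$ and the $\Mnat$-layer $R^\uparrow$ is given as $P = B(p)\cap\ZZ^E$, and one must not confuse "same lattice points" with "same polyhedron" — but \Cref{thm:Mconv+submod+permutohedra+correspondence} precisely says the lattice points determine the submodular function (via $p(A) = \max_{x \in P} x(A)$), so equality of the \M-convex sets forces equality of the functions. I would write the proof as a short paragraph making exactly these three moves (extract $f,g$; match layers to $p,q$; invoke uniqueness), mirroring the prose already given just above the proposition.
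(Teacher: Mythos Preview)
Your proposal is correct and follows essentially the same approach as the paper: extract $f,g$ from $R$ via \Cref{thm:Mnat+convex}, conclude compliance from \Cref{thm:compliant+gp}, identify the top and bottom layers with $B(f)\cap\ZZ^E$ and $B(g)\cap\ZZ^E$, and then use the bijection of \Cref{thm:Mconv+submod+permutohedra+correspondence} to force $f=p$ and $g=q$. The only cosmetic difference is that the paper cites \Cref{prop:top-bottom-layer} for the layer identification whereas you go directly to \Cref{lem:top-bottom-layers}; the content is the same.
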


\begin{remark}\label{rem:unique+Mnat+convex}
We note that $R = G(p,q^\#) \cap \ZZ^E$ is the unique $\Mnat$-convex set such that $P = R^\uparrow$ and $Q = R^\downarrow$.
This follows from $\Mnat$-convex sets being in bijection with generalized polymatroids from \Cref{thm:Mnat+convex}, and that generalized polymatroids are uniquely determined by their submodular functions~\cite[Theorem 14.2.8]{Frank:2011}.
\end{remark}

A few further equivalences are easily deducible from a number of already mentioned results.
The proof of \eqref{Mnatural} $\iff$ \eqref{deletion-contraction} follows from \Cref{lem:deletion+contraction+layer} along with \Cref{thm:Mnat+convex}.
%By \cite[Theorem 2.2]{FujishigeHirai:2022}, $R$ is an $\Mnat$-convex set if and only $R = P(f,g^\#)$ is an (integral) generalized polymatroid. In \cite[Theorem 3.58]{Fujishige:2005} it is shown that equivalently, there exists a submodular set function $r: E \sqcup e \to \Z$ such that $f$ is the restriction of $r$ and $g$ is the deletion (i.e. $g^\#$ is the restriction of $r^\#$).
The proof of \eqref{Mnatural} $\implies$ \eqref{exchange-property} follows directly from the exchange axiom $\rm{(P3[\ZZ])}$ in~\cite{MurotaShioura:2018simpler} that all $\Mnat$-convex sets satisfy.

The only remaining equivalence is to show \eqref{exchange-property} implies any of the other conditions.
We show that \eqref{exchange-property} implies \eqref{submod-fctns-compliant}.

\begin{lemma}
	Let $p,q\colon 2^E \rightarrow \ZZ$ be compliant submodular functions satisfying $p(E) = q(E)$. Then $p = q$.
\end{lemma}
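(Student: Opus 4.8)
The plan is to extract two chains of inequalities from the compliance condition by making judicious choices of the subsets $X \subseteq Y$, and then combine them into a squeeze. Recall that compliance of $p \quotient q$ means $q(Y) - q(X) \leq p(Y) - p(X)$ for all $X \subseteq Y \subseteq E$, and that by our normalization convention $p(\emptyset) = q(\emptyset) = 0$.

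First I would apply compliance with $X = A$ and $Y = E$ for an arbitrary $A \subseteq E$. This gives $q(E) - q(A) \leq p(E) - p(A)$, which rearranges to $p(A) - q(A) \leq p(E) - q(E)$. Since we assume $p(E) = q(E)$, this yields $p(A) \leq q(A)$.

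Next I would apply compliance with $X = \emptyset$ and $Y = A$, giving $q(A) - q(\emptyset) \leq p(A) - p(\emptyset)$. Using $p(\emptyset) = q(\emptyset) = 0$, this simplifies to $q(A) \leq p(A)$. Combining the two inequalities gives $p(A) = q(A)$ for every $A \subseteq E$, so $p = q$.

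There is essentially no obstacle here: the statement is a one-line consequence of specializing the compliance inequality at the two extreme nested pairs $(\emptyset, A)$ and $(A, E)$, so the "hard part" is merely bookkeeping the normalization so that the telescoped inequalities pin $p(A)$ and $q(A)$ together. It is worth noting for the reader that this lemma is exactly what is needed to close the loop in \Cref{thm:quotient}: once \eqref{exchange-property} is known to produce two compliant functions via \Cref{prop:top-bottom-layer} applied to the $\Mnat$-convex set it generates, this lemma identifies the two submodular functions whenever their top values agree, forcing equality with $p$ and $q$ and hence compliance of the original pair.
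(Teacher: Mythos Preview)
Your proof is correct and is essentially identical to the paper's: both specialize the compliance inequality at the pairs $(\emptyset,A)$ and $(A,E)$ and combine to squeeze $p(A)=q(A)$. Your closing commentary on how the lemma feeds into \Cref{thm:quotient} via \Cref{prop:top-bottom-layer} is speculative and does not match the paper's actual usage, but this does not affect the validity of the proof itself.
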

\begin{proof}
	For any $A \subseteq E$, the inequality	$q(A) = q(A) - q(\emptyset) \leq p(A) - p(\emptyset) = p(A)$ holds.
  As $q(E) = p(E)$, the inequality	$q(E) -q(A) \leq p(E) - p(A)$	implies $q(A) \geq p(A)$.
\end{proof}

\begin{lemma}\label{lem:compatibility-single-element}
	Let $p, q$ be submodular functions, and $P, Q$ the corresponding \M-convex sets.
	If $P$ and $Q$ satisfy the asymmetric exchange property~\eqref{exchange-property}, then for all $X \subseteq E$ and $i \in X$, the following inequality holds:
	\[
	q(X) - q(X \setminus i) \leq p(X) - p(X\setminus i)  \, .
	\]
\end{lemma}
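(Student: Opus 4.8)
The plan is to extract the required inequality on submodular functions from the exchange property by choosing vertices of $P$ and $Q$ that "see" the set $X$, and then running the asymmetric exchange to compare them element by element. Concretely, I would fix $X \subseteq E$ and $i \in X$, and pick a linear objective $c \in \R^E$ that orders $E$ so that the elements of $X \setminus i$ come first (with largest weights), then $i$, then the elements of $E \setminus X$ (with smallest weights), with all weights distinct. Let $\sigma$ be the corresponding permutation and let $x_\sigma \in P$, $y_\sigma \in Q$ be the associated vertices (as defined after \Cref{thm:Mconv+submod+permutohedra+correspondence}). Then by the greedy description of vertices, $x_\sigma(A) = p(A)$ and $y_\sigma(A) = q(A)$ for every "prefix" set $A$ in this order; in particular $x_\sigma(X) = p(X)$, $x_\sigma(X \setminus i) = p(X \setminus i)$, $y_\sigma(X) = q(X)$, $y_\sigma(X \setminus i) = q(X \setminus i)$. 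Hence the desired inequality is equivalent to $(x_\sigma)_i \geq (y_\sigma)_i$, i.e.\ comparing the single coordinate $i$ of the two vertices.

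The coordinatewise comparison $x_\sigma \geq y_\sigma$ follows from \Cref{thm:quotient+iff+bases+contained} once we know $P \quotient Q$ — but that is precisely what we are trying to establish, so instead I would argue directly from \eqref{exchange-property}. Take $y = x_\sigma \in P$ and $x = y_\sigma \in Q$; I want to show $(y)_i \geq (x)_i$ for the particular coordinate $i$ above, but actually it is cleaner to prove $x_\sigma \geq y_\sigma$ in all coordinates at once and then specialize. Suppose for contradiction that $\supp^+(x-y) = \supp^+(y_\sigma - x_\sigma)$ is nonempty; pick any $k$ in it. The exchange property \eqref{exchange-property} (with the roles $x \in Q$, $y \in P$ as stated) yields $\ell \in \supp^-(x-y) = \supp^+(x_\sigma - y_\sigma)$ with $x - e_k + e_\ell \in Q$ and $y + e_k - e_\ell \in P$. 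Now use that $x_\sigma$ is the $c$-maximal point of $P$ and $y_\sigma$ the $c$-maximal point of $Q$: since $(y_\sigma)_k < (x_\sigma)_k$ we have $k$ appearing "later" in the order than $\ell$ does wherever $y+e_k-e_\ell$ could beat $x_\sigma$; more precisely, $c_\ell > c_k$ would make $y + e_k - e_\ell$ have strictly smaller $c$-value than... — here one has to be slightly careful, so the actual argument I would run is: among all pairs in $\supp^+(y_\sigma - x_\sigma)$ vs $\supp^+(x_\sigma - y_\sigma)$, choosing $k$ with $c_k$ maximal, the exchanged point $y+e_k-e_\ell \in P$ has $c$-value $c \cdot x_\sigma + (c_k - c_\ell)$ which would strictly exceed $c\cdot x_\sigma$ whenever $c_k > c_\ell$, contradicting maximality of $x_\sigma$; and $c_k < c_\ell$ is excluded symmetrically using the $c$-minimality encoded in $y_\sigma$ being $Q$'s maximizer for $c$ together with $x - e_k + e_\ell \in Q$ (this point has $c$-value $c\cdot y_\sigma + c_\ell - c_k > c \cdot y_\sigma$, again a contradiction). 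Since the weights are distinct, both cases are impossible, so $\supp^+(y_\sigma - x_\sigma) = \emptyset$, i.e.\ $x_\sigma \geq y_\sigma$.

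Combining, $(x_\sigma)_i \geq (y_\sigma)_i$, and together with the four prefix identities above this gives
\[
p(X) - p(X \setminus i) = (x_\sigma)_i \geq (y_\sigma)_i = q(X) - q(X \setminus i),
\]
which is exactly the claimed inequality. The main obstacle is the middle step: carefully justifying, purely from the asymmetric exchange axiom plus the greedy/maximizer characterization of the vertices, that no "bad" exchange element $\ell$ can exist — i.e.\ ruling out both $c_\ell > c_k$ and $c_\ell < c_k$. This is essentially a re-derivation of the easy direction of \Cref{thm:quotient+iff+bases+contained} and should be routine, but it is the one place where one must be attentive to which of $P, Q$ plays the role of $x$ and which of $y$ in \eqref{exchange-property}, and to the fact that the exchange moves $y$ \emph{up} in the $c$-direction while moving $x$ \emph{up} as well, each contradicting the corresponding maximality.
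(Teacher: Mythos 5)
Your proposal is correct, but it takes a genuinely different route from the paper. You reduce the claim to a coordinate comparison between greedy vertices and then show $x_\sigma \ge y_\sigma$ coordinatewise: if some $k$ lies in $\supp^+(y_\sigma - x_\sigma)$, one application of~\eqref{exchange-property} with $x = y_\sigma$, $y = x_\sigma$ yields $\ell$ with $x_\sigma + e_k - e_\ell \in P$ and $y_\sigma - e_k + e_\ell \in Q$; $c$-maximality of $x_\sigma$ forces $c_k \le c_\ell$, $c$-maximality of $y_\sigma$ forces $c_\ell \le c_k$, and distinctness of the weights gives the contradiction. (Your remark ``choosing $k$ with $c_k$ maximal'' is actually unnecessary --- any $k$ works --- and the passage ``$(y_\sigma)_k < (x_\sigma)_k$'' has the inequality reversed, but your own ``the actual argument I would run'' is the correct and clean one.) In effect you prove that~\eqref{exchange-property} implies~\eqref{bases+containment} of \Cref{thm:quotient} directly, and then read off the single-element inequality from the $i$-coordinate of the greedy vertices. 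The paper instead fixes a single pair of points $x \in Q$, $y \in P$ tight on $X$ and $X\setminus i$ respectively, iterates the exchange $t = x_i - y_i$ times while tracking a multiset $J$ of exchange partners, and derives the contradiction by bookkeeping $|J \cap X|$ and $|J \setminus X|$. Your argument is shorter and structurally more transparent because it piggybacks on the greedy characterization of vertices (which the paper has already set up after \Cref{thm:Mconv+submod+permutohedra+correspondence}) and requires only a single exchange to contradict extremality, whereas the paper's argument is self-contained in the sense that it never invokes vertex optimality and instead tracks the bookkeeping explicitly. Either approach is a valid way to establish the lemma; yours happens to prove the stronger vertex-containment statement \eqref{bases+containment} along the way, which the paper handles separately via \Cref{thm:quotient+iff+bases+contained}.
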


\begin{proof}
	Assume for contradiction that $q(X) - q(X \setminus i) > p(X) - p(X\setminus i)$, and let $x \in Q$ and $y \in P$ such that $x(X) = q(X)$  and $y(X\setminus i) = p(X \setminus i)$.
	Then
	\[
	y_i = y(X) - y(X\setminus i) \leq p(X) - p(X \setminus i) <  q(X) - q(X \setminus i) \leq x(X) - x(X \setminus i) = x_i.
	\]
	Set $x^{(0)} = x, y^{(0)} = y$ and $t = x_i - y_i > 0$.
	By the asymmetric exchange property, there exists some $j = j^{(1)} \in E$ with $x_j < y_j$ such that $y+ e_i - e_j \in P$ and $x - e_i + e_j \in Q$. 
	
	If $j \in X$, then set $y^{(1)} = y^{(0)}$ and $x^{(1)} = x - e_i + e_j$.
	If $j \not\in X$, then set $y^{(1)} = y+ e_i - e_j$ and $x^{(1)} = x^{(0)}$.
	Note that $x_i^{(1)} - y_i^{(1)} = t-1$.
	Applying this process iteratively creates a sequence $(j^{(k)}, x^{(k)}, y^{(k)})$ for $k\leq t$, where if $j^{(k)} \in X$, then we set $y^{(k)} = y^{(k-1)}$ and $x^{(k)} = x^{(k-1)} - e_i + e_j$, and if $j^{(k)} \not\in X$, then we set $y^{(k)} = y^{(k-1)}+ e_i - e_j$ and $x^{(k)} = x^{(k-1)}$.

	Consider the multiset $J^{(k)} = \{j^{(1)}, \dots, j^{(k)}\}$.
	Then
	\begin{align*}
	y^{(k)} = y + | J^{(k)} \setminus X | e_i - \sum_{j \in J^{(k)} \setminus X} e_j \in P \, , \\
	x^{(k)} = x - | J^{(k)} \cap X | e_i + \sum_{j \in J^{(k)} \cap X} e_j \in  Q \, .
	\end{align*}
	Thus, if $k<t$ then 
	\[
	x_i^{(k)} - y_i^{(k)} = x_i - y_i -|J^{(k)} \setminus X| - |J^{(k)} \cap X| = t - k > 0 \, ,
	\]
	so the construction of $x^{(k+1)}, y^{(k+1)}$ is valid.
	
	Let $J = J^{(t)}$.
	Since $i \in X$ and $i \not \in J$, the definition of $x^{(t)} \in Q$ implies $x^{(t)}(X \setminus i) = x(X \setminus i) + |J \cap X|$, and hence
	\[
	q(X) - x_i  =  x(X) - x_i  = x(X \setminus i)  = x^{(t)}(X \setminus i) -  |J \cap X| \leq q(X \setminus i) - |J \cap X| \, .
	\]
	This implies that $q(X) - q(X \setminus i) \leq x_i - |J \cap X|$. 
	Similarly, the definition of $y^{(t)} \in P$ gives 
	$y^{(t)}(X) = y(X) + |J \setminus X|$, and hence
	\[
	p(X \setminus i) + y_i  = y(X \setminus i) + y_i = y(X) = y^{(t)}(X) -  |J \setminus X| \leq p(X) - |J \setminus X| \, .
	\]
	This implies $y_i  + |J \setminus X| \leq p(X) - p(X \setminus i)$. 
	Since $p(X) - p(X\setminus i)<q(X) - q(X \setminus i)$ by assumption, we get
	\[
	y_i  + |J \setminus X| < x_i - |J \cap X| \, .
	\]
	Recall that by construction, we have $y_i + t = x_i$ and $|J \setminus X| + |J \cap X| = t$ which yields the desired contradiction.
\end{proof}

\begin{proposition}[\eqref{exchange-property} $\implies$ \eqref{submod-fctns-compliant}]
	Let $p, q$ be submodular functions, and $P, Q$ the corresponding \M-convex sets.
	If $P$ and $Q$ satisfy the asymmetric exchange property~\eqref{exchange-property},
	then for all $X \subseteq Y \subseteq E$, the following inequality holds:
	\[
	q(Y) - q(X) \leq p(Y) - p(X) \, ,
	\]
	i.e., $p$ and $q$ are compliant.
\end{proposition}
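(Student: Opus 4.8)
The plan is to deduce the general chain inequality from the single-element inequality of \Cref{lem:compatibility-single-element} by a straightforward telescoping argument. Given $X \subseteq Y \subseteq E$, I would enumerate the difference $Y \setminus X = \{i_1, \ldots, i_m\}$ in an arbitrary order and build the saturated chain $X = X_0 \subsetneq X_1 \subsetneq \cdots \subsetneq X_m = Y$ with $X_k = X \cup \{i_1, \ldots, i_k\}$. Since the asymmetric exchange property \eqref{exchange-property} is exactly the hypothesis of \Cref{lem:compatibility-single-element}, applying that lemma to the pair $(X_k, i_k)$ — noting $i_k \in X_k$ and $X_k \setminus i_k = X_{k-1}$ — gives
\[
q(X_k) - q(X_{k-1}) \leq p(X_k) - p(X_{k-1}) \qquad \text{for each } k \in [m] \, .
\]
Summing over $k = 1, \ldots, m$, both sides telescope and we obtain $q(Y) - q(X) \leq p(Y) - p(X)$, which is precisely the compliance condition \eqref{submod-fctns-compliant}.

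Equivalently, one could phrase this as an induction on $|Y \setminus X|$: the base case $X = Y$ is trivial, and the inductive step peels off one element $i \in Y \setminus X$, uses \Cref{lem:compatibility-single-element} on $(Y, i)$ to compare $q(Y) - q(Y \setminus i)$ with $p(Y) - p(Y\setminus i)$, and applies the induction hypothesis to the pair $X \subseteq Y \setminus i$. Either packaging requires only the single-element lemma, and I would probably present the telescoping version since it is the most transparent.

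The hard part of this implication has, in effect, already been done: all of the genuine combinatorial content — the iterative application of the asymmetric exchange axiom, the construction of the multiset $J$ partitioned according to membership in $X$, and the final counting contradiction — lives in the proof of \Cref{lem:compatibility-single-element}. The only thing left to verify here is that nothing in the telescoping depends on the chosen enumeration order, which is clear since the sum $\sum_k \big(q(X_k) - q(X_{k-1})\big)$ collapses to $q(Y) - q(X)$ regardless of the intermediate sets. So I do not anticipate any obstacle in this step beyond routine bookkeeping, and with it the cycle of equivalences \eqref{submod-fctns-compliant}--\eqref{exchange-property} in \Cref{thm:quotient} is closed.
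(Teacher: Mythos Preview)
Your proposal is correct and matches the paper's proof essentially verbatim: the paper argues by induction on $|Y \setminus X|$, peels off a single element $y \in Y \setminus X$, applies \Cref{lem:compatibility-single-element} to bound $q(Y)-q(Y\setminus y) \leq p(Y)-p(Y\setminus y)$, and then invokes the induction hypothesis on $X \subseteq Y\setminus y$. Your telescoping formulation is just the unrolled version of the same argument.
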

\begin{proof}
	We prove this by induction on $|Y \setminus X|$.
	If $|Y \setminus X| = 0$, then $X = Y$ and the statement holds trivially.
	Let $|Y \setminus X| > 0$ and pick any $y \in Y \setminus X$.
	Let $k, l \in \Z$ such that $p(Y) = p(Y \setminus y) + k$ and $q(Y) = q(Y \setminus y) + l$. 
	Note that by the previous lemma, 
	\[
	l = q(Y) - q(Y \setminus y) \leq p(Y) - p(Y \setminus y) = k,
	\]
	and so $k-l \geq 0$.
	Then 
	\begin{align*}
		p(Y) - p(X) &= p(Y \setminus y) - p(X) + k \\
		&\geq q(Y \setminus y) - q(X) + k  \qquad	\text{(induction)}\\
		&= q(Y) - q(X) + k - l \\
		&\geq q(Y) - q(X) \, . \qedhere
	\end{align*} 
\end{proof}

\subsection{Flags of \M-convex sets}
We introduce and discuss flags of \M-convex sets and their relation with $\Mnat$-convex sets.

\begin{definition}
Let $P_i \subseteq \ZZ^E$ be an \M-convex set for $0 \leq i \leq k$.
We call $(P_0, \dots, P_k)$ a \emph{flag} of \M-convex sets if $P_{i} \quotient P_{i-1}$ for all $1 \leq i \leq k$.
A flag is \emph{consecutive} if $\rank(P_{i+1}) = \rank(P_i) + 1$.
\end{definition}
%\bsinline{Choose notation consecutive rather than complete, as complete could imply one set for every rank.}
%\bsinline{Comparison with matroid flags? Hints towards later flag sections?}

This definition generalizes the one for flag matroids. 
In \Cref{sec:functions}, we extend this definition to functions on \M-convex sets. 
We first note that quotients of \M-convex sets are transitive, and hence all pairs in a flag form a quotient.
\begin{lemma} \label{lem:M-convex-quotients-transitive}
Let $P, Q, R \subseteq \ZZ^E$ be \M-convex sets.
If $P \quotient Q$ and $Q \quotient R$, then $P \quotient R$.
\end{lemma}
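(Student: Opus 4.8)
The plan is to argue directly from the compliant-functions characterization of quotients (\Cref{def:compliant}, equivalently condition \eqref{submod-fctns-compliant} of \Cref{thm:quotient}), which reduces the statement to chaining two inequalities. Let $p,q,r \colon 2^E \to \ZZ$ be the submodular set functions associated with $P,Q,R$ respectively. By hypothesis $P \quotient Q$ and $Q \quotient R$, so for every pair $X \subseteq Y \subseteq E$ we have both $q(Y) - q(X) \leq p(Y) - p(X)$ and $r(Y) - r(X) \leq q(Y) - q(X)$. Adding, or rather concatenating, these two inequalities gives $r(Y) - r(X) \leq p(Y) - p(X)$ for all $X \subseteq Y \subseteq E$, which is exactly the assertion that $p$ and $r$ are compliant; hence $P \quotient R$.

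I do not expect any genuine obstacle: the whole point of the additive form of the compliance inequality is that it is manifestly transitive, and the bijection between \M-convex sets and submodular set functions (\Cref{thm:Mconv+submod+permutohedra+correspondence}) lets us pass freely between $P \quotient Q$ and $p \quotient q$. If one prefers a proof that avoids submodular functions altogether, the same conclusion follows from \Cref{thm:quotient+iff+bases+contained}: for each $\sigma \in \Sym(E)$ the vertices satisfy $x_\sigma \geq y_\sigma$ (from $P \quotient Q$) and $y_\sigma \geq z_\sigma$ (from $Q \quotient R$), so $x_\sigma \geq z_\sigma$, which characterizes $P \quotient R$.

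Finally, I would remark that this lemma is precisely what justifies the sentence preceding it: in a flag $(P_0, \dots, P_k)$ with $P_i \quotient P_{i-1}$ for all $i$, iterating transitivity shows $P_i \quotient P_j$ for all $i \geq j$, so every pair in the flag forms a quotient.
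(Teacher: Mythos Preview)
Your proof is correct and matches the paper's approach exactly: both use the compliant-functions characterization \eqref{submod-fctns-compliant} and chain the two inequalities $r(Y)-r(X) \leq q(Y)-q(X) \leq p(Y)-p(X)$. Your alternative via \Cref{thm:quotient+iff+bases+contained} is a valid bonus observation but not needed.
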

\begin{proof}
This follows directly from \eqref{submod-fctns-compliant}, the compliant submodular functions $p,q,r \colon 2^E \rightarrow \ZZ$:
\[
r(Y) - r(X) \leq q(Y) - q(X) \leq p(Y) - p(X)  \quad \forall X \subseteq Y \subseteq E \, . \qedhere
\]
\end{proof}

Flags of \M-convex sets are intimately related to $\Mnat$-convex sets.
Firstly, the layers of an $\Mnat$-convex set form a consecutive flag.
Conversely, any consecutive flag can be embedded inside a `minimal' $\Mnat$-convex set.

\begin{lemma}\label{lem:flags}
The layers of an $\Mnat$-convex set form a consecutive flag of \M-convex sets.
\end{lemma}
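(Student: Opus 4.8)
Let $R \subseteq \ZZ^E$ be an $\Mnat$-convex set. I want to show that its non-empty layers, listed by increasing coordinate sum, form a consecutive flag of \M-convex sets. The argument has two ingredients: (i) each layer is \M-convex, and (ii) consecutive layers $R \cap H_{k-1} \quotient R \cap H_k$ form a quotient, with the rank difference being exactly $1$. Part (i) is already recorded in the text immediately after \Cref{def:mnatural-sets}: condition \eqref{Mnat:exc} restricted to a fixed hyperplane $H_k$ is precisely the \M-convex exchange axiom, so $R \cap H_k$ is \M-convex whenever it is non-empty. It also remains to observe that the set of $k$ for which $R \cap H_k \neq \emptyset$ is a contiguous interval of integers; this follows from the augmentation axiom \eqref{Mnat:aug}, since given $x, y \in R$ with $x(E) > y(E)$ we can strictly decrease the coordinate sum of $x$ by one step and stay in $R$, so every intermediate value $b \leq k \leq a$ (with $a = \max$, $b = \min$) is achieved. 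Hence the layers are $R \cap H_b, R \cap H_{b+1}, \dots, R \cap H_a$, and consecutive layers differ in rank by $1$ by construction.

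\textbf{The quotient step.} For (ii), fix $k$ with $b < k \leq a$ and set $Q = R \cap H_{k-1}$, $P = R \cap H_k$; I must verify $P \quotient Q$. The cleanest route is to use the intersection-with-a-plank machinery from \Cref{thm:box+plank}: by \Cref{thm:Mnat+convex} write $R = G(p', q') \cap \ZZ^E$ for a compliant pair of sub/supermodular functions, and then $R \cap K(k-1, k)$ is again $\Mnat$-convex by \Cref{thm:box+plank}. By the definition of the plank $K(k-1,k)$ its only two layers are $H_{k-1}$ and $H_k$, so $R \cap K(k-1,k)$ has top layer $P$ and bottom layer $Q$. Now invoke the equivalence \eqref{Mnatural} $\iff$ \eqref{submod-fctns-compliant} of \Cref{thm:quotient}, which has already been established in the preceding subsection: $P$ being the top layer and $Q$ the bottom layer of a single $\Mnat$-convex set is exactly one of the equivalent characterizations of $P \quotient Q$. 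This gives $P \quotient Q$, and since also $\rank(P) = k = \rank(Q) + 1$ the flag is consecutive. Chaining over all $k$ and using transitivity of quotients (\Cref{lem:M-convex-quotients-transitive}) if one wants the non-consecutive quotient relations as well, we conclude $(R \cap H_b, \dots, R \cap H_a)$ is a consecutive flag.

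\textbf{Main obstacle.} There is no deep obstacle here — the statement is essentially a bookkeeping consequence of results already assembled. The one point requiring a little care is the claim that the layers occupy a contiguous range of coordinate sums (so that "consecutive flag" makes sense with no gaps); this is where axiom \eqref{Mnat:aug} is genuinely used, rather than being "trivially satisfied" as it is for a single-layer \M-convex set. A secondary subtlety is making sure the plank $K(k-1,k)$ really does intersect $R$ in exactly the two desired layers and not fewer — but contiguity of the range handles this too, since both $H_{k-1}$ and $H_k$ meet $R$ for every $b < k \leq a$. Alternatively, one could avoid \Cref{thm:box+plank} entirely and argue directly that \eqref{Mnat:aug} plus \eqref{Mnat:exc} yield the asymmetric exchange property \eqref{exchange-property} between $P$ and $Q$, then apply \eqref{exchange-property} $\implies$ \eqref{submod-fctns-compliant}; I would present the plank argument as the primary one since it is shortest given what is already proved.
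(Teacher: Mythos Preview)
Your proposal is correct and takes essentially the same approach as the paper: intersect $R$ with a plank (via \Cref{thm:box+plank}) to obtain an $\Mnat$-convex set whose top and bottom layers are the two desired \M-convex sets, then invoke characterization~\eqref{Mnatural}. The only cosmetic difference is that the paper applies the plank $K(l,k)$ to an \emph{arbitrary} pair of layers directly, whereas you restrict to consecutive layers $K(k-1,k)$ and then mention transitivity; your version also spells out the contiguity of the layer indices via axiom~\eqref{Mnat:aug}, a point the paper leaves implicit.
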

\begin{proof}
Given an $\Mnat$-convex set $R$, we show that any two layers form a quotient.
Let $P = R \cap H_k$ and $Q = R \cap H_l$ where $l \leq k$.
These are top and bottom layers respectively of the set $R' = \SetOf{x \in R}{l \leq x(E) \leq k}$.
Moreover, \Cref{thm:box+plank} implies $R'$ is also $\Mnat$-convex.
\end{proof}

We will call consecutive flags whose union forms an $\Mnat$-convex set an \emph{$\Mnat$-convex flag}.
Not all consecutive flags are $\Mnat$-convex flags as \Cref{ex:flags} demonstrates.
However, we can embed every consecutive flag into a canonical $\Mnat$-convex flag.

\begin{lemma}\label{lem:mnat+completion}
Let $(P_0, \dots, P_k)$ be a consecutive flag of \M-convex sets, and let
\[
\widetilde{P}_i = G(p_k, p_0^\#) \cap H_{\rank(P_0) + i} \cap \ZZ^E \, .
\]
where $p_0$ and $p_k$ are the submodular functions corresponding to $P_0$ and $P_k$.
Then $(\widetilde{P}_0, \dots, \widetilde{P}_k)$ is an $\Mnat$-convex flag with $P_i \subseteq \widetilde{P}_i$ for all $0 \leq i \leq k$.
\end{lemma}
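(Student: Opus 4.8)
The plan is to exhibit the ambient $\Mnat$-convex set as $R := G(p_k, p_0^\#) \cap \ZZ^E$ and to verify that the $\widetilde{P}_i$ are precisely its layers. First I would note that, since $(P_0, \dots, P_k)$ is a flag, transitivity of quotients (\Cref{lem:M-convex-quotients-transitive}) gives $P_k \quotient P_i \quotient P_0$ for every $0 \le i \le k$; in particular $p_k$ and $p_0$ are compliant, so $G(p_k, p_0^\#)$ is a generalized polymatroid by \Cref{thm:compliant+gp} and hence $R$ is an $\Mnat$-convex set by \Cref{thm:Mnat+convex}. By \Cref{lem:top-bottom-layers} the top layer of $R$ is $B(p_k) \cap \ZZ^E = P_k$, at height $\rank(P_k) = \rank(P_0) + k$ since the flag is consecutive, and the bottom layer is $B(p_0) \cap \ZZ^E = P_0$, at height $\rank(P_0)$. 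Granting the containment $P_i \subseteq \widetilde{P}_i$ established below, each $\widetilde{P}_i = R \cap H_{\rank(P_0)+i}$ is nonempty, and since every point of $R$ lies at a height in $\{\rank(P_0), \dots, \rank(P_0)+k\}$ these are exactly the layers of $R$; by \Cref{lem:flags} they form a consecutive flag of \M-convex sets, whose union $R$ is $\Mnat$-convex, i.e. an $\Mnat$-convex flag.

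The only computation is the containment $P_i \subseteq \widetilde{P}_i$, which I would read off the inequality description $G(p_k, p_0^\#) = \SetOf{x}{p_0^\#(A) \le x(A) \le p_k(A) \text{ for all } A \subseteq E}$ together with $p_0^\#(A) = p_0(E) - p_0(E \setminus A)$. Fix $x \in P_i$ and let $p_i$ denote the submodular function of $P_i$, so $x \in B(p_i) \cap \ZZ^E$. The coordinate-sum condition is immediate: $x(E) = \rank(P_i) = \rank(P_0) + i$, so $x \in H_{\rank(P_0)+i}$. For the upper bounds, $x(A) \le p_i(A) \le p_k(A)$, the first inequality being $x \in B(p_i)$ and the second the compliance $p_k \quotient p_i$ applied to $\emptyset \subseteq A$ (recall $p(\emptyset) = 0$). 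For the lower bounds, $x(A) = x(E) - x(E \setminus A) \ge p_i(E) - p_i(E \setminus A)$, again by $x \in B(p_i)$; and $p_i(E) - p_i(E \setminus A) \ge p_0(E) - p_0(E \setminus A) = p_0^\#(A)$ is exactly the compliance $p_i \quotient p_0$ applied to $E \setminus A \subseteq E$. Hence $p_0^\#(A) \le x(A) \le p_k(A)$ for all $A \subseteq E$, i.e. $x \in \widetilde{P}_i$.

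I do not expect a genuine obstacle here: once the ambient polymatroid $G(p_k, p_0^\#)$ is identified, everything reduces to invoking the compliance inequality for the correct pair of functions (the upper bounds use $p_k \quotient p_i$, the lower bounds $p_i \quotient p_0$, both furnished by transitivity). The one point deserving a word of care is that every layer of $R$ strictly between its bottom and top is nonempty; this follows from augmentation axiom~\eqref{Mnat:aug} of \Cref{def:mnatural-sets}, though it is in any case subsumed by the containments $P_i \subseteq \widetilde{P}_i$, since each $P_i$ is nonempty by definition of an \M-convex set.
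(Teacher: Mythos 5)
Your proof is correct and follows essentially the same route as the paper's: identify $R = G(p_k, p_0^\#) \cap \ZZ^E$ as the ambient $\Mnat$-convex set, observe that the $\widetilde{P}_i$ are its layers, and reduce to $P_i \subseteq R$ via the same two compliance inequalities ($p_k \quotient p_i$ for the upper bounds, $p_i \quotient p_0$ for the lower bounds via dualization). You are somewhat more explicit than the paper in spelling out why $R$ is $\Mnat$-convex (transitivity plus \Cref{thm:compliant+gp}) and in checking nonemptiness of the intermediate layers, but the argument is the same.
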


\begin{proof}
To simplify notation, let $p = p_k$ and $q = p_0$, and without loss of generality $\rank(P_0) = 0$.
As $R = G(p, q^\#)\cap \ZZ^E$ is $\Mnat$-convex, $(\widetilde{P}_0, \dots, \widetilde{P}_k)$ is an $\Mnat$-convex flag by definition.

As $\rank(P_i) = \rank(\widetilde{P}_i)$, it suffices to show $P_{i} \subseteq R$.
Let $\rho$ be the submodular function corresponding to $P_i$ and let $x \in P_i$, i.e., $x(A) \leq \rho(A)$ with equality at $A = E$.
As $P \quotient P_i$, we first note that
\[
x(A) \leq \rho(A) = \rho(A) - \rho(\emptyset) \leq p(A) - p(\emptyset) = p(A) \, .
\]
Moreover, we see from $P_i \quotient P_0$ and that
\begin{align*}
q^\#(A) &= q(E) - q(E \setminus A) \leq \rho(E) - \rho(E \setminus A) \\
&\leq x(E) - x(E \setminus A) = x(A) \, .
\end{align*}
As such, we get that $x \in R$.
\end{proof}

We next show that any non-consecutive flag has a canonical completion to a consecutive flag.
This is a generalization of Higgs lift of a matroid quotient as described in \cite[Section 1.7.6]{borovik03_coxetermatroids}. 
\begin{lemma} \label{lem:flag+completion}
  Any flag of \M-convex sets can be completed to a consecutive flag.
\end{lemma}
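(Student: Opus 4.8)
The statement is that any flag of \M-convex sets can be completed to a consecutive flag. So we are given $(P_0,\dots,P_k)$ with $P_i \quotient P_{i-1}$, but with gaps in the ranks; we must insert new \M-convex sets so that consecutive ranks differ by exactly one, while keeping all pairs quotients. By transitivity (\Cref{lem:M-convex-quotients-transitive}) it suffices to treat a single adjacent pair $P \quotient Q$ with $\rank(P) = r$ and $\rank(Q) = s$ where $r - s = m > 1$, and insert \M-convex sets $Q = R_0, R_1, \dots, R_m = P$ with $R_{j} \quotient R_{j-1}$ and $\rank(R_j) = s + j$; gluing these across all the gaps yields the consecutive flag.

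The plan is to build the intermediate sets as layers of the canonical $\Mnat$-convex set interpolating between $Q$ and $P$. Concretely, let $p,q$ be the submodular functions of $P,Q$. Since $P \quotient Q$, condition~\eqref{submod-fctns-compliant} and \Cref{thm:compliant+gp} tell us $G(p,q^\#)$ is a generalized polymatroid, and by \Cref{thm:Mnat+convex} its lattice points $R = G(p,q^\#)\cap\ZZ^E$ form an $\Mnat$-convex set. By \Cref{lem:top-bottom-layers} its top layer is $P$ (at coordinate sum $r = p(E)$) and its bottom layer is $Q$ (at coordinate sum $s = q(E)$). Define $R_j = R \cap H_{s+j}$ for $0 \le j \le m$; these are nonempty (the coordinate sums of an $\Mnat$-convex set fill an interval, or one can see it directly from the box/plank machinery) and each is \M-convex since a layer of an $\Mnat$-convex set is \M-convex. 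Then \Cref{lem:flags} — or directly the observation that $R_j$ and $R_{j-1}$ are the top and bottom layers of $R \cap K(s+j-1, s+j)$, which is $\Mnat$-convex by \Cref{thm:box+plank}, together with characterization~\eqref{Mnatural} of \Cref{thm:quotient} — gives $R_j \quotient R_{j-1}$. This produces the required consecutive refinement of the single gap.

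The remaining step is bookkeeping: apply this to every adjacent pair $P_{i}\quotient P_{i-1}$ of the original flag, concatenate the resulting chains (the endpoints match up since $R_m = P_i$ and the next chain starts at $P_i$), and invoke transitivity of quotients to conclude that \emph{all} pairs in the refined sequence — not just the adjacent ones — form quotients, so it is genuinely a flag in the sense of the definition. I expect no real obstacle here; the only mild subtlety is confirming that the intermediate layers $R_j$ are nonempty for every intermediate coordinate sum, which follows because $G(p,q^\#)$ is a (nonempty, integral) polyhedron whose vertices lie in the hyperplanes $H_s$ and $H_r$ and whose integer points project surjectively onto the integer interval $[s,r]$ under $x \mapsto x(E)$ — a standard fact about generalized polymatroids, or provable inductively by repeatedly applying augmentation axiom~\eqref{Mnat:aug} of \Cref{def:mnatural-sets} to a pair of points in $R^\uparrow$ and $R^\downarrow$.
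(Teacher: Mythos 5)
Your proof is correct and takes essentially the same approach as the paper: for each adjacent pair $P_i \quotient P_{i-1}$ you interpolate via the layers of the $\Mnat$-convex set $G(p_i,p_{i-1}^\#)\cap\ZZ^E$, with \Cref{lem:flags} supplying the consecutive-quotient property, and then glue across all gaps. The paper's write-up packages the gluing as an induction on $k$ (splitting off the last pair) rather than treating all gaps at once, but the underlying construction is identical; your added remarks on nonemptiness of intermediate layers and the (not actually needed) transitivity are harmless elaborations.
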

\begin{proof}
Let $(P_0, \dots, P_k)$ be a flag, we prove by induction on $k$.
When $k=1$, we take the $\Mnat$-convex set $R$ with $P_0 = R^{\downarrow}$ and $P_1 = R^{\uparrow}$.
By \Cref{lem:flags}, the layers of $R$ completes $(P_0, P_1)$ to a consecutive flag.
Given arbitrary $k$, we can split $(P_0, \dots, P_k)$ into $(P_0, \dots, P_{k-1})$ and $(P_{k-1}, P_k)$.
The former can be completed by the induction hypothesis, the latter by the same argument as $k=1$.
\end{proof}

Combining \Cref{lem:mnat+completion} and \Cref{lem:flag+completion} shows that every flag can be embedded into a canonical $\Mnat$-convex flag.

\begin{example}\label{ex:flags}
Consider the \M-convex sets in $\ZZ^2$
\[
P = \{(4,2),(3,3),(2,4)\} \, , \quad Q = \{(3,1),(2,2),(1,3)\} \, , \quad R = \{(2,0),(1,1),(0,2)\} \, ,
\]
with corresponding submodular functions $p,q,r$.
These form a non-consecutive flag of \M-convex sets, displayed on the left in \Cref{fig:flags}.
By \Cref{lem:flag+completion}, we can complete this to a consecutive flag by considering the $\Mnat$-convex sets $G(p,q^\#) \cap \ZZ^E$ and $G(q,r^\#) \cap \ZZ^E$ and including the middle layers.
These are the sets
\[
P' = \{(4,1),(3,2),(2,3),(1,4)\} \, , \quad Q' = \{(3,0),(2,1),(1,2),(0,3)\} \, .
\]
Now $(R,Q',Q,P',P)$ is a consecutive flag, displayed in the center of \Cref{fig:flags}.
Note that this is not an $\Mnat$-convex flag, but applying \Cref{lem:mnat+completion} allows us to embed it inside an $\Mnat$-convex flag.
This is the $\Mnat$-convex set $G(p,r^\#) \cap \ZZ^E$ displayed on the right of \Cref{fig:flags}, whose layers are $(R,Q',\widetilde{Q},P',P)$ where
\[
\widetilde{Q} = \{(4,0),(3,1),(2,2),(1,3),(0,4)\} \, .
\]
This gives a chain of embeddings
\[
(R,Q,P) \hookrightarrow (R,Q',Q,P',P) \hookrightarrow (R,Q',\widetilde{Q},P',P)
\]
into a canonical consecutive flag and a canonical $\Mnat$-convex flag respectively.
\end{example}
\begin{figure}
\includegraphics[width=\textwidth]{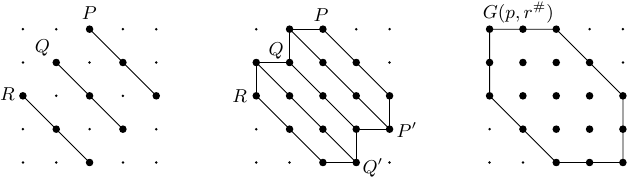}
\caption{A flag of \M-convex sets, its completion to a consecutive flag, and an $\Mnat$-convex flag from \Cref{ex:flags}.}
\label{fig:flags}
\end{figure}

\begin{remark}
There is a close connection between matroid quotients and \emph{$\Delta$-matroids} as elaborated in \cite{BoninChunNoble:2021}.
Specifically, the top and bottom layers of a $\Delta$-matroid give rise to a matroid quotient and each matroid quotient arises in this way from a $\Delta$-matroid.
In particular, \emph{saturated} $\Delta$-matroids are the same as \emph{generalized matroids}, while non-saturated $\Delta$-matroids are generalized matroids with some non-consecutive layers removed.
Note that $\Delta$-matroids and flag matroids are mutually incomparable generalizations of generalized matroids.
%The analogous concept of submodular functions for matroids are bisubmodular functions for $\Delta$-matroids.
%There may be a similar theory of strong maps for bisubmodular functions. 

Analogously to how $\Mnat$-convex sets are the lattice generalization of generalized matroids, \emph{jump systems} are the lattice generalization of $\Delta$-matroids~\cite{BouchetCunningham:1995}.
Moreover, $\Mnat$-convex sets are precisely the saturated jump systems.
As such, \eqref{Mnatural} can be rephrased as the existence of some jump system with $P$ as its top layer and $Q$ as its bottom layer, although this jump system is not unique.

There are similar exchange properties for valuated $\Delta$-matroids as for valuated matroids~\cite{Murota:1996:Delta}. 
They also describe geometric objects in tropical geometry~\cite{Rincon:2012}.
As such, they should be kept in mind when discussing quotients of valuated matroids and \M-convex functions in \Cref{sec:functions}.
\end{remark}

\section{Induction} \label{sec:induction}

In this section, we study a powerful operation for discrete convex sets known as induction.
After defining it and considering some examples, we give another characterization of quotients using induction.
We then end the section by introducing a monoid derived from induction and its defining objects, and give another characterization of quotients using Green's relations on this monoid.

\subsection{Linking sets}

We first recall a construction for matroids generalizing transversal matroids: induction by graphs~\cite{Brualdi:1971}.  %\cite{BrualdiScrimger:1968}.

%\glinline{I added this reference as it may be moreless the first one, but maybe actually \cite{Brualdi:1971} would be more illustrative. }

\begin{example} \label{ex:bipartite+graph+induction}
  Let $G=(V,U;\cE)$ be a bipartite graph on disjoint node sets $V,U$ with edges $\cE$, and let $M = (U, I)$ be a matroid on $U$ with independent sets $I$.
  The \emph{induction of $M$ through $G$} is the matroid $N = (V,I')$ with independent sets
\[
I' = \max\SetOf{A \subseteq V}{\exists B \in I, \mu \subseteq \cE  \text{ matching s.t. } \partial_V(\mu) = A, \partial_U(\mu) = B} ,
\]
where $\partial_V(\mu) \subseteq V$ is the set of nodes $\mu$ is adjacent to in $V$.

As a special case, if $M = (U, 2^U)$ is the free matroid on $U$ then $N$ is the transversal matroid associated to $G$.
\end{example}

%\todo[inline]{Ben: Add example of linking through bipartite graph: picture of 2-dim M-convex set, picture of bipartite graph on $2+3$ nodes, list the 3-dim points in the result from applying the linking system to the M-convex set. }

This construction can be generalized in a number of ways.
Firstly, one can relax that $G$ is a bipartite graph and consider more abstract `linkings' $L \subseteq 2^V \times 2^U$ satisfying certain exchange properties.
This leads to the notion of linking systems~\cite{Schrijver:1979} or bimatroids~\cite{Kung:1978}.
Secondly, we can generalize from matroids to more general discrete convex sets by considering linkings $L \subseteq \ZZ^V \times \ZZ^U$ between lattice points: this leads to poly-linking systems~\cite{Schrijver:1978}. %~\cite{KobayashiMurota:2007}.
These can equivalently be described in terms of bi-submodular functions as shown in \cite[Theorem 6.3]{Schrijver:1978}, generalizing the rank function of a bimatroid. 
Rather than defining these notions formally, we will instead consider a different point of view on them in terms of \M-convex sets.
%This is basically the same as a poly-linking systems, though we relax the condition that the zero vector must be in a linking set, as this allows for all $\Mnat$-convex sets to arise as left or right sets of a linking set.

\begin{definition}\label{def:linking+set}
Let $V, U$ be disjoint sets.
An \M-convex set $\Gamma \subseteq \ZZ^V \times \ZZ^U$ is called a \emph{linking set} from $V$ to $U$.
The \emph{left set of $\Gamma$} is the $\Mnat$-convex set $\pi_V(\Gamma) \subseteq \ZZ^V$.
The \emph{right set of $\Gamma$} is the $\Mnat$-convex set $\pi_U(\Gamma) \subseteq \ZZ^U$.
\end{definition}

\begin{example}\label{ex:left+set+transversal+matroid}
Recall the bipartite graph $G = (V, U; \cE)$ from \Cref{ex:bipartite+graph+induction}.
In this example, we identify subsets of $A \subseteq V$ with their indicator vectors in $e_A \in \Z^V$.
We define a linking set from the set of matchings in the graph as follows:
\[
\Gamma_G = \SetOf{(e_A,-e_B) \in \ZZ^V \times \ZZ^U}{\exists \mu \subseteq \cE \text{ matching s.t. } \partial_V(\mu) = A, \partial_U(\mu) = B} \, ,
\]
where $\partial_V(\mu)$ denotes the vertices in $V$ covered by $\mu$.
The left set $\pi_V(\Gamma_G)$ of $\Gamma_G$ is precisely the subsets of $V$ with a perfect matching to some subset of $U$.
This is the transversal matroid associated to $G$.
\end{example}

\begin{example}\label{ex:matrix+linking+set}
Consider a matrix $M\in \FF^{V \times U}$ over a field $\FF$ with rows and columns indexed by $V$ and $U$ respectively.
We can define a linking set $\Gamma_M \subseteq \{0,1\}^V \times \{-1,0\}^U$ by
\[
(e_I,-e_J) \in \Gamma_M \, \Leftrightarrow \, \det(M_{I,J})\neq 0 \forall I \subseteq V \, , \, J \subseteq U \, ,
\]
where $M_{I,J}$ is the square matrix given by restricting to the rows indexed by $I$ and columns indexed by $J$.
Note that we consider the empty matrix as having non-zero determinant.
It is straightforward to verify that the left set $\pi_V(\Gamma_M)$ is precisely the row matroid of $M$, while the (negative of the) right set $-\pi_U(\Gamma_M)$ is precisely the column matroid of $M$.
This perspective is what leads to Kung's notion of a bimatroid~\cite{Kung:1978}.
\end{example}

\iffalse

\begin{proposition}[\cite{KobayashiMurota:2007}]
Suppose that $V$ and $U$ are finite sets and $L\subseteq \ZZ^V \times \ZZ^U$.
Then $\Lambda = (V, U, L)$ is a poly-linking system if and only if the set $\Gamma_\Lambda \subseteq \ZZ^V \times \ZZ^U$ defined by
\[
\Gamma_\Lambda = \set{(x,-y) \mid (x,y) \in L}
\]
is an \M-convex set with $(\0_V,\0_U) \in \Gamma_\Lambda$.
\end{proposition}

%Note that the left and right sets of (poly)-linking systems are (poly)matroids.
We relax the condition that the zero vector must be in a linking set, as this allows for all $\Mnat$-convex sets to arise as left or right sets of a linking set.

\fi

Equipped with this notion, we can introduce the desired generalization of induction by a bipartite graph.

\begin{definition}
Let $\Gamma \subseteq \ZZ^V \times \ZZ^U$ be a linking set and $P \subseteq \ZZ^U$ an \M-convex (resp. $\Mnat$-convex) set.
The \emph{induction of $P$ through $\Gamma$} is the \M-convex (resp. $\Mnat$-convex) set
\begin{align*}
\ind{P}{\Gamma} &= \SetOf{x \in \ZZ^V}{\exists y \in P \text{ such that } (x,-y) \in  \Gamma} \\
& = \left.\left(\Gamma + (\0_V \times P)\right)\right|_V \subseteq \ZZ^V 
\end{align*}
where $+$ denotes Minkowski sum in the ambient space $\ZZ^V \times \ZZ^U$ and $\0_V$ denotes the zero lattice point in $\ZZ^V$.
\end{definition}

The proof that $\ind{P}{\Gamma}$ is \M-convex (resp. $\Mnat$-convex) follows from restriction and Minkowski sum preserving \M-convexity (resp. $\Mnat$-convexity) as discussed in \Cref{sec:operations}.
The submodular function of the resulting \M-convex set is described in~\cite[Theorem 6.4]{Schrijver:1978}.
This corresponding submodular function can also be derived from Fenchel duality using the interaction between projection, Minkowski sum and duality~\cite[Theorem 8.36]{Murota:2003}. 

One could also consider the action of a linking set on an \M-convex set as an analog of a linear map on a linear space instead of considering quotients as the morphisms for the category of matroids.
This viewpoint has very recently been explored in \cite{Purbhoo:2024}, not just for matroids but also defining a category of \M-convex sets.
This approach is also in the spirit of \cite{Frenk:2013} for tropical linear spaces but this goes beyond the scope of this article.

\begin{example}\label{ex:bipartite+induction+generalization}
Recall the bipartite graph $G = (V, U; \cE)$ and associated linking set $\Gamma_G$ from \cref{ex:bipartite+graph+induction,ex:left+set+transversal+matroid}. 
We encode the matroid $M$ via its independent sets $I\subseteq 2^U$ viewed as a $\Mnat$-convex set.
The induction of $I$ through $\Gamma_G$ is precisely the bipartite graph induction given in \Cref{ex:bipartite+graph+induction}, i.e.
\[
\ind{\Gamma_G}{I} = \SetOf{x \in 2^V}{\exists \text{ perfect matching between $x$ and } y \in I} \, .
\]
\end{example}

\begin{remark}
Given that the bases of a matroid determine the independent sets, one can also induce the \M-convex set $B \subseteq 2^U$ corresponding to the bases in the above example.
However, this can give a different answer to inducing the independent sets: specifically, if $\ind{\Gamma_G}{I}$ has smaller rank than~$B$, 
then $\ind{\Gamma_G}{B}$ will be empty.
Moreover, more general $\Mnat$-convex sets are not determined by their top layer, hence one requires their bottom layer also.
\end{remark}

\begin{example}\label{ex:induction+example}
To demonstrate the flexibility linking sets and induction offer, we give another method of constructing a linking set from a graph.%, different from the linking set $\Gamma_G$ introduced in \Cref{ex:left+set+transversal+matroid}.
Given a bipartite graph $G = (V,U; \cE)$, we define a linking set from all subsets of edges as follows:
\[
\widetilde{\Gamma}_G = \SetOf{\sum_{(v,u) \in \mu} (e_v, -e_u) \in \ZZ^V \times \ZZ^U}{ \mu \subseteq \cE} \, .
\]
Unlike $\Gamma_G$, this linking set has lattice points outside of $\{0,1\}^V \times \{-1,0\}^U$.
Restricting to $\widetilde{\Gamma}_G$ to $\{0,1\}^V \times \{-1,0\}^U$ recovers the linking set $\Gamma_G$.

As an explicit example, let $G \cong K_{3,2}$ where $V = \{v_1, v_2, v_3\}$, $U = \{u_1, u_2\}$ and $\cE = V \times U$.
As $G$ has six edges, the linking set $\widetilde{\Gamma}_G$ has $2^6$ lattice points, one for every subset of edges.
For example, the set of three edges adjacent to $u_1$ corresponds to the lattice point $(1,1,1,-3,0) \in \ZZ^V \times \ZZ^U$.

Consider the \M-convex set $P = \{(2,0),(1,1),(0,2)\} \subseteq \ZZ^U$ displayed in \Cref{fig:induction+example}.
The induction of $P$ through $\widetilde{\Gamma}_G$ is the $M$-convex set 
\[
\ind{\widetilde{\Gamma}_G}{P} = \{(2,0,0), (1,1,0),(0,2,0),(1,0,1),(0,1,1),(0,0,2)\} \subseteq \ZZ^V \, .
\]
\end{example}

\begin{figure}
\includegraphics[width=0.9\textwidth]{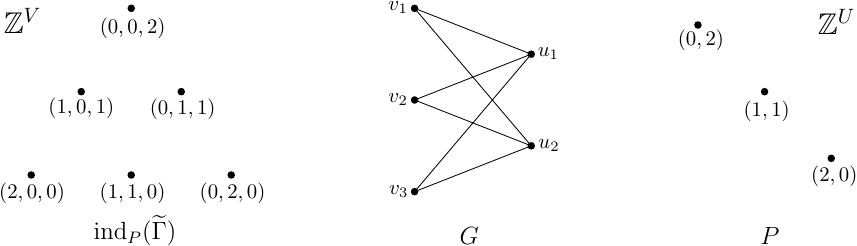}
\caption{An $M$-convex set $P$ and its induction through the linking set $\widetilde{\Gamma}_G$, introduced in Example~\ref{ex:induction+example}.}
\label{fig:induction+example}
\end{figure}

\begin{example}\label{ex:induction+operations}
Many operations on $M/M^\natural$-convex sets can be modelled by induction.
Given some $v \in \ZZ^V$, we define the (unbounded) linking set
\[
\Gamma_v = \{(x + v,-x) \in \ZZ^V \times \ZZ^V \mid x \in \ZZ^V\} \, .
\]
Then given any \M-convex $P \subseteq \ZZ^V$, it is straightforward to verify that its translation $P+v$ by $v$ is precisely $\ind{P}{\Gamma_v}$, the induction of $P$ through $\Gamma_v$.

For a slightly more involved example, consider the (unbounded) linking set
\[
\Gamma^{tr} = \{(x,-y) \in \ZZ^V \times \ZZ^V \mid y = x + e_i \text{ for some } i \in V\} \, .
\]
Then given any \M-convex $P \subseteq \ZZ^V$, its truncation $P^{tr}$ is precisely $\ind{P}{\Gamma^{tr}}$, the induction of $P$ through $\Gamma^{tr}$.
For $k$-th truncations, we can iteratively induce through $\Gamma^{tr}$, or consider the linking set obtained as the `product' of $\Gamma^{tr}$ with itself $k$ times.
This linking set product will be formally defined in \Cref{sec:monoid}.
We note that verifying $\Gamma^{tr}$ and $\Gamma_v$ are \M-convex, and therefore genuine linking sets, is a relatively straightforward check.
\end{example}

  We are not aware of a way to realize projections and minors of \M-convex sets via induction in an analogous way to \Cref{ex:induction+operations}.
  However, they are still intimately connected to induction in a different way: by viewing the \M-convex set as a linking set and inducing other sets through it.

\begin{example}
  Let $\Gamma \in \Z^V \times \Z^U$ be \M-convex.
  To obtain the projection $\pi_{V}(\Gamma)$, we simply induce the unbounded `free' $\Mnat$-convex set $\ZZ^U$ through $\Gamma$:
  \[
  \ind{\ZZ^U}{\Gamma} = \{x \in \Z^V \mid \exists y \in \Z^U \text{ such that } (x,-y) \in \Gamma\} = \pi_V(\Gamma) \, .
  \]
  Now suppose we wish to obtain $\Gamma^U_k$, the basic $k$-th minor of $\Gamma$ with respect to $U$.
  Consider the hyperplane $H_{k-\rank(\Gamma)} \subseteq \ZZ^U$, inducing this through $\Gamma$ gives us the basic $k$-th minor of $\Gamma$ with respect to $U$:
	\begin{align*}
		\ind{H_{k-\rank(\Gamma)}}{\Gamma} &= \{x \in \Z^V \mid \exists y \in H_{k-\rank(\Gamma)} \text{ such that } (x,-y) \in \Gamma\} \\
		&= \{x \in \Z^V \mid \exists y \in \ZZ^U \text{ such that } (x,-y) \in \Gamma \, , \, -y(U) = \rank(\Gamma)-k\}\\
		&= \{x \in \Z^V \mid \exists y \in \ZZ^U \text{ such that } (x,-y) \in \Gamma \, , \, x(V) = k\} = \Gamma^U_k \, .
	\end{align*}
\end{example}

\begin{remark}
Up until this point, we have only worked with bounded \M-convex sets, and so the reader may be nervous about allowing unbounded sets in the previous examples.
However, induction is just a combination of Minkowski sum and restriction, both of which are well defined for unbounded sets.
Moreover, the projection of an unbounded \M-convex set is a well-defined, but possibly unbounded, $\Mnat$-convex set.
This will continue to be relevant in \Cref{sec:monoid} where we will need unbounded sets to form a well-defined monoid structure on linking sets.
\end{remark}

\subsection{Quotients via induction}

In this section, we consider the relationship between induction and quotients.
We first show that quotients are preserved under induction.

\begin{lemma}\label{lem:induction+quotient}
	Let $P, Q \subseteq \ZZ^U$ be \M-convex sets, and $\Gamma \subseteq \ZZ^V \times \ZZ^U$ a linking set such that $\ind{P}{\Gamma}$ and $\ind{Q}{\Gamma}$ are non-empty.
	If $P \quotient Q$ then $\ind{P}{\Gamma} \quotient \ind{Q}{\Gamma}$.
%	Moreover, if $P \cup Q \subseteq \pi_U(\Gamma)$, then $P \quotient Q$ if and only if $\ind{P}{\Gamma} \quotient \ind{Q}{\Gamma}$
\end{lemma}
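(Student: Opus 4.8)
The plan is to use the characterization of quotients via deletion-contraction (\eqref{deletion-contraction} in \Cref{thm:quotient}), which says $P \quotient Q$ iff there is a single \M-convex set $R \subseteq \ZZ^{U \sqcup e}$ with $P = R \setminus e$ and $Q = R / e$. By \Cref{lem:deletion+contraction+layer}, equivalently $P$ and $Q$ are the top and bottom layers of the $\Mnat$-convex set $\pi_U(R) \subseteq \ZZ^U$; or, via \eqref{Mnatural}, of the $\Mnat$-convex set $G(p, q^\#) \cap \ZZ^U$. So it suffices to produce, from $\Gamma$ and this $\Mnat$-convex set, an $\Mnat$-convex set in $\ZZ^V$ whose top layer is $\ind{P}{\Gamma}$ and whose bottom layer is $\ind{Q}{\Gamma}$.

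The key observation is that induction extends verbatim to $\Mnat$-convex sets (this is exactly how \Cref{def:linking+set}'s induction is set up — "$P \subseteq \ZZ^U$ an \M-convex (resp. $\Mnat$-convex) set"), and it is built from Minkowski sum and restriction, both of which preserve $\Mnat$-convexity. Concretely: let $S = G(p,q^\#) \cap \ZZ^U$ be the $\Mnat$-convex set with $S^\uparrow = P$, $S^\downarrow = Q$ (guaranteed by \eqref{submod-fctns-compliant} $\Rightarrow$ \eqref{Mnatural}). Form $T := \ind{S}{\Gamma} = (\Gamma + (\0_V \times S))|_V \subseteq \ZZ^V$, which is $\Mnat$-convex. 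The heart of the argument is to show
\[
T^\uparrow = \ind{P}{\Gamma} \quad\text{and}\quad T^\downarrow = \ind{Q}{\Gamma},
\]
after which \Cref{lem:flags} (layers of an $\Mnat$-convex set form a consecutive flag, hence a quotient) or directly \eqref{Mnatural} gives $\ind{P}{\Gamma} \quotient \ind{Q}{\Gamma}$. Note this automatically yields the \emph{consecutive} (elementary) case $\rank(P) = \rank(Q)+1$; for the general quotient one iterates using transitivity (\Cref{lem:M-convex-quotients-transitive}) and \Cref{lem:flag+completion}, or simply notes that $\ind{\,\cdot\,}{\Gamma}$ applied to the intermediate layers of $S$ produces an intermediate flag.

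For the layer identification: an element of $\ind{P}{\Gamma}$ is some $x \in \ZZ^V$ with $(x,-y) \in \Gamma$ for some $y \in P$; since $y$ has the maximal coordinate sum among elements of $S$ and $(x,y) \mapsto x(V) - \text{(coordinate sum of } y)$ is constrained by $\rank(\Gamma)$, the corresponding $x$ has the maximal coordinate sum among elements of $T$ — so $\ind{P}{\Gamma} \subseteq T^\uparrow$. The reverse inclusion is the delicate direction: given $x \in T^\uparrow$, we have $(x,-y)\in\Gamma$ for \emph{some} $y \in S$, but a priori $y$ need not lie in the top layer $P$; however $x(V)$ maximal forces $y(U)$ maximal (again by the rank/coordinate-sum bookkeeping on the \M-convex set $\Gamma$, where $x(V) - y(U) = \rank(\Gamma)$ is constant), hence $y \in S^\uparrow = P$. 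The same argument with "min" in place of "max" handles $T^\downarrow$ and $Q$. I expect this bookkeeping step — pinning down that the coordinate sum of the $\ZZ^U$-part is rigidly determined by the coordinate sum of the $\ZZ^V$-part on the \M-convex set $\Gamma$, and then that $\ind{S}{\Gamma}$ does not "overshoot" beyond the layers coming from $S$'s extreme layers — to be the main obstacle, though it is essentially an exercise in the fact that an \M-convex set lies in a single hyperplane $H_{\rank(\Gamma)}$ and that Minkowski sum adds coordinate sums.
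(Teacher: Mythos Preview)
Your proposal is correct and follows essentially the same approach as the paper: take the $\Mnat$-convex set $S$ with $S^\uparrow = P$ and $S^\downarrow = Q$, induce it through $\Gamma$ to get an $\Mnat$-convex set $T = \ind{S}{\Gamma}$, and verify via the rank bookkeeping $x(V) - y(U) = \rank(\Gamma)$ on $\Gamma$ (plus the non-emptiness hypotheses) that $T^\uparrow = \ind{P}{\Gamma}$ and $T^\downarrow = \ind{Q}{\Gamma}$. One small remark: your detour about the elementary case and iterating via transitivity is unnecessary---the argument you outline already handles arbitrary quotients directly, since the $\Mnat$-convex set $S$ exists for any quotient, not just elementary ones.
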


\begin{proof}
	First note that the condition $\ind{P}{\Gamma} \neq \emptyset$ is equivalent to $(-\pi_U(\Gamma))\cap P \neq \emptyset$.
	If $P \quotient Q$ then by \Cref{thm:quotient}~\eqref{Mnatural}, there exists an $\Mnat$-convex set $R \in \Z^U$ such that $P = R^\uparrow$ and $Q = R^\downarrow$.
	The induction of an $\Mnat$-convex set is $\Mnat$-convex, so we compute the rank of the top layer as
	\begin{align*}
		\rank(\ind{R}{\Gamma}^{\uparrow})
		&= \max\SetOf{x(V)}{(x,-y) \in \Gamma \, , \, y \in R}  \\
		&= \max\SetOf{x(V) - y(U) + y(U)}{(x,-y) \in \Gamma \, , \, y \in R} \\
		&= \rank(\Gamma) + \max\SetOf{y(U)}{(x,-y) \in \Gamma \, , \, y \in R} \\
		&= \rank(\Gamma) + \rank(P) \, ,
	\end{align*}
	where the last equality holds since $P$ is the top layer of $R$ and $(-\pi_U(\Gamma))\cap P \neq \emptyset$. Thus,
	\begin{align*}
		\ind{R}{\Gamma}^{\uparrow} 
		&= \{x \in \Z^V \mid (x,-y) \in \Gamma \, , \, y \in R \, , \, x(U) = \rank(\Gamma) + \rank(P)\} \\
		&= \{x \in \Z^V \mid (x,-y) \in \Gamma \, , \, y \in R \, , \, y(V) = \rank(P)\} 
		=	\ind{P}{\Gamma} \, .
	\end{align*}
A similar computation shows for the bottom layer that $\rank(\ind{R}{\Gamma}^{\downarrow}) =  \rank(\Gamma) + \rank(Q)$ and $\ind{R}{\Gamma}^{\downarrow} = \ind{Q}{\Gamma}$. Therefore, the inductions of $P$ and $Q$ are the top and bottom layers of an $\Mnat$-convex set, thus forming a quotient.
\end{proof}

\begin{remark}
While \Cref{lem:induction+quotient} only requires $(-\pi_U(\Gamma))\cap P \neq \emptyset$, if $P \nsubseteq -\pi_U(\Gamma)$ then there will be points of $P$ that are `forgotten' in the induction process.
This will be important when considering lifts of \M-convex sets in \Cref{sec:lifts}.
\end{remark}

The other relationship between induction and quotients is~\eqref{induction}: that $P \quotient Q$ if and only if there exists a linking set $\Gamma \subseteq \ZZ^E \times \ZZ^\tE$ and \M-convex set $W \in \ZZ^\tE$ such that $P= \pi_E(\Gamma)^\uparrow$ is the top layer of the left set of $\Gamma$, and $Q= \ind{W}{\Gamma}$ is the induction of $W$ through $\Gamma$.
The motivation for this construction is inspired by matrix multiplication.
Assume you have a realizable matroid given as the row matroid of a matrix whose rows are labelled by $E$ and columns are labelled by $\tE$.
The discrete analog of this matrix is the linking set $\Gamma \subset \Z^E \times \Z^{\tE}$, and the bases of the row matroid form the points of an \M-convex set, namely the left set of $\Gamma$.
Matrix multiplication from the right amounts to taking combinations of the columns of the matrix.
This is captured by the \M-convex set $W$, analogous to the row matroid of the matrix one multiplies by on the right.
The induction $\ind{W}{\Gamma}$ is analogous to the row matroid of the product of the matrices.

\begin{proposition}[\eqref{Mnatural} $\implies$ \eqref{induction}] \label{prop:Mnat+implies+induction}
	Let $P$ be the top layer of an $\Mnat$-convex set $R$ and $Q$ be the bottom layer. 
	Then there exist \M-convex sets $\Gamma \subset \Z^E \times \Z^{\tE}$ and $W \subset \Z^{\tE}$ such that $P=\pi_E(\Gamma)^\uparrow$ is the top layer of the coordinate projection and $Q=\ind{W}{\Gamma}$ is the induction of $W$ through $\Gamma$.
\end{proposition}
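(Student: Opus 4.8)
The plan is to read both $P$ and $Q$ off a single \M-convex lift of the ambient $\Mnat$-convex set $R$, using an essentially trivial linking set and a one-point right set. By \Cref{prop:Mnat+lifts+to+M} together with part (3) of \Cref{thm:Mnat+convex}, one extra element suffices: there is a fresh element $e \notin E$ and an \M-convex set $\widehat R \subseteq \ZZ^{E \sqcup e}$ with $\pi_E(\widehat R) = R$. I would then set $\tE = \{e\}$, identify $\ZZ^{E \sqcup e}$ with $\ZZ^E \times \ZZ^{\tE}$, and take $\Gamma = \widehat R$ as a linking set together with $W \subseteq \ZZ^{\tE}$ the single lattice point whose $e$-coordinate equals $\rank(Q) - \rank(\widehat R)$. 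Both are \M-convex: $\Gamma$ by construction, and $W$ because a one-point set satisfies the exchange axiom vacuously (indeed every \M-convex subset of $\ZZ^{\{e\}}$ is a single point).

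Next I would check the two required identities. For the left set, $\pi_E(\Gamma) = \pi_E(\widehat R) = R$ by construction, so $\pi_E(\Gamma)^\uparrow = R^\uparrow = P$ is immediate from the hypothesis. For the induction, the key observation is that \M-convexity of $\widehat R$ forces the coordinate sum to be constant, so every point of $\widehat R$ has the shape $(x, \rank(\widehat R) - x(E))$ with $x \in R$; in particular $\pi_E$ restricts to a bijection $\widehat R \to R$ and the $e$-coordinate is determined by $x$. Unwinding $\ind{W}{\Gamma} = \SetOf{x \in \ZZ^E}{\exists\, y \in W \text{ with } (x,-y) \in \Gamma}$ then reduces to the single equation $\rank(\widehat R) - x(E) = \rank(\widehat R) - \rank(Q)$, i.e. $x(E) = \rank(Q)$, together with $x \in R$; this is exactly $R \cap H_{\rank(Q)} = R^\downarrow = Q$.

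The argument is bookkeeping, so there is no genuine obstacle; the only point requiring care is fixing the constant (and sign, because of the $-y$ in the definition of induction) in $W$ so that the layer of $R$ that gets selected is the bottom layer rather than some intermediate one. A secondary remark worth including is that $\Gamma$ and $W$ need not be bounded if $R$ is unbounded, but this is harmless: induction is built from Minkowski sum and restriction, both well defined for unbounded \M-convex sets, as noted after \Cref{ex:induction+operations}. It is also worth pointing out that this construction is precisely the matrix-multiplication picture described before the proposition, with $\Gamma$ the ``matrix'' whose left (row) set is $R$ and $W$ the degenerate right factor whose induction through $\Gamma$ picks out the contraction to the bottom layer.
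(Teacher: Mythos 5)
Your proof is correct and follows the same strategy as the paper's: lift $R$ to an \M-convex set $\widehat R \subseteq \ZZ^{E\sqcup\{e\}}$ on one extra coordinate via \Cref{prop:Mnat+lifts+to+M} and part (3) of \Cref{thm:Mnat+convex}, take $\Gamma = \widehat R$, and induce a one-point $W \subseteq \ZZ^{\{e\}}$ through $\Gamma$ to select the bottom layer. The one place you differ from the printed argument is the sign of $W$: the paper takes $W$ to be the single point $\rank(P)-\rank(Q)$, while you take $\rank(Q)-\rank(\widehat R)$. With the stated convention $\ind{W}{\Gamma}=\SetOf{x}{\exists y\in W,\ (x,-y)\in\Gamma}$ and the lift normalised so that every $(x,x_e)\in\widehat R$ satisfies $x_e=\rank(\widehat R)-x(E)\geq 0$, the membership condition forces $-y=\rank(\widehat R)-x(E)\geq 0$; your (non-positive) choice then yields $x(E)=\rank(Q)$ exactly, whereas the positive value $\rank(P)-\rank(Q)$ would never occur as $-y$ and would give an empty induction. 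So your sign is the one consistent with the paper's own definition of induction, and your computation that $\ind{W}{\Gamma}=R\cap H_{\rank(Q)}=Q$ is a tidy way to record it. Your auxiliary remarks, that the construction is harmless for unbounded $R$ and that this instantiates the matrix-multiplication picture sketched before the proposition, are also accurate.
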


\begin{proof}
Let $R$ be the $\Mnat$-convex set with $R^\uparrow = P$ and $R^\downarrow = Q$.
We can lift $R$ to an \M-convex set $\Gamma = \widetilde{R} \subseteq \ZZ^{E'}$ where $E' = E \cup \{e\}$ such that $x(E') = \rank(P)$.
In particular, $x(e) = 0$ for all $x \in P$ and $y(e) = \rank(P) - \rank(Q)$ for all $y \in Q$.
Let $W = \rank(P) - \rank(Q) \in \ZZ^e$ be a point, a trivially \M-convex set.
Then $(\widetilde{R} + (\0_E \times W))|_E = Q$.
\end{proof}

\begin{proposition}[\eqref{induction} $\implies$ \eqref{exchange-property}] 
	Let $\Gamma \subset \Z^E \times \Z^{\tE}$ and $W \subset  \Z^{\tE}$ be \M-convex sets, $P = \pi_E(\Gamma)^{\uparrow}$ the top layer of the coordinate projection, and $Q=\ind{W}{\Gamma}$ the induction of $W$ through $\Gamma$.
	Then for all $x \in Q, y \in P$ and $i \in \supp^+(x-y)$, there exists some $j \in \supp^-(x-y)$ such that $x - e_i + e_j \in Q$ and $y + e_i - e_j \in P$.
\end{proposition}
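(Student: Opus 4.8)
The plan is to pull both $x \in Q$ and $y \in P$ back to points of the linking set $\Gamma$, invoke the \M-convex exchange axiom \emph{for $\Gamma$}, and then use the hypothesis that $P$ is the \emph{top} layer of $\pi_E(\Gamma)$ to force the resulting exchange index to lie in $E$ rather than in $\tE$; once that is done, the second containment comes essentially for free because it preserves coordinate sums.

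Concretely, first I would record witnesses. Since $x \in Q = \ind{W}{\Gamma}$, there is $w \in W$ with $(x,-w) \in \Gamma$; since $y \in P = \pi_E(\Gamma)^{\uparrow}$, there is $v \in \ZZ^{\tE}$ with $(y,v) \in \Gamma$, and moreover $y(E) = \rank(P)$ is the maximal coordinate sum attained by a point of $\pi_E(\Gamma)$. As $i \in \supp^+(x-y)$ we have $i \in \supp^+\bigl((x,-w)-(y,v)\bigr)$, so the exchange axiom for $\Gamma$, whose ground set is $E \sqcup \tE$, yields an index $j \in \supp^-\bigl((x,-w)-(y,v)\bigr)$ with $(x,-w) - e_i + e_j \in \Gamma$ and $(y,v) + e_i - e_j \in \Gamma$.

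The crux --- and the only step that is not pure bookkeeping --- is ruling out $j \in \tE$. If $j \in \tE$, then the $E$-coordinates of $(y,v)+e_i-e_j$ are precisely $y+e_i$, so $y+e_i \in \pi_E(\Gamma)$ has coordinate sum $y(E)+1 > \rank(P)$, contradicting the maximality of $\rank(P)$ over $\pi_E(\Gamma)$. Hence $j \in E$; since the positive and negative supports of a fixed vector are disjoint, $j \neq i$, and negativity of the $j$-th coordinate of $(x,-w)-(y,v)$ gives $j \in \supp^-(x-y)$. It then remains to read off the two containments: $(x,-w)-e_i+e_j = (x-e_i+e_j,-w) \in \Gamma$ with $w \in W$ gives $x-e_i+e_j \in \ind{W}{\Gamma} = Q$, while $(y,v)+e_i-e_j = (y+e_i-e_j,v) \in \Gamma$ gives $y+e_i-e_j \in \pi_E(\Gamma)$, of coordinate sum $y(E) = \rank(P)$, hence in the top layer $P$. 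I expect the argument to be short once the pullback is set up; the only thing to be careful about is tracking which coordinates of a point of $\Gamma$ the unit vectors $e_i$ and $e_j$ act on.
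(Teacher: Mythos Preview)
Your proposal is correct and is essentially identical to the paper's own proof: both pull $x$ and $y$ back to witnesses $(x,-w),(y,v)\in\Gamma$, apply the \M-convex exchange axiom for $\Gamma$ at index $i$, and use the top-layer maximality of $y(E)$ to rule out $j\in\tE$, after which the two desired containments follow exactly as you describe. Your write-up is, if anything, slightly more explicit about why $j\in\supp^-(x-y)$ and why $y+e_i-e_j$ lands in the top layer.
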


\begin{proof}
	Let $x \in Q, y \in P$ and $i \in \supp^+(x-y)$.
	By definition of $Q$, there exists some $w \in W$ such that $(x,-w) \in \Gamma$.
	Moreover, by definition of $P$ there exists some $z \in \Z^\tE$ such that $(y,z) \in \Gamma$, and $y(E) = \rank(P)$ is maximal.
	
	As $\Gamma$ is an \M-convex set, there exists some $j \in \supp^-((x,-w) - (y,z))$ such that $(x,-w) - e_i + e_j$ and $(y,z) + e_i - e_j \in \Gamma$.
	If $j \in E$ then $j \in \supp^-(x-y)$, hence $x - e_i + e_j \in Q$ and  $y + e_i - e_j \in P$.
	To complete the proof, suppose that $j \in \tE$.
	Then $(y + e_i, z - e_j) \in \Gamma$ and $y + e_i \in P$.
	However, $(y + e_i)(E) = y(E) + 1 > \rank(P)$, which contradicts that $P$ is the top layer.
\end{proof}

\begin{example}
Recall that a transversal matroid $M \subseteq \ZZ^V$ is an \M-convex set that can be realized as the left set of some $\Gamma_G \subseteq \ZZ^V \times \ZZ^U$ where $G = (V,U; \cE)$ a bipartite graph, as in \Cref{ex:left+set+transversal+matroid}.
One may ask which matroids $N \subseteq \ZZ^V$ can arise as a quotient of $M$?
With \eqref{induction}, we can characterize quotients of $M$ as precisely the matroids that arise as the induction $\ind{W}{\Gamma_G}$ of a matroid $W \in \ZZ^U$ through the bipartite graph $G$, as described in \Cref{ex:bipartite+induction+generalization}, where $G$ is a graph that realizes $M$ as a transversal matroid.
\end{example}

\subsection{The monoid of linking sets}\label{sec:monoid}

As shown in \Cref{ex:matrix+linking+set}, linking sets from $V$ to $U$ are combinatorial abstractions of matrices with rows and columns labelled by $V$ and $U$ respectively.
For further reading on this point of view, we refer to~\cite{Murota:2000}. 
The square matrices form a monoid with multiplication given by matrix multiplication and the identity matrix as the identity element.
We can define a similar monoid for linking sets, but first must define a product operation.

\begin{definition}
The \emph{product} of two (possibly unbounded) linking sets $\Gamma \subseteq \ZZ^W \times \ZZ^V$ and $\Delta \subseteq \ZZ^V \times \ZZ^U$ is
\begin{align*}
\Gamma * \Delta &= \SetOf{(x,-z) \in \ZZ^W \times \ZZ^U}{\exists y \in \ZZ^V \text{ such that } (x,-y) \in \Gamma \, , \, (y, -z) \in \Delta} \\
&= \left((\Gamma \times\{\0_U\}) + (\{\0_W\} \times \Delta)\right)|_{W \cup U} \, ,
\end{align*}
where $+$ denotes Minkowski sum in the ambient space $\ZZ^W \times \ZZ^V \times \ZZ^U$.
\end{definition}

Its straightforward to verify that the product operation on linking sets is associative, i.e.
\[
\Gamma*(\Delta*\Sigma) = (\Gamma*\Delta)*\Sigma \quad , \quad \forall \Gamma \subseteq \ZZ^W \times \ZZ^V \, , \, \Delta \subseteq \ZZ^V \times \ZZ^U \, , \, \Sigma \subseteq \ZZ^U \times \ZZ^T \, .
\]

\begin{example}
Recall the construction from \Cref{ex:matrix+linking+set} of obtaining a linking set $\Gamma_M \subseteq \ZZ^V \times \ZZ^U$ from a matrix $M \in \FF^{V \times U}$ over some field $\FF$.
Given $N \in \FF^{W \times V}$ with linking set $\Gamma_N \subseteq \ZZ^W \times \ZZ^V$, the Cauchy-Binet Theorem implies that
\[
\Gamma_{N\cdot M} \subseteq \Gamma_N * \Gamma_M \, ,
\]
with equality if the matrices $M,N$ are sufficiently generic.
This is precisely the motivation for bimatroid multiplication, as defined in~\cite[Section 6]{Kung:1978}
\end{example}

\begin{example}
Recall the construction from \Cref{ex:left+set+transversal+matroid} of obtaining a linking set $\Gamma_G \subseteq \ZZ^V \times \ZZ^U$ from a bipartite graph $G = (V,U;\cE)$.
Given $H = (W,V; \cE')$, we can define a bipartite graph `product' by
\[
H\cdot G = (W, U; \cE' \cdot \cE) \quad , \quad \cE' \cdot \cE = \SetOf{(w,u) \in W \times U}{\exists v \in V \text{ s.t } (w,v) \in \cE' \, , \, (v,u) \in \cE} \, .
\]
It is straightforward to see that $\Gamma_{H\cdot G} \subseteq \Gamma_H * \Gamma_G$.
\end{example}

\begin{remark}\label{rem:induction+product}
Induction of $\Mnat$-convex sets is closely related to products of linking sets in the following way.
Given some $\Mnat$-convex set $P \subseteq \ZZ^U$, \Cref{prop:Mnat+lifts+to+M} ensures the existence of some linking set $\Delta_P \subseteq \ZZ^U \times \ZZ^W$ such that $\pi_U(\Delta_P) = P$, e.g.
\[
\Delta_P = \SetOf{(y,-y(U)) \in \ZZ^U \times \ZZ}{y \in P} \, .
\]
The induction $\ind{P}{\Gamma}$ of $P$ through $\Gamma \subseteq \ZZ^V \times \ZZ^U$ is equal to the left set $\pi_V(\Gamma * \Delta_P)$ of $\Gamma * \Delta_P$.
Note that we can substitute $\Delta_P$ for any linking set with left set $P$, as the induction projects the right set away after product.
\end{remark}

We are now ready to define a monoid structure on linking sets from a set to itself.

\begin{definition}
Let $V$ be a finite set.
The \emph{monoid of linking sets} $(\cM_V, *, I_V)$ on $V$ is the set of (possibly unbounded) \M-convex sets
\[
\cM_V = \SetOf{\Gamma \subseteq \ZZ^V \times \ZZ^V}{\Gamma \text{ is \M-convex}}
\]
with product $*$ as its associative operation and identity element
\[
I_V = \SetOf{(x, -x) \in \ZZ^V \times \ZZ^V}{x \in \ZZ^V} \, .
\]
\end{definition}

Note that to ensure $\cM_V$ is closed, we consider the empty set $\emptyset$ as an \M-convex set.
This is the unique two-sided absorbing element with $\Gamma * \emptyset = \emptyset * \Gamma = \emptyset$ for all $\Gamma \in \cM_V$.

It is immediate to see that $\cM_V$ does not have inverses: for example, for any $\Gamma$ with finite support we cannot hope to find $\Gamma^{-1}$ such that $\Gamma * \Gamma^{-1}$ has infinite support.
One might hope that every element has at least one \emph{pseudoinverse}, i.e., for all $\Gamma \in \cM_V$, there exists some $\Gamma^{-1} \in \cM_V$ such that
\[
\Gamma * \Gamma^{-1} * \Gamma = \Gamma \, .
\]
Monoids where every element has a pseudoinverse are called \emph{regular monoids}.

\begin{proposition}
For $|V| > 2$, the monoid $\cM_V$ is not a regular monoid.
\end{proposition}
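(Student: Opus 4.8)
The plan is to exhibit a single element $\Gamma \in \cM_V$ that has no pseudoinverse. The natural candidate is a linking set with finite support, since then $\Gamma * \Gamma^{-1} * \Gamma$ also has finite support and one can hope to analyse it combinatorially. A convenient choice is a linking set coming from a matrix that is \emph{not} of full rank — concretely, take $\Gamma = \Gamma_M$ for a matrix $M \in \FF^{V\times V}$ of rank $|V|-1$ as in \Cref{ex:matrix+linking+set}, so its left set is a truncation of the free matroid (the uniform matroid $U_{|V|-1,|V|}$), realized inside $\{0,1\}^V\times\{-1,0\}^V$. One then argues that no $\Delta \in \cM_V$ can satisfy $\Gamma_M * \Delta * \Gamma_M = \Gamma_M$.

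**Key steps.** First I would record the shape of $\Gamma_M$ explicitly: $(e_I,-e_J)\in\Gamma_M$ iff $|I|=|J|$ and the square submatrix $M_{I,J}$ is nonsingular, and in particular $\Gamma_M$ has rank $0$ (its left set has rank $|V|-1$, its right set rank $-(|V|-1)$ after the sign flip), with top layer corresponding to the bases of $U_{|V|-1,|V|}$. Second, I would suppose for contradiction that $\Delta \in \cM_V$ satisfies $\Gamma_M * \Delta * \Gamma_M = \Gamma_M$. Since product is Minkowski sum followed by restriction, ranks add: $\rank(\Gamma_M*\Delta*\Gamma_M) = 2\rank(\Gamma_M) + \rank(\Delta) = \rank(\Delta)$, so $\rank(\Delta)=0$. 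Third — this is the crux — I would show that $\Gamma_M * \Delta * \Gamma_M$ is forced to be a \emph{strict} subset of $\Gamma_M$, or fails to contain all of it. The left set of $\Gamma_M*\Delta*\Gamma_M$ is the induction of (a rank-$0$ $\Mnat$-convex set derived from) $\Gamma_M$ through $\Delta$ through $\Gamma_M$ again; because $\Gamma_M$'s left/right sets are the truncated free matroid, which is \emph{not} the free matroid when $|V|>2$, passing through $\Gamma_M$ twice cannot recover all of the top layer of $\Gamma_M$ unless $\Delta$ "inverts" a non-invertible linking relation. Making this precise: I would track a specific pair, e.g. $(e_I, -e_I)$ with $|I|=|V|-1$, which lies in $\Gamma_M$ (for generic $M$, any $(|V|-1)\times(|V|-1)$ principal-type minor is nonzero). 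For this to lie in $\Gamma_M * \Delta * \Gamma_M$ we would need $y, y' \in \ZZ^V$ with $(e_I,-y)\in\Gamma_M$, $(y,-y')\in\Delta$, $(y',-e_I)\in\Gamma_M$; the first and third force $y = e_J$, $y' = e_{J'}$ with $|J|=|J'|=|V|-1$, hence $(e_J,-e_{J'})\in\Delta$ for \emph{every} admissible $I$ — and running over all $I$ this forces $\Delta$ to contain so many points (a full copy of the exchange structure on $(|V|-1)$-subsets) that $\rank(\Delta) = 0$ together with $\Mnat$-convexity is violated, or alternatively $\Gamma_M*\Delta*\Gamma_M$ strictly contains $\Gamma_M$, contradicting equality. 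The cleanest incarnation is probably to count: $\Gamma_M$ has a specific finite cardinality, and any $\Delta$ whose sandwich contains $\Gamma_M$ must, by the rank-$0$ constraint, have $\Gamma_M*\Delta*\Gamma_M$ with strictly larger support.

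**Main obstacle.** The delicate point is the middle step — showing the equation $\Gamma_M*\Delta*\Gamma_M = \Gamma_M$ is genuinely impossible rather than just "hard to arrange". The rank bookkeeping ($\rank\Delta = 0$) is easy, and $|V|\le 2$ must really be excluded (for $|V|\le 2$ the truncation is still the free matroid or trivial, so $\Gamma_M$ is "invertible" and the argument collapses — this is exactly why the hypothesis $|V|>2$ appears). I expect the proof to hinge on a careful analysis of which $(e_J,-e_{J'})$ a rank-$0$ $\Mnat$-convex $\Delta$ can contain, and on the observation that sandwiching the \emph{truncated} uniform matroid between two non-full-rank linking sets cannot be idempotent; equivalently, the Green's $\cH$-class of $\Gamma_M$ is not a group. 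An alternative, possibly slicker route worth keeping in mind is to invoke the characterization of regular monoids via Green's relations (every principal left ideal is generated by an idempotent) and show directly that the principal right ideal $\Gamma_M * \cM_V$ contains no idempotent of the same rank as $\Gamma_M$; but the hands-on matrix/matroid computation above is the approach I would write up first.
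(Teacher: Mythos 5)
Your proof strategy cannot work as proposed, because the candidate $\Gamma_M$ is in fact a \emph{regular} element of $\cM_V$. For a generic matrix $M \in \FF^{V\times V}$ of rank $|V|-1$, every $k\times k$ submatrix with $k \leq |V|-1$ is nonsingular, so
\[
\Gamma_M = \{\mathbf{0}\}\cup\bigSetOf{(e_I,-e_J) \in \ZZ^V\times\ZZ^V}{1\le|I|=|J|\le|V|-1}\, .
\]
This set is idempotent: for any $(e_I,-e_K)\in\Gamma_M$ one may interpolate with $J=I$, giving $(e_I,-e_J),(e_J,-e_K)\in\Gamma_M$ and hence $(e_I,-e_K)\in\Gamma_M*\Gamma_M$, while the product cannot create anything new since every factorization passes through some $(e_J,-\cdot)$ with $|J|=|I|=|K|$. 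Thus $\Gamma_M*\Gamma_M=\Gamma_M$, so $\Gamma_M$ itself (or, equivalently, the identity $I_V$, via $\Gamma_M*I_V*\Gamma_M=\Gamma_M^2=\Gamma_M$) is a pseudoinverse. Your rank bookkeeping is sound but harmless — $\Delta=I_V$ has rank $0$ and satisfies the sandwich identity — and the subsequent element-chasing cannot find a contradiction because there is none. The same objection defeats the proposed counting argument: the sandwich does not strictly enlarge the support when $\Delta = I_V$.

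The paper's construction differs precisely where it matters: its linking set deliberately \emph{omits the diagonal pairs} $(e_i,-e_i)$ (so it is not the bimatroid of a generic matrix — to realize it as a bimatroid one would need all diagonal entries of $M$ to vanish, a very non-generic condition that defeats your choice). With the diagonal absent, the paper fixes any $(e_i,-e_j)\in\Gamma$ with $i\neq j$, observes that any factorization $\Gamma*\Gamma^{-1}*\Gamma$ of it must pass through some $(e_k,-e_\ell)\in\Gamma^{-1}$ with $k\neq i$, $\ell\neq j$, and then rotates to any $a\in[3]\setminus\{k,\ell\}$ to force the forbidden diagonal pair $(e_a,-e_a)$ into $\Gamma*\Gamma^{-1}*\Gamma$. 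This is the actual role of the hypothesis $|V|>2$: it guarantees room for the rotation $a\notin\{k,\ell\}$, not (as you surmise) that the truncation $U_{|V|-1,|V|}$ is proper. (As a side note, the linking set written in the paper's proof appears to miss its rank-two layer; the argument goes through verbatim once one completes it, e.g., to the bimatroid of an $\FF_2$-matrix with zero diagonal.) Your alternative route via Green's relations is a reasonable instinct, but it still requires a candidate whose principal ideals avoid idempotents of the same rank; the generic $\Gamma_M$ is itself such an idempotent, so it is the worst possible starting point.
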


\begin{proof}
For notational convenience, we define the partial operation $*$ on elements of $\ZZ^V \times \ZZ^V$ defined as
\[
(x, -y) * (z,-w) = \begin{cases}
(x,-w) & y = z \\ \text{undefined} & y \neq z \, .
\end{cases}
\]
It follows that the product of linking sets is the union of products of its elements, i.e., $\Gamma * \Delta = \left\{\gamma * \delta \mid \gamma \in \Gamma \, , \, \delta \in \Delta\right\}$.

Let $V \supseteq [3]$ and consider the linking set
\[
\Gamma = \SetOf{(e_i,-e_j) \in \ZZ^V \times \ZZ^V}{i, j \in [3], i \neq j} \cup \{\mathbf{0}\} \, .
\]
Suppose there exists some pseudoinverse $\Gamma^{-1}$ of $\Gamma$, and fix some $(e_i, -e_j) \in \Gamma$.
As $\Gamma^{-1}$ is a pseudoinverse, there must exist some $(e_k, -e_\ell) \in \Gamma^{-1}$ with $k \neq i$ and $\ell \neq j$ such that
\[
(e_i, -e_k) * (e_k, -e_\ell) * (e_\ell, -e_j) = (e_i, -e_j) \, .
\]
Pick any $a \in [3] \setminus \{k, \ell\}$, then
\[
(e_{a}, -e_k) * (e_k, -e_\ell) * (e_\ell, -e_{a}) = (e_a, -e_a) \in \Gamma \, ,
\]
a contradiction.
\end{proof}

To describe the basic structure of a semigroup, one first asks what are its Green's relations.
This will be particularly useful for us, as it transpires that the Green's relations of $\cM_V$ encode quotients of \M-convex sets on $\ZZ^V$.

\begin{definition}[Green's $\cR$ and $\cL$ relations]
We define the preorders $\preceq_{\cR}$ and $\preceq_{\cL}$ on $\cM_V$ given by 
\begin{align*}
\Gamma \preceq_{\cR} \Delta \quad &\Leftrightarrow \quad \exists X \in \cM_V \text{ such that } \Gamma*X = \Delta \, , \\
\Gamma \preceq_{\cL} \Delta \quad &\Leftrightarrow \quad \exists X \in \cM_V \text{ such that } X*\Gamma = \Delta \, .
\end{align*}
Green's $\cR$-relation $\sim_\cR$ is the equivalent relation $\preceq_{\cR} \cap \succeq_{\cR}$, i.e.
\[
\Gamma \sim_\cR \Delta \quad \Leftrightarrow \quad \exists X,Y \in \cM_V \text{ such that } \Gamma*X = \Delta \; , \; \Delta*Y = \Gamma \, .
\]
Green's $\cL$-relation $\sim_\cL$ is the equivalent relation $\preceq_{\cL} \cap \succeq_{\cL}$ defined analogously.
\end{definition}

The following lemma shows that we can characterize the preorders $\preceq_{\cR}$ and $\preceq_{\cL}$ via the quotient structures on the left and right sets of linking sets.
We denote the left (resp. right) set of $\Gamma$ as $\pi_L(\Gamma) \subseteq \ZZ^V$ (resp. $\pi_R(\Gamma) \subseteq \ZZ^V$); as both the left and right sets are on the same ground set, we deviate slightly from the notation of \Cref{def:linking+set}.

\begin{proposition}[\eqref{induction} $\Longleftrightarrow$ \eqref{r-order}]\label{prop:quotient+iff+R+related}
If $\Gamma, \Delta \in \cM_V$ are linking sets such that $\Gamma \preceq_{\cR} \Delta$ then $\pi_L(\Gamma)^\uparrow \quotient \pi_L(\Delta)^\uparrow$.
Conversely, for any \M-convex sets $P, Q \subseteq \ZZ^V$ such that $P \quotient Q$, there exists $\Gamma, \Delta \in \cM_V$ such that $P = \pi_L(\Gamma)^\uparrow, Q = \pi_L(\Delta)^\uparrow$ and $\Gamma \preceq_{\cR} \Delta$.
\end{proposition}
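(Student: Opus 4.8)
The plan is to prove the two directions separately, routing each through condition~\eqref{induction} of \Cref{thm:quotient}, which has already been shown equivalent to $P\quotient Q$. For the forward direction, suppose $\Delta=\Gamma*X$ with $X\in\cM_V$; I will work under the (implicit) assumption that $\pi_L(\Gamma)$ is bounded above and $\Delta\neq\emptyset$, so that $P:=\pi_L(\Gamma)^\uparrow$ and $Q:=\pi_L(\Delta)^\uparrow$ are honest \M-convex sets. The first step is to apply \Cref{rem:induction+product} to identify $\pi_L(\Delta)=\pi_L(\Gamma*X)=\ind{\pi_L(X)}{\Gamma}$, an $\Mnat$-convex subset of $\pi_L(\Gamma)$. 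Since every $(x,-y)\in\Gamma$ satisfies $x(V)-y(V)=\rank(\Gamma)$, a witness $y\in\pi_L(X)$ with $(x,-y)\in\Gamma$ for $x\in Q$ must have $y(V)=\rank(Q)-\rank(\Gamma)=:m$, and conversely any such $y\in\pi_L(X)\cap H_m$ produces a point of $Q$; hence $Q=\ind{W}{\Gamma}$ with $W:=\pi_L(X)\cap H_m$. As $W$ is a nonempty layer of the $\Mnat$-convex set $\pi_L(X)$, it is \M-convex, so the pair $(\Gamma,W)$ (with $\tE=V$) witnesses condition~\eqref{induction} for $(P,Q)$, and I conclude $P\quotient Q$ via the established implications $\eqref{induction}\implies\eqref{exchange-property}\implies\eqref{submod-fctns-compliant}$.

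For the converse, given $P\quotient Q$ I invoke condition~\eqref{deletion-contraction} of \Cref{thm:quotient} to get an \M-convex set $R\subseteq\ZZ^{V\sqcup\{e\}}$ with $P=R\setminus e$ and $Q=R/e$; after translating along the $e$-coordinate I may assume it ranges over $\{0,1,\dots,k\}$ with $k=\rank(P)-\rank(Q)$, so that $P=\SetOf{x}{(x,0)\in R}$ and $Q=\SetOf{x}{(x,k)\in R}$ by \Cref{lem:deletion+contraction+layer}. Fixing $e'\in V$, I set $\Gamma:=\SetOf{(x,\,t\,e_{e'})\in\ZZ^V\times\ZZ^V}{(x,t)\in R}$; a direct check of the exchange axiom — splitting on whether the exchanged coordinate lies in the first copy of $V$ or is the $e'$-coordinate of the second copy, and using that the coordinate sum is constant on $R$ — shows $\Gamma$ is \M-convex, and $\pi_L(\Gamma)=\pi_V(R)$ gives $\pi_L(\Gamma)^\uparrow=P$. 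I then take the one-point linking set $X:=\{(-k\,e_{e'},\,\mathbf 0)\}\in\cM_V$ and compute directly from the definition of the product that $\Delta:=\Gamma*X=\SetOf{(x,\mathbf 0)}{(x,k)\in R}=Q\times\{\mathbf 0\}$, which is \M-convex with left set $Q$, so $\pi_L(\Delta)^\uparrow=Q$. Since $\Delta=\Gamma*X$ we have $\Gamma\preceq_\cR\Delta$, which finishes the converse (and the case $P=Q$, i.e. $k=0$, is not special: then $X=\{(\mathbf 0,\mathbf 0)\}$).

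The step I expect to be the main obstacle is the identity $Q=\ind{\pi_L(X)\cap H_m}{\Gamma}$ in the forward direction: one must verify that collapsing $\pi_L(X)$ to the single layer $W=\pi_L(X)\cap H_m$ — which is forced, since \eqref{induction} requires an \M-convex input rather than an $\Mnat$-convex one — still recovers the \emph{entire} top layer $Q$ of $\ind{\pi_L(X)}{\Gamma}$, and one must keep the bookkeeping with possibly unbounded $\Gamma,X$ under control so that $\rank(\Gamma)$ and $m$ are finite (precisely where the standing assumption that the relevant top layers exist is used). The converse is largely routine once \eqref{deletion-contraction} is in hand, the only care needed being the \M-convexity of the zero-padded lift $\Gamma$ of $R$.
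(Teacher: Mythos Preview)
Your proof is correct and follows essentially the same route as the paper's. The forward direction is identical: both you and the paper use \Cref{rem:induction+product} to get $\pi_L(\Delta)=\ind{\pi_L(X)}{\Gamma}$, then slice $\pi_L(X)$ at the layer $H_{\rank(Q)-\rank(\Gamma)}$ to extract an \M-convex $W$ with $Q=\ind{W}{\Gamma}$, and conclude via~\eqref{induction}. For the converse, the paper appeals to~\eqref{induction} directly (citing \Cref{prop:Mnat+implies+induction}) to get a linking set $\Gamma\subseteq\ZZ^V\times\ZZ^U$ with $|U|=1$ and a one-point $W$, and then ``increases the size of $U$'' to land in $\cM_V$; your route through~\eqref{deletion-contraction} and the explicit zero-padding $(x,t)\mapsto(x,t\,e_{e'})$ is exactly this enlargement carried out by hand, with $X=\{(-k\,e_{e'},\mathbf 0)\}$ playing the role of the paper's $\Delta_W$. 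So the two proofs coincide up to which equivalent characterization of \Cref{thm:quotient} is invoked as the entry point.
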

\begin{proof}
Suppose $\Gamma \preceq_{\cR} \Delta$, then $\Gamma * X = \Delta$ for some $X \in \cM_V$.
Moreover, note by \Cref{rem:induction+product} we have
\[
\pi_L(\Delta) = \pi_{L}(\Gamma * X) = \ind{\pi_{L}(X)}{\Gamma} \, .
\]
Let $\ell = \rank(\pi_{L}(\Delta)^\uparrow)$ and consider the \M-convex set $\pi_{L}(X) \cap H_{\ell - \rank(\Gamma)}$ obtained as a layer of $\pi_{L}(X)$.
Then
\begin{align*}
\ind{\pi_{L}(X) \cap H_{\ell - \rank(\Gamma)}}{\Gamma} &= \SetOf{x \in \ZZ^V}{(x,-y) \in \Gamma \, , \, y \in \pi_{L}(X) \, , \, y(V) = \ell - \rank(\Gamma)} \\
&= \SetOf{x \in \ZZ^V}{(x,-y) \in \Gamma \, , \, y \in \pi_{L}(X) \, , \, x(V) = \ell} \\
&= \ind{\pi_{L}(X)}{\Gamma}^\uparrow = \pi_L(\Delta)^\uparrow \, .
\end{align*}
Using the induction characterization \eqref{induction} of quotients, we get that $\pi_L(\Gamma)^\uparrow \quotient \pi_L(\Delta)^\uparrow$.

Conversely, if $P \quotient Q$ then \eqref{induction} implies there exists linking set $\Gamma \in \ZZ^V \times \ZZ^U$ and \M-convex set $W \in \ZZ^U$ such that $P = \pi_{V}(\Gamma)^\uparrow$ and $Q = \ind{W}{\Gamma}^\uparrow$, where $\ind{W}{\Gamma}$ is just a single layer.
Moreover, the proof of \Cref{prop:Mnat+implies+induction} implies that we can pick $|U| =1$, and so we can always increase the size of $U$ such that $\Gamma, \Delta_W \in \cM_V$, where $\Delta_W$ defined as in \Cref{rem:induction+product}.
This gives $P = \pi_{L}(\Gamma)^\uparrow$ and $Q = \ind{W}{\Gamma} = \pi_L(\Gamma*\Delta_W)^\uparrow$.
\end{proof}

We get an entirely analogous lemma by replacing $\preceq_{\cR}$ with $\preceq_{\cL}$, and left sets $L$ with right sets $R$.

\section{Lifts of \M-convex sets} \label{sec:lifts}

In this section, we will consider ways to `lift' general \M-convex sets to special families, such as matroids and $k$-polymatroids.
These lifts will be constructed via the induction machinery introduced in \Cref{sec:induction}.
We will then show that quotients are preserved under lifts, and hence we can relate general \M-convex quotients to matroid quotients.

\subsection{Box lifts}\label{sec:box+lifts}

There are special classes of \M-convex sets that are more widely studied, namely matroids and $k$-polymatroids.
In this section, we show one can lift an \M-convex set to one of these families by lifting to a higher dimensional space but with tighter bounds on the size of coordinate entries.

We first recall the definition of a $k$-polymatroid.
Let $k \in \ZZ_{\geq 0}$ be a non-negative integer.
An \M-convex set $P \subseteq \ZZ^U$ is a \emph{k-polymatroid} if it is contained in the box $[0,k]^U$, i.e., all lattice points are nonnegative and bounded above by $k$.
In particular, a matroid given by its bases is a 1-polymatroid.
Its associated $\Mnat$-convex set is 
\[
P^\natural = \SetOf{x \in \ZZ_{\geq 0}^U}{x \leq y \text{ for some } y \in P} \subseteq [0,k]^U \, .
\]
If $P$ were the bases of a matroid, then $P^\natural$ is precisely the independent sets.
Analogously to matroids, $k$-polymatroids are in one-to-one correspondence with submodular set functions $p\colon 2^U \rightarrow \ZZ$ satisfying
\[
p(A) \leq p(A \cup i) \leq p(A) + k \quad , \quad \forall A \subseteq U \, , \, i \in U \setminus A \, .
\]

Let $\phi \colon {V} \rightarrow U$ be a surjection.
This induces a projection $\pi_\phi$ from $\ZZ^V$ to $\ZZ^U$ defined by
\begin{equation} \label{eq:projection-from-partition}
\pi_\phi \colon \ZZ^V \rightarrow \ZZ^U \quad , \quad \pi_\phi(y) = \left(\sum_{j \in \phi^{-1}(i)} y_j\right)_{i \in U} \, .
\end{equation}
\begin{definition}
Given an \M-convex (resp. $\Mnat$-convex) set $P \subseteq \ZZ^U$, we say an \M-convex (resp. $\Mnat$-convex) set $Q \subseteq \ZZ^V$ is a \emph{lift of $P$} if there exists a surjection $\phi \colon {V} \rightarrow U$ and vector $v \in \ZZ^U$ such that $P = \pi_\phi(Q) + v$.
We say $Q$ is a \emph{$k$-polymatroid lift} of $P$ if $Q \subseteq [0,k]^V$.
When $k=1$, we call $Q$ a \emph{matroid lift} of $P$.
\end{definition}

Note that this lift construction is an inverse to the combination of `aggregation' and `translation', see \cite[Chapter 6.4]{Murota:2003} and \cite[\S II.3.1(d)]{Fujishige:2005} for details on aggregation. 

This section will be dedicated to constructing $k$-polymatroid lifts via the induction machinery introduced and studied in \Cref{sec:induction}.
To do this, we first show a more general construction that lifts boxes in $\ZZ^U$ to boxes in $\ZZ^V$ via induction.

\begin{construction}[Box lifts]
Let $P$ be an \M-convex set, we consider a box containing $P$.
Specifically, we let
\[
\underline{\omega}, \overline{\omega} \in \ZZ^U \quad \text{such that} \quad \underline{\omega}_i \leq \min\SetOf{x_i}{x \in P} \quad , \quad \overline{\omega}_i \geq \max\SetOf{x_i}{x \in P} \, .
\]
We write $\Omega_i = [ \underline{\omega}_i, \overline{\omega}_i] \subset \R$ and define $\Omega := \prod_i \Omega_i \subseteq \RR^U$ to be the lattice hypercube with vertices $\underline{\omega}$ and $\overline{\omega}$.
Clearly $P \subseteq \Omega$, and if the bounds on $\underline{\omega}$ and $\overline{\omega}$ are tight then this is the smallest such box containing $P$.
%Moreover, if $b_i = \overline{\omega}_i - \underline{\omega}_i$ then we can translate $P$ by $-\underline{\omega}$ to bound it by the nonnegative box $[0,b]$.

Let $\phi\colon V \rightarrow U$ be a surjection and $v \in \ZZ^U$; we show one can construct a box $\Psi \subseteq \ZZ^V$ such that $\Omega = \pi_\phi(\Psi) + v$.
Let $\underline{\psi}, \overline{\psi} \in \ZZ^V$ such that $\underline{\psi}_j \leq \overline{\psi}_j$ for all $j \in V$, and
\[
\Omega_i = \left(\sum_{j \in \phi^{-1}(i)} \Psi_j\right) + v_i \quad , \quad \Psi_j = [\underline{\psi}_j, \overline{\psi}_j] \subset \R \, .
\]
i.e., $\Omega_i$ is a line segment that can be written as the Minkowski sum of line segments $\Psi_j$, when viewed as subsets of $\RR$.
Such a decomposition is always possible by subdividing $\Omega_i$ into $|\phi^{-1}(i)|$ parts (possibly with zero width).
As a result, the lattice hypercube $\Psi:= \prod_j \Psi_j \subseteq \ZZ^V$ is a lift of $\Omega$.

We now define the linking set $\Gamma_{\phi,v}(\Psi) \subseteq \ZZ^{V} \times \ZZ^{U}$ by
\[
\Gamma_{\phi,v}(\Psi) = \SetOf{(y,-x) \in \ZZ^{V} \times \ZZ^{U}}{y \in \Psi \, , \, x = \pi_{\phi}(y) + v \in \Omega} \, .
\]
Note that by construction, the right and left sets are precisely $\Omega$ and $\Psi$ respectively.
Given our set $P \subseteq \Omega \subseteq \ZZ^U$, we can induce it through $\Gamma_{\phi,v}(\Psi)$ to find a lift $Q = \ind{P}{\Gamma_{\phi,v}(\Psi)} \subseteq \ZZ^V$ with the additional property that it is contained in the box $\Psi$.
We denote such a lift as
\[
\lift{\Psi}{\phi}{v}{P} = \ind{P}{\Gamma_{\phi,v}(\Psi)} \subseteq \Psi \, .
\]
\end{construction}

\begin{remark}
The proof that $\Gamma_{\phi,v}(\Psi)$ is \M-convex is as follows.
It is straightforward to verify that $\SetOf{(y,-x) \in \ZZ^{V} \times \ZZ^{U}}{x = \pi_{\phi}(y) + v}$ is an unbounded \M-convex set.
Restricting this to the box $\Psi \times -\Omega$ remains \M-convex by \Cref{thm:box+plank}.
Note that we cannot do more general restrictions, for example to arbitrary $\Mnat$-convex sets, without losing \M-convexity.
\end{remark}

\begin{construction}[Matroid lifts]\label{construction:matroid-lifts}
As a special case of box lifts, we show how one can lift any finite \M-convex set to a matroid.
Let $P \subseteq \ZZ^U$ be an \M-convex set contained in the box $\Omega \subseteq \ZZ^U$ as above.
Define $b \in \ZZ^U$ by $b_i = \overline{\omega}_i - \underline{\omega}_i$.
Note that we can translate $P$ by $-\underline{\omega}$ to bound it by the box $\prod_i [0,b_i]$.
In this case, $P -\underline{\omega}$ is already a $b^*$-polymatroid where $b^*= \max(b_i)$.

We define the ground set $V = \bigsqcup_{i \in U} V_i$ where $V_i = [b_i]$, and the associated surjection $\phi\colon V \rightarrow U$ that maps $j \in V_i$ to $i$.
Observe that
\[
\Omega_i = \underbrace{[0,1] + \cdots + [0,1]}_{b_i} + \underline{\omega}_i \, ,
\]
and so the box $\square = [0,1]^V$ maps onto $\Omega$ under $\pi_\phi$ and translation by $\underline{\omega}$.

We consider the linking set $\Gamma_{\phi, \underline{\omega}}(\square)$ from $\Omega$ to the box $\square$, i.e.
\[
\Gamma_{\phi, \underline{\omega}}(\square) = \BiggSetOf{(y,-x) \in \ZZ^{V} \times \ZZ^{U}}{x \in \Omega \, , \, y \in [0,1]^V \, , \, x_i = (\sum_{j \in V_i} y_j) + \underline{\omega}_i} \, .
\]
Then the lift of $P$ through $\Gamma_{\phi, \underline{\omega}}(\square)$ is
\[
\lift{\square}{\phi}{\underline{\omega}}{P} = \ind{P}{\Gamma_{\phi, \underline{\omega}}(\square)} = \SetOf{y \in \{0,1\}^V}{\pi_{\phi}(y) + \underline{\omega} \in P} \, .
\]
As $\lift{\square}{\phi}{\underline{\omega}}{P}$ is \M-convex and contained in $[0,1]^V$, it forms the bases of a matroid on $V$.
Moreover, by construction it is necessarily a matroid lift of $P$.
%Moreover, we can induce $P^\natural$ through $\Gamma_\phi$ to get $Q^\natural = \ind{P^\natural}{\Gamma_\phi}$, where $P = (P^\natural)^\uparrow$ and $Q = (Q^\natural)^\uparrow$, and the points in $Q^\natural$ are the independent sets of $Q$.
\end{construction}

\begin{remark}
Given a polymatroid $P \subseteq \ZZ_{\geq 0}^U$ with submodular function $p\colon 2^U \rightarrow \ZZ_{\geq 0}$, we can define a matroid lift that is minimal in the following sense.
The smallest box containing $P^\natural$ is precisely $\prod_{i \in U} [0,p(i)]$, hence we let $V = \bigsqcup_{i \in U} V_i$ with $|V_i| = p(i)$, and define the surjection $\phi \colon V \rightarrow U$ by $\phi(j) = i$ for all $j \in V_i$.
Then inducing $P$ through the linking set
\[
\Gamma_{\phi}(\square) = \BiggSetOf{(y,-x) \in \ZZ^{V} \times \ZZ^{U}}{x \in \prod_{i \in U} [0,p(i)] \, , \, y \in [0,1]^V \, , \, \pi_{\phi}(y) = x} \, .
\]
gives a matroid $\lift{\square}{\phi}{0}{P} = \ind{P}{\Gamma_{\phi}(\square)}$.
This is precisely the minimal (multisymmetric) matroid lift in the sense of~\cite{CrowleyEtAl:2022}, the natural matroid in the sense of~\cite{BoninChunFife:2023}, and it has appeared a number of times in the literature previously, for example~\cite{Helgason:1974,Lovasz:1977}.
In the former, this construction is used to define the Bergman fan of a polymatroid, and so this lift needs to be canonical.
However, we will need the freedom to lift to non-minimal matroids, as we will need to compare matroid lifts of polymatroids and \M-convex sets in some compatible way.
\end{remark}

\begin{construction}[$k$-polymatroid lifts]
More generally, we can lift any \M-convex set to a $k$-polymatroid for any non-negative integer $k$.
%\mb{Do we need $k \geq \rank(P)$?} \bs{I don't think so? If $k$ is small we can just increase the size of the ground set. Rank shouldn't matter as we can arbitrarily change the rank by translating}
Let $P \subseteq \Omega \subseteq \ZZ^U$ be as above.
In the notation of \Cref{construction:matroid-lifts}, we write each $b_i$ uniquely as $b_i = m_i \cdot k + r_i$ where $m_i, r_i$ non-negative integers with $0< r_i \leq k$.
Observe that we can write
\[
\Omega_i = \left(\sum_{i=1}^{m_i} [0,k]\right) + [0,r_i] + \underline{\omega}_i \, .
\]
As such, we define the ground set $V = \bigsqcup_{i \in U} V_i$ where $V_i = [m_i + 1]$, and the associated surjection $\phi\colon V \rightarrow U$ that maps $j \in V_i$ to $i$.
Define the box
\[
B = \prod_{i \in U} \left(\left(\prod_{j = 1}^{m_i} [0,k]\right) \times [0,r_i]\right)  \subseteq [0,k]^V \subseteq \ZZ^V \, ,
\]
then $\Omega = \pi_\phi(B) + \underline{\omega}$.

Consider the linking set $\Gamma_{\phi,\underline{\omega}}(B)$ from $\Omega$ to the box $B\subseteq [0,k]^V$, i.e.
\[
\Gamma_{\phi,\underline{\omega}}(B) = \BiggSetOf{(y,-x) \in \ZZ^{V} \times \ZZ^{U}}{x \in \Omega^U \, , \, y \in B \, , \, x_i = (\sum_{j \in V_i} y_j) + \underline{\omega}_i} \, .
\]
Then the induction of $P$ through $\Gamma_{\phi,\underline{\omega}}$ is the set
\[
\lift{B}{\phi}{\underline{\omega}}{P} = \ind{P}{\Gamma_{\phi,\underline{\omega}}(B)} = \SetOf{y \in B}{\pi_{\phi}(y) + \underline{\omega} \in P} \, .
\]
As $\lift{B}{\phi}{\underline{\omega}}{P}$ is \M-convex and contained in $[0,k]^V$, it forms a $k$-polymatroid.
%Moreover, we can induce $P^\natural$ through $\Gamma_\phi$ to get $Q^\natural = \ind{P^\natural}{\Gamma_\phi}$, where $P = (P^\natural)^\uparrow$ and $Q = (Q^\natural)^\uparrow$, and the points in $Q^\natural$ are the independent sets of $Q$.
\end{construction}

\subsection{Compatible lifts}
Consider two \M-convex sets $P, Q \subseteq \ZZ^U$.
We would like to lift these sets in some compatible way such that their quotient structure is preserved.
We first show that if the surjection and translation are the same, then lifts being quotients implies the original \M-convex sets were quotients.

\begin{proposition}[\eqref{lift-quotient} $\implies$ \eqref{Mnatural}]\label{prop:surjection-quotient}
Let $P,Q \subseteq \ZZ^U$ and $M, N \subseteq \ZZ^V$ be \M-convex sets such that
\[
P = \pi_\phi(M) + v \, , \, Q = \pi_\phi(N) + v \, ,
\]
for some surjection $\phi\colon V \rightarrow U$ and vector $v \in \ZZ^U$.
If $M \quotient N$ then $P \quotient Q$.
\end{proposition}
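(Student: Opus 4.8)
The plan is to reduce to the compliant-functions characterization \eqref{submod-fctns-compliant} of \Cref{thm:quotient} by computing the submodular functions of $P$ and $Q$ explicitly in terms of those of $M$ and $N$ together with the data $(\phi,v)$. Write $m,n\colon 2^V\to\ZZ$ for the submodular functions of $M$ and $N$, and $p,q\colon 2^U\to\ZZ$ for those of $P$ and $Q$ (these exist since $P,Q$ are $\M$-convex by hypothesis).

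First I would record how the submodular function transforms under $\pi_\phi$ and translation by $v$. Unravelling~\eqref{eq:projection-from-partition}, for any $y\in\ZZ^V$ and $A\subseteq U$ one has $(\pi_\phi(y))(A)=y(\phi^{-1}(A))$, hence for $x=\pi_\phi(y)+v$ we get $x(A)=y(\phi^{-1}(A))+v(A)$. Taking the maximum over $y\in M$ and using the correspondence of \Cref{thm:Mconv+submod+permutohedra+correspondence} gives
\[
p(A)=\max_{x\in P}x(A)=\Bigl(\max_{y\in M}y(\phi^{-1}(A))\Bigr)+v(A)=m(\phi^{-1}(A))+v(A),
\]
and symmetrically $q(A)=n(\phi^{-1}(A))+v(A)$ for all $A\subseteq U$.

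Next I would verify compliance of $q$ and $p$ directly. Fix $A\subseteq B\subseteq U$. Since $\phi^{-1}$ is monotone, $\phi^{-1}(A)\subseteq\phi^{-1}(B)\subseteq V$, so the hypothesis $M\quotient N$ — i.e.\ compliance of $n$ and $m$ — applied to this pair yields $n(\phi^{-1}(B))-n(\phi^{-1}(A))\le m(\phi^{-1}(B))-m(\phi^{-1}(A))$. Adding $v(B)-v(A)$ to both sides and substituting the formulas above gives $q(B)-q(A)\le p(B)-p(A)$. As $A\subseteq B$ were arbitrary, $p$ and $q$ are compliant, i.e.\ $P\quotient Q$.

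There is essentially no obstacle here: the only point requiring a little care is the identity $(\pi_\phi(y))(A)=y(\phi^{-1}(A))$ together with the monotonicity of $A\mapsto\phi^{-1}(A)$, after which the conclusion is a one-line manipulation of the compliance inequality. (One could equivalently argue via~\eqref{bases+containment} or~\eqref{submod-poly-containment}, but the submodular-function route is the most economical.)
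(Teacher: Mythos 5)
Your proof is correct, but it takes a genuinely different route from the paper's. You reduce to the compliant-functions characterization \eqref{submod-fctns-compliant} of \Cref{thm:quotient} by computing the submodular functions of $P$ and $Q$ explicitly, namely $p(A)=m(\phi^{-1}(A))+v(A)$ and $q(A)=n(\phi^{-1}(A))+v(A)$, and then pulling back the compliance inequality for $(m,n)$ along the monotone map $A\mapsto\phi^{-1}(A)$. The paper instead argues via the top-and-bottom characterization \eqref{Mnatural}: it takes the $\Mnat$-convex set $R$ with $M=R^\uparrow$ and $N=R^\downarrow$, sets $S=\pi_\phi(R)+v$, and observes that $\pi_\phi$ and translation preserve the coordinate-sum order, so $P=S^\uparrow$ and $Q=S^\downarrow$. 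That geometric argument is shorter but relies (without restating it) on the fact that $\pi_\phi$ sends $\Mnat$-convex sets to $\Mnat$-convex sets, an aggregation-type preservation property; your calculation sidesteps this, since $P$ and $Q$ are assumed $\M$-convex from the outset, and it has the added benefit of recording the explicit transformation law for submodular functions under $\pi_\phi$ and translation, which is reusable elsewhere.
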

\begin{proof}
Observe that $\pi_\phi$ preserves the partial order given by coordinate sum:
\[
x(V) < y(V) \, \Longleftrightarrow \, \pi_\phi(x)(U) < \pi_\phi(y)(U) \, , \quad \forall x,y \in \ZZ^V \, .
\]
Recall that $M \quotient N$ if and only if there exists an $\Mnat$-convex set $R \subseteq \ZZ^V$ such that $M = R^\uparrow$ and $N = R^\downarrow$.
Let $S = \pi_\phi(R) + v \subseteq \ZZ^U$ be the translated $\Mnat$-convex set in the image of $\pi_\phi$.
As $\pi_\phi$ and translation preserves the partial order given by coordinate sum, we have
\[
S^\uparrow = \pi_\phi(M) + v = P \quad , \quad S^\downarrow = \pi_\phi(N) + v = Q \, .
\]
\end{proof}
The converse to this proposition is not true as the following example shows.

\begin{example}
Consider any two \M-convex sets $M, N \subseteq \ZZ^V$ with $\rank(M) \geq \rank(N)$.
Given the trivial surjection $\phi\colon V \rightarrow \{e\}$ that sends $\phi(i) = e$ for all $i \in V$, the induced projection is $\pi_\phi\colon\ZZ^V \rightarrow \ZZ^e$ where $\pi_\phi(x) = x(V)$.
Let $P = \pi_\phi(M)$ and $Q = \pi_\phi(N) \subseteq \ZZ^e$.
Then $M,N$ are lifts of $P,Q$ respectively with the same projection (and translation).
However, we have $P \quotient Q$ for any choice of $M,N$, but generally we will not have $M \quotient N$.
\end{example}

To rectify this issue, we instead consider a lift of not just $P$ and $Q$, but a local neighbourhood containing them both via a box lift.

\begin{definition}\label{def:compatible+lift}
Let $P, Q \subseteq \ZZ^U$ be \M-convex sets with respective lifts $M, N \in \ZZ^V$.
These lifts are \emph{compatible} if there exists a surjection $\phi\colon V \rightarrow U$, vector $v \in \ZZ^U$ and box $B \subseteq \ZZ^V$ with $M, N \subseteq B$ such that
\[
M = \lift{B}{\phi}{v}{P} \quad , \quad N = \lift{B}{\phi}{v}{Q} \, .
\]
\end{definition}

With this definition, the implication from \eqref{Mnatural} to \eqref{lift-quotient} follows immediately from \Cref{lem:induction+quotient}.

\begin{proposition}[\eqref{Mnatural} $\implies$ \eqref{lift-quotient}]\label{lem:compatible-boxlifts-quotients}
Let $P, Q \subseteq \ZZ^U$ be \M-convex sets such that $P \quotient Q$.
Then for any compatible matroid lifts $M, N \subseteq \ZZ^V$ of $P$ and $Q$ respectively, we have $M\quotient N$.
\end{proposition}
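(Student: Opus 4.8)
The plan is to recognize that compatibility forces $M$ and $N$ to be the inductions of $P$ and of $Q$ through \emph{one and the same} linking set, so that the statement reduces immediately to \Cref{lem:induction+quotient}.

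First I would unpack \Cref{def:compatible+lift}: there exist a surjection $\phi\colon V \rightarrow U$, a vector $v \in \ZZ^U$ and a box $B \subseteq \ZZ^V$ with $M, N \subseteq B$ such that
\[
M = \lift{B}{\phi}{v}{P} = \ind{P}{\Gamma_{\phi,v}(B)} \quad , \quad N = \lift{B}{\phi}{v}{Q} = \ind{Q}{\Gamma_{\phi,v}(B)} \, ,
\]
where $\Gamma_{\phi,v}(B) \subseteq \ZZ^V \times \ZZ^U$ is the linking set of the box-lift construction from \Cref{sec:box+lifts}; recall that it is indeed \M-convex, hence a genuine linking set. Since $M$ and $N$ are \M-convex sets, they are in particular non-empty, so both $\ind{P}{\Gamma_{\phi,v}(B)}$ and $\ind{Q}{\Gamma_{\phi,v}(B)}$ are non-empty and the hypotheses of \Cref{lem:induction+quotient} hold for the linking set $\Gamma_{\phi,v}(B)$.

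It then suffices to invoke \Cref{lem:induction+quotient}: from $P \quotient Q$ we obtain $\ind{P}{\Gamma_{\phi,v}(B)} \quotient \ind{Q}{\Gamma_{\phi,v}(B)}$, that is $M \quotient N$, and since $M, N$ were an arbitrary compatible pair this is exactly \eqref{lift-quotient}. I do not expect a real obstacle: the only points requiring care are the bookkeeping ones --- that \Cref{def:compatible+lift} genuinely provides the \emph{same} $\Gamma_{\phi,v}(B)$ for $P$ and $Q$, and that this set is \M-convex --- both of which are already established in \Cref{sec:box+lifts}. Combined with \Cref{prop:surjection-quotient}, this closes the equivalence \eqref{Mnatural} $\Longleftrightarrow$ \eqref{lift-quotient} of \Cref{thm:quotient}.
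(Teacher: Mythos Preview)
Your proposal is correct and matches the paper's own argument exactly: the paper states that the implication ``follows immediately from \Cref{lem:induction+quotient}'' and gives no further proof. You have simply made explicit the unpacking of \Cref{def:compatible+lift} (same $\Gamma_{\phi,v}(B)$ for both lifts) and the non-emptiness check needed to invoke that lemma, which is precisely the intended reasoning.
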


This completes the claim that for any \M-convex sets $P,Q \subseteq \ZZ^U$, we have $P \quotient Q$ if and only if $M \quotient N$ for every compatible matroid lift $M,N$ of $P,Q$.
Matroid quotients have many additional characterizations that we can translate into \M-convex quotient conditions.
For a more comprehensive list, see~\cite[Proposition 7.4.7]{Brylawski:1986}.

We noted in \Cref{prop:truncation+quotient} that truncations $P^{tr}_k$ are the maximal, or generic, quotients of an \M-convex set $P \subseteq \ZZ^U$.
We would like for this maximality to be preserved for any matroid lift $M \subseteq \ZZ^V$ of $P$.
However, as discussed in \Cref{rem:truncation-submodular-vs-polymatroid}, the matroid theoretic definition of the $k$-th truncation of $M$ is $M^{tr}_k \cap \ZZ_{\geq 0}^V$, where the intersection with the positive orthant ensures the resulting object is also a matroid.
The following proposition shows that truncations of \M-convex sets are lifted to matroid truncations of the lifted \M-convex set.

\begin{proposition}
Let $P \subseteq \ZZ^U$ be an \M-convex set and $P^{tr}_k \subseteq \ZZ^U$ its $k$-th truncation.
For any compatible matroid lifts $M, N \subseteq \ZZ^V$ of $P$ and $P^{tr}_k$ respectively, we have $N = M^{tr}_k \cap \ZZ_{\geq 0}^V$.
\end{proposition}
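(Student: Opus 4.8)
The plan is to prove the two inclusions $N\subseteq M^{tr}_k\cap\ZZ^V_{\geq 0}$ and $M^{tr}_k\cap\ZZ^V_{\geq 0}\subseteq N$ separately: the first will follow from the quotient machinery already developed, and the second is a direct computation with the projection $\pi_\phi$. First I would fix notation. By \Cref{def:compatible+lift}, with the matroid lifts realised as in \Cref{construction:matroid-lifts}, there are a surjection $\phi\colon V\to U$ and a vector $v\in\ZZ^U$ such that, writing $\square=[0,1]^V$,
\[
M=\lift{\square}{\phi}{v}{P}=\SetOf{y\in\{0,1\}^V}{\pi_\phi(y)+v\in P},\qquad N=\lift{\square}{\phi}{v}{P^{tr}_k}=\SetOf{y\in\{0,1\}^V}{\pi_\phi(y)+v\in P^{tr}_k}.
\]
Since $\pi_\phi(y)(U)=y(V)$ for all $y$ and $M,N$ are nonempty basis sets of matroids on $V$, every $z\in M$ satisfies $z(V)+v(U)=(\pi_\phi(z)+v)(U)=\rank(P)$, so $\rank(M)=\rank(P)-v(U)$; likewise $\rank(N)=\rank(P^{tr}_k)-v(U)=\rank(M)-k$ using $\rank(P^{tr}_k)=\rank(P)-k$. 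Finally, by \Cref{rem:truncation-submodular-vs-polymatroid}, $M^{tr}_k\cap\ZZ^V_{\geq 0}$ is the basis set of the $k$-fold matroid truncation of the matroid with basis set $M$, that is,
\[
M^{tr}_k\cap\ZZ^V_{\geq 0}=\SetOf{y\in\{0,1\}^V}{y\leq z\text{ for some }z\in M,\ y(V)=\rank(M)-k}.
\]

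For $N\subseteq M^{tr}_k\cap\ZZ^V_{\geq 0}$, I would invoke the earlier results. By \Cref{prop:truncation+quotient} we have $P\quotient P^{tr}_k$, hence $M\quotient N$ by \Cref{lem:compatible-boxlifts-quotients}. As $M$ and $N$ lie in $[0,1]^V$, their submodular functions are the rank functions $p_M,p_N\colon 2^V\to\ZZ$ of the underlying matroids, and compliance of these (take $X=\emptyset$ in \Cref{def:compliant}) gives $p_N(S)\leq p_M(S)$ for all $S\subseteq V$. If $y\in N$ then $S:=\supp(y)$ is a basis of the matroid of $N$, so $|S|=p_N(S)=\rank(N)=\rank(M)-k$; combining with $p_N(S)\leq p_M(S)\leq|S|$ forces $p_M(S)=|S|$, i.e.\ $S$ is an independent set of size $\rank(M)-k$ in the matroid of $M$, which is exactly the condition $y\in M^{tr}_k\cap\ZZ^V_{\geq 0}$.

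For the reverse inclusion, take $y\in M^{tr}_k\cap\ZZ^V_{\geq 0}$, so $y\leq z$ for some $z\in M$ and $y(V)=\rank(M)-k$. Then $\pi_\phi(y)\leq\pi_\phi(z)$ coordinatewise, so $\pi_\phi(y)+v=(\pi_\phi(z)+v)-\pi_\phi(z-y)\in P+\ZZ^U_{\leq 0}$ since $\pi_\phi(z)+v\in P$ and $\pi_\phi(z-y)\geq 0$, while $(\pi_\phi(y)+v)(U)=y(V)+v(U)=\rank(M)-k+v(U)=\rank(P)-k$. Hence $\pi_\phi(y)+v\in(P+\ZZ^U_{\leq 0})\cap H_{\rank(P)-k}=P^{tr}_k$, and since $y\in\{0,1\}^V$ this means $y\in N$.

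The step I expect to require the most care is the bookkeeping in the first paragraph: one must check that the matroid-theoretic truncation — the one obtained by intersecting with $\ZZ^V_{\geq 0}$ — of the \emph{lifted} matroid really corresponds to the lift of $P^{tr}_k$, including the rank shift by $v(U)$, and it is precisely here that using the full cube $[0,1]^V$ in the lift (rather than an arbitrary box $B$) is what makes the inclusion $M^{tr}_k\cap\ZZ^V_{\geq 0}\subseteq N$ go through, since it guarantees every element of $M^{tr}_k\cap\ZZ^V_{\geq 0}$ lies in the box over which $N$ is taken. Once the displayed description of $M^{tr}_k\cap\ZZ^V_{\geq 0}$ and the two rank identities are in hand, both inclusions are one-liners.
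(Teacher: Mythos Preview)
Your proof is correct and follows the paper's two-inclusion strategy: the inclusion $N\subseteq M^{tr}_k\cap\ZZ^V_{\geq 0}$ via the quotient $M\quotient N$ together with \Cref{prop:truncation+quotient} (which you unpack through the rank functions rather than cite directly), and the reverse inclusion by pushing an element down through $\pi_\phi$ into $P^{tr}_k$. Your caution about needing the box to be $\square=[0,1]^V$ for the reverse inclusion is well-placed---the paper's proof works with a general box $B$ and its final line ``$x\in \lift{B}{\phi}{v}{P^{tr}_k}$'' tacitly uses $x\in B$, which your restriction to the full cube guarantees.
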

\begin{proof}
Fix $\phi \colon V \rightarrow U$, $v \in \ZZ^U$ and box $B \subseteq \ZZ^V$ such that
\[
M = \lift{B}{\phi}{v}{P} \quad , \quad N = \lift{B}{\phi}{v}{P_k^{tr}} \, .
\]
As $P \quotient P_k^{tr}$ by \Cref{prop:truncation+quotient}, it follows that $M \quotient N$.
Moreover, \Cref{prop:truncation+quotient} along with $N$ being a matroid implies that $N \subseteq M^{tr}_k \cap \ZZ_{\geq 0}^V$.
It suffices to show the other containment.

Let $x \in M^{tr}_k \cap \ZZ_{\geq 0}^V$, then $x \leq y$ in all coordinates for some $y \in M$.
As $\pi_\phi$ is a coordinate summing projection that preserves ranks, we have
\[
z = \pi_\phi(x) + v \leq \pi_\phi(y) + v \in P \, ,
\]
and so $z \in P_k^{tr}$.
As such, $x \in \lift{B}{\phi}{v}{P_k^{tr}} = N$ by definition of the linking set $\Gamma_{\phi,v}(B)$.
\end{proof}

\subsection{Flag \M-convex polytopes} \label{sec:flag+M-convex}
We finish this section by relating compatible matroid lifts to the structure of Minkowski sums of quotients.
Recall that an \M-convex set on $\ZZ^V$ is a matroid (or $1$-polymatroid) if it is contained in the box $[0,1]^V$; for example, the $m$-hypersimplex $\Delta(m,V)$ introduced in \Cref{ex:matroids-M-convex}. 
It was noted in \cite{borovik03_coxetermatroids} that one can determine when two matroids form a quotient from the structure of their Minkowski sum.
We rephrase their statement in terms of \M-convex sets. 

\begin{theorem}[{\cite[Theorem 1.11.1]{borovik03_coxetermatroids}}]
	\label{th:matroid-quotients-minkowski}
	Let $R \subseteq \ZZ^V$ be an \M-convex set.
	Then $R$ is the Minkowski sum $M + N$ of two matroids $M, N \subseteq \ZZ^V$ forming a quotient if and only if the vertices of $R$ form a subset of the vertices of $\Delta(\rank(M),V) + \Delta(\rank(N),V)$.
\end{theorem}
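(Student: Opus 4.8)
The plan is to work throughout with vertices. Recall from \Cref{thm:Mconv+submod+permutohedra+correspondence} and the description of generalized permutohedra that an \M-convex set is recovered from its vertex family $(x_\sigma)_{\sigma \in \Sym(V)}$ as $\conv\{x_\sigma\} \cap \ZZ^V$; that a family $(x_\sigma)_\sigma$ of integer vectors is the vertex family of an \M-convex set exactly when $x_\sigma - x_{\sigma s_i}$ is a non-negative integer multiple of $e_{\sigma(i)} - e_{\sigma(i+1)}$ for all $\sigma$ and all adjacent transpositions $s_i=(i,i+1)$; and that, passing to $c$-maximal faces for a linear functional with $c_{\sigma(1)} > \cdots > c_{\sigma(n)}$, the $\sigma$-vertex of a Minkowski sum is the sum of the $\sigma$-vertices of the summands. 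I would first record the combinatorial description of the target polytope: for $a \geq b \geq 0$, the vertices of $\Delta(a,V)+\Delta(b,V)$ are exactly the vectors in $\{0,1,2\}^V$ with precisely $b$ coordinates equal to $2$ and $a-b$ coordinates equal to $1$, because the uniform matroids $\Delta(a,V),\Delta(b,V)$ have $\sigma$-vertices $e_{\sigma(1)}+\cdots+e_{\sigma(a)}$ and $e_{\sigma(1)}+\cdots+e_{\sigma(b)}$.

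For the forward implication, suppose $R = M+N$ with $M,N$ matroids and, after relabelling, $M \quotient N$; set $a = \rank(M) \geq b = \rank(N)$. For each $\sigma$, the vertex $r_\sigma$ of $R$ equals $m_\sigma + n_\sigma$, where $m_\sigma \in M$ and $n_\sigma \in N$ are the $\sigma$-vertices. Now $m_\sigma \in \{0,1\}^V$ has $a$ ones, $n_\sigma \in \{0,1\}^V$ has $b$ ones, and $m_\sigma \geq n_\sigma$ coordinatewise by \Cref{thm:quotient+iff+bases+contained}; so $\supp(n_\sigma) \subseteq \supp(m_\sigma)$ and $r_\sigma$ has $b$ twos and $a-b$ ones, i.e. $r_\sigma$ is a vertex of $\Delta(a,V)+\Delta(b,V)$. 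Since every vertex of $R$ is some $r_\sigma$, this gives the containment.

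For the converse, assume $\vertices(R) \subseteq \vertices(\Delta(a,V)+\Delta(b,V))$ for some $a \geq b \geq 0$ (these integers are forced by counting $2$'s and $1$'s in any one vertex of $R$), so that $R \subseteq \{0,1,2\}^V$. For each $\sigma$ I would set $n_\sigma = e_{\{i : (r_\sigma)_i = 2\}}$ and $m_\sigma = r_\sigma - n_\sigma = e_{\supp(r_\sigma)}$, so that $m_\sigma,n_\sigma \in \{0,1\}^V$, $m_\sigma \geq n_\sigma$, and $m_\sigma,n_\sigma$ have $a$ and $b$ ones respectively. The main step is to verify that $(m_\sigma)_\sigma$ and $(n_\sigma)_\sigma$ are vertex families of generalized permutohedra. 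Fix $\sigma$ and $s_i$ and write $p=\sigma(i)$, $q=\sigma(i+1)$. Since $R$ is \M-convex, $r_\sigma - r_{\sigma s_i} = k(e_p - e_q)$ with $k \in \ZZ_{\geq 0}$, and there is nothing to check when $k=0$. When $k>0$, because $r_\sigma$ and $r_{\sigma s_i}$ agree off $\{p,q\}$ and both have $b$ twos and $a-b$ ones, their values on $\{p,q\}$ are swapped, forcing $\bigl((r_{\sigma s_i})_p,(r_{\sigma s_i})_q\bigr) \in \{(0,1),(0,2),(1,2)\}$; a short check of these three cases shows $m_\sigma - m_{\sigma s_i}$ and $n_\sigma - n_{\sigma s_i}$ each lie in $\{0,\,e_p-e_q\}$. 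Hence $M := \conv\{m_\sigma\} \cap \ZZ^V$ and $N := \conv\{n_\sigma\} \cap \ZZ^V$ are \M-convex, contained in $[0,1]^V$, so matroids of ranks $a$ and $b$, and $m_\sigma \geq n_\sigma$ yields $M \quotient N$ by \Cref{thm:quotient+iff+bases+contained}. Finally $m_\sigma + n_\sigma = r_\sigma$ is the $\sigma$-vertex of $M+N$ for every $\sigma$, so $\conv(M)+\conv(N) = \conv(M+N) = \conv(R)$; intersecting with $\ZZ^V$ and using that $M+N$ and $R$ are \M-convex gives $R = M+N$.

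I expect the only real obstacle to be the three-case check in the converse, together with the observation that it depends essentially on a \emph{single} pair $(a,b)$ working for all vertices of $R$: dropping that hypothesis would permit $\bigl((r_{\sigma s_i})_p,(r_{\sigma s_i})_q\bigr) = (0,2)$ while $\bigl((r_\sigma)_p,(r_\sigma)_q\bigr) = (1,1)$, and then $m_\sigma - m_{\sigma s_i} = e_p$ is not a multiple of $e_p - e_q$, so $(m_\sigma)_\sigma$ would fail to define a generalized permutohedron. The fixed-$(a,b)$ hypothesis is exactly what excludes this. Everything else reduces to standard facts about vertices of Minkowski sums and to the equivalence \eqref{submod-fctns-compliant} $\Leftrightarrow$ \eqref{bases+containment} of \Cref{thm:quotient}, already established as \Cref{thm:quotient+iff+bases+contained}.
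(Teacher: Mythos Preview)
The paper does not supply its own proof of this statement: it is quoted verbatim as \cite[Theorem 1.11.1]{borovik03_coxetermatroids} and used as a black box in the proof of \Cref{prop:M-convex-quotients-minkowski}. So there is no ``paper's proof'' to compare against, and your proposal should be read as an independent argument.

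As such it is correct. The forward direction is immediate from \Cref{thm:quotient+iff+bases+contained} once one notes that the $\sigma$-vertex of a Minkowski sum is the sum of the $\sigma$-vertices. For the converse, your decomposition $r_\sigma = m_\sigma + n_\sigma$ with $n_\sigma = e_{\{i:(r_\sigma)_i=2\}}$ and $m_\sigma = e_{\supp(r_\sigma)}$ is the natural one, and the three-case check is indeed the heart of the matter. It is worth spelling out why only those three cases survive: writing $(\alpha,\beta) = ((r_\sigma)_p,(r_\sigma)_q)$ and $(\alpha',\beta') = (\alpha-k,\beta+k)$ with $k>0$ and all entries in $\{0,1,2\}$, the possibilities are $(1,0)\to(0,1)$, $(1,1)\to(0,2)$, $(2,0)\to(1,1)$, $(2,1)\to(1,2)$, $(2,0)\to(0,2)$; the constraint that $r_\sigma$ and $r_{\sigma s_i}$ have the \emph{same} number of $2$'s and of $1$'s eliminates the second and third. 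In the surviving cases one checks directly that $m_\sigma-m_{\sigma s_i}$ and $n_\sigma-n_{\sigma s_i}$ are each $0$ or $e_p-e_q$. One small point you use implicitly and might make explicit: having established that $(m_\sigma)_\sigma$ satisfies the edge-direction condition, the polytope $\conv\{m_\sigma\}$ is a generalized permutohedron \emph{and} $m_\sigma$ is its $\sigma$-vertex (the unique $c$-maximizer for $c_{\sigma(1)}>\cdots>c_{\sigma(n)}$); this is what lets you invoke \Cref{thm:quotient+iff+bases+contained} for $M\quotient N$ and identify the $\sigma$-vertex of $M+N$ as $m_\sigma+n_\sigma$.
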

The \M-convex sets obtainable from \Cref{th:matroid-quotients-minkowski} are precisely (the lattice points of) \emph{flag matroid polytopes}.
These polytopes play a crucial role in the study quotients of valuated matroids, as valuated matroid quotients can be characterized by subdivisions of flag matroid polytopes~\cite{BrandtEurZhang:2021}.
We will take a similar approach in \Cref{sec:functions} when we begin characterising quotients of \M-convex functions.
With this in mind, the notion of a `flag \M-convex polytope' is a natural notion to consider.

\begin{definition}\label{def:flag+M+convex+set}
Let $r = (r_1,\dots,r_k)$ be a tuple of strictly increasing positive integers and $\phi: V \to U$ a surjection with corresponding projection $\pi_\phi: \R^V \to \R^U$.
A \emph{flag \M-convex polytope} of type $(r, \phi)$ is an integral translate of a polytope $\pi_{\phi}(S) \subseteq \R^U$ where $S$ is a generalized permutohedron whose vertices are a subset of the vertices of $\sum_{j=1}^k \Delta(r_j,V)$. 
 A \emph{flag \M-convex set} of type $(r, \phi)$ is the set of integer points of a flag \M-convex polytope of type $(r, \phi)$. 
\end{definition}

With this terminology, the polytope $R$ arising in \Cref{th:matroid-quotients-minkowski} is a flag \M-convex set of type $((\rank(M),\rank(N)), \id)$ where $\id\colon V \rightarrow V$ is the identity map, i.e., $\pi_{\id}$ is the trivial projection.

The next result justifies the name of these sets. 
%Making use of the previous results presented in this section, we can make the following statement.

\begin{proposition}[\eqref{lift-quotient} $\iff$ \eqref{compressed-quotient}]
  \label{prop:M-convex-quotients-minkowski}
Let $R \subseteq \ZZ^U$ be an \M-convex set.
Then $R$ is the Minkowski sum $P + Q$ of two \M-convex sets $P,Q \subseteq \ZZ^U$ forming a quotient if and only if it is a flag \M-convex set of type $((\rank(P) +\ell,\rank(Q)+\ell),\phi)$ for some surjection $\phi$ onto $U$ and $\ell \in \ZZ$.
\end{proposition}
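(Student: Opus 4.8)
The plan is to reduce the general statement to the matroid case of \Cref{th:matroid-quotients-minkowski} by passing through compatible matroid lifts, using the fact that the coordinate-summing projection $\pi_\phi$ intertwines Minkowski sums, quotients (via \Cref{prop:surjection-quotient} and \Cref{lem:compatible-boxlifts-quotients}), and hypersimplices. The two implications will be handled separately but symmetrically.

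For the forward direction, suppose $R = P + Q$ with $P \quotient Q$. First I would invoke \Cref{construction:matroid-lifts} to choose compatible matroid lifts $M, N \subseteq \ZZ^V$ of $P$ and $Q$ with respect to a common surjection $\phi\colon V \to U$, vector $v \in \ZZ^U$ and box $B = [0,1]^V$; by \Cref{lem:compatible-boxlifts-quotients} we have $M \quotient N$. Then \Cref{th:matroid-quotients-minkowski} says $M + N$ is a flag \M-convex set whose vertices lie among the vertices of $\Delta(\rank(M),V) + \Delta(\rank(N),V)$, i.e.\ $M+N$ is a flag \M-convex set of type $((\rank(M),\rank(N)),\id)$. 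The key computation is that $\pi_\phi$ is a linear map sending $M+N$ to (a translate of) $P+Q = R$, and sending $\Delta(r,V)$ for the relevant ranks to (a translate of) a hypersimplex contributing to $\sum_j \Delta(r_j, U)$ — more precisely $\pi_\phi(M) = P - v$ and $\pi_\phi(N) = Q - v$, so $\pi_\phi(M+N) = R - 2v$; since $\rank(M) = \rank(P) + \ell$ and $\rank(N) = \rank(Q) + \ell$ for the common value $\ell$ coming from the translation/box (here $\ell = -v(U)$ up to sign bookkeeping), $R$ is a flag \M-convex polytope of type $((\rank(P)+\ell, \rank(Q)+\ell), \phi)$ by \Cref{def:flag+M+convex+set}, as the image under $\pi_\phi$ of a generalized permutohedron with vertices among those of $\Delta(\rank(M),V) + \Delta(\rank(N),V)$.

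For the converse, suppose $R$ is a flag \M-convex set of type $((\rank(P)+\ell, \rank(Q)+\ell),\phi)$, so $R$ is an integral translate of $\pi_\phi(S)$ where $S$ is a generalized permutohedron with vertices among those of $\Delta(r_1,V) + \Delta(r_2,V)$, with $r_1 = \rank(Q)+\ell \le r_2 = \rank(P)+\ell$. By \Cref{th:matroid-quotients-minkowski}, $S \cap \ZZ^V = M + N$ for matroids $M, N \subseteq \ZZ^V$ with $M \quotient N$ and $\rank(M) = r_2$, $\rank(N) = r_1$. Applying $\pi_\phi$ and the appropriate integral translation, set $P = \pi_\phi(M) + v$ and $Q = \pi_\phi(N) + v$ for the suitable $v \in \ZZ^U$; these are \M-convex by \Cref{prop:Mnat+lifts+to+M} (a coordinate projection of an \M-convex set with constant coordinate sum is again \M-convex, being a single layer of an $\Mnat$-convex set). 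Then $R = \pi_\phi(M+N) + 2v' = P + Q$ after matching up translations, and $P \quotient Q$ follows from $M \quotient N$ by \Cref{prop:surjection-quotient}. The ranks work out to $\rank(P) = \rank(M) - (\text{shift}) = r_2 - \ell$ and similarly for $Q$, as required.

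The main obstacle I expect is bookkeeping the translations and the constant $\ell$ correctly: \Cref{th:matroid-quotients-minkowski} is stated for untranslated matroid polytopes whose vertices sit among those of a \emph{sum} of standard hypersimplices, whereas \Cref{def:flag+M+convex+set} allows an integral translate and a projection, and the box lift construction introduces its own shift vector $\underline{\omega}$ (here playing the role of $v$). One must check that "flag \M-convex set of type $(r,\phi)$" is genuinely closed under the integral translations needed, and that the additive constant $\ell$ is exactly the discrepancy $\rank(M) - \rank(P)$ forced by the choice of box $B = [0,1]^V$ and surjection $\phi$ — this is where the hypothesis "for some $\ell \in \ZZ$" is doing real work, since different compatible matroid lifts can have different ranks but all differ from $\rank(P), \rank(Q)$ by the \emph{same} $\ell$. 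Once the translation/rank accounting is pinned down, both implications are essentially immediate consequences of \Cref{th:matroid-quotients-minkowski} together with the functoriality of $\pi_\phi$ established in \Cref{prop:surjection-quotient} and \Cref{lem:compatible-boxlifts-quotients}.
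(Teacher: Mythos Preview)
Your proposal is correct and follows essentially the same route as the paper's proof: reduce to the matroid case \Cref{th:matroid-quotients-minkowski} by passing to compatible matroid lifts $M,N$ via \Cref{lem:compatible-boxlifts-quotients} and \Cref{prop:surjection-quotient}, then use that $\pi_\phi$ commutes with Minkowski sum and compute the rank shift $\ell = v(U)$. The paper's argument is more compressed (it handles both directions simultaneously by stringing together the biconditionals), and your anticipated obstacle about translation and $\ell$ bookkeeping is exactly the only nontrivial computation; the paper resolves it in one line by setting $\ell = v(U)$ and noting $P + Q = \pi_\phi(M+N) - 2v$.
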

%\bs{Adjusted statement and proof slightly to account for translating adding a constant to the ranks (Also in main theorem)}

\begin{proof}
By \Cref{prop:surjection-quotient,lem:compatible-boxlifts-quotients}, $P \quotient Q$ form a quotient if and only if every compatible matroid lift $M,N$ forms a quotient $M \quotient N$.
Fix a choice of a compatible lift, i.e., $\phi \colon V \rightarrow U$ and $v \in \ZZ^U$ such that $P + v = \pi_{\phi}(M)$ and $Q + v = \pi_{\phi}(N)$.
By \Cref{th:matroid-quotients-minkowski}, $M \quotient N$ if and only if the vertices of $M + N$ form a subset of the vertices of $\Delta(\rank(M),V) + \Delta(\rank(N),V)$.
Moreover, setting $\ell = v(U)$ gives that $\rank(M) = \rank(P) + \ell$ 
and $\rank(N) = \rank(Q) + \ell$.
Projection commutes with Minkowski sum, and so $P + Q = \pi_\phi(M + N) - 2v$, giving the claim. 
\end{proof}

The following corollary captures an important special case, namely the flag of \M-convex sets arising from the lattice points in a polymatroid polytope. 

\begin{corollary} 
  Let $P_i \subseteq \ZZ^U$ be \M-convex sets for $0 \leq i \leq k$ with $\rank(P_{i+1}) = \rank(P_i) + 1$ and $P_0$ be the origin.
  Then $(P_0, P_1,\dots, P_k)$ forms a flag of \M-convex sets if and only if there is a surjection $\phi \colon V \to U$ such that  $\sum_{i=1}^{k} P_k = \pi_{\phi}(Q)$ for a subpermutohedron $Q$ of the permutohedron $\Pi_V$. 
\end{corollary}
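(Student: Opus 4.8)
The idea is to lift all the $P_i$ to matroids in a uniform way, read the claim off the polytopal description of flag matroids, and use that the sum of the first $k+1$ hypersimplices is a Minkowski summand of $\Pi_V$. (We read $\sum_{i=1}^k P_k$ as $\sum_{i=1}^k P_i=\sum_{i=0}^k P_i$, since $P_0=\{\0\}$.)

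For the forward implication, assume $(P_0,\dots,P_k)$ is a flag. Since $P_0=\{\0\}$, transitivity of quotients (\Cref{lem:M-convex-quotients-transitive}) gives $P_k\quotient P_i\quotient P_0$ for all $i$; comparing the compliant submodular functions at $\emptyset$ forces each $p_i$ to be monotone, so every $P_i$ is a polymatroid with $P_i\subseteq\ZZ_{\geq 0}^U$, and comparing them at singletons gives $\max_{x\in P_i}x_u=p_i(\{u\})\le p_k(\{u\})$ for all $u\in U$, where $p_k$ is the submodular function of $P_k$. Put $V=\bigsqcup_{u\in U}V_u$ with $|V_u|=p_k(\{u\})$, let $\phi\colon V\to U$ send $V_u$ to $u$, and lift all of $P_0,\dots,P_k$ simultaneously through $\Gamma_{\phi,\0}(\square)$ with $\square=[0,1]^V$ as in \Cref{construction:matroid-lifts}. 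This yields matroids $M_i=\lift{\square}{\phi}{\0}{P_i}$ on $V$ with $\pi_\phi(M_i)=P_i$ and $\rank(M_i)=i$; these lifts are compatible in the sense of \Cref{def:compatible+lift}, so \Cref{lem:compatible-boxlifts-quotients} applied to each consecutive pair shows $(M_0,\dots,M_k)$ is a flag of matroids, and $|V|=\sum_u p_k(\{u\})\ge p_k(U)=k$.

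Next I would invoke the polytopal characterization of flag matroids --- the extension of \Cref{th:matroid-quotients-minkowski} from pairs to flags of arbitrary length --- to conclude that the vertices of $R:=\sum_{i=0}^k M_i$ form a subset of the vertices of $\sum_{i=0}^k\Delta(i,V)$, hence $R\subseteq\sum_{i=0}^k\Delta(i,V)$. Since $\Pi_V=\sum_{i=0}^{|V|}\Delta(i,V)$, the polytope $\sum_{i=0}^k\Delta(i,V)$ is a Minkowski summand of $\Pi_V$ (the complementary summand being $\sum_{i=k+1}^{|V|}\Delta(i,V)$), so a suitable integral translate $Q$ of $R$ is a generalized permutohedron contained in $\Pi_V$, i.e.\ a subpermutohedron. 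As $\pi_\phi$ commutes with Minkowski sums and $P_0=\{\0\}$, we get $\sum_{i=1}^k P_i=\pi_\phi(R)=\pi_\phi(Q)$ up to the integral translation permitted by \Cref{def:flag+M+convex+set}. For the converse, given a subpermutohedron $Q\subseteq\Pi_V$ with $\sum_{i=1}^k P_i$ an integral translate of $\pi_\phi(Q)$, the same characterization writes $Q$ (up to translation) as the polytope $\sum_j N_j$ of a complete flag matroid $(N_0,\dots,N_{|V|})$ on $V$; \Cref{prop:surjection-quotient} then shows $(\pi_\phi(N_0),\dots,\pi_\phi(N_{|V|}))$ is a flag of \M-convex sets, and matching the rank-$i$ layers of $\pi_\phi(\sum_j N_j)=\sum_{i=1}^k P_i$ identifies $(P_0,\dots,P_k)$ with its nontrivial initial segment. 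Specializing $P_0=\{\0\}$ and taking the $P_i$ to be the lattice points in the layers of a polymatroid polytope gives the stated form.

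The main obstacles are twofold: (i) one must make available the multi-term version of \Cref{th:matroid-quotients-minkowski}, since the two-term statement does not directly yield the $(k+1)$-fold case; and (ii) in the converse one must match the given $P_i$ with the rank-$i$ layers of the pulled-back complete flag matroid. This is precisely where the hypotheses $\rank(P_{i+1})=\rank(P_i)+1$ and $P_0=\{\0\}$ are essential: they pin down the ranks and force every $P_i$ to be a polymatroid, so that the relevant hypersimplices $\Delta(0,V),\dots,\Delta(k,V)$ form the contiguous bottom block of $\{\Delta(i,V)\}_{i=0}^{|V|}$ --- and hence a genuine Minkowski summand of $\Pi_V$ --- rather than a scattered subset.
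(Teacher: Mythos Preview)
The paper gives no proof here, treating the statement as immediate from \Cref{prop:M-convex-quotients-minkowski} together with the multi-term form of \Cref{th:matroid-quotients-minkowski} from \cite{borovik03_coxetermatroids}. Your strategy --- lift compatibly to a matroid flag and invoke that theorem --- is exactly in this spirit, and your obstacle (i) is not serious: the cited source already treats flags of arbitrary length.

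The substantive gap is your passage from $R=\sum_i M_i$ to a subpermutohedron of $\Pi_V$. You argue that since $\sum_{j=0}^k\Delta(j,V)$ is a Minkowski summand of $\Pi_V$, ``a suitable integral translate $Q$ of $R$ is a generalized permutohedron contained in $\Pi_V$, i.e.\ a subpermutohedron.'' But in the sense of \Cref{def:flag+M+convex+set} a subpermutohedron of $\Pi_V$ must have its \emph{vertices} among those of $\Pi_V$, and containment does not give this: the vertices of $\sum_{j=0}^k\Delta(j,V)$ are the permutations of $(0,\dots,0,1,\dots,k)$, those of $\Pi_V$ the permutations of $(1,\dots,|V|)$, and for $k<|V|-1$ no translate carries one vertex set into the other. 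What is actually needed is either to arrange $|V|$ so that (up to translation) $\sum_{j=1}^{k}\Delta(j,V)$ \emph{coincides} with $\Pi_V$, or to complete $(M_0,\dots,M_k)$ to a full flag on $V$ in a way compatible with $\pi_\phi$; you do neither. Your converse has a parallel problem: from $\sum_i P_i=\pi_\phi(Q)$ you recover a complete flag $(N_j)$ with $\sum_i P_i=\sum_j\pi_\phi(N_j)$, but Minkowski summands cannot be read off from their sum, so ``matching the rank-$i$ layers'' does not by itself identify the given $P_i$ with $\pi_\phi(N_i)$.
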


The definition of flag \M-convex polytopes heavily relies on the surjection $\phi$, which relates the flag \M-convex polytope to flag matroid polytopes via projections.
It would be desirable to obtain a more self-contained notion, which solely relies on notions of \M-convex sets.
We thus pose the following question.

\begin{question}
  Is there a more direct description of flag \M-convex polytopes? 
\end{question}

\section{Quotients of \M-convex functions}\label{sec:functions}

In this section we extend the notions of quotients of \M-convex sets to \M-convex functions.
These naturally generalize \M-convex sets and valuated matroids. 
The goal of this section is to examine various notions of quotients for \M-convex functions and prove a hierarchy between these notions.
We first define all necessary terms, and then state the main result \Cref{thm:function+quotient}.

\begin{definition}[\cite{MurotaShioura:2018simpler}]
	The \emph{effective domain} of a function $f: \Z^E \to \R \cup \{\infty\}$ is the set $\dom f = \set{x \in \Z^E \mid f(x) < \infty}$.

	A function $f : \Z^E \to \R \cup \{\infty\}$ is an \emph{$\Mnat$-convex function} if $\dom(f)\neq\emptyset$ is bounded and the following conditions are satisfied:
	
	\begin{align}
		 &\forall  x, y \in \dom f \text{ with } x(E)>y(E) : 
		 \quad \
		f(x) + f(y) \geq \min_{j \in \supp^{+}(x-y)} \left( f(x - e_j) + f (y + e_j) \right) \, ,  \label{eq:mnat-axiom-between-layers} \\
		 &\forall x, y \in \dom f \text{ with } x(E) = y(E) \, , \, \forall i \in \supp^+(x-y) :  \label{eq:mnat-axiom-within-layers}    \\  \nonumber
		  & \hspace*{13 em}
		  f(x) + f(y) \geq \min_{j \in \supp^-(x-y)} \left(f(x - e_i + e_j) + f(y + e_i - e_j)\right) \, . 
	\end{align}

	A \emph{layer} of an $\Mnat$-convex function is the function restricted to a layer of the effective domain, i.e.
	\[
	f|_{H_k}(x)  = \begin{cases}
		f(x) & \text{ if } x \in \dom (f) \cap H_k \\
		\infty & \text{ otherwise.}
	\end{cases}
	\]
        
	We define the \emph{top layer} $f^{\uparrow}$ and \emph{bottom} layer $f^{\downarrow}$ as the highest and lowest nonempty layer respectively.
 A function  $f: \Z^E \to \R \cup \{\infty\}$  is an \emph{\M-convex function} if it is a layer of an $\Mnat$-convex function, i.e., satisfies \eqref{eq:mnat-axiom-within-layers}.
 The \emph{rank} of an \M-convex function is $\rank(f) = \rank(\dom(f))$.
\end{definition}
As we work almost entirely with bounded \M-convex sets, we define \M-convex functions to have bounded effective domain.
We note that this restriction is relaxed in many places in the literature.

Right from the definition one obtains that the effective domain of an \M-convex (resp. $\Mnat$-convex) function is always an \M-convex (resp. $\Mnat$-convex) set, and so is the set of minimizers.
This also holds in the reverse direction leading to an alternative characterization of \M-convex (resp. $\Mnat$-convex) functions in terms of their minimizers.
Given a function $f\colon\ZZ^E \rightarrow \Rinf$ and any linear functional $u \in (\R^E)^*$, the \emph{minimizer} $f^u$ of $f$ is the set
\[
f^u = \SetOf{x \in \ZZ^E}{f(x) - \langle u, x\rangle \leq f(y) - \langle u, y\rangle  \, \forall y \in \dom(f)} \, .
\]
%In the notation of \cite{Murota:2003} we have $f^u = \argmin f[-u]$.

\begin{theorem}[{\cite[Theorem 6.30]{Murota:2003}}] \label{thm:minimizers}
  Let $f\colon \ZZ^E \rightarrow \RR \cup \{\infty\}$ be a function with bounded non-empty effective domain.
$f$ is an \M-convex (resp. $\Mnat$-convex) function if and only if all of its minimizers $f^u$ are \M-convex (resp. $\Mnat$-convex) sets for all $u \in (\R^E)^*$.
\end{theorem}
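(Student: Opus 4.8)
\textbf{The forward direction} is a direct verification. Suppose $f$ is \M-convex (the $\Mnat$-convex case is handled identically; see the end of this paragraph) and fix $u \in (\R^E)^*$. As $\dom(f)$ is non-empty and bounded, $g := f - \langle u,\cdot\rangle$ attains a minimum value $m$ on $\dom(f)$, so $f^u = \SetOf{x}{g(x) = m}$ is non-empty. Crucially, $g$ satisfies the same exchange inequality \eqref{eq:mnat-axiom-within-layers} as $f$: for $x, y$ with $x(E) = y(E)$, $i \in \supp^+(x-y)$ and any $j \in \supp^-(x-y)$ we have $\langle u, x\rangle + \langle u, y\rangle = \langle u, x - e_i + e_j\rangle + \langle u, y + e_i - e_j\rangle$, so the linear terms cancel on both sides. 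Now take $x, y \in f^u$ and $i \in \supp^+(x-y)$; the inequality for $g$ gives some $j \in \supp^-(x-y)$ with $g(x - e_i + e_j) + g(y + e_i - e_j) \leq g(x) + g(y) = 2m$, and since each summand on the left is at least $m$ (with value $\infty$ if the point leaves $\dom f$) both must equal $m$, i.e. $x - e_i + e_j, y + e_i - e_j \in f^u$. Thus $f^u$ satisfies the set exchange axiom, and since $\dom(f)$ has constant coordinate sum this makes $f^u$ an \M-convex set. For the $\Mnat$-convex case one argues the same way, additionally using \eqref{eq:mnat-axiom-between-layers} (whose linear parts also cancel, as $\langle u, x\rangle + \langle u, y\rangle = \langle u, x - e_i\rangle + \langle u, y + e_i\rangle$) to verify axiom \eqref{Mnat:aug} of \Cref{def:mnatural-sets} for pairs of $f^u$ with different coordinate sum.

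\textbf{The converse} is the substantial direction. Assume every minimizer $f^u$ is \M-convex. First, $\dom f$ must have constant coordinate sum: letting $u$ run over positive multiples of the all-ones vector, the minimizer $f^u$ moves from the lowest to the highest layer of $\dom f$ as the multiple grows, so if $\dom f$ met more than one layer then some $f^u$ would contain points of two layers, which is impossible since an \M-convex set has constant coordinate sum. It then suffices to establish the exchange inequality \eqref{eq:mnat-axiom-within-layers} for $f$: together with constant coordinate sum this forces $\dom f$ to be an \M-convex set (a finite left-hand side requires some $j$ with both $x - e_i + e_j$ and $y + e_i - e_j$ in $\dom f$), and the $\Mnat$-convex statement follows by treating \eqref{eq:mnat-axiom-between-layers} in parallel. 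Suppose \eqref{eq:mnat-axiom-within-layers} fails; among all failing triples pick $(x, y, i)$ with $\|x - y\|_1$ minimal, so $f(x) + f(y) < f(x - e_i + e_j) + f(y + e_i - e_j)$ for every $j \in \supp^-(x - y)$ (right-hand side $\infty$ when an image leaves $\dom f$), and note $\|x - y\|_1 \geq 4$ since the inequality is automatic for $\|x - y\|_1 \leq 2$.

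\textbf{The crux, and the main obstacle,} is to produce $u \in (\R^E)^*$ with $x, y \in f^u$. The plan: choose $u_0$ with $\langle u_0, x - y\rangle = f(x) - f(y)$ so that $f - \langle u_0,\cdot\rangle$ takes equal values at $x$ and $y$, then perturb $u_0$ within the cone of directions orthogonal to $x - y$ until $x$ (hence also $y$) becomes a global minimizer of $g := f - \langle u,\cdot\rangle$, arranging via the minimality of the counterexample that $f^u$ does not absorb the ``forbidden'' images $x - e_i + e_j$ (they carry comparatively large $f$-value by the strict inequality). Granting such a $u$: since $f^u$ is \M-convex, applying its set exchange axiom to $x, y \in f^u$ and $i \in \supp^+(x - y)$ yields $j \in \supp^-(x - y)$ with $x - e_i + e_j, y + e_i - e_j \in f^u$; the linear term cancels on this move, so $f(x - e_i + e_j) + f(y + e_i - e_j) = f(x) + f(y)$, contradicting the strict inequality. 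Hence \eqref{eq:mnat-axiom-within-layers} holds and $f$ is \M-convex. The delicate point is making this choice of $u$ rigorous — forcing $x, y$ to be simultaneous minimizers while controlling which other lattice points enter $f^u$ (with, if that control is only partial, an inductive descent through exchanges inside $f^u$ on strictly smaller $\ell^1$-distance). This is the technical heart of~\cite[Theorem 6.30]{Murota:2003}, to which we refer for the complete argument.
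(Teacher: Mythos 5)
The paper does not prove this theorem: it is invoked as a known result, cited directly to Murota \cite[Theorem~6.30]{Murota:2003}, so there is no in-paper argument to compare your proposal against. Your forward direction is a complete and correct self-contained proof: the cancellation of $\langle u,\cdot\rangle$ across an exchange move is exactly the right observation, and the ``each summand is at least $m$, so both equal $m$'' step is clean. Your preliminary reduction for the converse is also sound; in particular, the argument that sweeping $u$ along positive multiples of the all-ones vector must hit a breakpoint at which $f^u$ meets two layers of $\dom f$ --- impossible for an \M-convex set, which has constant coordinate sum --- correctly establishes that $\dom f$ lies in a single layer. However, the actual substance of the converse, namely constructing for a minimal failing triple $(x,y,i)$ a functional $u$ that makes $x$ and $y$ simultaneous global minimizers while keeping the forbidden images $x - e_i + e_j$ out of $f^u$, is only sketched and then explicitly deferred to Murota. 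As written it is not a proof: you give no mechanism ensuring that perturbing $u_0$ orthogonally to $x-y$ does not pull in other lattice points of $\dom f$ in a way that defeats the minimal-counterexample choice, and the promised inductive descent on $\|x-y\|_1$ is never carried out. Since the paper itself delegates the theorem to the citation, your proposal is exactly as complete as the paper's own treatment; just be aware that the genuine technical work lives entirely inside the reference.
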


\begin{example}
Valuated matroids are precisely \M-convex functions $f\colon \ZZ^V \rightarrow \RR \cup \{\infty\}$ whose effective domain is contained in the unit hypercube $\dom(f) \subseteq \{0,1\}^V$.
As a concrete example, consider a weighted bipartite graph $G = (V, U; \cE)$ where $\cE \subseteq V \times U$, with edge weights $w \colon \cE \rightarrow \RR$.
One can define an \M-convex function $f_G \colon \{0,1\}^V \rightarrow \RR \cup \{\infty\}$ by
\[
f_G(A) = \min\BiggSetOf{\sum_{e \in \mu} w(e)}{\mu \subseteq \cE \text{ matching of maximal cardinality s.t. } \partial_V(\mu) = A} \, .
\]

The $\Mnat$-convex functions whose effective domains' are contained in the unit hypercube are the \emph{valuated generalized matroids}.
We can relax the previous example to get an \M-convex function $f^\natural_G \colon \{0,1\}^V \rightarrow \RR \cup \{\infty\}$ by
\[
f^\natural_G(A) = \min\BiggSetOf{\sum_{e \in \mu} w(e)}{\mu \subseteq \cE \text{ matching s.t. } \partial_V(\mu) = A} \, .
\]
\end{example}

Many of the operations for \Cref{sec:operations} extend to \M-convex and $\Mnat$-convex functions. 
Consider $f,g\colon\ZZ^E \to \Rinf$ functions, and write $E = V \sqcup U$ as a disjoint union.

The \emph{restriction} of $f$ to $V$ is 
\[
f|_V \colon \ZZ^V \rightarrow \Rinf \quad , \quad f|_V(x) = f(x,\0_U) \, .
\]
The \emph{projection} of $f$ onto $V$ is 
\[
\pi_V(f) \colon \ZZ^V \rightarrow \Rinf \quad , \quad \pi_V(f)(x) = \inf\{f(x,y) \mid y \in \Z^U \} \, .
\]
The \emph{convolution} of $f$ and $g$ is 
\[
f \square g \colon \Z^E \to \Rinf \quad , \quad (f \square g)(x) = \inf\{f(x_1) + g(x_2) \mid x_1 + x_2 = x, x_1,x_2 \in \Z^V\} \, .
\]
Convolution is the function equivalent of Minkowski sum for sets.
If $f,g$ are $\Mnat$-convex functions, their restriction, projection and convolution are also all $\Mnat$-convex functions.
Moreover, if they are \M-convex functions, their restriction and convolution are also \M-convex functions~\cite[Section~6.4]{Murota:2003}.

As with $\Mnat$-convex sets, we can define a notion of induction for $\Mnat$-convex functions.
Analogously to \Cref{def:linking+set}, we call an \M-convex function $\gamma: \ZZ^V \times \ZZ^U \to \Rinf$ a \emph{linking function} from $V$ to $U$.
The \emph{left function of $\gamma$} is the $\Mnat$-convex function $\pi_V(\gamma)$.
The following is an adaptation of the framework of valuated polylinking systems~\cite{KobayashiMurota:2007}.

\begin{definition}
	Let $\gamma : \ZZ^V \times \ZZ^U \to \Rinf$ be a linking function and $f : \ZZ^U \to \Rinf$ an $\Mnat$-convex function.
	The \emph{induction of $f$ through $\gamma$} is the $\Mnat$-convex function $\ind{f}{\gamma} \colon \ZZ^V \to \Rinf$
	\begin{align*}
		\ind{f}{\gamma}(x) &= \inf_{y \in \Z^U}(\gamma(x,-y) + f(y)) = (\gamma \square (\0_V \times f))|_V \, ,
	\end{align*}
		where $\0_V \times f : \Z^V \times \Z^U \to \Rinf$ is the function that sends $(\0_V,y)$ to $f(y)$ and everything else to $\infty$. 
\end{definition}

\begin{example}
Over \Cref{ex:bipartite+graph+induction,ex:left+set+transversal+matroid,ex:bipartite+induction+generalization}, we showed how induction by bipartite graph is a special case of induction by linking set.
Here we extend this by defining a linking function from a weighted bipartite graph, and considering induction by weighted bipartite graph as a special case of induction by linking function.

Let $G = (V,U;\cE)$ be a bipartite graph with edge weights $w\colon \cE \rightarrow \RR$.
Identifying subsets $A \subseteq V$ with their indicator vectors $e_A \in \ZZ^V$, we define a linking function $\gamma_G \colon \ZZ^V \times \ZZ^U \rightarrow \RR \cup \{\infty\}$ by
\[
\gamma_G(e_A, -e_B) = \min\BiggSetOf{\sum_{e \in \mu} w(e)}{\mu\subseteq \cE \text{ matching s.t } \partial_V(\mu) = A, \partial_U(\mu) = B} \, ,
\]
with $\gamma_G(x,y) = \infty$ if no such matching exists.

Now consider a valuated generalized matroid $f\colon \{0,1\}^U \rightarrow  \RR \cup\{\infty\}$.
The induction of $f$ through $\gamma_G$ is the valuated generalized matroid $\ind{f}{\gamma_G}\colon \{0,1\}^V \rightarrow \RR\cup \{\infty\}$ defined as
\begin{align*}
\ind{f}{\gamma_G}(e_A) &= \min_{e_B\in \{0,1\}^U}(\gamma_G(e_A,-e_B) + f(e_B)) \\
&= \min\biggSetOf{f(e_B) + \sum_{e \in \mu} w(e)}{\mu\subseteq \cE \text{ matching s.t } \partial_V(\mu) = A, \partial_U(\mu) = B} \, ,
\end{align*}
or taking the value $\infty$ if no such matching exists.
\end{example}

Now we are equipped to state the \M-convex function version of our main theorem.

\begin{manualtheorem}{\labelcref*{thm:functions-intro}}\label{thm:function+quotient}
  Let $f,g : \Z^E \to \R \cup \{\infty\}$ be \M-convex functions such that $\rank(g) < \rank(f)$.
  Consider the following statements. 
	\begin{enumerate}[label=(\Alph*)]
	  \item (top and bottom) There exists an $\Mnat$-convex function $h: \Z^E \to \Rinf$ such that $f$ and $g$ are the top and bottom layers of $h$ respectively. \label{quo:Mnat}
	  \item (induction)
			There exists a linking function $\gamma : \ZZ^E \times \ZZ^{\tE} \to \Rinf$ and an \M-convex function $r : \ZZ^{\tE} \to \Rinf$ such that $f$ is the left function of $\gamma$, and $g$ is the induction of $r$ through $\gamma$, i.e.
			\[
			f = \pi_E(\gamma)^\uparrow \quad , \quad g = \ind{r}{\gamma} \, .
			\]\label{quo:induction}
		      \item (exchange property)
                        For every $x \in \dom(f), y \in \dom(g), i \in \supp^+(y-x)$ there exists a $j \in \supp^-(y-x)$ such that \label{quo:exchange}
		\[
		f(x) + g(y) \geq f(x + e_i - e_j) + g(y - e_i + e_j).
		\]
		\item (minimizers) For every $u \in (\R^E)^*$ the minimizers $f^u \quotient g^u$ are quotients as \M-convex sets. \label{quo:minimizers}
	\end{enumerate}
	Then \ref{quo:Mnat} $\implies$ \ref{quo:induction} $\implies$ \ref{quo:exchange}$\implies$ \ref{quo:minimizers}.
	For elementary quotients, that is if $\rank(f) = \rank(g)+1$, these are all equivalences.         
\end{manualtheorem}

Observe that \ref{quo:Mnat}, \ref{quo:induction} and \ref{quo:exchange} are the valuated analogs of (\ref{Mnatural}), (\ref{induction}) and (\ref{exchange-property}), respectively.
We will write $f \exquotient g$ if $f,g$ satisfy the exchange property \ref{quo:exchange} and $f \minquotient g$ if $f,g$ satisfy the minimization property \ref{quo:minimizers}. 

\begin{proof}
{[\ref{quo:Mnat} $\implies$ \ref{quo:induction}]}
	Let $h$ be the $\Mnat$-convex function with $h^\uparrow = f$ and $h^\downarrow = g$.
	We define the \M-convex functions $\gamma : \ZZ^{V} \times \Z \to \Rinf$ and $r: \Z \to \Rinf$ as follows:
	\[
		\gamma(x,y) = \begin{cases}
			h(x) & \text{ if } y = -x(V) + \rank(f) \\
			\infty & \text{ otherwise}.
			\end{cases} \, , \quad 
		r(y) = \begin{cases}
			0 & \text{ if } y = \rank(f) - \rank(g) \\ 
			\infty & \text{ otherwise}.
		\end{cases}
	\]
	Then $\pi_V(\gamma) = h$ and thus $\pi_V(\gamma)^\uparrow = f$.
	Furthermore, 
	\[
		\gamma(x,-y) + r(y) = \begin{cases}
			\gamma(x,-y) & \text{ if } y = \rank(f) - \rank(g) \\
			\infty & \text{ otherwise}
		\end{cases}
	=
	\begin{cases}
		h(x) & \text{ if } x(V) =  \rank(g) \\
		\infty & \text{ otherwise}
	\end{cases},
	\]
	so $\ind{r}{\gamma} = g$.

{[\ref{quo:induction} $\implies$ \ref{quo:exchange}]}
	Let $x \in \dom(f), y \in \dom(g)$ and $i \in \supp^+(y-x)$.
There exist $z,w \in \Z^{\tE}$ such that $f(x) = \gamma(x,w)$ and $g(y) = \gamma(y,-z)+r(z)$.
Furthermore, $x(E) = \rank(f)$ is maximal since $f$ is the top layer.
Since $\gamma$ is \M-convex, there exists some $j \in \supp^-((y,-z) -(x,w))$ such that
	\begin{equation} \label{eq:M-convex-function-proof}
		\gamma(x,w) + \gamma(y,-z) \geq \gamma((x,w) + e_i - e_j) + \gamma((y,-z) - e_i + e_j) \, .
	\end{equation}
	If $j \in \supp^-(y-x)$, then the construction of $f$ and $g$ implies that
	\begin{align*}
		f(x) + g(y) &= \gamma(x,w) + \gamma(y,-z) + r(z) \\
		&\geq \gamma(x+ e_i- e_j,w) \gamma(y- e_i + e_j,-z) + r(z) \\
		&\geq  f(x + e_i - e_j) + g(y-e_i+e_j) \, .
	\end{align*}
	To complete the proof, suppose that $j \in \supp^-(-z-w)$.
	Then \eqref{eq:M-convex-function-proof} implies that $(x+e_i,w-e_j) \in \dom(\gamma)$.
	However, $(x + e_i)(E) = x(E) +1 > \rank(f)$, which contradicts that $f$ is the top layer.

{[\ref{quo:exchange}  $\implies$ \ref{quo:minimizers}]}
	Let  $x \in f^u, y \in g^u$ and $i \in \supp^+(y-x)$.
	As $f \exquotient g$, there exists some $j \in \supp^-(y-x)$ such that
	$
	f(x) + g(y) \geq f(x + e_i - e_j) + g(y - e_i + e_j).
	$
  Hence we have
	\begin{align*}
	f(x) - \langle u, x \rangle + g(y) - \langle u, y \rangle &\geq f(x + e_i - e_j) - \langle u, x \rangle + g(y - e_i + e_j) - \langle u, y \rangle \\
	&= f(x + e_i - e_j) - \langle u, x + e_i - e_j \rangle + g(y - e_i + e_j) - \langle u, y - e_i + e_j \rangle \, . 
	\end{align*}
	Since $x \in f^u$ and $y \in g^u$, the expression on the left hand side is minimal among all $x \in \dom(f)$ and $y \in \dom(g)$. 
	Thus, the above expression holds with equality and we have $x + e_i - e_j \in f^u$ and $y - e_i + e_j \in g^u$. 

{[\ref{quo:minimizers} $\implies$ \ref{quo:Mnat} when $\rank(f) = \rank(g) + 1$]}
  Define $h \colon \Z^E \to \Rinf$ by 
  \[
  h(x) = \begin{cases} f(x) & x\in \dom(f) \\ g(x) & x \in \dom(g) \\ \infty & \text{otherwise} \end{cases} \, .
  \]
  As $\dom(h) = \dom(f) \cup \dom(g)$, the minimizer $h^u$ is an element of $\{f^u, g^u, f^u \cup g^u\}$ for each $u \in (\R^E)^*$.
  Utilizing \Cref{thm:minimizers}, $f^u$ and $g^u$ are both \M-convex sets.
  Moreover, $f^u \quotient g^u$ and $\rank(f^u) = \rank(g^u) + 1$ coupled with~\eqref{Mnatural} imply that $f^u \cup g^u$ is an $\Mnat$-convex set.
As such, $h^u$ is an $\Mnat$-convex set for all $u \in (\R^E)^*$, and so $h$ is an $\Mnat$-convex function by \Cref{thm:minimizers}.
\end{proof}

Combining the definition of $\minquotient$ with \Cref{lem:M-convex-quotients-transitive}, the transitivity of quotients of \M-convex sets, yields the transitivity of $\minquotient$.

\begin{corollary}
  Let $f,g,h \colon \Z^E \to \R \cup \{\infty\}$ be \M-convex functions such that $\rank(h) < \rank(g) < \rank(f)$.
  Then $f \minquotient g$ together with $g \minquotient h$ implies $f \minquotient h$. 
\end{corollary}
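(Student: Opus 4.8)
The plan is to unwind the definition of $\minquotient$ pointwise in the linear functional and then invoke transitivity at the level of \M-convex sets. Fix an arbitrary $u \in (\R^E)^*$. Since $f \minquotient g$, property \ref{quo:minimizers} applied to the pair $(f,g)$ gives that the minimizers satisfy $f^u \quotient g^u$ as \M-convex sets; likewise $g \minquotient h$ gives $g^u \quotient h^u$. These are all genuine \M-convex sets by \Cref{thm:minimizers}, because $f,g,h$ are \M-convex functions. Now \Cref{lem:M-convex-quotients-transitive}, the transitivity of quotients of \M-convex sets, yields $f^u \quotient h^u$. As $u$ was arbitrary, this says precisely that every minimizer of the pair $(f,h)$ is a quotient, i.e. property \ref{quo:minimizers} holds for $(f,h)$.

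The only bookkeeping point is to confirm the rank hypothesis needed for $f \minquotient h$ to be stated. The domain of an \M-convex function is an \M-convex set, all of whose points share the same coordinate sum, so $\rank(f^u) = \rank(f)$, $\rank(g^u) = \rank(g)$ and $\rank(h^u) = \rank(h)$; the assumption $\rank(h) < \rank(g) < \rank(f)$ thus makes $\rank(h^u) < \rank(g^u) < \rank(f^u)$ consistent at every stage, and in particular $\rank(h) < \rank(f)$ so that $f \minquotient h$ is meaningful. There is no genuine obstacle here: the statement is a purely formal consequence of the set-level transitivity in \Cref{lem:M-convex-quotients-transitive}, combined with the characterization of \M-convex functions through their minimizers.
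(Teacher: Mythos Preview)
Your proof is correct and matches the paper's own argument: the corollary is stated immediately after \Cref{lem:M-convex-quotients-transitive} with the remark that combining the definition of $\minquotient$ with that lemma yields transitivity, which is exactly what you do. The extra bookkeeping about ranks is harmless but not needed for the argument itself.
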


%\bsinline{Added remark on equivalence of \ref{quo:exchange} and \ref{quo:minimizers} for valuated matroids}
\begin{remark}
In the special case of valuated matroids, we have an equivalence between \ref{quo:exchange} and \ref{quo:minimizers}.
If $f,g$ satisfy $f^u \quotient g^u$ for all $u \in (\RR^E)^*$, then \Cref{th:matroid-quotients-minkowski} implies that $f^u + g^u$ is (the lattice points of) a flag matroid polytope.
By~\cite[Theorem A]{BrandtEurZhang:2021}, this is equivalent to $f \exquotient g$.
\end{remark}

\begin{remark}
The implication \ref{quo:exchange} $\implies$ \ref{quo:Mnat} can be reframed as $f \exquotient g$ implies the existence of a \M-convex function $\tilde{h}$ on $E \cup A$ such that $f = h \setminus A$ is the deletion of $h$ and $g = h/A$ is the contraction of $h$.
While \cite[Theorem 2.23]{JarraLorscheid:2024} shows the reverse implication holds for all `matroids with coefficients', it is not clear for which generalizations of matroids this property holds, including valuated matroids.
For example, it is known to not be true for oriented matroids: one can construct a strong map of oriented matroids that does not factor as an extension followed by a contraction~\cite{Richter-Gebert:1993}.
To determine for which `matroids with coefficients' this property holds is \cite[Problem 2]{JarraLorscheid:2024}.
\end{remark}

\begin{example}\label{ex:valuated+truncation}
  The paper~\cite{Murota:1997} introduced truncation and elongation of a valuated matroid. 
  Let $f\colon \{0,1\}^E \to \RR \cup \{\infty\}$ be a valuated matroid of rank $d$.  
  For $d \geq 1$, its \emph{truncation} is the function $f^{tr} \colon \{0,1\}^E \to \RR \cup \{\infty\}$ of rank $d-1$ with $f^{tr}(x) = \min(f(y) \mid x \leq y)$.  
  Dually for $d \leq n-1$, its \emph{elongation} is the function $f^{el} \colon \{0,1\}^E \to \RR \cup \{\infty\}$ of rank $d+1$ with $f^{el}(x) = \min(f(y) \mid x \geq y)$.
  As they are elementary, they form quotients with $f$ in any of the ways of \Cref{thm:function+quotient}.

  These notions can be extended analogously to arbitrary \M-convex functions $f\colon \Z^E \to \RR \cup \{\infty\}$; the proof that they yield quotients via the exchange property follows in a similar fashion to the proof of \cite[Lemma 3.8]{JoswigLohoLuberOlarte:2023}.

We saw in \Cref{prop:truncation+quotient} that truncation of unvaluated matroids and \M-convex sets yields quotient that are maximal, or generic in some sense.
It would be interesting to investigate what the analogous result should be for valuated matroids and \M-convex functions.
\end{example}

\subsection{Flags of \M-convex functions}

When $\rank(f) > \rank(g) + 1$, the difficulty of showing any equivalence with \ref{quo:Mnat} comes from `filling in' the layers between $f$ and $g$ to make a coherent $\Mnat$-convex function.
%As far as the authors know, this is not made any easier by assuming \ref{quo:exchange} or \ref{quo:induction} either.
We show some of the pitfalls in proving \ref{quo:exchange} $\implies$ \ref{quo:Mnat}.
As the following example shows, it is not even sufficient to find functions on all the layers between $f$ and $g$ that satisfy the exchange property, and whose supports form an $\Mnat$-convex set.

\begin{example}
Consider the \M-convex functions $f,g \colon \ZZ^2 \to \Rinf$, defined as
\[	
f(x) = \begin{cases} l_1 & x = (1,1) \\ \infty & \text{ otherwise } \end{cases} \, , \quad g(y) = \begin{cases} l_2 & y = (0,0) \\ \infty & \text{ otherwise } \end{cases} \, , \quad l_1,l_2 \in \RR \, .
\]
These functions form a quotient $f \exquotient g$ as the exchange property is trivially satisfied.

When trying to extend them to a $\Mnat$-convex function $h$ with $h^\uparrow = f$ and $h^\downarrow = g$, we know that $\dom(h) = \{(0,0),(1,0),(0,1),(1,1)\}$, as this is the unique $\Mnat$-convex set with $\dom(h)^\uparrow = \dom(f)$ and $\dom(h)^\downarrow = \dom(g)$.
We need to find a rank $1$ \M-convex function $r$ to fill in the middle layer.
It turns out that we can pick the values on the effective domain entirely freely, i.e.
\[
r(x) = \begin{cases} k_1 & x = (1,0) \\ k_2 & x = (0,1) \\ \infty & \text{otherwise} \end{cases} \, , \quad k_1,k_2 \in \RR \, ,
\]
and get a flag of quotients, namely $f \exquotient r \exquotient g$.
However, the resulting function $h(x) = \inf(f(x),r(x),g(x))$ may not be $\Mnat$-convex without additional restrictions.
Notably, if $l_1 + l_2 < k_1 + k_2$, then
\[
h(1,1) + h(0,0) = l_1 + l_2 < k_1 + k_2 = h(1,0) + h(0,1) \, .
\]
This shows that a sequence of quotients $\exquotient$ does not always yield the layers of an $\Mnat$-convex function, even if their effective domains are layers of an $\Mnat$-convex set.

Despite this, we can remedy this in the following way.
Fixing arbitrary values for $l_1,l_2,k_1,k_2$, we can always find a constant $c \in \RR$ such that $l_1 + l_2 + c > k_1 + k_2$, so adding a constant function to one of the three layers suffices to yield layers of an $\Mnat$-convex function.
The following theorem shows this can be done in general.
\end{example}

\begin{manualtheorem}{\labelcref*{th:flags-intro}}
\label{th:flags-adding-constants}
	Let $f_0,f_1,\dots,f_k: \Z^E \to \Rinf$ be \M-convex functions such that $f_{i+1} \exquotient f_i$ and $\rank(f_i)=\rank(f_0)+i$ for all $i \in [k]$,	and $P = \bigcup_{i=0}^k \dom(f_i)$ is an $\Mnat$-convex set.
	Then there exist constants $c_0,c_1,\dots,c_k \in \RR$ such that $f_0 - c_0, f_1 - c_1, \dots, f_k - c_k$ are the layers of an $\Mnat$-convex function
	\[
	h(x) = \inf(f_0(x)-c_0,\dots,f_k(x)-c_k) \, .
	\]

  Concretely, we can choose $c_0=c_1=0$ and for $m = 2,\dots, k$ we can recursively choose
	\[
	 c_{m} = \min\biggSetOf{\begin{array}{l}
	 f_{m}(x) - f_{m-1}(x-e_j) + c_{m-1} \\ - f_{l+1}(y+e_j) + c_{l+1} + f_l(y) - c_l
	 \end{array}}{\begin{array}{l}
	 y \in \dom(f_{l}) \, , \, 0 \leq l \leq m-2 \, ,\\ x \in \dom(f_m) \, , \, j \in \supp^-(y-x)
	 \end{array}}
    %		c_{m} = \min_{y \in \dom(f_m)}   \min_{l \in [m-1]}   \min_{x \in \dom(f_{l})}   \min_{j \in \supp^-(x-y)}	f_{m}(y) - f_{m-1}(y-e_j) + c_{m-1} - f_{l+1}(x+e_j) + c_{l+1} + f_l(x) - c_l .
    %\min(\{f_{k+1}(y) - f_k(y-e_j)+c_k-f_{l+1}(x+e_j)+c_{l+1}+f_l(x)-c_l \mid l \in [k], x \in \dom(f_l), y \in \dom(f_{k+1}, j \in \supp^-(y-x))\}) \, .
	\]
\end{manualtheorem}

\begin{proof}
	We prove the statement by induction on $k$.
	For $k=1$, the proof of \Cref{thm:function+quotient} shows that we can choose $c_0 = c_1 = 0$.
	Let $f_0,\dots,f_k,f_{k+1}: \Z^E \to \Rinf$ be \M-convex functions satisfying the assumptions.
	Recall from \Cref{thm:box+plank} that if $\bigcup_{l = 0}^{k+1} \dom(f_l)$ is an $\Mnat$-convex set, then so is $\bigcup_{l = 0}^{k} \dom(f_l)$.
	Thus, by induction there exist constants $c_0,\dots,c_k \in \RR$ such that $h^k(x) = \inf(f_0-c_0,\dots,f_k-c_k)$ is an $\Mnat$-convex function.
  Define $h^{k+1}$ analogously where $c_{k+1}$ is as defined in the statement, and note that
	\[
	h^{k+1}(x) = \begin{cases}
		f_{k+1}(x)-c_{k+1} & \text{if } x \in \dom(f_{k+1}) \\
		h^k(x) & \text{otherwise}.
	\end{cases}
	\]
	Note that the minimum in the definition of $c_{k+1}$ does indeed exist, as $P$ is a bounded $\Mnat$-convex set and $x(E) > y(E)$ guarantees the existence of some $j \in \supp^-(y-x)$.
	We show that $h^{k+1}$  is an $\Mnat$-convex function.
	
  By induction, both conditions	\eqref{eq:mnat-axiom-between-layers} and \eqref{eq:mnat-axiom-within-layers} are satisfied for all $x,y \in \dom(h^k)$.
  Moreover, condition \eqref{eq:mnat-axiom-within-layers} holds for all $x,y \in \dom(f_{k+1})$ as $f_{k+1}$ is \M-convex.
	It remains to show~\eqref{eq:mnat-axiom-between-layers} holds for all $x \in \dom(f_{k+1})$ and $y \in \dom(h^k)$.
	There exists a unique $l \in [k]$ such that $y \in \dom(f_l)$.
	If $l = k$, it is clear that $f_{k+1} \exquotient f_{k}$ implies $f_{k+1} -c_{k+1} \exquotient f_{k}-c_{k}$.
	As such, \eqref{eq:mnat-axiom-between-layers} follows from the conditions in  \Cref{thm:function+quotient} all being equivalent when $\rank(f_{k+1}) = \rank(f_{k}) + 1$.
	When $l \neq k$, the following holds for any $j \in \supp^-(y-x)$ by definition of $c_{k+1}$:
	\begin{align*}
		h^{k+1}(x) + &h^{k+1}(y) 
			= f_{k+1}(x) - c_{k+1} + f_l(y) - c_l \\
			&\geq f_{k+1}(x) - (f_{k+1}(x) - f_k(x-e_j)+c_k-f_{l+1}(y+e_j)+c_{l+1}+f_l(y)-c_l) + f_l(y) - c_l  \\
			&=  f_k(x-e_j) - c_k + f_{l+1}(y+e_j) - c_{l+1} \\
			&= h^{k+1}(x - e_j) + h^{k+1}(y+e_j) \, .
	\end{align*}
\end{proof}

\begin{example}
Let $f_0,f_1,f_2: \Z^E \to \Rinf$ be \M-convex functions such that $f_2 \exquotient f_1 \exquotient f_0$ and $\dom(f_0)\cup \dom(f_1) \cup \dom(f_2)$ is an $\Mnat$-convex set.
Applying \Cref{th:flags-adding-constants}, we can find a constant
\[
c = \min\SetOf{f_2(x) + f_0(y) - f_1(x-e_j) - f_1(y+e_j)}{x \in \dom(f_2) \, , \, y \in \dom(f_0) \, , \, j \in \supp^-(y-x)}
\]
such that $h(x) = \inf(f_0(x),f_1(x),f_2(x) -c)$ is $\Mnat$-convex.
However, this is not a unique choice: any $c' \leq c$ will also satisfy the conditions of $\Mnat$-convexity.
Moreover we can choose to do the scaling on $f_0$ or $f_1$ instead.
As the non-trivial inequality $h$ must satisfy is for all $x \in \dom(f_2)$ and $y \in \dom(f_0)$
\begin{align*}
h(x) + h(y) &= f_2(x) - c + f_0(y) \\
 &\geq \min_{j \in \supp^+(x-y)}(f_1(x - e_j) + f_1(y + e_j)) = \min_{j \in \supp^+(x-y)}(h(x - e_j) + h(y + e_j)) \, ,
\end{align*}
we could equally let $h(x) = \inf(f_0(x) -c ,f_1(x),f_2(x))$, or even $h(x) = \inf(f_0(x),f_1(x)+ c/2,f_2(x))$.
This flexibility only increases in complexity as the number of consecutive quotients grows.
\end{example}

We end by noting an important class of $\M$-convex functions for which Theorem~\ref{thm:function+quotient} is an equivalence.
A matroid $M \subseteq [0,1]^E$ of rank $m$ is \emph{sparse paving} if for all $x, y \in \Delta(m,E) \setminus M$, we have $z(E) \leq m-2$ where $z = \min(x,y)$ coordinatewise minimum of $x$ and $y$.
Sparse paving matroids are an important class of matroids as it is conjectured that asymptotically all matroids are sparse paving.
Moreover, they have been used to derive bounds on the dimension and number of cells of the Dressian, the polyhedral fan parametrising all valuated matroids~\cite{JoswigSchroter:2017,Pendavingh:2024}.

We define a \emph{sparse paving valuated matroid} $f \colon \{0,1\}^E \rightarrow \RR \cup \{\infty\}$ to be a function such that $\argmin(f)$ is a sparse paving matroid.
The class of sparse paving valuated matroids is very flexible, as the values $f(x)$ for $x \notin \argmin(f)$ can be arbitrary values greater than $\min(f(x))$.
The proof that $f$ is a valuated matroid is given in~\cite[Lemma A.2, Remark A.3]{HusicLohoSmithVegh:2022}.

\begin{proposition}
Let $f,g$ be sparse paving valuated matroids.
If $f \minquotient g$ then there exists an $\Mnat$-convex function $h \colon \{0,1\}^E \rightarrow \RR \cup \{\infty\}$ such that $h^\uparrow = f$ and $h^\downarrow = g$.

In particular, the conditions in \Cref{thm:function+quotient} are equivalent for sparse paving valuated matroids. 
\end{proposition}

\begin{proof}
Write $M = \argmin(f)$ and $N = \argmin(g)$ for the associated sparse paving matroids of $f$ and $g$ of ranks $\rank(f) = m$ and $\rank(g) = n$.
Furthermore, write $c_f := f(x)$ for $x \in M$ and $c_g := g(x)$ for $x \in N$.
We define $h \colon \{0,1\}^E \rightarrow \RR \cup \{\infty\}$ by
\[
h(x) := \begin{cases}
f(x) & x \in \dom(f) \\
g(x) & x \in \dom(g) \\
\min(c_f, c_g) & n < x(E) < m \\
\end{cases}
\]
and $h(x) = \infty$ otherwise.
Each layer of $h$ is $\M$-convex, it remains to show \eqref{eq:mnat-axiom-between-layers} holds.
We show this for the case where $c_g \leq c_f$, the case where $c_f \leq c_g$ is identical.

Suppose that \eqref{eq:mnat-axiom-between-layers} does not hold, and there exists some $x, y \in \dom(h)$ with $x(E)>y(E)$ such that
\begin{equation} \label{eq:spvm+not+mnat}
h(x) + h(y) < h(x - e_j) + h(y + e_j) \quad \forall j \in \supp^+(x-y) \, .
\end{equation}
Note that if $|\supp^+(x-y)| = 1$, then we have $x = y + e_j$ and \eqref{eq:mnat-axiom-between-layers} trivially holds with equality. %\todo[inline]{GL: Why does it hold then?\\
%BS: We can rewrite $h(x) +(y) = h(y+e_j) + h(x-e_j) = \min_{i \in \supp^+(x-y)}(h(y+e_i) + h(x-e_i))$}
Hence we can assume that there exists distinct $j, j' \in \supp^+(x-y)$.

If $x(E) \neq m$, note that by construction we have $2c_g \leq h(x) + h(y)$ with equality if $x,y \notin\dom(g) \setminus N$.
As $h(x-e_j) +h(y+e_j) > 2c_g$, at least one of $x-e_j$ and $y+e_j$ must be contained in $
\dom(g) \setminus N$.
As $(y+e_j)(E) \geq n+1$,
 we must have $x-e_j \in \dom(g) \setminus N$, and moreover that $x(E) = n + 1$.
As \eqref{eq:spvm+not+mnat} holds for $j'$, an identical argument shows $x-e_{j'} \in \dom(g) \setminus N$ also.
%\todo[inline]{GL: I feel like I am very confused again: doesn't $x(E) = n + 1$ imply that $(x-e_j)(E) = (x-e_{j'})(E) = n$?\\
%BS: Yes, hence they are `removed bases' of the sparse paving matroid $N$ that overlap in $n-1$ elements, giving a contradiction. Tried to flesh this paragraph out}
However, setting $z = x -e_j - e_{j'} = \min(x-e_{j},x-e_{j'})$, we have $z(E) = n-1 > n-2$, contradicting that $N$ is sparse paving.

If $x(E) = m$, note that by construction we have $c_f + c_g \leq h(x) + h(y)$.
A similar argument as above shows \eqref{eq:spvm+not+mnat} can only hold if $y+e_j, y+e_{j'} \in \dom(f) \setminus M$, implying that $y(E) = m - 1$.
As $y = \min(y+e_j, y+e_{j'})$, this again contradicts that $M$ is sparse paving.
\end{proof}

\subsection{Subdivisions from flags of \M-convex functions}

We introduced flag \M-convex sets as Minkowski sums of flags of \M-convex sets in \Cref{sec:flag+M-convex}.
This can even be used as a characterization of quotients. 
Recall that convolution is the generalization of Minkowski sum for functions. 
Now, we investigate the interplay between quotients of \M-convex functions and convolution. 

The following basic property can be derived from the fact that faces of Minkowski sums are sums of faces of the summands.
Applying this to the convex hull of the epigraph of the functions yields the following. 

\begin{lemma}[{\cite[Lemma 2.2.6]{BrandtEurZhang:2021}}] \label{lem:minimizers-convolution}
  Let $f_0,f_1,\dots,f_k: \Z^E \to \Rinf$ be \M-convex functions with finite support and $u \in (\R^E)^*$. 
  Then $\left(f_0 \square \dots \square f_k\right)^u = \sum_{i=0}^{k} f_i^u$. 
\end{lemma}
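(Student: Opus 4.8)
The plan is to reduce the statement to the elementary fact that a linear functional distributes over sums of vectors, so that minimizing the convolution decouples into independent minimizations of the summands; this is the combinatorial shadow of the geometric statement (faces of a Minkowski sum are Minkowski sums of faces) quoted before the lemma.

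First I would unwind the definitions. Write $g = f_0 \square \cdots \square f_k$, so that for $x \in \Z^E$
\[
g(x) = \min\Bigl\{\textstyle\sum_{i=0}^k f_i(x_i) \;\Big|\; x_i \in \Z^E,\ \textstyle\sum_{i=0}^k x_i = x\Bigr\},
\]
where the infimum is attained because each $f_i$ has finite support (and $\dom(g) = \sum_i \dom(f_i)$ is finite and non-empty). Fix $u \in (\R^E)^*$ and set $m_i := \min_{z}\bigl(f_i(z) - \langle u, z\rangle\bigr)$, the value of $f_i - \langle u,\cdot\rangle$ on $f_i^u$. Since $\langle u, \cdot\rangle$ is linear, any decomposition $x = \sum_i x_i$ satisfies $g(x) - \langle u, x\rangle \le \sum_i\bigl(f_i(x_i) - \langle u, x_i\rangle\bigr)$ with equality for an optimal one, and swapping the order of the two minimizations gives
\[
\min_{x}\bigl(g(x) - \langle u,x\rangle\bigr) \;=\; \min_{x_0,\dots,x_k} \sum_{i=0}^k\bigl(f_i(x_i) - \langle u, x_i\rangle\bigr) \;=\; \sum_{i=0}^k m_i .
\]

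Then I would prove the two inclusions. For $\sum_i f_i^u \subseteq g^u$: if $x = \sum_i z_i$ with $z_i \in f_i^u$, then $g(x) - \langle u, x\rangle \le \sum_i(f_i(z_i) - \langle u, z_i\rangle) = \sum_i m_i$, and since $\sum_i m_i$ is the global minimum of $g - \langle u,\cdot\rangle$ by the display above, this is an equality, so $x \in g^u$. For $g^u \subseteq \sum_i f_i^u$: let $x \in g^u$ and pick an optimal decomposition $x = \sum_i x_i^*$ realizing $g(x)$. Then
\[
\sum_{i=0}^k\bigl(f_i(x_i^*) - \langle u, x_i^*\rangle\bigr) = g(x) - \langle u, x\rangle = \sum_{i=0}^k m_i ,
\]
and as each summand on the left is $\ge m_i$, all of them equal $m_i$; hence $x_i^* \in f_i^u$ for every $i$ and $x = \sum_i x_i^* \in \sum_i f_i^u$.

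I do not expect a genuine obstacle: the only points requiring care are that the finite-support hypothesis makes every infimum an attained minimum (so both the optimal decomposition and the sets $f_i^u$ exist and are well behaved) and that $\langle u,\cdot\rangle$ passes cleanly through the sum. Alternatively one can argue exactly as the paper indicates — identify the relevant faces of the lower convex hull of $\operatorname{graph}(g)$, which is the Minkowski sum of those of the $f_i$, observe that the face selected by $(u,-1)$ is the Minkowski sum of the selected faces, and note that $f_i^u$ is the set of lattice points of the $i$-th such face — but the decoupling argument above avoids setting up epigraphs and is the shortest route.
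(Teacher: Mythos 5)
Your proof is correct, and both inclusions are cleanly argued: the key identity $\min_x(g(x) - \langle u,x\rangle) = \sum_i m_i$ together with the ``each summand is $\ge m_i$ and they sum to $\sum_i m_i$, hence all equal'' step is exactly the decoupling you need, and the finite-support hypothesis is correctly invoked to guarantee all infima are attained and the minimizer sets $f_i^u$ are non-empty. Note, however, that the paper does not actually supply a proof — it cites the result to Brandt--Eur--Zhang and gives only the one-line geometric pointer that faces of a Minkowski sum decompose as Minkowski sums of faces, applied to the (lower) convex hulls of the epigraphs. Your argument is the coordinate-level translation of that geometric fact: the functional $(u,-1)$ selecting a face of the Minkowski-sum epigraph is precisely your observation that $\langle u,\cdot\rangle$ distributes across the optimal decomposition. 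What your version buys is self-containment — no appeal to the structure theory of faces of Minkowski sums of polyhedra, no epigraph setup, just two chains of inequalities. What the geometric phrasing buys is that it places the lemma in the polyhedral framework used throughout Section~5 (and in Brandt--Eur--Zhang), where the same Minkowski-sum-of-faces fact is reused for the regular-subdivision picture. Either is acceptable; yours is arguably the more transparent proof if one insists on writing it out.
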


Next, we need a simple observation for sums. 
Recall that an \M-convex set $P \subseteq \ZZ_{\geq 0}^E$ is contained in the nonnegative orthant if and only if its submodular function $p$ is non-decreasing, as discussed in \Cref{sec:box+lifts} for $k$-polymatroids.

\begin{lemma} \label{lem:quotient-of-sum}
Let $P, Q, S \subseteq \ZZ^E$ be \M-convex sets such that $P \quotient Q$ and $S \subseteq \ZZ_{\geq 0}^E$ contained in the nonnegative orthant.
%  Let $p,q,s \colon 2^E \rightarrow \ZZ$ be submodular functions such that $p \quotient q$ and $s$ is nondecreasing.
%  Then $(p+s) \quotient q$.
  Then $P + S \quotient Q$. 
\end{lemma}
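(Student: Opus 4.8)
The plan is to reduce the statement to the compliant-function characterization \eqref{submod-fctns-compliant} of \Cref{thm:quotient}, which turns the whole claim into a one-line inequality between submodular set functions. Write $p,q,s \colon 2^E \to \ZZ$ for the submodular functions of $P$, $Q$, $S$ respectively. By the lemma in \Cref{sec:operations} computing submodular functions of basic operations, the submodular function of $P+S$ is $p+s$. So it suffices to show that $q$ and $p+s$ are compliant, i.e.
\[
q(Y) - q(X) \le (p+s)(Y) - (p+s)(X) \qquad \text{for all } X \subseteq Y \subseteq E.
\]

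Since $P \quotient Q$, \Cref{def:compliant} already gives $q(Y) - q(X) \le p(Y) - p(X)$ for all $X \subseteq Y$. Hence it is enough to prove that $s$ is non-decreasing, i.e. $s(Y) - s(X) \ge 0$ whenever $X \subseteq Y$. This is the equivalence recalled just before the statement (an \M-convex set lies in $\ZZ_{\ge 0}^E$ iff its submodular function is non-decreasing), and it admits a two-line proof: choose $x \in S$ with $x(X) = s(X)$; since every coordinate of $x$ is nonnegative,
\[
s(Y) \ge x(Y) = x(X) + x(Y \setminus X) \ge x(X) = s(X).
\]
Combining the two inequalities gives $q(Y)-q(X) \le p(Y)-p(X) \le (p(Y)-p(X)) + (s(Y)-s(X)) = (p+s)(Y)-(p+s)(X)$, so $q$ and $p+s$ are compliant, which is exactly $P+S \quotient Q$. (The rank inequality $\rank(Q) \le \rank(P+S)$ is automatic, being the $X=\emptyset,\ Y=E$ instance.)

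I do not expect a real obstacle here; the only points requiring care are quoting the Minkowski-sum formula for submodular functions correctly and invoking the right direction of the ``nonnegative orthant $\Leftrightarrow$ monotone submodular function'' equivalence. If one prefers to avoid the submodular dictionary entirely, the same argument runs through \Cref{thm:quotient+iff+bases+contained}: for each $\sigma \in \Sym(E)$ the vertex of $P+S$ labelled by $\sigma$ is $x_\sigma^{P} + x_\sigma^{S}$, and $x_\sigma^{P} \ge y_\sigma^{Q}$ (from $P \quotient Q$) together with $x_\sigma^{S} \ge \0$ (as $x_\sigma^S \in S \subseteq \ZZ_{\ge 0}^E$) yields $x_\sigma^{P}+x_\sigma^{S} \ge y_\sigma^{Q}$, giving the vertex-containment characterization of $P+S \quotient Q$.
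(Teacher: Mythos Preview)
Your proof is correct and follows essentially the same route as the paper: pass to the submodular functions $p,q,s$, use that $P \quotient Q$ gives $q(Y)-q(X) \le p(Y)-p(X)$, use that $S \subseteq \ZZ_{\ge 0}^E$ makes $s$ non-decreasing, and add the two inequalities. Your additional vertex-based alternative via \Cref{thm:quotient+iff+bases+contained} is a nice variant but not needed.
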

\begin{proof}
Consider the corresponding submodular functions $p,q,s$.
  For $X \subseteq Y \subseteq E$, we obtain $q(Y) - q(X) \leq p(Y) - p(X)$ by~\Cref{def:compliant}.
  $S \subseteq \ZZ_{\geq 0}^E$ implies that $s$ is nondecreasing, hence one additionally has $0 \leq s(Y) - s(X)$.
  Adding the inequalities yields the claim. 
\end{proof}

%Recall that matroid rank functions are submodular and nondecreasing.
%Therefore, applying the lemma to a flag $(r_1,\dots,r_k)$ of matroids yields that $r \quotient r_i$ for each $i \in [k]$ where $r = \sum_{i=1}^{k}r_i$.
As a special case of this, note that the constituents of a flag of matroids $(R_1, \dots, R_k)$ viewed as \M-convex sets are all contained in nonnegative orthant.
Therefore, applying the lemma yields that $R \quotient R_i$ for each $i \in [k]$ where $R = \sum_{i=1}^{k} R_i$.
In particular, the matroid base polytope of each constituent of a flag matroid is a quotient of the flag matroid polytope.
This idea even extends to valuated matroids.
We derive its \M-convex function generalization. 

\begin{proposition}\label{thm:aggregation+quotient}
  Let $f,g,h \colon \Z^E \to \R \cup \{\infty\}$ be \M-convex functions with support in the nonnegative orthant such that $g \minquotient f$ and $h \minquotient f$. 
  Then $(g \square h) \minquotient f$.
\end{proposition}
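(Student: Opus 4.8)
The plan is to reduce the statement to the two preceding lemmas by unwinding both sides of $\minquotient$ at the level of minimizers. Fix an arbitrary $u \in (\R^E)^*$; by definition of $\minquotient$ it suffices to show $(g \square h)^u \quotient f^u$ as \M-convex sets. Since $f,g,h$ are \M-convex functions they have bounded, hence finite, domains, so \Cref{lem:minimizers-convolution} applies and yields $(g \square h)^u = g^u + h^u$. By \Cref{thm:minimizers} each of $f^u,g^u,h^u$ is an \M-convex set, and since Minkowski sums preserve \M-convexity, $g^u + h^u$ is a genuine \M-convex set.

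Next I would feed the hypotheses into \Cref{lem:quotient-of-sum}. The assumption $g \minquotient f$ unpacks to $g^u \quotient f^u$ --- note the orientation here, so that $f^u$ is the lower-rank quotient and plays the role of $Q$. The assumption that $h$ has support in the nonnegative orthant gives $h^u \subseteq \dom(h) \subseteq \ZZ_{\geq 0}^E$, so $h^u$ plays the role of $S$. Applying \Cref{lem:quotient-of-sum} with $P = g^u$, $Q = f^u$, $S = h^u$ then gives $g^u + h^u \quotient f^u$, that is, $(g \square h)^u \quotient f^u$. Since $u \in (\R^E)^*$ was arbitrary, this establishes $(g \square h) \minquotient f$.

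I do not expect a genuine obstacle here: the entire content is carried by \Cref{lem:minimizers-convolution} and \Cref{lem:quotient-of-sum}, and the only care needed is in the bookkeeping --- matching $P,Q,S$ to $g^u,f^u,h^u$ in the correct order, keeping straight the direction of $\quotient$, and confirming that the finiteness hypothesis of \Cref{lem:minimizers-convolution} holds (it does, as \M-convex functions are taken to have bounded domain throughout this paper). It is worth remarking in passing that the argument actually uses only $g \minquotient f$ together with the nonnegativity of $\dom(h)$; the hypothesis $h \minquotient f$ and the nonnegativity of $\dom(f)$ and $\dom(g)$ are not strictly needed for this implication, although they are natural to state given the flag-of-functions context of this subsection.
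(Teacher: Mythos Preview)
Your proof is correct and follows essentially the same route as the paper: fix $u$, use \Cref{lem:minimizers-convolution} to write $(g \square h)^u = g^u + h^u$, and then apply \Cref{lem:quotient-of-sum}. The only cosmetic difference is that the paper uses the symmetric assignment $S = g^u$ (noting $g^u \subseteq \dom(g) \subseteq \ZZ_{\geq 0}^E$) together with $h^u \quotient f^u$ from $h \minquotient f$, whereas you take $S = h^u$ and use $g \minquotient f$; your closing remark about which hypotheses are actually needed is thus correct for your version, while the paper's version uses the complementary pair.
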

\begin{proof}
  Fix an arbitrary $u \in (\R^E)^*$.
  By \Cref{lem:minimizers-convolution}, we have $\left(g \square h\right)^u = g^u + h^u$.
%    It is a basic property of a submodular function (following from the greedy algorithm representation of the submodular base polytope) that it is nondecreasing exactly if the corresponding base polytope is in the nonnegative orthant.
  Moreover, $g^u \subseteq \dom(g)$ is contained in the nonnegative orthant.
  Applying \Cref{lem:quotient-of-sum}, we get that $\left(g \square h\right)^u \quotient f^u$.
  This implies the claim. 
\end{proof}

We conclude with a generalization of \cite[Theorem 5.3]{FujishigeHirai:2022} and \cite[Theorem 4.4.2]{BrandtEurZhang:2021}. 
It follows from \Cref{prop:M-convex-quotients-minkowski} and \Cref{lem:minimizers-convolution}. 

\begin{theorem} \label{thm:valuted+flag+subdivision}
  Let $f_0,f_1,\dots,f_k: \Z^E \to \Rinf$ be \M-convex functions such that $f_{i+1} \minquotient f_i$. 
  Then, for each $u \in (\R^E)^*$, the minimizer $\left(f_0 \square \dots \square f_k\right)^u$ is a flag \M-convex set of type $\left((\rank(f_0)+\ell,\dots,\rank(f_k)+\ell),\phi\right)$ for some surjection $\phi$ onto $E$ and $\ell \in \ZZ$.
\end{theorem}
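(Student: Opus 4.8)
The plan is to follow the proof of \Cref{prop:M-convex-quotients-minkowski}: reduce to a statement about Minkowski sums of a flag of \M-convex sets, lift that flag to a flag of matroids through a common box lift, invoke the matroid analogue, and project back down. First fix $u \in (\R^E)^*$. By \Cref{lem:minimizers-convolution} we have $(f_0 \square \dots \square f_k)^u = \sum_{i=0}^{k} f_i^u$; by \Cref{thm:minimizers} each $f_i^u$ is an \M-convex set with $\rank(f_i^u) = \rank(f_i)$; and by the definition of $\minquotient$, the hypotheses $f_{i+1}\minquotient f_i$ give $f_{i+1}^u \quotient f_i^u$, so $(f_0^u,\dots,f_k^u)$ is a flag of \M-convex sets on $\ZZ^E$ (with strictly increasing ranks, as is implicit in $f_{i+1}\minquotient f_i$). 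It therefore suffices to prove: for any flag $(P_0,\dots,P_k)$ of \M-convex sets on $\ZZ^E$ with strictly increasing ranks, $\sum_i P_i$ is a flag \M-convex set of type $((\rank(P_0)+\ell,\dots,\rank(P_k)+\ell),\phi)$ for a surjection $\phi$ onto $E$ and some $\ell\in\ZZ$.

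To this end I would choose a single box $\Omega=\prod_{i\in E}[\underline{\omega}_i,\overline{\omega}_i]$ containing $\bigcup_i P_i$ and apply \Cref{construction:matroid-lifts} uniformly to all the $P_i$: this fixes a ground set $V$, a surjection $\phi\colon V\to E$, a translation $v=\underline{\omega}$, and the box $\square=[0,1]^V$, and produces matroid lifts $M_i = \lift{\square}{\phi}{\underline{\omega}}{P_i}$ that are pairwise compatible in the sense of \Cref{def:compatible+lift}. Applying \Cref{lem:compatible-boxlifts-quotients} to consecutive pairs shows that $(M_0,\dots,M_k)$ is a flag of matroids on $V$, and one computes $\rank(M_i)=\rank(P_i)+\ell$ with $\ell:=v(E)$.

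The heart of the argument is the flag version of \Cref{th:matroid-quotients-minkowski}: the vertices of $\sum_i M_i$ lie among the vertices of $\sum_i\Delta(\rank(M_i),V)$. Indeed, for $\sigma\in\Sym(V)$, since faces of a Minkowski sum are sums of faces, the $\sigma$-vertex of $\sum_i M_i$ is $\sum_i x^{(i)}_\sigma$ where $x^{(i)}_\sigma$ is the $\sigma$-vertex of $M_i$; by \Cref{thm:quotient+iff+bases+contained} applied to the consecutive quotients we get $x^{(0)}_\sigma\leq\dots\leq x^{(k)}_\sigma$ coordinatewise, and as these are $\{0,1\}$-vectors their supports form a chain $A_0\subseteq\dots\subseteq A_k$ with $|A_i|=\rank(M_i)$; a routine check (maximize a functional strictly decreasing along $A_0,A_1\setminus A_0,\dots,V\setminus A_k$) shows every such $\sum_i e_{A_i}$ is a vertex of $\sum_i\Delta(\rank(M_i),V)$. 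Hence $S:=\sum_i M_i$ is a generalized permutohedron whose vertices lie among those of $\sum_i\Delta(\rank(P_i)+\ell,V)$, and since $\pi_\phi$ commutes with Minkowski sum,
\[
\pi_\phi(S) = \sum_i \pi_\phi(M_i) = \sum_i (P_i+v) = \bigl(\sum_i P_i\bigr) + (k+1)v \, ,
\]
an integral translate of $\sum_i P_i$. By \Cref{def:flag+M+convex+set} this exhibits $\sum_i f_i^u = \sum_i P_i$ as a flag \M-convex set of type $((\rank(f_0)+\ell,\dots,\rank(f_k)+\ell),\phi)$, as desired.

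The main obstacle is the flag generalization of \Cref{th:matroid-quotients-minkowski} used above; although the argument through \Cref{thm:quotient+iff+bases+contained} is short, one could alternatively obtain it by iterating the two-term statement, and some care is needed to set up the compatible matroid lifts with a single common box so that \Cref{lem:compatible-boxlifts-quotients} applies to all consecutive pairs at once.
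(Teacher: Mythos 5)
Your proof is correct and follows essentially the same route as the paper, which actually only offers a one-sentence citation of \Cref{prop:M-convex-quotients-minkowski} and \Cref{lem:minimizers-convolution} without spelling out how the two-term statement extends to $k+1$ terms. You supply exactly the missing details: you reduce via \Cref{lem:minimizers-convolution} and \Cref{thm:minimizers} to the flag $(f_0^u,\dots,f_k^u)$ of \M-convex sets, replay the proof of \Cref{prop:M-convex-quotients-minkowski} by building a single compatible family of matroid lifts through a common box and surjection, and then establish the needed multi-term analogue of \Cref{th:matroid-quotients-minkowski} directly via \Cref{thm:quotient+iff+bases+contained} (the $\sigma$-vertices of the lifted flag form a chain of sets, which always yields a vertex of $\sum_i \Delta(\rank(M_i),V)$). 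That direct argument is cleaner than iterating the two-term statement, which does not apply verbatim since $M_0+M_1$ is not itself a matroid. Two cosmetic points: the sign of $\ell = v(E)$ depends on whether one writes $P_i = \pi_\phi(M_i)+v$ (as in \Cref{def:compatible+lift}) or $P_i+v=\pi_\phi(M_i)$ (as in the proof of \Cref{prop:M-convex-quotients-minkowski}); since the conclusion only asserts the existence of some $\ell\in\ZZ$ this is harmless, and likewise one should choose the box large enough that all ranks $\rank(M_i)=\rank(P_i)+\ell$ are positive to match \Cref{def:flag+M+convex+set} literally.
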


\section{Generalizing to non-integral quotients} \label{sec:non-integral}

The geometric viewpoint on quotients allows us to generalize beyond discrete structures.
Many of the equivalent characterizations of quotients of \M-convex sets, or equivalently \emph{integral} submodular functions, carry over to generalized permutohedra, or equivalently real-valued submodular functions. 
These characterizations are captured by the interplay between generalized permutohedra and generalized polymatroids, analogously to the interplay between \M-convex and $\Mnat$-convex sets.
For these, there are even further characterizations for which we refer to \cite{Frank:2011,FrankKiralyPapPritchard:2014,Kiraly:2018}. 

We briefly collect the necessary concepts to state \Cref{thm:real+quotients}, a characterization of quotients of generalized permutohedra. 
The proofs of the equivalences are very similar to the discrete setting in most cases and are omitted. 

For a generalized polymatroid $Q \subseteq \RR^E$, the \emph{top layer} $Q^\uparrow$ and the \emph{bottom layer} $Q^{\downarrow}$ are the faces with $x(E)$ maximal and minimal respectively.
These are both generalized permutohedra.
All of the operations defined in \Cref{sec:operations} and \Cref{sec:minors} carry over to generalized permutohedra and generalized polymatroids.

Let $\Gamma \subseteq \R^V \times \R^U$ be a generalized permutohedron and $P \subseteq \R^U$ a generalized permutohedron (resp. generalized polymatroid). 
The \emph{induction of $P$ through $\Gamma$} is the generalized permutohedron (resp. generalized polymatroid)
\begin{align*}
\ind{P}{\Gamma} = \left.\left(\Gamma + (\0_V \times P)\right)\right|_V \subseteq \RR^V \enspace .
\end{align*}
In the following, we also allow a relaxation to possibly unbounded permutohedra, in which facets can be translated to infinity.
Given a finite set $V$, we define the \emph{monoid of linking permutohedra} $(\cP_V, *, J_V)$ on $V$ to be the set of (possibly unbounded) generalized permutohedra with associative operation $*$ and identity $J_V$:
\begin{align*}
\cP_V &= \SetOf{\Gamma \subseteq \RR^V \times \RR^V}{\Gamma \text{ generalized permutohedron }} \, , \\
\Gamma * \Delta &= \SetOf{(x,-z) \in \RR^V \times \RR^V}{\exists y \in \RR^V \text{ such that } (x,-y) \in \Gamma \, , \, (y, -z) \in \Delta} \, , \\
J_V &= \SetOf{(x,-x) \in \RR^V \times \RR^V}{x \in \RR^V} \, .
\end{align*}
Green's right pre-order $\preceq_\cR$ on $\cP_V$ is defined as
\[
\Gamma \preceq_{\cR} \Delta \quad \Leftrightarrow \quad \exists X \in \cP_V \text{ such that } \Gamma*X = \Delta \, .
\]

A crucial difference to the discrete setting arises in the formulation of the analog of \eqref{exchange-property}.
For this, we recall the `real' of `continuous' analog or the exchange property from \cite[\S 8]{Murota:2003}.    
A polyhedron $P \in \R^E$ is the base polytope of a submodular function with values in $\R \cup \{+\infty\}$ if and only if for all $x,y \in P$ and $i \in \supp^+(x-y)$, there exists a $j \in \supp^-(x-y)$ and a positive number $\rho_0$ such that $x - \rho(e_i - e_j) \in P$ and $y + \rho(e_i - e_j) \in P$ for all $\rho \in [0,\rho_0]$.

The final unfamiliar equivalence is \eqref{eq:edge+directions}.
This follows from $\left(B(p) + \R_{\leq 0}^E\right) \cap \left(B(q) + \R_{\geq 0}^E\right)$ being the generalized polymatroid $G(p,q^\#)$, and generalized polymatroids having an alternative characterization as polytopes with only edge directions $e_i - e_j$ and $e_i$; see~\cite[Theorem 17.1]{Fujishige:2005}.

  \begin{theorem} \label{thm:real+quotients}
	Let $P, Q \in \R^E$ be generalized permutohedra, and $p,q \colon 2^E \to \R$ be the corresponding submodular set functions. 
	Then the following are equivalent. 
	\begin{enumerate}
	  \item (compliant functions) For all $X \subseteq Y \subseteq E$, the inequality $q(Y) - q(X) \leq p(Y) - p(X)$ holds. 
	  \item (containment of bases) For every $\sigma \in \Sym(E)$, the vertices $x_\sigma \in P$ and $y_\sigma \in Q$ satisfy $x_\sigma \geq y_\sigma$. 	  
		\item (submodular polyhedron containment) For all $X \subseteq E$, the containment $S(q/X) \subseteq S(p/X)$ holds, i.e., the submodular polyhedra are contained for all contractions.
		\item (top and bottom) The polyhedron $R = G(p,q^\#)$ is a generalized polymatroid with $P = R^{\uparrow}$ and $Q = R^{\downarrow}$. 
		\item (deletion-contraction) There exists a generalized permutohedron $R \subseteq \RR^{\tE}$ with $\tE = E \sqcup X$ such that $P = R \setminus X$ is the deletion of $R$ and $Q = R / X$ is the contraction of $R$.
		Moreover, $X$ can be picked to be a singleton.
		\item (exchange property) For all $x \in Q, y \in P$ and $i \in \supp^+(x-y)$, there exists a $j \in \supp^-(x-y)$ and a positive number $\rho_0$ such that $x - \rho(e_i - e_j) \in Q$ and $y + \rho(e_i - e_j) \in P$ for all $\rho \in [0,\rho_0]$.
    \item (edge directions) The polyhedron $\left(B(p) + \R_{\leq 0}^E\right) \cap \left(B(q) + \R_{\geq 0}^E\right)$ has only edge directions $e_i - e_j$ and $e_i$. \label{eq:edge+directions}
		\item (induction)
		There exists a generalized permutohedron $\Gamma \subseteq \R^E \times \R^\tE$ and $W \in \R^\tE$ such that $P$ is the top layer of the left set of $\Gamma$, and $Q$ is the induction of $W$ through $\Gamma$, i.e.
		\[
		P = \pi_E(\Gamma)^\uparrow \quad , \quad Q = \ind{W}{\Gamma} \, .
		\]
		\item ($\cR$-ordered linking permutohedra) 
    There exist linking permutohedra $\Gamma, \Delta \subseteq \RR^E \times \RR^E$ such that $P$ and $Q$ are top layers of the left sets of $\Gamma$ and $\Delta$ respectively, and $\Gamma \preceq_\cR \Delta$ where $\preceq_\cR$ is Green's right preorder on the monoid of linking permutohedra.
	\end{enumerate}
\end{theorem}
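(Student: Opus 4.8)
The plan is to follow the proofs of the corresponding items in \Cref{thm:quotient}, observing that integrality played essentially no role in most of them. First I would establish the ``polyhedral core'' $(1)\Leftrightarrow(2)\Leftrightarrow(3)\Leftrightarrow(4)\Leftrightarrow(5)\Leftrightarrow(7)$. The equivalence $(1)\Leftrightarrow(3)$ is a direct expansion of the definitions of contraction and of $S(q/X)\subseteq S(p/X)$, exactly as in \Cref{prop:compliant+poly+containments}, with no appeal to integrality. For $(1)\Leftrightarrow(2)$ one uses that the vertex $x_\sigma$ of a generalized permutohedron is the unique maximizer of a generic linear functional whose ordering is $\sigma$, and runs the greedy comparison underlying \Cref{thm:quotient+iff+bases+contained}, a statement about real submodular functions. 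The equivalence $(1)\Leftrightarrow(4)$ is \cite[Theorem 2.2]{FujishigeHirai:2022} (valid for real-valued $p,q$) combined with \Cref{lem:top-bottom-layers}, and $(4)\Leftrightarrow(5)$ is the real analog of \Cref{lem:deletion+contraction+layer} together with the classical description of generalized polymatroids as coordinate projections of base polytopes \cite[Theorem 3.58]{Fujishige:2005}. Finally $(4)\Leftrightarrow(7)$ is immediate from $G(p,q^\#)=\bigl(B(p)+\R_{\leq 0}^E\bigr)\cap\bigl(B(q)+\R_{\geq 0}^E\bigr)$ and the characterization of generalized polymatroids as exactly the polytopes with edge directions among $\{e_i-e_j,\,e_i\}$ \cite[Theorem 17.1]{Fujishige:2005}.

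Next I would attach the induction-theoretic conditions $(8)$ and $(9)$, by repeating \Cref{sec:induction} with $\Mnat$-convex sets replaced by generalized polymatroids and \M-convex sets by generalized permutohedra: induction is still $\bigl(\Gamma+(\0_V\times P)\bigr)|_V$, restriction and Minkowski sum preserve the relevant classes, and \Cref{prop:Mnat+lifts+to+M} has the classical real counterpart that a generalized polymatroid is a coordinate projection of a base polytope. Thus $(4)\Rightarrow(8)$ copies \Cref{prop:Mnat+implies+induction}, and $(8)\Rightarrow(6)$ copies the proof of \emph{(induction)}$\implies$\emph{(exchange property)} from \Cref{sec:induction}, which uses only that $\Gamma$ satisfies an exchange axiom and that the top layer is rank-maximal. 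For $(8)\Leftrightarrow(9)$ one checks that $\cP_V$ is an associative monoid with the diagonal $J_V$ as identity (handling unbounded permutohedra exactly as for $\cM_V$ in \Cref{sec:monoid}) and then reruns \Cref{prop:quotient+iff+R+related}, using that $\pi_L(\Gamma*X)=\ind{\pi_L(X)}{\Gamma}$ and that intersecting a generalized polymatroid with a single coordinate-sum hyperplane returns a generalized permutohedron.

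The remaining, and I expect the only genuinely delicate, step is the implication $(6)\Rightarrow(1)$ closing the cycle. (The direction $(4)\Rightarrow(6)$ is the ``real'' exchange axiom for base polytopes of submodular functions recalled from \cite[\S 8]{Murota:2003}, applied to $G(p,q^\#)$ after lifting it to a base polytope in $\R^{E\sqcup e}$.) For $(6)\Rightarrow(1)$ I would adapt \Cref{lem:compatibility-single-element} and the subsequent induction on $|Y\setminus X|$ to continuous exchanges. Concretely, fixing $X\subseteq E$ and $i\in X$, assume $q(X)-q(X\setminus i)>p(X)-p(X\setminus i)$ and pick $x\in Q$, $y\in P$ with $x(X)=q(X)$, $y(X\setminus i)=p(X\setminus i)$; as in the discrete case $x_i>y_i$, and one repeatedly applies the exchange along directions $e_i-e_j$, now moving by the largest admissible step $\rho$ at each stage rather than by a unit, while tracking the coordinates used weighted by their step sizes, until $x_i$ and $y_i$ agree. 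The accounting of how much mass leaves $X$ versus stays in $X$ is identical to the discrete count but carried out over $\R_{\geq 0}$; closedness of $P$ and $Q$ and the fact that only finitely many facet directions can be exhausted guarantee termination after finitely many steps. This yields the same contradiction as in \Cref{lem:compatibility-single-element}, with the cardinalities $|J\cap X|$, $|J\setminus X|$ replaced by total step weights. Once $(6)\Rightarrow(1)$ is established the cycle is complete and all nine conditions are equivalent; the main obstacle is precisely making this continuous bookkeeping rigorous, in particular the finiteness of the iteration.
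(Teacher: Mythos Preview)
Your proposal is correct and matches the paper's approach: the paper explicitly states that ``the proofs of the equivalences are very similar to the discrete setting in most cases and are omitted,'' with brief remarks only for conditions (6) and (7) (citing \cite[\S 8]{Murota:2003} and \cite[Theorem 17.1]{Fujishige:2005} respectively, exactly as you do). Your sketch is in fact more detailed than the paper's own treatment, particularly regarding the continuous adaptation of $(6)\Rightarrow(1)$, which the paper does not address separately.
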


\begin{remark}
Recall that the set of all submodular functions forms a polyhedral cone in $\R^{2^E}$ cut out by the submodular inequalities from \Cref{def:submodularity}.
Equivalently, it is the facet deformation cone of the regular permutohedron \cite[Proposition 3.2]{PostnikovReinerWilliams:2008}.
As each generalized polymatroid is exactly the orthogonal projection (forgetting one coordinate) of a generalized permutohedron, the space of quotients of generalized permutohedra is exactly the deformation cone of an orthogonal projection of the regular permutohedron.
This also contains the set of all \M-convex set quotients as the integer points, and the set of all matroid quotients as $0/1$-lattice points. 
\end{remark}

\printbibliography

\vspace*{\fill}

\subsection*{Affiliations} 
$ $
\vspace*{0.2cm}

\noindent \textsc{Marie-Charlotte Brandenburg} \\
\textsc{Ruhr-Universit\"at Bochum } \\
%	Lindstedtsvägen 25, 114 28 Stockholm, Sweden} \\
\url{marie-charlotte.brandenburg@rub.de} \\

\noindent \textsc{Georg Loho} \\
\textsc{University of Twente \& Freie Universit\"at Berlin} \\
%	Address } \\
\url{georg.loho@math.fu-berlin.de} \\

\noindent \textsc{Ben Smith} \\
\textsc{Lancaster University} \\
%	Address} \\
\url{b.smith9@lancaster.ac.uk}

\end{document}